\documentclass[abstract=true,a4paper]{scrartcl}
\usepackage[utf8]{inputenc}
\usepackage{tikz,amsmath,amsfonts,amsthm,amsopn,mathrsfs,amsthm}
\usepackage[noadjust]{cite}
\usepackage{epsfig,verbatim}
\usepackage{bm}
\usepackage{dsfont}
\providecommand {\norm}[1] {\lVert#1\rVert}

\providecommand {\set}[1]{\lbrace #1 \rbrace}





\newcommand {\one} e
\newcommand {\bn} {\ensuremath{\mathbb{N}}}

\newcommand {\br} {\ensuremath{\mathbb{R}}}

{











\DeclareMathOperator{\supp}{supp}








\newtheorem{prop}{Proposition}[section]
\newtheorem{cor}[prop]{Corollary}
\newtheorem{thm}[prop]{Theorem}

\newtheorem{lem}[prop]{Lemma}

\theoremstyle{definition}
\newtheorem{defn}[prop]{Definition}

\theoremstyle{remark}
\newtheorem*{rem*}{Remark}
\newtheorem*{rems*}{Remarks}
\newtheorem{rem}[prop]{Remark}

\newtheorem{ex}[prop]{Example}



\usepackage{enumitem}

\usepackage[top=2cm,bottom=3.5cm]{geometry}

\let\oldbibliography\thebibliography
\renewcommand{\thebibliography}[1]{%
  \oldbibliography{#1}%
  \setlength{\itemsep}{1.3pt}%
}

\numberwithin{equation}{section}

\usepackage[ddmmyyyy,hhmmss]{datetime}
\newdateformat{daymonthyeardate}{%
  \THEDAY.\twodigit{\THEMONTH}.\THEYEAR}
\newdateformat{TitleDate}{%
  \THEDAY.~\monthname[\THEMONTH]~\THEYEAR}
\usepackage{fancyhdr}
\usepackage{lastpage}
\usepackage{etoolbox}
\pagestyle{fancy}
\setlength{\footskip}{1.4cm}
\fancyhf{} 

\fancyfoot{}
\fancyfoot[R]{\footnotesize{Page \thepage~of \pageref{LastPage}}}

\usepackage{mathtools}
\usepackage{amscd}


\providecommand{\pdiffs}[2] {\frac 1 {#1} - \frac 1 {#2}}
\newcommand{\ball}{\mathcal{B}}
\DeclareMathOperator{\argmin}{argmin}
\DeclareMathOperator{\enc}{Enc}
\DeclareMathOperator{\codec}{Codecs}

\usepackage{dsfont}
\usepackage{xparse}
\usepackage{todonotes}
\usepackage{amssymb}
\usepackage{graphicx}

\let\emptyset\varnothing

\newcommand*{\bigs}[1]{\vcenter{\hbox{\scalebox{1.07}{\ensuremath#1}}}}

\newcommand{\signalClass}{\mathcal{S}}
\newcommand{\hilbert}{\mathcal{H}}
\newcommand{\banach}{\mathbf{X}}
\newcommand{\banachOne}{\banach}
\newcommand{\otherBanach}{\mathbf{Y}}
\newcommand{\banachTwo}{\otherBanach}
\newcommand{\banachThree}{\mathbf{Z}}
\newcommand{\FirstN}[1]{[#1]}

\newcommand{\powerset}[1]{2^{#1}}
\newcommand{\waveletT}{T}

\makeatletter
\newcommand{\specialcell}[1]{\ifmeasuring@#1\else\omit$\displaystyle#1$\ignorespaces\fi}
\makeatother

\DeclareMathOperator{\compressible}{Comp}
\DeclareMathOperator{\dist}{dist}

\NewDocumentCommand\endec{O{R}D<>{\signalClass}G{\hilbert}}{\enc^{#1}_{#2,#3}}
\NewDocumentCommand\codecs{D<>{\signalClass}G{\hilbert}}{\codec_{#1,#2}}
\NewDocumentCommand\distortion{D<>{\signalClass}G{\hilbert}}{\delta_{#1,#2}}
\NewDocumentCommand\comp{mG{\hilbert}}{\compressible_{#2}^{#1}}
\NewDocumentCommand\optRate{D<>{\signalClass}G{\hilbert}}{s^\ast_{#2} \big( #1 \big)}
\NewDocumentCommand\optRateSmall{D<>{\signalClass}G{\hilbert}}{s^\ast_{#2} \bigs( #1 \bigs)}
\NewDocumentCommand\approxClass{mD<>{\signalClass}G{\hilbert}}{\mathcal{A}_{#2,#3}^{#1}}
\NewDocumentCommand\mixSpace{O{\partition}G{\infty}G{\alpha}D<>{p}}{\ell^{#4,#2}_{#1,#3}}
\NewDocumentCommand\generalSpace{G{\theta}G{\infty}G{\alpha}D<>{p}}{\ell^{#4,#2}_{\partition,#3,#1}}
\NewDocumentCommand\sequenceSpaceSignalClass{D<>{\partition}G{q}}{\signalClass_{#1,\alpha}^{p,#2}}
\NewDocumentCommand\generalSignalClass{G{\theta}G{q}G{\alpha}D<>{p}}{\signalClass_{\partition,#3,#1}^{#4,#2}}
\NewDocumentCommand\sequenceProductMeasure{D<>{\partition}G{q}}{\prob_{#1,\alpha}^{p,#2}}
\NewDocumentCommand\ContinuousBesov{G{\alpha}D<>{p}G{q}}{\mathbf{B}^{#1}_{#2,#3}}
\NewDocumentCommand\LeopoldUnweighted{G{p}G{q}}{\ell^{#2}(\ell_{\LeopoldM_j}^{#1})}
\NewDocumentCommand\LeopoldUnweightedReal{G{p}G{q}}{\ell^{#2}(\ell_{\LeopoldM_j}^{#1}; \R)}
\NewDocumentCommand\LeopoldSpace{G{p}G{q}}{\ell^{#2}(\beta_j \, \ell_{\LeopoldM_j}^{#1})}
\NewDocumentCommand\LeopoldReal{G{p}G{q}}{\ell^{#2}(\beta_j \, \ell_{\LeopoldM_j}^{#1};\R)}

\NewDocumentCommand\generalProductMeasure{G{\theta}G{\infty}D<>{p}}{\prob_{\partition,\alpha,#1}^{#3,#2}}

\newcommand{\N}{\mathbb{N}}
\newcommand{\Z}{\mathbb{Z}}
\newcommand{\CC}{\mathbb{C}}

\newcommand{\B}{\mathbb{B}}
\newcommand{\R}{\mathbb{R}}
\newcommand{\X}{\mathbf{X}}
\newcommand{\Y}{\mathbf{Y}}
\newcommand{\code}{\mathcal{C}}
\newcommand{\eps}{\varepsilon}

\newcommand{\identity}{\mathrm{id}}
\newcommand{\prob}{\mathbb{P}}
\newcommand{\xvec}{\mathbf{x}}
\newcommand{\yvec}{\mathbf{y}}

\newcommand{\bvec}{\mathbf{b}}
\newcommand{\cvec}{\mathbf{c}}
\newcommand{\uvec}{\mathbf{u}}

\newcommand{\evec}{\mathbf{e}}
\newcommand{\indexSet}{\mathcal{I}}
\newcommand{\calB}{\mathcal{B}}
\newcommand{\calD}{\mathcal{D}}
\newcommand{\calT}{\mathcal{T}}
\newcommand{\Schwartz}{\mathscr{S}}
\newcommand{\Fourier}{\mathcal{F}}
\newcommand{\interior}{\mathrm{int}}
\newcommand{\ext}{\mathrm{ext}}
\newcommand{\LeopoldM}{N}
\newcommand{\BesovSmoothness}{\tau}
\newcommand{\indicator}{\mathds{1}}
\renewcommand{\P}{\mathbb{P}}
\newcommand{\extension}{\mathscr{E}}
\newcommand{\partition}{\mathscr{P}}
\newcommand{\gc}{c} 

\usepackage{stackengine}
\newcommand{\myhat}[1]{\mathop{}\!\!\ensurestackMath{\stackon[-1.25ex]{#1}{\smash{\widehat{\hphantom{#1}}}}}}

\usepackage{authblk}

\theoremstyle{definition}

\begin{document}
\title{Phase Transitions in Rate Distortion Theory and Deep Learning}
\author[$\dagger$,$\ddagger$,$\flat$]{Philipp Grohs}
\author[$\dagger$]{Andreas Klotz\thanks{AK acknowledges funding from the FWF projects I 3403 and P 31887.}}
\author[$\dagger$,$\S$]{Felix Voigtlaender}

\affil[$\,$]{\footnotesize \texttt{$\{$philipp.grohs, andreas.klotz, felix.voigtlaender$\}$@univie.ac.at}
\vspace*{0.3cm}}

\affil[$\dagger$]{\footnotesize
Faculty of Mathematics,
University of Vienna, Vienna, Austria.
\vspace*{0.1cm}}

\affil[$\ddagger$]{\footnotesize
Research Platform Data Science,
University of Vienna, Vienna, Austria
\vspace*{0.1cm}}

\affil[$\flat$]{\footnotesize
Johann Radon Institute for Computational and Applied Mathematics,\newline
Austrian Academy of Sciences,
Linz, Austria.
\vspace*{0.1cm}}

\affil[$\S$]{\footnotesize
Catholic University of Eichstätt--Ingolstadt,
Eichstätt, Germany}

\date{\vspace*{-0.3cm}\TitleDate\today\vspace*{-0.8cm}}
\maketitle

\begin{abstract}
  Rate distortion theory is concerned with optimally encoding a given signal class $\signalClass$
  using a budget of $R$ bits, as $R \to \infty$.
  We say that $\signalClass$ \emph{can be compressed at rate} $s$ if we can achieve an error
  of at most $\mathcal{O}(R^{-s})$ for encoding the given signal class;
  the supremal compression rate is denoted by $s^\ast (\signalClass)$.
  Given a fixed coding scheme, there usually are \emph{some} elements of $\signalClass$
  that are compressed at a higher rate than $s^\ast (\signalClass)$ by the given coding scheme;
  in this paper, we study the size of this set of signals.
  We show that for certain ``nice'' signal classes $\signalClass$,
  a \emph{phase transition} occurs:
  We construct a probability measure $\prob$ on $\signalClass$ such that
  for  \emph{every} coding scheme $\code$ and any $s > s^\ast(\signalClass)$,
  the set of signals encoded with error $\mathcal{O}(R^{-s})$ by $\code$ forms a $\prob$-null-set.
  In particular our results apply to all unit balls in Besov and Sobolev spaces
  that embed compactly into $L^2 (\Omega)$ for a bounded Lipschitz domain $\Omega$.
  As an application, we show that several existing sharpness results
  concerning function approximation using deep neural networks are in fact \emph{generically sharp}.

  In addition we provide quantitative and non-asymptotic bounds on the probability
  that a random $f\in \signalClass$ can be encoded to within accuracy $\eps$ using $R$ bits.
  This result is subsequently applied to the problem of approximately representing $f\in \signalClass$
  to within accuracy $\varepsilon$ by a (quantized) neural network constrained
  to have at most $W$ nonzero nodes that can be produced by any numerical ``learning'' procedure.
  We show that for any $s > s^\ast(\signalClass)$ there are constants $c,C$ such that,
  no matter how we choose the ``learning'' procedure, the probability of success is bounded
  from above by $\min \big\{1, 2^{C\cdot W \lceil \log_2 (1+W) \rceil^2 - c\cdot \varepsilon^{-1/s}} \big\}$.
\end{abstract}

\noindent
\textbf{Keywords:}
Rate distortion theory,
Phase transition,
Approximation rates,
Besov spaces,
Sobolev spaces,
Neural network approximation.

\vspace{0.2cm}

\noindent
\textbf{MSC (2010) classification:}
41A46, 28C20, 68P30.


\section{Introduction}
\label{sec:intro}

Let $\signalClass$ be a \emph{signal class}, that is, a relatively compact subset
of a Banach space $(\banach, \| \cdot \|_{\banach})$.
Rate distortion theory is concerned with the question of how well the elements
of $\signalClass$ can be encoded using a prescribed number $R$ of bits.
In many cases of interest, the best achievable coding error scales like $R^{-s^\ast}$,
where $s^\ast$ is the \emph{optimal compression rate} of the signal class $\signalClass$.
We show that a phase transition occurs: the set of elements $\xvec \in \signalClass$
that can be encoded using a \emph{strictly larger} exponent than $s^\ast$ is thin;
precisely, it is a null-set with respect to a suitable probability measure $\P$.
Crucially, the measure $\P$ is \emph{independent} of the chosen coding scheme.

In order to make these results more rigorous, let us state the needed notions of rate-distortion
theory, see also \cite{berger2003rate,boelcskeiNeural,GrohsNNApproximationTheory,grohs2015optimally}.

\subsection{A crash course in rate distortion theory}%
\label{sub:RateDistortionCrashCourse}

To formalize the notion of encoding a signal class $\signalClass \subset \banach$,
we define the set $\endec{\banach}$ of encoding/decoding pairs
$(E, D)$ of \emph{code-length} $R \in \bn$ as
\[
  \endec{\banach}
  :=\left\{
      (E,D)
      \quad : \quad
      E: \signalClass \to \{0,1\}^R \quad \mbox{ and }\quad D : \{0,1\}^R \to \banach
    \right\}.
\]
We are interested in choosing $(E,D) \in \endec{\banach}$ such as to minimize
the \emph{(maximal) distortion}
\(
  \distortion{\banach} (E,D)
  := \sup_{\xvec \in \signalClass} \| \xvec - D (E (\xvec)) \|_{\banach}
  .
\)

The intuition behind these definitions is that the encoder $E$ converts any signal
$\xvec \in \signalClass$ into a bitstream of code-length $R$ (i.e., consisting of $R$ bits),
while the decoder $D$ produces from a given bitstream ${b \in \{ 0, 1 \}^R}$ a signal $D(b) \in \banach$.
The goal of \emph{rate distortion theory} is to determine the minimal
distortion that can be achieved by any encoder/decoder pair of code-length $R \in \N$.
Typical results concerning the relation between code-length and distortion
are formulated in an asymptotic sense:
One assumes that for every code-length $R \in \N$, one is given an encoding/decoding pair
${(E_R,D_R) \in \endec{\banach}}$, and then studies the asymptotic behaviour
of the corresponding distortion $\distortion{\banach}(E_R, D_R)$ as $R \to \infty$.

We refer to a sequence $\big( (E_R, D_R) \big)_{R \in \N}$ of encoding/decoding pairs
as a \emph{codec}, so that the set of all codecs is
\[
  \codecs{\banach} := \prod_{R \in \N}
                        \endec{\banach} .
\]
For a given signal class $\signalClass$ in a Banach space $\banach$,
it is of great interest to find an asymptotically optimal codec; that is,
a sequence $\big( (E_R,D_R) \big)_{R\in \bn} \in \codecs{\banach}$ such that the asymptotic decay
of $\big(\distortion{\banach} (E_R,D_R) \big)_{R\in \bn}$ is, in a sense, maximal.
To formalize this, for each $s \in [0,\infty)$ define the class of subsets of $\banach$ that
\emph{admit compression rate $s$} as
\[
  \comp{s}{\banach}
  := \! \left\{
          \signalClass \subset \banach
          \quad \!\!\!\! \colon \!\!\!\! \quad
          \exists \, \big( (E_R, D_R) \big)_{R \in \N} \in \codecs{\banach} : \,\,
            \sup_{R \in \N}
              \big( R^{s} \cdot \distortion{\banach} (E_R, D_R) \big)
            \!<\! \infty
        \right\} \! .
\]
For a given (bounded) signal class $\signalClass \subset \banach$ we aim to determine the
\emph{optimal compression rate} for $\signalClass$ in $\banach$, that is
\begin{equation}
  \optRateSmall{\banach} := \sup \big\{
                                   s \in [0,\infty)
                                   \colon
                                   \signalClass \in \comp{s}{\banach}
                                 \big\}
                         \in [0,\infty].
  \label{eq:OptimalCompressionRate}
\end{equation}
Although the calculation of the quantity $\optRateSmall{\banach}$ may appear daunting
for a given signal class $\signalClass$, there exists in fact a large body of literature
addressing this topic.
A landmark result in this area states that the JPEG2000 compression standard
represents an optimal codec for the compression of piecewise smooth signals \cite{mallat1999wavelet}.
This optimality is typically stated more generally for the signal class
$\signalClass = \ball \big( 0, 1; B_{p,q}^\alpha (\Omega) \big)$, the unit ball in the Besov space
$B_{p,q}^\alpha(\Omega)$, considered as a subset of $\banach = \hilbert = L^2(\Omega)$,
for ``sufficiently nice'' bounded domains $\Omega \subset \R^d$; see \cite{DeVoreConstructiveApproximation}.

For a codec $\code = \big( (E_R, D_R) \big)_{R \in \N} \in \codecs{\banach}$,
instead of considering the maximal distortion of $\code$ over the \emph{entire} signal class $\signalClass$,
one can also measure the approximation rate that the codec $\code$ achieves
for each \emph{individual} $\xvec \in \signalClass$.
Precisely, the \emph{class of elements with compression rate $s$ under $\code$} is
\begin{equation}
  \approxClass{s}{\banach} (\code)
  := \Big\{
       \xvec \in \signalClass
       \quad \colon \quad
       \sup_{R \in \N}
         \big[ R^s \cdot \big\| \xvec - D_R(E_R (\xvec)) \big\|_{\banach} \big]
       < \infty
     \Big\} .
  \label{eq:ApproximationClass}
\end{equation}
If the signal class $\signalClass$ is ``sufficiently regular''---for instance if $\signalClass$
is compact and convex---then one can prove (see Proposition~\ref{prop:SingleHardToEncodeElement})
that the following \emph{dichotomy} is valid:
\begin{equation}
  \begin{aligned}
    s < \optRateSmall{\banach} & \Longrightarrow \exists \, \code \in \codecs{\banach}
                                                   \forall \, \xvec \in \signalClass \mkern-18mu && : \quad
                                                     \xvec \in \approxClass{s}{\banach} (\code) , \\
    s > \optRateSmall{\banach} & \Longrightarrow \forall \, \code \in \codecs{\banach}
                                                   \exists \, \xvec^\ast \in \signalClass \mkern-18mu && : \quad
                                                     \xvec^\ast \notin \approxClass{s}{\banach} (\code) .
  \end{aligned}
  \label{eq:dichotomy}
\end{equation}
Thus, all signals in $\signalClass$ can be approximated at any compression rate
lower than the optimal rate for $\signalClass$ using a \emph{common} codec.
Furthermore, for any approximation rate $s$ larger than the optimal rate for $\signalClass$,
and for any codec $\code$, there exists some ${\xvec^\ast = \xvec^\ast (s, \code) \in \signalClass}$
that is \emph{not} compressed at rate $s$ by $\code$.

\begin{rem*}[Encoding/decoding schemes vs.~discretization maps]
  As the above considerations suggest, the crucial quantity
  for our investigations are not the encoding/decoding pairs
  ${(E,D) \in \endec{\banach}}$, but the \emph{distortion} they cause for each
  $\xvec \in \signalClass$.
  Therefore, we could equally well restrict our attention to the \emph{discretization map}
  $D \circ E : \signalClass \to \banach$, which has the crucial property
  ${|\mathrm{range} (D \circ E)| \leq 2^R}$.
  Conversely, given any (discretization) map $\Delta : \signalClass \to \banach$ with
  $|\mathrm{range}(\Delta)| \leq 2^R$, one can construct an encoding/decoding pair ${(E,D) \in \endec{\banach}}$,
  by choosing a surjection ${D : \{ 0,1 \}^R \to \mathrm{range}(\Delta)}$, and then setting
  \[
    E : \signalClass \to \{ 0,1 \}^R,
        \xvec \mapsto \argmin_{c \in \{ 0,1 \}^R} \| \xvec - D(c) \|_{\banach}
    \quad ,
  \]
  which ensures that $\| \xvec - D(E(\xvec)) \|_{\banach} \leq \| \xvec - \Delta(\xvec) \|_{\banach}$
  for all $\xvec \in \signalClass$.
  Thus, all our results could equally well be rephrased in terms of such discretization maps
  rather than in terms of encoding/decoding pairs.
  For more details on this connection, see also Lemma~\ref{lem:EntropyAndDistortion}.
\end{rem*}

\subsection{Our contributions}%
\label{sub:contributions}
\subsubsection{Phase Transition}

We improve on the dichotomy (\ref{eq:dichotomy}) by measuring the size of the class
$\approxClass{s}{\banach} (\code)$ of elements with compression rate $s$ under the codec $\code$.
Then a phase transition occurs: the class of elements that can not be encoded
at a ``larger than optimal'' rate is \emph{generic}.
We prove this when the signal class is a ball in a Besov- or Sobolev space,
as long as this ball forms a compact subset of $\hilbert = L^2(\Omega)$
for a bounded Lipschitz domain $\Omega \subset \R^d$.

More precisely, for each such signal class $\signalClass$, we construct a probability measure $\prob$
on $\signalClass$ such that the compressibility exhibits a \emph{phase transition}
as in the following definition.

\begin{defn}\label{defn:IntroPhaseTransitionMeasure}
  A Borel probability measure $\P$ on a subset $\signalClass$ of a Hilbert space $\hilbert$
  \emph{exhibits a compressibility phase transition} if it satisfies the following:
  \begin{equation}
    \begin {aligned}
      \text{if } s < \optRateSmall & \text{ then } \exists \, \code \in \codecs :
                                           \prob \big( \approxClass{s} (\code) \big) = 1; \\
      \text{if } s > \optRateSmall & \text{ then } \forall \, \code \in \codecs:
                                           \prob^\ast \big( \approxClass{s} (\code) \big) = 0.
    \end{aligned}
    \label{eq:PhaseTransition}
  \end{equation}
  Here $\prob^\ast$ is the \emph{outer measure} corresponding to $\prob$,
  defined in Equation~\eqref{eq:OuterMeasureDefinition} below.
\end{defn}

The first implication in \eqref{eq:PhaseTransition} is always satisfied,
as a consequence of \eqref{eq:dichotomy}.
The second part of \eqref{eq:PhaseTransition} states that
for any $s > \optRateSmall$ and any codec $\code$,
almost every $\xvec \in \signalClass$ \emph{cannot} be compressed by $\code$ at rate $s$.
In other words, whenever $\P$ exhibits a compressibility phase transition on $\signalClass$,
\emph{the property of not being compressible at a ``larger than optimal'' rate is a generic property}.

\begin{rem}[Universality in Definition~\ref{defn:IntroPhaseTransitionMeasure}]\label{rem:IntroUniversality}
  Note that the measure $\P$ in Definition~\ref{defn:IntroPhaseTransitionMeasure}
  is required to satisfy the second property in \eqref{eq:PhaseTransition}
  universally for \emph{any} choice of codec $\code$.

  In fact, if $\P$ would be allowed to depend on $\code$, one could simply choose
  $\P = \delta_{\xvec}$, where $\xvec = \xvec(\code, s) \in \signalClass$ is a single element
  that is not approximated at rate $s$ by $\code$; for $s > \optRateSmall$ such an element exists
  under mild assumptions on $\signalClass$.
  In contrast, the measure $\P$ in Definition~\ref{defn:IntroPhaseTransitionMeasure} satisfies $\P (\{ \xvec \}) = 0$ for each $\xvec \in \signalClass$,
  as can be seen by taking ${\code = \bigl((E_R,D_R)\bigr)_{R \in \N}}$ with
  $D_R : \{ 0,1 \}^R \to \signalClass, c \mapsto \xvec$,
  so that ${\approxClass{s}(\code) = \{ \xvec \}}$ for all ${s > 0}$.
  This shows, in particular, that any probability measure $\P$ exhibiting a compressibility phase transition
  is \emph{atom free}, so that $\P(M)=0$ for any countable set $M$.
\end{rem}

Our first main result establishes the existence of critical measures for all Sobolev-
and Besov balls (denoted $\ball ( 0 , 1; W^{k,p}(\Omega ; \R))$,
resp.~$\ball ( 0 , 1; B_{p,q}^{\BesovSmoothness}(\Omega;\R)) )$; see Appendix~\ref{sec:BesovReview})
that are compact subsets of $L^2 (\Omega)$:

\begin{thm}\label{thm:IntroductionSobolevBesovTransition}
  Let $\emptyset \neq \Omega \subset \R^d$ be a bounded Lipschitz domain.
  Consider either of the following two settings:
  \setlength{\leftmargini}{0.4cm}
  \begin{itemize}
    \item $\signalClass \!:=\! \ball \big( 0, 1; B_{p,q}^{\BesovSmoothness} (\Omega; \R) \big)$
          and $s^\ast := \! \frac{\BesovSmoothness}{d}$, where $p,q \in \!(0,\infty]$
          and $\BesovSmoothness \in \R$
          with $\strut {\BesovSmoothness \!>\! d \cdot (\frac{1}{p} \!-\! \frac{1}{2})_{+}}$, or

    \item $\signalClass := \ball\bigl(0, 1; W^{k,p}(\Omega)\bigr)$
          and $s^\ast := \frac{k}{d}$, where $p \in [1,\infty]$
          and $k \in \N$ with ${k > d \cdot (\frac{1}{p} - \frac{1}{2})_+}$.
  \end{itemize}

  In either case, $\optRateSmall{L^2(\Omega)} = s^\ast$, and there is a Borel probability measure
  $\P$ on $\signalClass$ that exhibits a compressibility phase transition as in
  Definition~\ref{defn:IntroPhaseTransitionMeasure}.
\end{thm}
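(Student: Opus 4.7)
The plan is to (i) reduce the theorem to an analogous statement on a sequence space via wavelet characterisations, (ii) construct an explicit product probability measure on that sequence space and establish a uniform small-ball estimate for it, and (iii) conclude by a covering plus Borel--Cantelli argument.

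For (i), the wavelet characterisations of Besov and Sobolev spaces on bounded Lipschitz domains (via, e.g., Daubechies wavelets combined with a Stein extension, or boundary-adapted wavelets) identify $\signalClass$ with an isomorphic image of the unit ball of a weighted mixed-norm sequence space $\ell^{q}(\beta_j\,\ell^{p}_{N_j})$, where $N_j \asymp 2^{jd}$ and $\beta_j \asymp 2^{-j(\tau + d/2 - d/p)}$; under the same isomorphism the $L^2(\Omega)$ norm is equivalent to the full $\ell^2$ sequence norm. It therefore suffices to construct a phase-transition measure on the corresponding sequence-space signal class $\sequenceSpaceSignalClass$ with $\hilbert = \ell^2$ and then push it forward. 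The identity $\optRateSmall{L^2(\Omega)} = s^\ast$ is classical (via entropy numbers of Besov/Sobolev balls), so the first implication in \eqref{eq:PhaseTransition} is automatic from \eqref{eq:dichotomy}.

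For (ii), on $\sequenceSpaceSignalClass$ I take a product measure $\sequenceProductMeasure := \bigotimes_{j} \prob_{j}$, where $\prob_{j}$ is essentially the normalised Lebesgue measure on the scaled ball $\beta_j\,B_{\ell^{p}_{N_j}}$; for $q < \infty$ one first shrinks each factor by a fixed weight $\gamma_j$ with $\sum_j \gamma_j^q \le 1$, so that the resulting product is supported in $\sequenceSpaceSignalClass$. The technical heart of the argument is the uniform small-ball estimate
\[
  \sup_{y \in \ell^2} \; \sequenceProductMeasure\bigl(\{ x : \|x - y\|_{\ell^2} \le \eps \}\bigr)
  \;\le\; \exp\!\bigl(-c\,\eps^{-1/s^\ast}\bigr), \qquad 0 < \eps \le \eps_0,
\]
obtained by calibrating a critical scale $j(\eps)$ via $2^{j(\eps)d} \asymp \eps^{-1/s^\ast}$, projecting the ball $\{x : \|x-y\|_{\ell^2} \le \eps\}$ onto the block of scale-$j(\eps)$ coefficients, and bounding the one-block mass by a volume ratio that is exponentially small in the block dimension $N_{j(\eps)}$. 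The standing hypothesis $\tau > d(1/p - 1/2)_{+}$ is precisely what makes $\beta_j\,N_j^{1/2 - 1/p} \to 0$, i.e.\ it both renders the embedding into $\ell^2$ compact and allows the calibration to be performed. I expect this step to be the principal technical obstacle: the constant $c$ must be uniform in the shift $y$, the residual scales $j \ne j(\eps)$ must be controlled simultaneously, and in the quasi-Banach regimes $p < 1$ or $q < 1$ the standard Euclidean volume bounds must be replaced by their $\ell^p$-ball analogues.

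For (iii), fix $s > s^\ast$ and a codec $\code = ((E_R, D_R))_{R \in \N}$. By \eqref{eq:ApproximationClass},
\[
  \approxClass{s}(\code)
  \;=\; \bigcup_{C \in \N}\,\bigcap_{R \in \N}\,
        \bigl\{ x \in \sequenceSpaceSignalClass : \|x - D_R(E_R(x))\|_{\ell^2} \le C R^{-s} \bigr\},
\]
and for each $C$ and $R$ the inner set is covered by the $2^R$ closed $\ell^2$-balls of radius $C R^{-s}$ centred at the codewords in $D_R(\{0,1\}^R)$. The small-ball estimate therefore bounds its outer measure by
\[
  2^R\,\exp\!\bigl(-c\,(C R^{-s})^{-1/s^\ast}\bigr)
  \;=\; \exp\!\bigl(R\log 2 - c'\,R^{s/s^\ast}\bigr),
\]
which vanishes as $R \to \infty$ because $s/s^\ast > 1$. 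Taking the infimum over $R$ and then the countable union over $C$ yields $\prob^\ast(\approxClass{s}(\code)) = 0$ for the pushforward, and pushing back through the wavelet isomorphism gives the claim for $\signalClass$.
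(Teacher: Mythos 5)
Your overall architecture coincides with the paper's: reduce to a mixed-norm sequence space via wavelets, put a product of normalized Lebesgue measures on $\ell^p$-blocks (shrunk blockwise by summable weights to handle $q<\infty$), prove a uniform small-ball estimate $\sup_y \P(B(y,\eps))\le 2^{-c\eps^{-1/s}}$ by projecting onto a single block at the critical scale $2^{j(\eps)d}\asymp \eps^{-1/s}$, and kill $\approxClass{s}(\code)$ by covering with the $2^R$ codeword balls. Steps (ii) and (iii) are essentially verbatim what the paper does (Sections~\ref{sec:constrmeas}--\ref{sub:QuantitativeInverseTheorem} and Lemma~\ref{lem:CriticalMeasuresAreSmall}).

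The one place where your reduction (i) would fail as stated is the claimed \emph{isomorphism} between $\signalClass$ and a sequence-space ball under which $\|\cdot\|_{L^2(\Omega)}$ is equivalent to the full $\ell^2$ coefficient norm. For a general bounded Lipschitz domain and the full range $p,q\in(0,\infty]$ no such boundary-adapted Riesz basis is known to exist: wavelets meeting $\partial\Omega$ lose orthogonality upon restriction to $\Omega$, so the restriction of the synthesis map is neither injective nor norm-equivalent on those coefficients. The paper circumvents this with an asymmetric transfer device (Theorem~\ref{thm:LipschitzTransferResult}): a Lipschitz \emph{surjection} built from all wavelets meeting $\Omega$ transfers the achievable compression rate, while an expansive \emph{injection} built only from wavelets supported inside $\Omega$ transfers the critical measure; neither map need be invertible. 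The same caveat applies to your Sobolev claim: for $p\neq 2$, $W^{k,p}$ is a Triebel--Lizorkin space whose coefficient space is not of mixed-norm $\ell^q(\ell^p)$ type, so there is no direct sequence-space identification; the paper instead sandwiches $\ball(0,1;W^{k,p}(\Omega))$ between Besov balls $B^k_{p,\min(p,2)}$ and $B^k_{p,\max(p,2)}$ (with separate arguments for $p\in\{1,\infty\}$) and again applies the two-map transfer. With these replacements your argument goes through; without them, the pushforward of the measure and the transfer of the codec are not justified.
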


\begin{proof}
  This follows from Theorems~\ref{thm:mainbesovresult}, \ref{thm:SobolevPhaseTransition},
  and \ref{thm:DichotomyBanach}.
\end{proof}

Since Remark~\ref{rem:IntroUniversality} shows that the measure $\P$ from the preceding theorem
satisfies $\P(M) = 0$ for each countable set $M \subset \signalClass$,
we get the following strengthening of the dichotomy \eqref{eq:dichotomy}.

\begin{cor}\label{cor:IntroductionUncountablyManyBadSignals}
  Under the assumptions of Theorem~\ref{thm:IntroductionSobolevBesovTransition}, for each codec
  $\code \in \codecs{L^2(\Omega)}$ the set
  $\signalClass\setminus \bigcup_{s > s^\ast} \approxClass{s}{L^2(\Omega)} (\code)$,
  which consists of all signals that can \emph{not} be encoded by $\code$
  at compression rate $s$ for some $s > s^\ast$, is uncountable.
\end{cor}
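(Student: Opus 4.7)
The approach is to derive the uncountability from the phase-transition property of the measure $\P$ furnished by Theorem~\ref{thm:IntroductionSobolevBesovTransition}, combined with its atom-freeness established in Remark~\ref{rem:IntroUniversality}. Concretely, I would show that the complement of the ``good'' set has outer measure one, and then use atom-freeness to preclude countability.

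First I would reduce the uncountable union $G := \bigcup_{s > s^\ast} \approxClass{s}{L^2(\Omega)}(\code)$ to a countable one. The definition \eqref{eq:ApproximationClass} together with $R^{s'} \leq R^s$ for $R \in \N$ and $s' \leq s$ shows that $\approxClass{s}{L^2(\Omega)}(\code) \subseteq \approxClass{s'}{L^2(\Omega)}(\code)$ whenever $s \geq s'$. Hence $G = \bigcup_{n \in \N} \approxClass{s^\ast + 1/n}{L^2(\Omega)}(\code)$.

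Next I would apply Theorem~\ref{thm:IntroductionSobolevBesovTransition} to obtain a Borel probability measure $\P$ on $\signalClass$ exhibiting a compressibility phase transition. Since $s^\ast + 1/n > s^\ast$ for every $n$, the second implication of \eqref{eq:PhaseTransition} yields $\P^\ast\bigl(\approxClass{s^\ast + 1/n}{L^2(\Omega)}(\code)\bigr) = 0$ for each $n \in \N$. By countable subadditivity of the outer measure, this gives $\P^\ast(G) = 0$.

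Finally, I would argue by contradiction: suppose $B := \signalClass \setminus G$ is countable. Since $B$ is a countable subset of a metric space it is Borel, and Remark~\ref{rem:IntroUniversality} guarantees that $\P$ is atom-free, so $\P(B) = 0$. Then $\signalClass \setminus B \subseteq G$ is Borel with $\P(\signalClass \setminus B) = 1$, whence $\P^\ast(G) \geq \P(\signalClass \setminus B) = 1$, contradicting $\P^\ast(G) = 0$. The only subtle point throughout is keeping track of the distinction between $\P$ and $\P^\ast$, since $G$ need not be Borel; but this is handled transparently once the atom-freeness is invoked on the Borel set $B$. No genuine obstacle arises, as the corollary is essentially a book-keeping consequence of the two properties already embedded in Definition~\ref{defn:IntroPhaseTransitionMeasure} and Remark~\ref{rem:IntroUniversality}.
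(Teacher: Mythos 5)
Your proposal is correct and follows essentially the same route as the paper: the paper derives the corollary from the atom-freeness of $\P$ (Remark~\ref{rem:IntroUniversality}) together with the fact that $\bigcup_{s>s^\ast}\approxClass{s}{L^2(\Omega)}(\code)$ is a countable union of $\P^\ast$-null sets by the phase transition. Your explicit reduction to the countable union over $s^\ast+1/n$ and the contradiction via $\P(B)=0$ for countable $B$ is exactly the intended book-keeping.
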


In words, Corollary~\ref{cor:IntroductionUncountablyManyBadSignals} states that for every codec
the set of signals in $\signalClass$  that can not be approximated at any compression rate
larger than the optimal rate for $\signalClass$ is uncountable.
In contrast, previous results (such as Proposition~\ref{prop:SingleHardToEncodeElement})
only state the existence of a single such ``badly approximable'' signal.

\subsubsection{Quantitative lower bounds}

As a quantitative version of Theorem~\ref{thm:IntroductionSobolevBesovTransition}, we show that
if one randomly chooses a function $f \sim \P$ according to the probability measure $\P$
constructed in (the proof of) Theorem~\ref{thm:IntroductionSobolevBesovTransition},
one can precisely bound the probability that a given encoding/decoding pair $(E_R,D_R)$
of code-length $R$ achieves a given error $\eps$ for $f$.
To underline a probabilistic interpretation, we define, for any property $\tau$ of elements $f \in \signalClass$
\begin{equation}
  \mathrm{Pr}(f \text{ satisfies } \tau)
  := \P^\ast (\{ f \in \signalClass \colon f \text{ satisfies } \tau \}),
  \label{eq:SpecialProbabilityNotation}
\end{equation}
where $\P^\ast$ denotes the outer measure induced by $\P$.

\begin{thm}\label{thm:IntroductionQuantitativeTheorem}
  Let $\signalClass$ and $s^\ast$ as in Theorem~\ref{thm:IntroductionSobolevBesovTransition}.
  Then for any $s > s^\ast$ there exist $c, \eps_0 > 0$
  such that for arbitrary $R \in \N$ and $(E_R, D_R) \in \endec{L^2(\Omega)}$ it holds that
  \[
   \mathrm{Pr}
    \big(
      \| f - D_R(E_R(f)) \|_{L^2(\Omega)} \leq \eps
    \big)
    \leq 2^{R - c \cdot \eps^{-1/s}}
    \qquad \forall \, \eps \in (0, \eps_0).
  \]
\end{thm}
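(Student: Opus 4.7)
The plan is to reduce the quantitative bound to a small-ball probability estimate for the measure $\prob$ built in the proof of Theorem~\ref{thm:IntroductionSobolevBesovTransition}, combined with the simple counting argument that an encoder uses at most $2^R$ codewords. Concretely, fix $s > s^\ast$ and write $\Delta_R := D_R \circ E_R$. By the remark on discretization maps, the image of $\Delta_R$ has cardinality at most $2^R$, so
\begin{equation*}
  \{ f \in \signalClass : \| f - \Delta_R(f) \|_{L^2(\Omega)} \le \eps \}
  \; \subset \; \bigcup_{b \in \{0,1\}^R}
                  B_{L^2(\Omega)}(D_R(b), \eps) \cap \signalClass .
\end{equation*}
Applying $\prob^\ast$ and using subadditivity of the outer measure on a finite union gives
\begin{equation*}
  \mathrm{Pr}\bigl( \| f - \Delta_R(f) \|_{L^2(\Omega)} \le \eps \bigr)
  \; \le \; 2^R \cdot \sup_{y \in L^2(\Omega)}
                        \prob^\ast \bigl( B_{L^2(\Omega)}(y,\eps) \cap \signalClass \bigr) .
\end{equation*}
Thus the entire statement reduces to proving a uniform small-ball bound
\begin{equation*}
  \sup_{y \in L^2(\Omega)}
    \prob^\ast \bigl( B_{L^2(\Omega)}(y,\eps) \cap \signalClass \bigr)
  \; \le \; 2^{-c \, \eps^{-1/s}}
  \qquad \text{for all } \eps \in (0,\eps_0).
\end{equation*}

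I would establish this small-ball estimate by passing through the wavelet / atomic sequence-space model in which $\prob$ is defined. The measure on a Besov (or Sobolev) ball is a product measure $\prob = \bigotimes_j \mu_j$ on the scales $j \in \N_0$, where level $j$ has $M_j \sim 2^{jd}$ coefficients rescaled by a factor $\beta_j \sim 2^{-j\alpha}$; the $L^2$-norm corresponds to a weighted $\ell^2$-norm on the sequence side. For a fixed $y$, the event $\|f-y\|_{L^2(\Omega)} \le \eps$ forces the projection of $f-y$ onto the first $J$ scales to lie in an $\eps$-ball of an $N_J := \sum_{j \le J} M_j \sim 2^{Jd}$ dimensional subspace. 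Choosing the cutoff $J = J(\eps)$ so that $\beta_J N_J^{1/2} \sim \eps$ (the correct scale, since at larger scales the tail of $\prob$ already lives in an $\eps$-ball), I would bound the $\mu_j$-mass of such a thin slab, for each $j \le J$, by a product of one-dimensional small-ball estimates with constants coming from the densities used to build $\mu_j$. Multiplying across scales yields an estimate of the form $\prob(B(y,\eps)) \le 2^{-c' N_J}$. Translating $N_J \sim 2^{Jd}$ back into $\eps$ via $\beta_J N_J^{1/2} \sim \eps$ produces an exponent $\eps^{-1/s^\ast}$, which, after absorbing a slack factor into the constant because $s > s^\ast$, gives the desired $2^{-c\,\eps^{-1/s}}$.

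The main obstacle is the scale-by-scale small-ball estimate, and in particular getting the correct exponent $1/s$ (rather than merely some positive exponent) with a constant $c$ that does not blow up. Two subtleties will require care: first, the discrepancy between $L^2(\Omega)$ norms of functions and the sequence norms in the wavelet model must be controlled uniformly in the center $y$; restricting to the projection of $y$ onto the relevant finite-dimensional scale subspaces reduces this to a Euclidean small-ball problem, for which the densities of the coordinate laws of $\mu_j$ need to be bounded above. Second, to get the exponent to approach $1/s^\ast$ from below (so that any $s > s^\ast$ is admissible) I would pick the cutoff scale $J(\eps)$ slightly smaller than the critical scale suggested by $\beta_J N_J^{1/2} \sim \eps$, trading a marginally worse exponent for a better constant $c$. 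This kind of truncation is a standard device and should combine cleanly with the product structure of $\prob$, but tracking the dependence of $c$ and $\eps_0$ on $s-s^\ast$ is the technical heart of the argument.
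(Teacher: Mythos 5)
Your first reduction is exactly the paper's argument: the union bound over the at most $2^R$ codewords together with a uniform small-ball estimate for $\prob$ is precisely Lemma~\ref{lem:CriticalMeasuresAreSmall} (i.e.\ Theorem~\ref{thm:DichotomyBanach}\ref{enu:DichotomyQuantitative}), and the uniform small-ball estimate is by definition the statement that $\prob$ has logarithmic growth order $s^\ast$ (Definition~\ref{def:GrowthRate}). So the theorem does reduce, as you say, to constructing a measure of critical growth, which is the content of Theorems~\ref{thm:mainbesovresult}, \ref{thm:SobolevPhaseTransition} and ultimately \ref{thm:MainSequenceSpaceResult}.

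The gap is in your sketch of that small-ball estimate, in two concrete places. First, within a fixed scale $m$ the measure $\prob_m^{p,w_m}$ is the normalized Lebesgue measure on an $\ell^p(\indexSet_m)$-ball, which is \emph{not} a product measure over the coordinates of $\indexSet_m$ unless $p=\infty$; so ``a product of one-dimensional small-ball estimates'' is not available. The paper instead bounds $\prob_m^{p,w_m}\bigl(\ball(\xvec_m,\eps;\ell^2(\indexSet_m))\bigr)$ by the ratio of the volumes of the $\ell^2$- and $\ell^p$-balls in dimension $n_m$, for which the exact formula and the two-sided estimate of Lemma~\ref{lem:ellPBallVolume} are needed. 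Second, your cutoff criterion $\beta_J N_J^{1/2}\sim\eps$ selects the scale beyond which the tail of $\prob$ is negligible, and with $\beta_J\sim 2^{-J\alpha}$, $N_J\sim 2^{Jd}$ it gives $N_J\sim\eps^{-1/(s^\ast-1/p)}$, not $\eps^{-1/s^\ast}$; plugging that into $2^{-c'N_J}$ would produce the wrong exponent (for $\alpha>d/2$, $p=2$ it would even contradict Corollary~\ref{cor:GrowthOrderAtLeastOptimalRate}). The correct critical scale is the one at which the per-scale volume ratio saturates, i.e.\ where $\eps\, w_m\, n_m^{-(1/2-1/p)}\sim 1$, which gives $n_m\sim\eps^{-1/s^\ast}$; and since $\prob$ is a product across scales one may simply project onto that \emph{single} scale $m_0(\eps)$ (as in Step~1 of the proof of Theorem~\ref{thm:MainSequenceSpaceResult}), avoiding any multiplication across scales. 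Finally, the passage from the sequence model to $L^2(\Omega)$ with a bound uniform in the center $y$ is handled in the paper not by projecting $y$ onto scale subspaces but by proving the growth estimate for \emph{all} centers $\xvec\in\ell^2(\indexSet)$ and pushing it forward through the (up to a constant) isometric wavelet synthesis map, via Lemma~\ref{lem:CriticalMeasuresPushForward} and Theorem~\ref{thm:LipschitzTransferResult}.
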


\begin{proof}
  This follows from Theorems~\ref{thm:mainbesovresult}, \ref{thm:SobolevPhaseTransition},
  and \ref{thm:DichotomyBanach}.
\end{proof}

Theorem~\ref{thm:IntroductionQuantitativeTheorem} is interesting due to its nonasymptotic nature.
Indeed, given a fixed budget of $R$ bits and a desired accuracy $\eps$, it provides a partial answer
to the question:
\begin{quote}
  How likely is one to succeed in describing a random $f \in \signalClass$
  to within accuracy $\eps$ using $R$ bits?
\end{quote}
Figure~\ref{fig:phaseplot} provides an illustration of the phase transition behaviour
in dependence of $\eps$ and $R$; it graphically shows that the transition is quite sharp.

\begin{figure}[h]
\begin{center}
\includegraphics[width=.74\textwidth]{./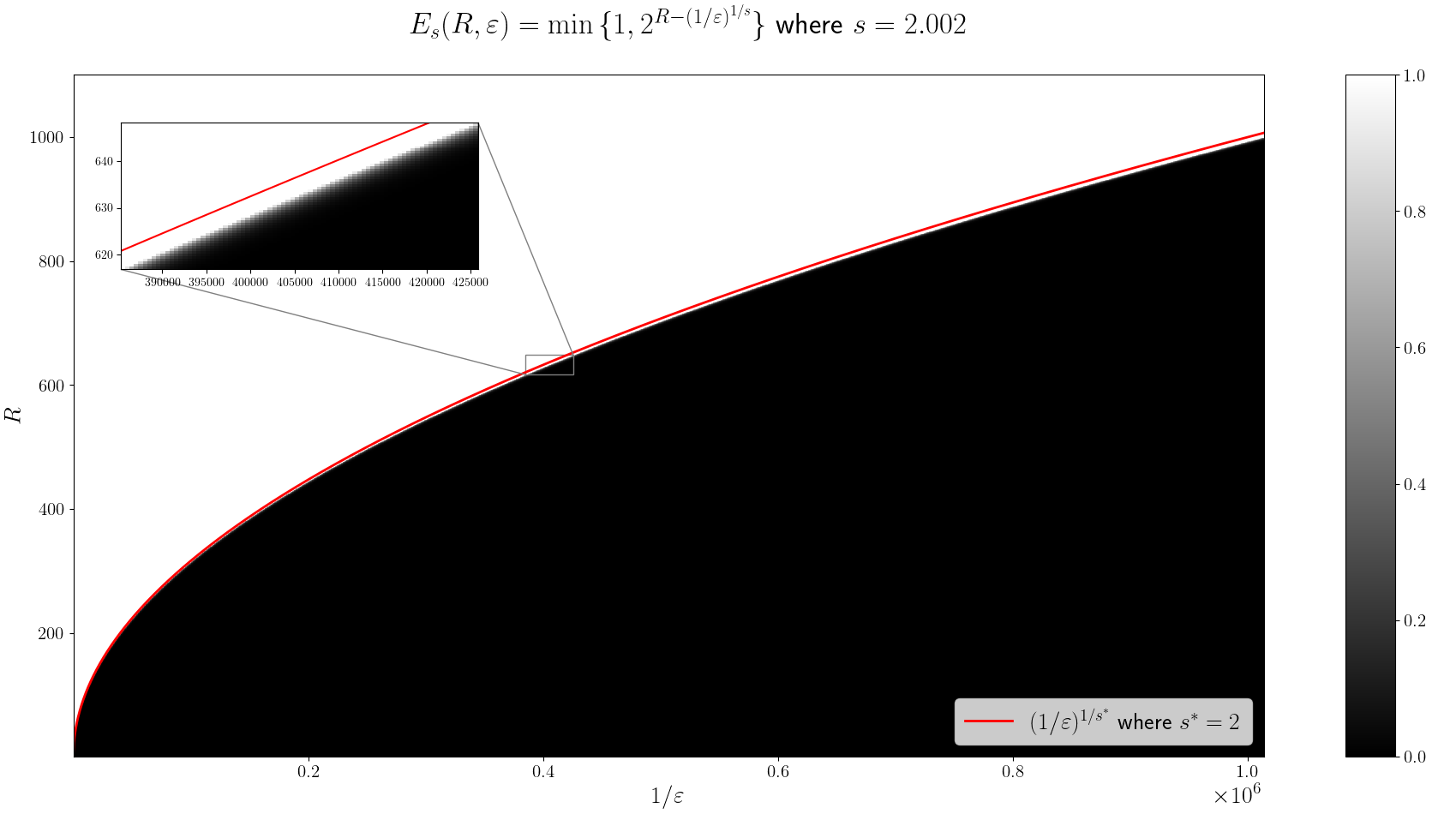}
\includegraphics[width=.74\textwidth]{./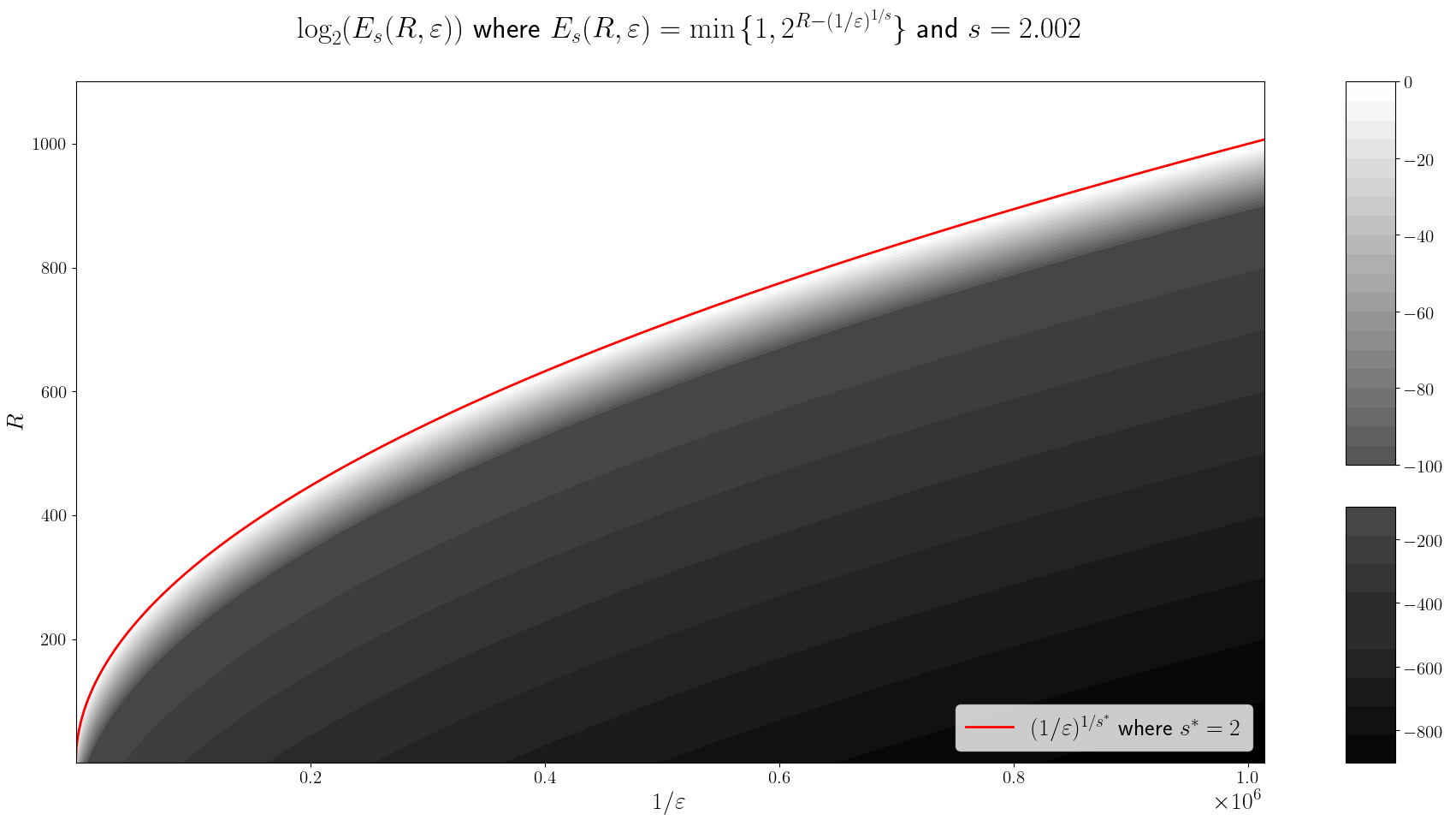}
\caption{\footnotesize For $\signalClass$ a Sobolev or Besov ball, Theorem~\ref{thm:IntroductionQuantitativeTheorem}
provides bounds on the probability of being able to describe a random function $f\in \signalClass$
to within accuracy $\eps$ using $R$ bits.
This probability is, for every $s > s^\ast$ and $\eps \in (0,\eps_0)$
($s^\ast$ denoting the optimal compression rate of $\signalClass$),
upper bounded by $E_s(R,\eps) := \min\bigl\{1,2^{R-c\cdot \eps^{-1/s}}\bigr\}$.
In this figure we show two plots of the function $E_s$ over the $(R,1/\eps)$-plane.
Both grayscale plots show $E_s$ for $s = 2.002 > s^\ast = 2$ and $c = 1$,
while the red curve indicates the critical region where $R = (1/\eps)^{1/s}$.
We see that a sharp phase transition occurs in the sense that above and slightly below
the critical curve $R = \eps^{-1/2}$ (white area) the upper bound $E_s$
does not rule out the possibility that it is always possible to describe $f\in \signalClass$
to within accuracy $\eps$ using $R$ bits;
but even slightly below the critical curve (dark area) the bound $E_s$ shows that
such a compression is almost impossible.
The sharpness of the phase transition is more clearly shown in the zoomed part of the figure.
The bottom plot further illustrates the quantitative behaviour by using a logarithmic colormap.
Note that in the bottom plot two different colormaps are used for the range $[-100,0]$
and the remaining range $[-1000, -100)$.}
\label{fig:phaseplot}
\end{center}
\end{figure}

\subsubsection{Lower Bounds for Neural Network Approximation}
\label{sub:NNApproximationLowerBounds}

As an application we draw a connection between the previously described results and
function approximation using neural networks.
We will use the following mathematical formalization of
(fully connected, feed forward) neural networks \cite{PetersenVoigtlaenderOptimalReLUApproximation}.

\begin{defn}\label{def:NNDefinition}
  Let $d,L \in \N$ and $\mathbf{N} = (N_0,\dots,N_L) \subset \N$ with $N_0 = d$.
  A \emph{neural network} (NN) with architecture $\mathbf{N}$ is a tuple
  $\Phi = \big( (A_1, b_1),\dots, (A_L,b_L) \big)$ of matrices $A_\ell \in \R^{N_\ell \times N_{\ell - 1}}$
  and bias vectors $b_\ell \in \R^{N_\ell}$.
  Given a function $\varrho : \R \to \R$, called the \emph{activation function},
  the mapping computed by the network $\Phi$ is defined as
  \[
    R_\varrho \Phi : \R^d \to \R^{N_L}, x \mapsto x^{(L)},
  \]
  where $x^{(L)}$ results from setting $x^{(0)} := x$ and furthermore
  \[
    x^{(\ell + 1)} := \varrho \big( A_{\ell+1} \, x^{(\ell)} + b_{\ell+1} \big)
    \,\,\text{ for }\,\,
    0 \leq \ell \leq L - 2,
    \quad \text{and} \quad
    x^{(L)} := A_L \, x^{(L-1)} + b_L.
  \]
  Here, $\varrho$ acts componentwise on vectors, meaning
  $\varrho ( (x_1,\dots,x_m) ) = (\varrho(x_1),\dots,\varrho(x_m))$.

  The complexity of the network $\Phi$ is described by the number $L(\Phi) := L$ of layers,
  the number $N(\Phi) := \sum_{\ell=0}^L N_\ell$ of neurons and the number
  $W(\Phi) := \sum_{\ell=1}^L \big( \| A_\ell \|_{\ell^0} + \| b_\ell \|_{\ell^0} \big)$
  of weights (or connections) of $\Phi$.
  Here, for a matrix or vector $A$, we denote by $\| A \|_{\ell^0}$ the number of nonzero
  entries of $A$.
  Furthermore, we set $d_{\mathrm{in}}(\Phi) := N_0$ and $d_{\mathrm{out}}(\Phi) := N_L$.

  We will also be interested in the complexity of the \emph{individual} weights and biases
  of the network.
  Precisely, for $\sigma , W \in \N$, we say that $\Phi$ is \emph{$(\sigma,W)$-quantized}
  if all entries of the matrices $A_\ell$ and the vectors $b_\ell$ belong to
  \({
    \bigl[ - W^{\sigma \lceil \log_2 W \rceil}, W^{\sigma \lceil \log_2 W \rceil} \bigr]
    \cap 2^{-\sigma \lceil \log_2 W \rceil^2} \Z
    \subset \R
  }\).
\end{defn}

Note that in applications one necessarily deals with quantized NNs due to the necessity
to store and process the weights on a digital computer.
Regarding function approximation by such quantized neural networks, we have the following result:

\begin{thm}\label{thm:IntroductionNNResult}
  Let $\varrho : \R \to \R$ be measurable with $\varrho(0) = 0$ and let $d, \sigma \in \N$.
  For $W \in \N$, define
  \[
    \mathcal{NN}_{d,W}^{\sigma,\varrho}
    := \big\{
         R_\varrho \Phi
         \colon
         \Phi \text{ is a } (\sigma,W)\text{-quantized NN and }
         W(\Phi) \leq W,
         d_{\mathrm{in}}(\Phi) \!=\! d,
         d_{\mathrm{out}}(\Phi) \!=\! 1
       \big\}.
  \]
  Let $\signalClass$, $s^\ast$, and $\P$ as in Theorem~\ref{thm:IntroductionSobolevBesovTransition}.
  Then the following hold:
  \setlength{\leftmargini}{0.8cm}
  \begin{enumerate}
    \item There is $C = C(d,\sigma) \in \N$ such that for each $s > s^\ast$ there are $c, \eps_0 > 0$
          satisfying
          \[
            \mathrm{Pr}
            \Bigl(\,
              \min_{g \in \mathcal{NN}_{d,W}^{\sigma,\varrho}}
                \| f - g \|_{L^2(\Omega)}
            \leq \eps
            \Bigr)
            \leq 2^{C \cdot W \, \lceil \log_2 (1+W) \rceil^2 - c \cdot \eps^{-1/s}}
            \quad \forall \, \eps \in (0,\eps_0)
            .
            \vspace*{-0.2cm}
          \]

    \item If we define\vspace*{-0.1cm}
          \[
            W_\eps^{\sigma,\varrho} (f)
            := \inf \Bigl\{
                      W \in \N
                      \,\,\, \colon \,\,\,
                      \exists \, g \in \mathcal{NN}_{d,W}^{\sigma,\varrho}
                      \text{ such that }
                      \| f - g \|_{L^2(\Omega)} \leq \eps
                    \Bigr\} \in \N \cup \{ \infty \}
          \]
          and\vspace*{-0.1cm}
          \[
            \mathcal{A}_{\mathcal{NN},\varrho}^\ast
            := \Big\{
                 f \in \signalClass
                 \colon
                 \exists \, \tau \in (0, \tfrac{1}{s^\ast}), \sigma \in \N, C > 0 \quad
                   \forall \, \eps \in (0, 1):
                     W_\eps^{\sigma,\varrho}(f) \leq C \cdot \eps^{-\tau}
               \Big\} ,
          \]
          then $\P^\ast ( \mathcal{A}_{\mathcal{NN},\varrho}^\ast ) = 0$.
  \end{enumerate}
\end{thm}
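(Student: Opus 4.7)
The plan is to reduce neural-network approximation to the coding framework of Theorem~\ref{thm:IntroductionQuantitativeTheorem}. The key ingredient is an \emph{encoding estimate}: any $(\sigma,W)$-quantized network $\Phi$ with $W(\Phi)\le W$, $d_{\mathrm{in}}(\Phi)=d$, and $d_{\mathrm{out}}(\Phi)=1$ can be encoded as a bitstring of length $R\le C\cdot W\lceil\log_2(1+W)\rceil^2$ for some $C=C(d,\sigma)\in\N$, so that $\mathcal{NN}_{d,W}^{\sigma,\varrho}$ is contained in the range of a suitable decoder $D_R$ of that code length.

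To establish the estimate, I would use that $\varrho(0)=0$ to prune every neuron with all-zero incoming weights (zeroing the corresponding outgoing columns does not alter $R_\varrho\Phi$); after pruning, every non-input neuron carries at least one nonzero weight, so $\sum_{\ell\ge 1}N_\ell\le W$, hence $L(\Phi)\le W+1$ and $N_\ell\le W$ for $\ell\ge 1$. The architecture then costs $\mathcal{O}(W\log W)$ bits, the layer index and entry position of each of the $\le W$ nonzero weights cost another $\mathcal{O}(W\log W)$ bits, and each quantized value lies in a set of cardinality at most
\[
2\,W^{\sigma\lceil\log_2 W\rceil}\cdot 2^{\sigma\lceil\log_2 W\rceil^2}+1 \le 2^{C_0\,\sigma\lceil\log_2 W\rceil^2},
\]
so the weight values contribute $\mathcal{O}(\sigma W\lceil\log_2 W\rceil^2)$ bits. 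Summing yields the claimed $R$.

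Given the estimate, for each $W\in\N$ I build $(E_R,D_R)\in\endec{L^2(\Omega)}$ of length $R=C\cdot W\lceil\log_2(1+W)\rceil^2$: the decoder $D_R$ parses its bitstring into (architecture, sparsity pattern, quantized weights), assembles the corresponding $\Phi$, and returns $R_\varrho\Phi|_\Omega$ (or $0\in L^2(\Omega)$ on parsing failure or non-integrability). Since $\mathcal{NN}_{d,W}^{\sigma,\varrho}$ is finite, choosing $E_R(f)\in\argmin_{c}\|f-D_R(c)\|_{L^2(\Omega)}$ gives
\[
\|f-D_R(E_R(f))\|_{L^2(\Omega)}\le\min_{g\in\mathcal{NN}_{d,W}^{\sigma,\varrho}}\|f-g\|_{L^2(\Omega)}
\quad\text{for all } f\in\signalClass.
\]
For any $s>s^\ast$, Theorem~\ref{thm:IntroductionQuantitativeTheorem} supplies $c,\eps_0>0$ with $\mathrm{Pr}\bigl(\|f-D_R(E_R(f))\|_{L^2(\Omega)}\le\eps\bigr)\le 2^{R-c\,\eps^{-1/s}}$ for $\eps\in(0,\eps_0)$; the above inclusion of events together with monotonicity of $\P^\ast$ yields part~(1).

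For part~(2), set $A_{\tau,\sigma,C}:=\{f\in\signalClass : W_\eps^{\sigma,\varrho}(f)\le C\eps^{-\tau}\text{ for every }\eps\in(0,1)\}$ for $\tau\in(0,1/s^\ast)$ and $\sigma,C\in\N$. Pick $s\in(s^\ast,1/\tau)$ so that $\tau<1/s$. With $W(\eps):=\lceil C\eps^{-\tau}\rceil$, the inclusion $A_{\tau,\sigma,C}\subset\{f:\min_{g\in\mathcal{NN}_{d,W(\eps)}^{\sigma,\varrho}}\|f-g\|_{L^2(\Omega)}\le\eps\}$ combined with part~(1) gives
\[
\P^\ast\bigl(A_{\tau,\sigma,C}\bigr)\le 2^{C\cdot W(\eps)\,\lceil\log_2(1+W(\eps))\rceil^2 - c\,\eps^{-1/s}}
\quad\text{for every } \eps\in(0,\eps_0),
\]
and the right-hand side tends to $0$ as $\eps\downarrow 0$ since $\eps^{-1/s}$ dominates $\eps^{-\tau}\log^2(1/\eps)$. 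Because $\Q$ is dense in $(0,1/s^\ast)$, one may rewrite $\mathcal{A}_{\mathcal{NN},\varrho}^\ast=\bigcup_{\tau\in\Q\cap(0,1/s^\ast),\,\sigma,C\in\N}A_{\tau,\sigma,C}$; countable subadditivity of the outer measure then gives $\P^\ast(\mathcal{A}_{\mathcal{NN},\varrho}^\ast)=0$. The main obstacle is the meticulous bookkeeping in the encoding estimate---especially the interaction between the quantization step, the weight range, and the sparsity constraint---after which Theorem~\ref{thm:IntroductionQuantitativeTheorem} does the bulk of the work.
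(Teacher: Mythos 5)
Your proposal is correct and follows essentially the same route as the paper: the encoding estimate you sketch is exactly Lemma~\ref{lem:NNEncoding} (which the paper obtains by citing \cite[Lemma~B.4]{PetersenVoigtlaenderOptimalReLUApproximation} together with the same count of the quantization lattice), and your Part~(1) is the same union bound over the at most $2^{C W \lceil \log_2(1+W)\rceil^2}$ elements of $\mathcal{NN}_{d,W}^{\sigma,\varrho}$, merely routed through Theorem~\ref{thm:IntroductionQuantitativeTheorem} instead of invoking criticality directly. The only (minor and equally valid) deviation is in Part~(2), where the paper builds an explicit codec from the networks (Lemma~\ref{lem:NNRateDistortion}) and then appeals to the phase-transition null-set statement, whereas you let $\eps \downarrow 0$ in the quantitative bound of Part~(1) with $W = W(\eps) = \lceil C \eps^{-\tau}\rceil$ and reduce to a countable union over rational $\tau$ --- a slightly more direct finish resting on the same two ingredients.
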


\begin{proof}
  The proof of this theorem is deferred to Appendix~\ref{sec:NNLowerBoundProofs}.
\end{proof}

Theorem~\ref{thm:IntroductionNNResult} can be interpreted as follows:
Suppose we would like to approximate a function $f \in \signalClass$  to within accuracy $\eps$
using (quantized) neural networks of size $\leq W$.
Theorem~\ref{thm:IntroductionNNResult} provides an upper bound on the probability of success.
In particular it shows that the network size has to scale at least of order $\eps^{-1/s^\ast}$
to succeed with high probability if $\signalClass$ is a Sobolev- or Besov ball;
see Figure~\ref{fig:NNfig}.

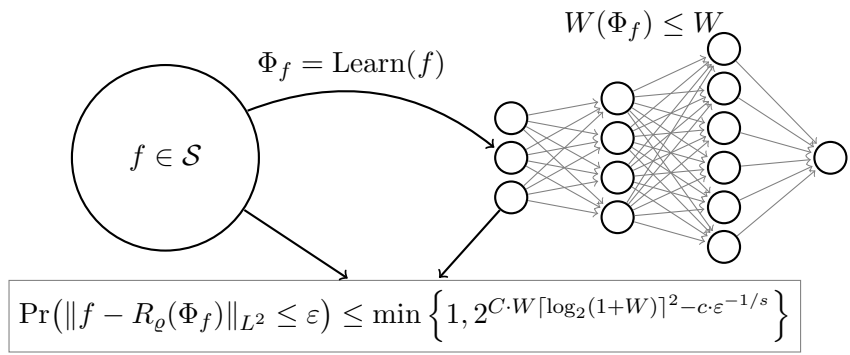
\begin{figure}
\begin{center}
\begin{tikzpicture}[shorten >=1pt,->,draw=black!50, node distance=5cm, scale=0.35]
\tikzstyle{every pin edge}=[<-,shorten <=1pt]
\tikzstyle{neuron}=[circle,draw=black,thick,minimum size=12pt,inner sep=1pt]
\tikzstyle{bigcirc}=[circle,draw=black,thick,minimum size=70pt,inner sep=1pt]
\tikzstyle{box}=[rectangle,rounded corners,fill=black!5,draw=black!50,thick,minimum size=25pt,inner sep=1pt]
\tikzstyle{input neuron}=[neuron];
\tikzstyle{output neuron}=[neuron];
\tikzstyle{hidden neuron}=[neuron];
\tikzstyle{signal class}=[bigcirc];

{
  \foreach \name / \y in {1,2,3}
  \node[input neuron] (I-\name) at (9+4,-1.5*\y+6) {};

  \foreach \name / \y in {1,...,4}
  \path
  node[hidden neuron] (H-\name) at (13+4,-1.5*\y+6.75) {};

  \foreach \source in {1,2,3}
  \foreach \dest in {1,2,3,4}
  \path (I-\source) edge (H-\dest);

  \foreach \name / \y in {1,...,6}
  \path
  node[hidden neuron] (L-\name) at (17+4,-1.5*\y+8.625) {};


  \foreach \source in {1,...,4}
  \foreach \dest in {1,...,6}
  \path (H-\source) edge (L-\dest);

  \node[output neuron] (out) at (21+4,3){};

  \foreach \source in {1,...,6}
  \path (L-\source) edge (out);


}

{\node[signal class] (sigs) at (0,3) {$f\in \mathcal{S}$};}

{
  \draw[->] (sigs)  edge[bend left, thick, black] (I-2);

  \node[] at (7,6.5) {$\Phi_f=\mbox{Learn}(f)$};
}

{\node[] at (18,8) {$W(\Phi_f)\le W$};}

{
  \node[draw] (err) at (9,-3) {\(
                                 \mathrm{Pr}\big(\|f-R_{\varrho}( \Phi_f)\|_{L^2} \leq \varepsilon\big)
                                 \leq \min\left\{
                                             1,
                                             2^{C\cdot W\lceil\log_2 (1+W) \rceil^2  - c\cdot \varepsilon^{-1/s}}
                                          \right\}
                               \)};

  \draw[->] (sigs) edge[thick,black] (err);
  \draw[->] (I-3) edge[thick,black] (err);
}

\end{tikzpicture}
\caption{\footnotesize Suppose we want to approximately represent a signal $f$ to within accuracy $\varepsilon$
by a (quantized) neural network $\Phi_f$ constrained to be of size $W(\Phi_f)\le W$
(for example due to limited memory).
Such a network shall be produced by any numerical ``learning'' procedure $\Phi_f=\mbox{Learn}(f)$.
Suppose further that the only available prior information is that $f \in \signalClass$,
where $\signalClass$ has optimal compression rate $s^\ast$ as in Theorem~\ref{thm:IntroductionSobolevBesovTransition}
(such prior information is, for instance, available if $f$ is the solution of a linear elliptic PDE
with known right hand side).
Then, no matter how we choose the ``learning'' algorithm $\mbox{Learn}(f)$,
Theorem~\ref{thm:IntroductionNNResult} states that for any $s>s^\ast$ there are constants $c,C$
such that  the probability of success is bounded from above by
$\min \big\{1, 2^{C \cdot W \lceil\log_2 (1+W) \rceil^2  - c\cdot \varepsilon^{-1/s}} \big\}$.}
\label{fig:NNfig}
\end{center}
\end{figure}

\begin{rem}[Sharpness of Theorem~\ref{thm:IntroductionNNResult}]\label{rem:NNResultSharpness}
  For the ReLU activation function given by ${\varrho (x) = \max\{ 0, x \}}$
  and $\Omega = [0,1]^d$, Theorem~\ref{thm:IntroductionNNResult} is sharp; in other words,
  there exist $C = C(\signalClass) > 0$ and $\sigma = \sigma(\signalClass) \in \N$ such that
  \[
    \forall \, f \in \signalClass \quad
      \forall \, \eps \in (0, \tfrac{1}{2}): \qquad
        W_\eps^{\sigma,\varrho} (f)
        \leq C \cdot \eps^{-1/s^\ast} \cdot \log_2(1/\eps)
        \lesssim \eps^{-\tau} ,
  \]
  where $\tau \in (0, \frac{1}{s^\ast})$ is arbitrary.
  This follows from results in \cite{SuzukiBesovNNApproximation,GrohsNNApproximationTheory}.
  Since the details are mainly technical, the proof is deferred to Appendix~\ref{sec:NNLowerBoundProofs}.
  We remark that by similar arguments as in \cite{SuzukiBesovNNApproximation,GrohsNNApproximationTheory},
  one can also prove the sharpness for other activation
  functions than the ReLU and other domains than $[0,1]^d$.
\end{rem}

\subsection{Related literature}%
\label{sub:Literature}

Many (optimality) results in approximation theory are formulated in a \emph{minimax sense},
meaning that one precisely characterizes the asymptotic decay of
\[
  d_\banach (\signalClass, M_n)
  = \sup_{f \in \signalClass} \,\,
      \inf_{g \in M_n} \,\,
        \| f - g \|_{\banach} ,
\]
where $\signalClass \subset \banach$ is the signal class to be approximated, and
$M_n \subset \banach$ contains all functions ``of complexity $n$'',
for example polynomials of degree $n$ or shallow neural networks with $n$ neurons, etc.
As recent examples of such results related to neural networks, we mention
\cite{boelcskeiNeural,YarotskyPhaseDiagram,PetersenVoigtlaenderOptimalReLUApproximation}.

A minimax lower bound of the form $d_{\banach} (\signalClass, M_n) \gtrsim n^{-s^\ast}$,
however, only makes a claim about the possible \emph{worst case} of approximating elements $f \in \signalClass$.
In other words, such an estimate in general only guarantees that there is \emph{at least one}
``hard to approximate'' function $f^\ast \in \signalClass$ that satisfies
$\inf_{g \in M_n} \| f^\ast - g \|_{\banach} \gtrsim n^{-s}$ for each $s > s^\ast$,
but \emph{nothing is known about how ``massive'' this set of ``hard to approximate'' functions is},
or about the ``average case''.

The first paper to address this question---and one of the main sources of inspiration for the
present paper---is \cite{MMR99}.
In that paper, Maiorov, Meir, and Ratsaby consider essentially the
``$L^2$-Besov-space type'' signal class $\signalClass = \signalClass_r$
of functions $f \in L^2(\B_d)$ (with ${\B_d = \{ x \in \R^d \colon \| x \|_2 \leq 1 \}}$)
that satisfy
\[
  d_{L^2}(f, \mathscr{P}_{2^N}) \leq 2^{-r N},
\]
where
\(
  \mathscr{P}_K
  = \mathrm{span} \bigl
  \{
    x^\alpha
    \colon
    \alpha \in \N_0^d \text{ with } |\alpha| \leq K \bigr
  \}
\)
denotes the space of $d$-variate polynomials of degree at most $K$.
On this signal class, they construct a probability measure $\P$ such that given the subset of functions
\[
  M_n
  =
  \Bigl\{
    \sum_{i =1}^n
      g_i (\langle a_i, x \rangle)
    \colon
    a_i \in \mathbb{S}^{d-1} \text{ and } g_i \in L^2([-1,1])
  \Bigr\} ,
\]
one obtains the minimax asymptotic
\(
  d_{L^2}(\signalClass_r, M_n)
  \asymp n^{-r/(d-1)},
\)
but furthermore there is $c > 0$ such that
\[
  \P \Big(
       \Big\{
         f \in \signalClass_r
         \colon
         d_{L^2}(f, M_n) \geq c \cdot n^{-r/(d-1)}
       \Big\}
     \Big)
  \geq 1 - e^{- c \cdot n^{d/(d-1)}} .
\]
In other words, the measure of the set of functions for which the minimax asymptotic is sharp
tends to $1$ for $n \to \infty$.
In this context, we would also like to mention the recent article \cite{LinLimitationsOfShallowNets},
in which the results of \cite{MMR99} are extended to cover more general signal classes
and approximation in stronger norms than the $L^2$ norm.

While we draw heavily on the ideas from \cite{MMR99} for the construction of the measure $\P$
in Theorem~\ref{thm:IntroductionSobolevBesovTransition}, it should be noted that we are interested
in phase transitions for general encoding/decoding schemes, while \cite{MMR99,LinLimitationsOfShallowNets}
exclusively focus on approximation using the ridge function classes $M_n$.

Finally, we would like to point out that our lower bounds for neural network approximation
consider networks with \emph{quantized weights},
as in \cite{boelcskeiNeural,PetersenVoigtlaenderOptimalReLUApproximation}.
The main reason is that without such an assumption, even two-layer networks
\emph{with a fixed number of neurons} can approximate any function arbitrarily well
if the activation function is chosen suitably; see \cite[Theorem~4]{MaiorovPinkusLowerBoundsMLP}.
Moreover, even if one considers the popular ReLU activation function, it was recently observed
that the optimal approximation rates for networks with quantized weights can in fact be
\emph{doubled} using arbitrarily deep ReLU networks with highly complex weights
\cite{YarotskyPhaseDiagram}.

\subsection{Outline}
\label{sub:Outline}

In Section~\ref{sec:GrowthRateOfMeasures}, we introduce and study a class of probability measures
with a certain growth behaviour.
More precisely, we say that $\P$ is of \emph{logarithmic growth order} $s_0$
on $\signalClass \subset \banach$ if for each $s > s_0$, we have
\[
  \P \bigl(\ball(\xvec,\eps; \banach)\bigr)
  \leq 2^{-c \cdot \eps^{-1/s}}
  \qquad \forall \, \xvec \in \banach \text{ and } \eps \in (0, \eps_0),
\]
for suitable $c,\eps_0 > 0$ depending on $s_0$.
Here, as in the rest of the paper, $\ball(\xvec,\eps;\banach)$ is the open ball
around $\xvec$ of radius $\eps$ with respect to $\| \cdot \|_{\banach}$.
A measure has \emph{critical growth} if its logarithmic growth order equals the optimal
compression rate $\optRateSmall{\banach}$.
We show in particular that every critical measure exhibits a compressibility phase transition
as in Definition~\ref{defn:IntroPhaseTransitionMeasure}, and we show how critical measures can
be transported from one set to another.

In Section~\ref{sec:MainProof}, we study certain sequence spaces $\mixSpace{q}$;
these are essentially the coefficient spaces associated to Besov spaces.
By modifying the construction given in \cite{MMR99}, we construct probability measures of
critical growth on the unit balls $\sequenceSpaceSignalClass$ of the spaces $\mixSpace{q}$,
for the range of parameters for which the embedding $\mixSpace{q} \hookrightarrow \ell^2$ is compact.

The construction of critical measures on the unit balls of Besov and Sobolev spaces is then
accomplished in Section~\ref{sec:Examples}, essentially by using wavelet systems to transfer
the critical measure from the sequence spaces to the function spaces.
This makes heavy use of the transfer results established in Section~\ref{sec:GrowthRateOfMeasures}.

A host of more technical proofs are deferred to the appendices.

\subsection{Notation}
\label{sub:Notation}

We write $\N := \{1,2,3,\dots\}$ for the set of natural numbers, and $\N_0 := \{0\} \cup \N$
for the natural numbers including zero.
For $n \in \N_0$, we define $\FirstN{n} := \{ k \in \N \colon k \leq n \}$;
in particular, $\FirstN{0} = \emptyset$.

For $x \in \R$, we write $x_+ := \max \{ 0, x \}$ and $x_{-} := (-x)_+ = \max \{ 0, -x \}$.

We assume all vector spaces to be over $\R$, unless explicitly stated otherwise.

For a given (quasi)-normed vector space $(\banach, \|\cdot\|)$,
we denote the \emph{closed ball} of radius $r \geq 0$ around $\xvec \in \banach$ by
$\ball (\xvec, r; \banach) := \{ \yvec \in \banach \colon \| \yvec - \xvec \| \leq r \}$.
If we want to emphasize the quasi-norm (for example, if multiple quasi-norms are considered on
the same space $\banach$), we write $\ball ( \xvec, r; \| \cdot \| )$ instead.

For an index set $\mathcal{I}$ and an integrability exponent $p \in (0,\infty]$,
 the sequence space $\ell^p (\mathcal{I}) \subset \R^{\mathcal{I}}$ is
\[
  \ell^p (\mathcal{I})
  = \big\{
      \xvec = (x_i)_{i \in \mathcal{I}} \in \R^{\mathcal{I}}
      \quad \colon \quad
      \| \xvec \|_{\ell^p} < \infty
    \big\} ,
\]
where $\| \xvec \|_{\ell^p} := \bigl(\sum_{i \in \mathcal{I}} |x_i|^p\bigr)^{1/p}$ if $p < \infty$,
while $\| \xvec \|_{\ell^\infty} := \sup_{i \in \mathcal{I}} |x_i|$.

For a measure $\mu$ on a measurable space $(\signalClass, \mathscr{A})$ the
\emph{outer measure} $\mu^\ast : \powerset{\signalClass} \to [0,\infty]$ induced by $\mu$ is given by
\begin{equation}
  \mu^\ast (M)
  := \inf \Big\{
            \sum_{n=1}^\infty
              \mu(M_n)
            \, \colon
            (M_n)_{n \in \N} \subset \mathscr{A} \text{ with } M \subset \bigcup_{n=1}^\infty M_n
          \Big\} \, .
  \label{eq:OuterMeasureDefinition}
\end{equation}
It is well-known (see \cite[Proposition~1.10]{FollandRA}) that $\mu^\ast$ is $\sigma$-subadditive,
meaning that ${\mu^\ast (\bigcup_{n=1}^\infty M_n) \leq \sum_{n=1}^\infty \mu^\ast (M_n)}$
for arbitrary $M_n \subset \signalClass$.
We will be interested in \emph{$\mu^\ast$-null-sets};
that is, subsets $N \subset \signalClass$ satisfying $\mu^\ast (N) = 0$.
This holds if and only if there is $N' \in \mathscr{A}$ satisfying
$N \subset N'$ and $\mu(N') = 0$.
Furthermore, directly from the $\sigma$-subadditivity of $\mu^\ast$, it follows that
a countable union of $\mu^\ast$-null-sets is again a $\mu^\ast$-null-set.

\textbf{A comment on measurability:}
Given a (not necessarily measurable) subset $M \subset \banachOne$ of a Banach space $\banachOne$,
we will always equip $M$ with the trace $\sigma$-algebra
\[
  M \Cap \mathcal{B}_{\banachOne}
  = \{ M \cap B \colon B \in \mathcal{B}_{\banachOne} \}
\]
of the Borel $\sigma$-algebra $\mathcal{B}_{\banachOne}$.
A \emph{Borel measure} on $M$ is then a measure defined on $M \Cap \mathcal{B}_{\banachOne}$.

Note that if $(\Omega, \mathscr{A})$ is an arbitrary measurable space,
then ${\Phi : \Omega \to M}$ is measurable if and only if it is measurable
when considered as a map ${\Phi : \Omega \to (\banachOne, \mathcal{B}_{\banachOne})}$.

\section{General results on phase transitions in Banach spaces}
\label{sec:GrowthRateOfMeasures}

In this section we establish an abstract version of the phase transition considered in
\eqref{eq:PhaseTransition} for signal classes in general Banach spaces and a class of measures that
satisfy a uniform growth property that we term ``critical'' (see Definition~\ref{def:GrowthRate}).
We will show in Section~\ref{sec:Crit-Fine} that such critical measures automatically
induce a phase transition behavior.
We furthermore show in Section~\ref{sec:TransferResults} that criticality is preserved
under pushforward by ``nice'' mappings.
The \emph{existence} of critical measures is by no means trivial;
quite the opposite, their construction for a class of sequence spaces in Section~\ref{sec:PhaseTrans_ss}---%
and for Besov and Sobolev spaces on domains in Section~\ref{sec:Examples}---%
constitutes an essential part of the present article.

\subsection{Measures of logarithmic growth}
\label{sec:Crit-Fine}

\begin{defn}\label{def:GrowthRate}
 Let $\signalClass$ be a subset of a Banach space $\banach$, and let $s_0 \in [0,\infty)$.

  A Borel probability measure $\P$ on $\signalClass$ has
  \emph{(logarithmic) growth order $s_0$ (with respect to $\banach$)}
  if for every $s > s_0$, there are constants $\eps_0, \gc > 0$
  (depending on $s,s_0,\P,\signalClass,\banach$) such that
  \begin{equation}
    \P \big(
         \signalClass \cap \ball (\xvec, \eps; \banach)
       \big)
    \leq 2^{-\gc \cdot \eps^{-1/s}}
    \qquad \forall \, \xvec \in \banach \text{ and } \eps \in (0,\eps_0) .
    \label{eq:CriticalMeasureDefinition}
  \end{equation}

  We say that $\P$ is \emph{critical for $\signalClass$ (with respect to $\banach$)}
  if $\P$ has logarithmic growth order $\optRateSmall{\banach}$,
  with the optimal compression rate $\optRateSmall{\banach}$
  as defined in Equation~\eqref{eq:OptimalCompressionRate}.
\end{defn}

\begin{rem*}
  If $\P$ has growth order $s_0$, then $\P$ also has growth order $\sigma$,
  for arbitrary $\sigma > s_0$.
\end{rem*}

The motivation for considering the growth order of a measure is that it leads to
bounds regarding the measure of elements $\xvec \in \signalClass$ that are well-approximated
by a given codec; see Equation~\eqref{eq:NewQuantitativeStatement} below.
Furthermore, as we will see in Corollary~\ref{cor:GrowthOrderAtLeastOptimalRate},
if $\P$ is a probability measure of growth order $s_0$, then necessarily $s_0 \geq \optRateSmall{\banach}$,
so \emph{critical measures have the minimal possible growth order}.

The following theorem summarizes our main structural results, showing that critical measures
always exhibit a compressibility phase transition.

\begin{thm}\label{thm:DichotomyBanach}
  Let the signal class $\signalClass$ be a subset of the Banach space $\banach$,
  let $\P$ be a Borel probability measure on $\signalClass$ that is critical for $\signalClass$
  with respect to $\banach$, and set $s^\ast := \optRateSmall{\banach}$.
  Then the following hold:
  \begin{enumerate}[label={(\roman*)}]
    \item \label{enu:DichotomyQuantitative}
          Let $s > s^\ast$ and let $\gc = \gc(s) > 0$ and $\eps_0 = \eps_0(s)$
          as in Equation~\eqref{eq:CriticalMeasureDefinition}.
          Then, for any $R \in \N$ and $(E_R, D_R) \in \endec{\banach}$, we have
          \begin{equation}
            \mathrm{Pr}\big( \| \xvec - D_R(E_R(\xvec)) \|_{\banach} \leq \eps \big)
            \leq 2^{R - c \cdot \eps^{-1/s}}
            \qquad \forall \, \eps \in (0,\eps_0) ,
            \label{eq:NewQuantitativeStatement}
          \end{equation}
          where we use the notation from Equation~\eqref{eq:SpecialProbabilityNotation}.

    \item \label{enu:DichotomySupcrit}
          For every $s > s^*$ and every codec $\code \in \codecs<\signalClass>{\banach}$, the set
          $\approxClass{s}<\signalClass>{\banach}(\code)$ is a $\P^*$-null-set:
          \begin{equation*}
            \mathrm{Pr}
            \left(
              \approxClass{s}<\signalClass>{\banach} (\code)
            \right) = 0.
          \end{equation*}

    \item \label{enu:DichotomySubcrit}
          For every $0 \leq s < {s^*}$, there is a codec
          \(
            \code = \big( (E_R,D_R) \big)_{R \in \N}
            \in \codecs<\signalClass>{\banach}
          \)
          with distortion
          \[
            \distortion<\signalClass>{\banach} (E_R, D_R) \leq C \cdot R^{-s}\,
            \qquad \forall \, R \in \N ,
          \]
          for a constant $C = C(s, \code) > 0$.
          In particular, the set of $s$-compressible signals $\approxClass{s}<\signalClass>{\banach} (\code)$
          defined in Eq.~\eqref{eq:ApproximationClass} satisfies
          $\approxClass{s}<\signalClass>{\banach} (\code) = \signalClass$
          and hence ${\P (\approxClass{s}<\signalClass>{\banach} (\code)) = 1}$.
  \end{enumerate}
\end{thm}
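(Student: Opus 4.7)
The plan is to prove the three parts in the order they are stated, since (ii) will build on the quantitative estimate from (i), and (iii) is essentially a direct unpacking of the definition of $\optRateSmall{\banach}$.

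For part~\ref{enu:DichotomyQuantitative}, I would exploit the ``discretization map'' viewpoint from the remark after Equation~\eqref{eq:dichotomy}: the image $\mathrm{range}(D_R) \subset \banach$ contains at most $2^R$ points $y_1, \dots, y_{2^R}$. Any $\xvec \in \signalClass$ with $\| \xvec - D_R(E_R(\xvec)) \|_{\banach} \leq \eps$ lies in some $\ball(y_i, \eps; \banach)$. Hence the set in question is contained in the union $\bigcup_{i=1}^{2^R} \bigl(\signalClass \cap \ball(y_i, \eps; \banach)\bigr)$, which is Borel measurable, so $\P^\ast$ agrees with $\P$ on it. By $\sigma$-subadditivity and the critical growth bound \eqref{eq:CriticalMeasureDefinition} applied at the exponent $s$ (valid for all $\eps \in (0,\eps_0)$), the probability is at most $2^R \cdot 2^{-c \cdot \eps^{-1/s}} = 2^{R - c \cdot \eps^{-1/s}}$.

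For part~\ref{enu:DichotomySupcrit}, I would fix $s > s^\ast$, pick an intermediate exponent $s_1 \in (s^\ast, s)$, and let $c_1, \eps_1 > 0$ be the constants from Equation~\eqref{eq:CriticalMeasureDefinition} associated with $s_1$. For each $C \in \N$ define
\[
  A_C := \bigl\{ \xvec \in \signalClass \colon \| \xvec - D_R(E_R(\xvec)) \|_{\banach} \leq C \cdot R^{-s} \text{ for all } R \in \N \bigr\},
\]
so that $\approxClass{s}<\signalClass>{\banach}(\code) = \bigcup_{C \in \N} A_C$. Applying part~\ref{enu:DichotomyQuantitative} at the exponent $s_1$ with $\eps := C \cdot R^{-s}$ (legal once $R$ is large enough that $\eps < \eps_1$), I obtain
\[
  \P^\ast(A_C) \leq 2^{R - c_1 \cdot C^{-1/s_1} \cdot R^{s/s_1}}.
\]
Since $s/s_1 > 1$, the exponent tends to $-\infty$ as $R \to \infty$, so $\P^\ast(A_C) = 0$. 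A countable union of $\P^\ast$-null-sets is again a $\P^\ast$-null-set, so $\P^\ast(\approxClass{s}<\signalClass>{\banach}(\code)) = 0$. The crucial move here, which is really the main technical point, is the interpolation $s^\ast < s_1 < s$: applying the critical bound directly at the exponent $s$ only yields $2^{R - c C^{-1/s} R}$, which is useless when $c C^{-1/s} \leq 1$; inserting a strictly smaller $s_1$ turns the linear $R$ into a polynomial $R^{s/s_1}$ of super-linear order and is what makes the Borel--Cantelli style estimate succeed.

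For part~\ref{enu:DichotomySubcrit}, I would just unfold the definition \eqref{eq:OptimalCompressionRate}: if $s < s^\ast$, then by the defining supremum there exists some $s' \in (s, s^\ast]$ with $\signalClass \in \comp{s'}{\banach}$, hence a codec $\code = ((E_R,D_R))_{R \in \N}$ and a constant $C > 0$ with $\distortion<\signalClass>{\banach}(E_R,D_R) \leq C \cdot R^{-s'} \leq C \cdot R^{-s}$ for every $R$. By the definition of the distortion as a supremum over $\signalClass$, every $\xvec \in \signalClass$ then satisfies $R^s \| \xvec - D_R(E_R(\xvec)) \|_{\banach} \leq C$, so that $\approxClass{s}<\signalClass>{\banach}(\code) = \signalClass$ and, since $\P$ is a probability measure on $\signalClass$, its mass is one.
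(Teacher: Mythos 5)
Your proposal is correct and follows essentially the same route as the paper: part (i) via covering by the at most $2^R$ balls around $\mathrm{range}(D_R)$ and subadditivity of the outer measure, part (ii) via the decomposition $\approxClass{s}<\signalClass>{\banach}(\code)=\bigcup_C A_C$ together with the interpolation of an intermediate exponent $s_1\in(s^\ast,s)$ (the paper's $\sigma=\tfrac{s+s_0}{2}$) to make the exponent $R - c_1 C^{-1/s_1} R^{s/s_1}$ tend to $-\infty$, and part (iii) by unwinding the definition of $\optRateSmall{\banach}$. No gaps.
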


\begin{rem*}
  1) Note that the theorem does not make any statement about the case $s = s^\ast$.
  In this case, the behavior depends on the specific choices of $\signalClass$ and $\P$.

  \smallskip{}

  2) As noted above, the question of the existence of a critical probability measure $\P$
  is nontrivial.
\end{rem*}

The proof of Theorem~\ref{thm:DichotomyBanach} is divided into several auxiliary results.
Part~(i) is contained in the following lemma.

\begin{lem}\label{lem:CriticalMeasuresAreSmall}
  Let $\signalClass$ be a subset of a Banach space $\banach$,
  and let $\P$ be a Borel probability measure on $\signalClass$ that is
  of logarithmic growth order $s_0 \geq 0$ with respect to $\banach$.

  Let $s > s_0$ and let $\gc = \gc(s) > 0$ and $\eps_0 = \eps_0(s)$
  as in Equation~\eqref{eq:CriticalMeasureDefinition}.
  Then, for any $R \in \N$ and $(E_R, D_R) \in \endec{\banach}$, we have
  \[
    \P^\ast \big(\{\xvec\in \signalClass:\  \| \xvec - D_R(E_R(\xvec)) \|_{\banach} \leq \eps\} \big)
    \leq 2^{R - c \cdot \eps^{-1/s}}
    \qquad \forall \, \eps \in (0,\eps_0) .
  \]

  Furthermore, for any given $s > s_0$ and $K > 0$ there exists a minimal code-length
  ${R_0 = \! R_0(s,s_0,K,\P,\signalClass,\banach) \! \in \N}$
  such that for every codec
  ${\code = \big( (E_R, D_R) \big)_{R \in \N} \!\in \codecs{\banach}}$, we have
  \begin{equation}
    \label{eq:measapperr}
    \P^*\big(
           \set{
             \xvec \in \signalClass
             \colon
             \norm{\xvec - D_R(E_R(\xvec))}_\banach \leq K \cdot R^{-s}
           }
         \big)
    \leq 2^{-R}\,
    \qquad \forall \, R \geq R_0 .
  \end{equation}
\end{lem}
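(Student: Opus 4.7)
My plan rests on one structural observation: since $E_R(\xvec) \in \{0,1\}^R$, the composition $D_R \circ E_R$ takes at most $2^R$ values in $\banach$, so the set of ``well-encoded'' signals is covered by at most $2^R$ closed balls of radius $\eps$. The logarithmic growth assumption \eqref{eq:CriticalMeasureDefinition} controls each such ball, and $\sigma$-subadditivity of $\P^\ast$ combines the two to give the first assertion; the quantitative second statement then follows by substitution, provided one first introduces a small slack $s' \in (s_0, s)$.

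Concretely, for the first claim let $\{y_1, \dots, y_N\} := \mathrm{range}(D_R) \subset \banach$, so $N \leq 2^R$. If $\|\xvec - D_R(E_R(\xvec))\|_\banach \leq \eps$ then $\xvec \in \ball(y_i, \eps; \banach)$ for the index $i$ with $y_i = D_R(E_R(\xvec))$; hence
\[
  \bigl\{ \xvec \in \signalClass : \|\xvec - D_R(E_R(\xvec))\|_\banach \leq \eps \bigr\}
  \ \subset \ \bigcup_{i=1}^{N} \bigl( \signalClass \cap \ball(y_i, \eps; \banach) \bigr),
\]
the right-hand side being a Borel subset of $\signalClass$. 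By $\sigma$-subadditivity of $\P^\ast$ and the growth-order bound applied at each centre $y_i$, this forces
\[
  \P^\ast \bigl( \{\xvec : \|\xvec - D_R(E_R(\xvec))\|_\banach \leq \eps\} \bigr)
  \ \leq \ \sum_{i=1}^{N} \P\bigl( \signalClass \cap \ball(y_i, \eps; \banach) \bigr)
  \ \leq \ 2^R \cdot 2^{-\gc \eps^{-1/s}}
\]
for all $\eps \in (0, \eps_0)$, which is the desired inequality.

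For \eqref{eq:measapperr}, a direct substitution $\eps = K R^{-s}$ into the bound above produces $2^{R(1 - \gc K^{-1/s})}$, which need \emph{not} beat $2^{-R}$ when $K$ is large. The remedy is to fix any auxiliary $s' \in (s_0, s)$ (possible because $s > s_0$) and re-apply the first part at $s'$ instead of $s$, yielding constants $c', \eps_0' > 0$ with
\[
  \P^\ast\bigl(\{\xvec : \|\xvec - D_R(E_R(\xvec))\|_\banach \leq \eps\}\bigr)
  \ \leq \ 2^{R - c' \eps^{-1/s'}}
  \qquad \forall \, \eps \in (0, \eps_0').
\]
Substituting $\eps := K R^{-s}$ (legitimate for all $R$ above some threshold $R_1 = R_1(K, s, \eps_0')$) yields the bound $2^{R - c' K^{-1/s'} R^{s/s'}}$. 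Since $s/s' > 1$, the superlinear term $R^{s/s'}$ eventually dominates $2R$, so one can choose $R_0 \geq R_1$ with $c' K^{-1/s'} R^{s/s'} \geq 2R$ for every $R \geq R_0$, and \eqref{eq:measapperr} follows.

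The whole argument is elementary once the $2^R$-ball covering is in hand; the only genuinely delicate point is the second part, where one must sacrifice a tiny portion of the decay rate (moving from $s$ down to $s'$) in order to buy the superlinear factor $R^{s/s'}$ that absorbs the arbitrary constant $K$.
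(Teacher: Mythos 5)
Your proposal is correct and follows essentially the same route as the paper: the same covering of the well-encoded set by the at most $2^R$ balls centred at $\mathrm{range}(D_R)$ combined with $\sigma$-subadditivity for the first claim, and the same auxiliary exponent $s' \in (s_0,s)$ (the paper takes $s' = \frac{s+s_0}{2}$) to turn the linear term $R$ into a dominated one via the superlinear factor $R^{s/s'}$ for the second claim.
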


\begin{rem*}
  The lemma states that the measure of the subset of points $\xvec \in \signalClass$
  with approximation error ${\cal E}_R(\xvec) = \norm{\xvec - D_R(E_R(\xvec))}_\banach$ satisfying
  ${\cal E}_R(\xvec) \leq K \cdot R^{-s}$ for some $s > s_0$ decreases exponentially with $R$.
  In fact, the proof shows that the approximation error is decreasing asymptotically
  \emph{superexponentially}.
\end{rem*}

\begin{proof}
Let $s > s_0$ and let $\gc, \eps_0$ as in Equation~\eqref{eq:CriticalMeasureDefinition}.
For $R \in \N$ and $\eps \in (0, \eps_0)$, define
\(
A (R, \eps)
  := \set{
    \xvec \in \signalClass
    \colon
    \norm{\xvec - D_R (E_R(\xvec)) }_{\banach} \leq \eps
  }
  .
\)
By definition,
\[
  A(R, \eps)
  \subset \bigcup_{\yvec \in \mathrm{range}(D_R)}
          \big[
            \signalClass \cap \ball (\yvec, \eps ; \banach)
          \big]
  \,.
\]
Since $\P$ is of growth order $s_0$ and because of $|\mathrm{range}(D_R)| \leq 2^R$,
we can apply~\eqref{eq:CriticalMeasureDefinition} and the subadditivity of the outer measure
$\P^\ast$ to deduce
\[
  \P^\ast \bigl( A(R,\eps) \bigr)
     \leq \sum_{\yvec \in \mathrm{range} (D_R)}
            \P
            \big(
              \signalClass \cap \ball (\yvec, \eps; \banach)
            \big)
     \leq 2^R \cdot 2^{-\gc \, \eps^{-1/s}}.
\]
This proves the first part of the lemma.

To prove the second part, let $s > s_0$, and choose $\sigma = \frac{s + s_0}{2}$,
noting that  $\sigma \in (s_0, s)$.
Therefore, the first part of the lemma, applied with $\sigma$ instead of $s$,
yields $c, \eps_0 > 0$ such that
\(
  \P^\ast (\{ \xvec \in \signalClass \colon \| \xvec - D_R(E_R(\xvec)) \|_{\banach} \leq \eps\})
  \leq 2^{R - c \cdot \eps^{-1/\sigma}}
\)
for all $R \in \N$ and $\eps \in (0, \eps_0)$.

Note that $\eps := K \cdot R^{-s} \leq \eps_0/2 < \eps_0$ holds as soon as
$R \geq \big\lceil (2 K / \eps_0)^{1/s} \, \big\rceil =: R_1$.
Finally, since $s / \sigma > 1$ we can find a code-length $R_2 \in \N$ such that
\[
  R - \gc \, \eps^{-1/\sigma}
  = R - \gc \, K^{-1/\sigma} \cdot R^{s/\sigma} \leq -R \quad \text{for } R \geq R_2 \,.
\]
Overall, we thus see that \eqref{eq:measapperr} holds, with $R_0 = \max \{ R_1, R_2 \}$.
\end{proof}

\begin{prop}\label{lem:CriticalMeasuresAreFine}
  Let $\signalClass$ be a subset of the Banach space $\banach$.
   If $\P$ is a Borel probability measure on $\signalClass$ that is
   of growth order $s_0 \in [0,\infty)$, then, for every $s > s_0$ and every codec
   $\code = \big( (E_R, D_R) \big)_{R \in \N} \in \codecs{\banach}$, we have
   \[
     \P^*\bigl(\approxClass{s}<\signalClass>{\banach} (\code)\bigr) = 0 \,.
   \]
\end{prop}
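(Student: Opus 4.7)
The plan is to reduce this to the quantitative bound of Lemma~\ref{lem:CriticalMeasuresAreSmall} via a straightforward countable decomposition of $\approxClass{s}<\signalClass>{\banach}(\code)$, and then invoke the $\sigma$-subadditivity of the outer measure $\P^\ast$.

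First I would rewrite the approximation class in a way that exposes a countable structure. Unpacking the definition in Equation~\eqref{eq:ApproximationClass}, an element $\xvec \in \signalClass$ lies in $\approxClass{s}<\signalClass>{\banach}(\code)$ if and only if there exists some finite $K > 0$ with $\|\xvec - D_R(E_R(\xvec))\|_{\banach} \leq K \cdot R^{-s}$ for every $R \in \N$. Since the constant $K$ may be replaced by $\lceil K \rceil$ without loss, this yields the decomposition
\[
  \approxClass{s}<\signalClass>{\banach}(\code)
  = \bigcup_{K \in \N} B_K,
  \qquad
  B_K := \bigcap_{R \in \N} A_K(R),
\]
where $A_K(R) := \{\xvec \in \signalClass : \|\xvec - D_R(E_R(\xvec))\|_{\banach} \leq K \cdot R^{-s}\}$.

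Next I would control each $B_K$. Fix $K \in \N$ and $s > s_0$. The second part of Lemma~\ref{lem:CriticalMeasuresAreSmall}, applied with this $s$ and $K$, yields a threshold $R_0 = R_0(s,s_0,K,\P,\signalClass,\banach) \in \N$ such that $\P^\ast(A_K(R)) \leq 2^{-R}$ for all $R \geq R_0$. Since $B_K \subset A_K(R)$ for every $R$, monotonicity of $\P^\ast$ gives $\P^\ast(B_K) \leq 2^{-R}$ for each $R \geq R_0$. Letting $R \to \infty$ forces $\P^\ast(B_K) = 0$.

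Finally, by $\sigma$-subadditivity of the outer measure (recalled in the Notation subsection),
\[
  \P^\ast\bigl(\approxClass{s}<\signalClass>{\banach}(\code)\bigr)
  \leq \sum_{K \in \N} \P^\ast(B_K) = 0,
\]
which is the desired conclusion. There is no serious obstacle in this argument; all of the real analytic work is already packaged inside Lemma~\ref{lem:CriticalMeasuresAreSmall}. The only subtle point worth checking is that the rewriting as a countable union is valid (which amounts to being allowed to restrict to integer $K$), and that we do not need $B_K$ itself to be measurable to apply monotonicity and $\sigma$-subadditivity of $\P^\ast$, both of which hold on arbitrary subsets of $\signalClass$.
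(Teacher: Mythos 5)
Your proof is correct and follows essentially the same route as the paper's: the paper likewise decomposes $\approxClass{s}<\signalClass>{\banach}(\code)$ as $\bigcup_{N\in\N}\bigcap_{R\in\N}A^{(s)}_{N,R}$, applies the second part of Lemma~\ref{lem:CriticalMeasuresAreSmall} to bound $\P^\ast(A^{(s)}_{N,R})\leq 2^{-R}$ for $R\geq R_0$, and concludes by monotonicity and $\sigma$-subadditivity of the outer measure. No gaps.
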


\begin{proof}
  First, note that
  \[
    \approxClass{s}<\signalClass>{\banach} (\code)
    = \bigcup_{N \in \N}
         \big\{
           \xvec \in \signalClass
           \colon
           \forall \, R \in \N:
             \| \xvec - D_R (E_R(\xvec)) \|_{\banach} \leq N \cdot R^{-s}
         \big\}
    = \bigcup_{N \in \N}
        \bigcap_{R \in \N}
          A^{(s)}_{N,R} ,
  \]
  where
  \(
    A^{(s)}_{N,R} = \set{
                      \xvec \in \signalClass
                      \colon
                      \norm{\xvec - D_R (E_R(\xvec)) }_{\banach} \leq N \cdot R^{-s}
                    }
    .
  \)

  By $\sigma$-subadditivity of $\P^\ast$, it is thus enough to show that
  $\P^\ast (\bigcap_{R \in \N} A_{N,R}^{(s)}) = 0$ for each $n \in \N$.
  To see that this holds, note that Lemma~\ref{lem:CriticalMeasuresAreSmall} shows
  \[
    0
    \leq \P^*\bigg( \bigcap_{R \in \N} A_{N,R}^{(s)}\bigg)
    \leq \P^*\bigl(A^{(s)}_{N,R}\bigr)
    \leq 2^{-R}
    \qquad \forall \, R \geq R_0 (s,s_0,N,\P,\signalClass,\banach) .
  \]
  This easily implies $\P^*\bigl( \bigcap_{R \in \N} A_{N,R}^{(s)}\bigr) = 0$.
\end{proof}

The proof of Theorem~\ref{thm:DichotomyBanach} merely consists of combining the preceding lemmas.

\begin{proof}[Proof of Theorem~\ref{thm:DichotomyBanach}]
  \emph{\textbf{Proof of \ref{enu:DichotomyQuantitative}:}}
  This is contained in the statement of Lemma~\ref{lem:CriticalMeasuresAreSmall}.

  \medskip{}

  \emph{\textbf{Proof of \ref{enu:DichotomySupcrit}:}}
  This follows from Proposition~\ref{lem:CriticalMeasuresAreFine}.

  \medskip{}

  \emph{\textbf{Proof of \ref{enu:DichotomySubcrit}:}}
  This follows from the definition of the optimal compression rate:
  for $s< s^*$ there exists a codec $\big((E_R,D_R)\big)_{R \in \N} \in \codecs{\banach}$ such that
  \[
    R^s \cdot \norm{\xvec -D_R(E_R(\xvec))}_\banach \leq C
    \qquad \forall \, R \in \N ,
  \]
  for a constant $C > 0$ and all $\xvec \in \signalClass$.
  In particular, this implies $ \approxClass{s}<\signalClass>{\banach} (\code) = \signalClass$,
  and therefore $\P(\approxClass{s}<\signalClass>{\banach} (\code) )=1$.
\end{proof}

We close this subsection by showing that if $\P$ is a probability measure with logarithmic
growth order $s_0$, then this growth order is at least as large as the optimal compression rate
of the set on which $\P$ is defined.
This justifies the nomenclature of ``critical measures'' as introduced in Definition~\ref{def:GrowthRate}.

\begin{cor}\label{cor:GrowthOrderAtLeastOptimalRate}
  Let $\signalClass$ be a subset of $\banach$,
  and $\P$ be a Borel probability measure on $\signalClass$ of growth order $s_0$.
  Then $s_0 \geq \optRateSmall{\banach}$, with $\optRateSmall{\banach}$ as defined in
  Equation~\eqref{eq:OptimalCompressionRate}.
\end{cor}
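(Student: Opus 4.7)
My plan is to prove the corollary by contradiction. Suppose for the sake of contradiction that $s_0 < \optRateSmall{\banach}$. Then I can pick some intermediate $s$ with $s_0 < s < \optRateSmall{\banach}$, and I will aim to derive the contradiction $\P^\ast(\signalClass) = 0$, which is impossible since $\P$ is a probability measure on $\signalClass$ (so that $\P^\ast(\signalClass) = \P(\signalClass) = 1$).

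To pit these two inequalities against each other I will use the two directions of the dichotomy, both of which are already available. First, since $s < \optRateSmall{\banach}$ and the membership classes $\comp{\sigma}{\banach}$ are monotone (a codec giving distortion $\mathcal{O}(R^{-\sigma})$ for some $\sigma > s$ automatically gives distortion $\mathcal{O}(R^{-s})$), the definition \eqref{eq:OptimalCompressionRate} of $\optRateSmall{\banach}$ as a supremum guarantees $\signalClass \in \comp{s}{\banach}$. Hence there exists a codec $\code = \bigl((E_R, D_R)\bigr)_{R \in \N} \in \codecs{\banach}$ with $\sup_{R \in \N} R^s \cdot \distortion{\banach}(E_R, D_R) < \infty$, which in view of the definition \eqref{eq:ApproximationClass} means exactly that
\[
  \approxClass{s}<\signalClass>{\banach}(\code) = \signalClass.
\]
Second, since $s > s_0$ and $\P$ has logarithmic growth order $s_0$, Proposition~\ref{lem:CriticalMeasuresAreFine} (applied to the \emph{same} codec $\code$) yields
\[
  \P^\ast \bigl( \approxClass{s}<\signalClass>{\banach}(\code) \bigr) = 0.
\]
Combining the two displays gives $\P^\ast(\signalClass) = 0$, contradicting $\P(\signalClass) = 1$, so $s_0 \geq \optRateSmall{\banach}$.

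I do not anticipate any real obstacle: the whole argument is a one-line contradiction once the definition of $\optRateSmall{\banach}$ as a supremum is properly unfolded and Proposition~\ref{lem:CriticalMeasuresAreFine} is invoked. The only small subtlety worth spelling out in the write-up is the monotonicity step $\comp{\sigma}{\banach} \subset \comp{s}{\banach}$ for $\sigma > s$, which justifies that a value of $s$ strictly between $s_0$ and $\optRateSmall{\banach}$ actually realizes a codec with $\approxClass{s}<\signalClass>{\banach}(\code) = \signalClass$; this is immediate because $R^s \le R^\sigma$ for $R \in \N$.
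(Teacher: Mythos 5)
Your argument is correct and is essentially identical to the paper's own proof: choose $s \in (s_0, \optRateSmall{\banach})$, use the definition of the optimal compression rate to get a codec with $\approxClass{s}<\signalClass>{\banach}(\code) = \signalClass$, and invoke Proposition~\ref{lem:CriticalMeasuresAreFine} to reach the contradiction $1 = \P(\signalClass) = 0$. The only difference is that you spell out the monotonicity of the classes $\comp{s}{\banach}$, which the paper leaves implicit.
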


\begin{proof}
  Suppose for a contradiction that $0 \leq s_0 < \optRateSmall{\banach}$,
  and choose $s \in (s_0, \optRateSmall{\banach})$.
  By definition of $\optRateSmall{\banach}$, there is a codec
  $\big( (E_R,D_R) \big)_{R \in \N} \in \codecs{\banach}$ such that
  ${\approxClass{s}<\signalClass>{\banach} (\code) = \signalClass}$.
  By Proposition~\ref{lem:CriticalMeasuresAreFine}, we thus obtain the contradiction
  ${1 = \P (\signalClass) = \P (\approxClass{s}<\signalClass>{\banach} (\code)) = 0}$.
\end{proof}

\subsection{Transferring critical measures}
\label{sec:TransferResults}

Our main goal in this paper is to prove a phase transition as in \eqref{eq:PhaseTransition}
for Besov- and Sobolev spaces.
To do so, we will first prove (in Section~\ref{sec:MainProof}) that such a phase-transition
occurs for a certain class of sequence spaces, and then transfer this result to the
Besov- and Sobolev spaces, essentially by discretizing these function spaces using suitable
wavelet systems.
In the present subsection, we formulate general results that allow such a transfer
from a phase transition as in \eqref{eq:PhaseTransition} from one space to another.

In general, it would be most convenient if we had access to an orthonormal wavelet basis
(or at least to a Riesz basis) of wavelets that is ``compatible'' with Besov- and Sobolev spaces.
For the setting of very general domains $\Omega \subset \R^d$ and the full range of parameters $p,q$,
however, it seems to be unknown whether such orthonormal wavelet bases exist.
Therefore, our transfer results will allow to use two distinct maps:
Essentially, one can use a frame to transfer the optimal compression rate,
and a (possibly different) Riesz sequence to transfer the critical measure.
In the abstract formulation of this section, this will be formulated using a Lipschitz continuous
surjection $\Phi$ (the synthesis operator of the frame) and an expansive injection $\Psi$
(the synthesis operator of the Riesz sequence).

The precise transference result reads as follows:

\begin{thm}\label{thm:LipschitzTransferResult}
  Let $\banachOne, \banachTwo, \banachThree$ be Banach spaces, and let
  $\signalClass_\banachOne \subset \banachOne$, $\signalClass_\banachTwo \subset \banachTwo$,
  and $\signalClass \subset \banachThree$.
  Assume that
  \begin{enumerate}
    \item \(
            \optRateSmall<\signalClass_\banachOne>{\banachOne}
            = \optRateSmall<\signalClass_{\banachTwo}>{\banachTwo}
            ;
          \)

    \item there exists a Lipschitz continuous map
          $\Phi : \signalClass_\banachOne \subset \banachOne \to \banachThree$
          satisfying $\Phi(\signalClass_{\banachOne}) \supset \signalClass$;

    \item there exists a Borel probability measure $\P$ on $\signalClass_\banachTwo$
          that is critical for $\signalClass_{\banachTwo}$ with respect to $\banachTwo$;

    \item there exists an \emph{expansive} measurable map $\Psi : \signalClass_{\banachTwo} \to \signalClass$,
          meaning that there is $\kappa > 0$ satisfying
          \[
            \| \Psi (\xvec) - \Psi(\xvec') \|_{\banachThree}
            \geq \kappa \cdot \| \xvec - \xvec' \|_{\banachTwo}
            \qquad \forall \, \xvec, \xvec' \in \signalClass_{\banachTwo}.
          \]
  \end{enumerate}

  Then $\optRateSmall{\banachThree} = \optRateSmall<\signalClass_\banachOne>{\banachOne}$,
  and the push-forward measure $\P \circ \Psi^{-1}$ is a Borel probability measure
  on $\signalClass$ that is critical for $\signalClass$ with respect to $\banachThree$.
\end{thm}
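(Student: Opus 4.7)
The plan is to prove the two conclusions in two stages: first, transfer the \emph{optimal compression rate} from $\signalClass_\banachOne$ to $\signalClass$ using the Lipschitz surjection $\Phi$ (this gives one inequality), then show that the pushforward measure $\widetilde{\P} := \P \circ \Psi^{-1}$ has logarithmic growth order $\optRateSmall<\signalClass_\banachTwo>{\banachTwo}$ on $\signalClass$ (this, combined with Corollary~\ref{cor:GrowthOrderAtLeastOptimalRate}, gives the reverse inequality and simultaneously establishes criticality).

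\textbf{Step 1 (Lipschitz lower bound on the optimal rate).} Fix $s < \optRateSmall<\signalClass_\banachOne>{\banachOne}$ and pick a codec $((E_R^{(1)}, D_R^{(1)}))_{R \in \N} \in \codecs<\signalClass_\banachOne>{\banachOne}$ with distortion $\lesssim R^{-s}$. Since $\Phi(\signalClass_\banachOne) \supset \signalClass$, the axiom of choice yields a section $\sigma : \signalClass \to \signalClass_\banachOne$ with $\Phi \circ \sigma = \id_\signalClass$; note that no measurability of $\sigma$ is required, because encoders in $\endec{\banachThree}$ are not required to be measurable. Define $E_R := E_R^{(1)} \circ \sigma$ and $D_R := \Phi \circ D_R^{(1)}$. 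If $L$ denotes the Lipschitz constant of $\Phi$, then for every $\yvec \in \signalClass$, writing $\xvec := \sigma(\yvec)$,
\[
\| \yvec - D_R (E_R (\yvec)) \|_{\banachThree}
= \| \Phi(\xvec) - \Phi ( D_R^{(1)} (E_R^{(1)} (\xvec))) \|_{\banachThree}
\leq L \cdot \| \xvec - D_R^{(1)} (E_R^{(1)} (\xvec)) \|_{\banachOne} ,
\]
so distortion $\lesssim R^{-s}$ is preserved. Taking the supremum over $s < \optRateSmall<\signalClass_\banachOne>{\banachOne}$ yields $\optRateSmall{\banachThree} \geq \optRateSmall<\signalClass_\banachOne>{\banachOne}$.

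\textbf{Step 2 (pushforward is well-defined).} By the measurability convention in the paper, $\Psi$ is measurable as a map into $\banachThree$, so for any Borel $B \in \signalClass \Cap \mathcal{B}_{\banachThree}$, write $B = \widetilde{B} \cap \signalClass$ and observe $\Psi^{-1}(B) = \Psi^{-1}(\widetilde{B}) \in \signalClass_\banachTwo \Cap \mathcal{B}_\banachTwo$. Hence $\widetilde{\P}(B) := \P(\Psi^{-1}(B))$ defines a Borel probability measure on $\signalClass$.

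\textbf{Step 3 (growth order of the pushforward).} Let $s_0 := \optRateSmall<\signalClass_\banachTwo>{\banachTwo}$ and fix $s > s_0$; let $c, \eps_0 > 0$ be the constants from Definition~\ref{def:GrowthRate} for $\P$. For any $\yvec \in \banachThree$ and $\eps > 0$, the expansiveness of $\Psi$ gives
\[
\| \xvec - \xvec' \|_{\banachTwo}
\leq \kappa^{-1} \| \Psi(\xvec) - \Psi(\xvec') \|_{\banachThree}
\leq 2 \eps / \kappa
\qquad \forall \, \xvec, \xvec' \in \Psi^{-1}(\ball(\yvec, \eps; \banachThree)) .
\]
Thus, provided the set is nonempty, picking any $\xvec_0$ in it yields $\Psi^{-1}(\ball(\yvec, \eps; \banachThree)) \subset \ball(\xvec_0, 2\eps/\kappa; \banachTwo)$, so
\[
\widetilde{\P} \bigl( \signalClass \cap \ball(\yvec, \eps; \banachThree) \bigr)
\leq \P \bigl( \signalClass_\banachTwo \cap \ball(\xvec_0, 2\eps/\kappa; \banachTwo) \bigr)
\leq 2^{-c \cdot (2\eps/\kappa)^{-1/s}}
= 2^{-c' \cdot \eps^{-1/s}}
\]
for all $\eps < \kappa \eps_0 / 2$, with $c' := c (\kappa/2)^{1/s} > 0$. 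This shows that $\widetilde{\P}$ has logarithmic growth order $s_0 = \optRateSmall<\signalClass_\banachOne>{\banachOne}$ with respect to $\banachThree$.

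\textbf{Step 4 (combining via Corollary~\ref{cor:GrowthOrderAtLeastOptimalRate}).} Since $\widetilde{\P}$ has growth order $\optRateSmall<\signalClass_\banachOne>{\banachOne}$ on $\signalClass \subset \banachThree$, the corollary forces $\optRateSmall{\banachThree} \leq \optRateSmall<\signalClass_\banachOne>{\banachOne}$. Combined with Step 1, this yields $\optRateSmall{\banachThree} = \optRateSmall<\signalClass_\banachOne>{\banachOne}$, and hence $\widetilde{\P}$ has growth order equal to $\optRateSmall{\banachThree}$, i.e.\ is critical.

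\textbf{Main obstacle.} The conceptually delicate step is Step 3: the expansiveness assumption is tailor-made to ensure that small balls in $\banachThree$ pull back to small sets in $\banachTwo$, but one has to verify this without assuming continuity of $\Psi^{-1}$ (which need not even exist on all of $\banachThree$) and without invoking measurability of the preimage beyond what Step 2 already provides. Everything else is bookkeeping: Step 1 is an elementary Lipschitz estimate, Step 2 is routine, and Step 4 is a direct appeal to the previously established corollary, which is the only place where the equality of optimal rates in hypothesis~(1) is needed.
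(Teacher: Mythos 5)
Your overall architecture matches the paper's: the paper factors the argument into Lemma~\ref{lem:ApproximationRateTransference} (Lipschitz transfer of the rate lower bound, your Step~1), Lemma~\ref{lem:CriticalMeasuresPushForward} (expansive pushforward preserves growth order, your Steps~2--3), and Lemma~\ref{lem:OneSidedBoundsSufficeForExactRate} / Corollary~\ref{cor:GrowthOrderAtLeastOptimalRate} (your Step~4). Steps~2--4 are correct and essentially identical to the paper's reasoning, including the $2\eps/\kappa$ triangle-inequality trick and the constants $c' = c\,(\kappa/2)^{1/s}$, $\eps_0' = \kappa\eps_0/2$.

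Step~1, however, has a well-definedness gap as written. You set $D_R := \Phi \circ D_R^{(1)}$ and then apply the Lipschitz estimate to $\Phi\bigl(D_R^{(1)}(E_R^{(1)}(\xvec))\bigr)$. But $\Phi$ is only defined on $\signalClass_{\banachOne}$, whereas the decoder $D_R^{(1)}$ maps $\{0,1\}^R$ into all of $\banachOne$; there is no reason for its outputs (even the ones actually attained as $D_R^{(1)}(E_R^{(1)}(\xvec))$ for $\xvec \in \signalClass_{\banachOne}$) to lie in $\signalClass_{\banachOne}$, so the composition and the Lipschitz inequality are not available. The paper repairs exactly this point in the proof of Lemma~\ref{lem:ApproximationRateTransference}: before applying $\Phi$, each decoded point $D_R(c)$ is replaced by an approximate nearest point $\Psi_{R^{-s}}(D_R(c)) \in \signalClass_{\banachOne}$ with $\|D_R(c) - \Psi_{R^{-s}}(D_R(c))\| \leq R^{-s} + \dist(D_R(c), \signalClass_{\banachOne})$, and one then uses that $\dist(D_R(E_R(\xvec)), \signalClass_{\banachOne}) \leq C R^{-s}$ for the codewords that actually occur; this costs only a constant factor $L(1+2C)$. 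With that modification your Step~1 goes through and the rest of your proof is sound. (A minor additional remark: the paper replaces your choice-function encoder $E_R^{(1)} \circ \sigma$ by a best-codeword encoder $E_R^\ast$, which avoids needing a section of $\Phi$ at all, but your use of an arbitrary section is legitimate since encoders are not required to be measurable.)
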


\begin{rem*}
  1) In many cases, it is natural to take $\signalClass_\banachOne = \signalClass_\banachTwo$
  and $\Phi = \Psi$.
  As we will see in Section~\ref{sec:Examples}, however, the added flexibility of the formulation
  above is necessary to transfer critical measures from the sequence spaces $\generalSpace{\theta}{q}$
  considered in Section~\ref{sec:MainProof} to Besov and Sobolev spaces.

  2) As mentioned in Section~\ref{sub:Notation}, regarding the measurability of $\Psi$,
  $\signalClass_{\banachTwo}$ is equipped with the trace $\sigma$-algebra of the Borel $\sigma$-algebra
  on $\banachTwo$, and analogously for $\signalClass$.
\end{rem*}

\begin{proof}
  The proof is given in Appendix~\ref{sec:TransferResultsProofs}.
\end{proof}

\section{Proof of the phase transition in \texorpdfstring{$\ell^2(\indexSet)$}{ℓ²(𝓘)}}
\label{sec:PhaseTrans_ss}
\label{sec:MainProof}

In this section, we provide the proof of the phase transition for a class of sequence spaces
associated to Sobolev and Besov spaces; these sequences spaces are defined in Section~\ref{sub:SequenceSpaceMainResult},
where we also formulate the main result (Theorem~\ref{thm:MainSequenceSpaceResult})
concerning the compressibility phase transition for these spaces.
Section~\ref{sec:ssembed} establishes elementary embedding results for these spaces
and provides a lower bound for their optimal compression rate;
the latter essentially follows by adapting results by Leopold \cite{LeopoldEntropyNumbersOfWeightedSequenceSpaces}
to our setting.
The construction of the critical probability measure for the sequence spaces is presented
in Section~\ref{sec:constrmeas}, while the proof of Theorem~\ref{thm:MainSequenceSpaceResult}
is given in Section~\ref{sub:QuantitativeInverseTheorem}.

\subsection{Main Result}%
\label{sub:SequenceSpaceMainResult}

\begin{defn}[$d$-regular partitions] \label{def:RegPart}
  Let $\indexSet$ be a countably infinite index set, and
  $\partition = (\indexSet_m)_{m \in \N}$ be a partition of $\indexSet$;
  that is, $\indexSet = \biguplus_{m=1}^\infty \indexSet_m$, where the union is disjoint.
  For $d \in \N$ we call $\partition$ a \emph{$d$-regular partition},
  if there are $0 < a < A < \infty$ satisfying
  \begin{equation}\label{eq:weightequival}
    a \, 2^{d m} \leq |\indexSet_m| \leq A \, 2^{d m}
    \quad \mbox{for all $m\in \bn$.}
  \end{equation}
\end{defn}

\noindent
\textbf{Convention:} We will always assume that $\indexSet$, $\partition$ and $d$ have this meaning.

Associated with a $d$-regular partition we now define the following family of weighted sequence spaces.
\begin{defn}[Sequence Spaces]\label{def:SequenceSpaces}
  Let $p,q \in (0,\infty]$ and $\alpha, \theta \in \R$.
  For ${\xvec = (x_i)_{i \in \indexSet} \in \R^{\indexSet}}$, we define
  \begin{equation}
    \xvec_m := \xvec|_{\indexSet_m} = (x_i)_{i \in \indexSet_m}
    \qquad\!\! \text{and} \!\!\qquad
    \| \xvec \|_{\generalSpace{\theta}{q}}
    := \Big\|
         \Big(
           2^{\alpha m} \cdot m^\theta \cdot \big\| \xvec_m \big\|_{\ell^p (\indexSet_m)}
         \Big)_{m \in \N}
       \Big\|_{\ell^q(\N)} .
    \label{eq:mixnorm}
  \end{equation}

  The \emph{mixed-norm sequence space $\generalSpace{\theta}{q}$}
  is
  \[
    \generalSpace{\theta}{q}
    := \left\{
         \xvec \in \R^{\indexSet}
         \quad \colon \quad
         \| \xvec \|_{\generalSpace{\theta}{q}} < \infty
       \right\} .
  \]
  For brevity, we also define $\mixSpace{q} := \generalSpace{0}{q}$ and
  \[
    \generalSignalClass := \ball \big( 0,1;\generalSpace{\theta}{q} \big) ,
    \qquad \text{as well as} \qquad
    \sequenceSpaceSignalClass := \generalSignalClass{0}.
  \]
\end{defn}

In the remainder of this section, we will prove the existence of a critical measure
on each of the sets $\sequenceSpaceSignalClass$,
provided that $\alpha > d \cdot (\frac{1}{2} - \frac{1}{p})_+$.
In the proof, the (otherwise not really important) spaces $\generalSpace{\theta}{q}$ will play
an essential role.
Our main result is thus the following theorem, the proof of which is given in
Section~\ref{sub:QuantitativeInverseTheorem} below.

\begin{thm}\label{thm:MainSequenceSpaceResult}
  Let $p,q \in (0,\infty]$ and $\alpha \in \R$,
  and assume that $\alpha > d \cdot \big( \frac{1}{2} - \frac{1}{p} \big)_+$.

  Then $\sequenceSpaceSignalClass \subset \ell^2(\indexSet)$ is compact and hence Borel measurable,
  its optimal compression rate is given by
  ${\optRate<\sequenceSpaceSignalClass>{\ell^2(\indexSet)} = \frac{\alpha}{d} - (\frac{1}{2} - \frac{1}{p})}$,
  and there exists a Borel probability measure $\P_{\partition,\alpha}^{p,q}$ on $\sequenceSpaceSignalClass$
  that is critical for $\sequenceSpaceSignalClass$ with respect to $\ell^2(\indexSet)$.
  In particular, the phase transition described in Theorem~\ref{thm:DichotomyBanach} holds.
\end{thm}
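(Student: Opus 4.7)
The theorem has three components: compactness of $\sequenceSpaceSignalClass$ in $\ell^2(\indexSet)$, the formula $\optRate<\sequenceSpaceSignalClass>{\ell^2(\indexSet)} = s^\ast$ with $s^\ast := \alpha/d - (1/2 - 1/p)$, and the existence of a critical Borel probability measure on $\sequenceSpaceSignalClass$. My plan follows the paper's decomposition: first establish compactness and an upper bound on $\optRate<\sequenceSpaceSignalClass>{\ell^2(\indexSet)}$ by adapting the entropy-number arguments of Leopold \cite{LeopoldEntropyNumbersOfWeightedSequenceSpaces}; then build a critical probability measure via a product-measure construction in the spirit of \cite{MMR99}; and finally invoke Corollary~\ref{cor:GrowthOrderAtLeastOptimalRate} together with Theorem~\ref{thm:DichotomyBanach} to close the loop and obtain the phase transition.

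\textbf{Compactness and the encoding upper bound.} Compactness of $\sequenceSpaceSignalClass \subset \ell^2(\indexSet)$ follows from the blockwise identity $\| \xvec \|_{\ell^2}^2 = \sum_m \| \xvec_m \|_{\ell^2(\indexSet_m)}^2$, the finite-dimensional bound $\| \xvec_m \|_{\ell^2} \leq |\indexSet_m|^{(1/2 - 1/p)_+} \| \xvec_m \|_{\ell^p}$, and the density estimate $|\indexSet_m| \asymp 2^{dm}$: the hypothesis $\alpha > d(1/2-1/p)_+$ produces uniform $\ell^2$-tail decay on the unit ball, while finite-dimensionality of each block gives compactness of the truncated image. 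For $\optRate<\sequenceSpaceSignalClass>{\ell^2(\indexSet)} \geq s^\ast$, I would allocate a bit-budget $R_m$ to block $m$ (with $\sum_m R_m \leq R$ chosen to balance contributions), quantize each block via a minimal $\ell^p$-covering of $\ball(0, 2^{-\alpha m}; \ell^p(\indexSet_m))$ at resolution $\eps_m \asymp 2^{-\alpha m} N_m^{(1/2-1/p)_+} 2^{-R_m/N_m}$, and sum $\ell^2$-contributions across blocks to obtain a codec with distortion $\mathcal{O}(R^{-s^\ast})$; this is the sequence-space translation of Leopold's entropy estimates.

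\textbf{The critical measure.} I would set $\prob_{\partition,\alpha}^{p,q} := \bigotimes_{m \in \N} \mu_m$, where $\mu_m$ is the normalized uniform measure on $\ball(0, \rho_m; \ell^p(\indexSet_m))$ with $\rho_m := c_0 \cdot 2^{-\alpha m} m^{-\beta}$, and $\beta > 1/q$ and $c_0 > 0$ are chosen so that $\sum_m (2^{\alpha m} \rho_m)^q \leq 1$, ensuring $\mathrm{supp}(\prob) \subset \sequenceSpaceSignalClass$. Since $\sequenceSpaceSignalClass$ is compact, Borel measurability is automatic. For the critical-growth bound, fix $s > s^\ast$, $\xvec \in \ell^2(\indexSet)$, and $\eps > 0$ small. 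The blockwise $\ell^2$-decomposition gives
\[
  \prob \bigl( \ball(\xvec, \eps; \ell^2(\indexSet)) \bigr)
  \leq \prod_{m \in \N}
         \mu_m \bigl( \ball(\xvec_m, \eps; \ell^2(\indexSet_m)) \bigr) ,
\]
and the standard formulas $\mathrm{Vol}(\ball_{\ell^p}^N) = (2\Gamma(1+1/p))^N / \Gamma(1+N/p)$ and $\mathrm{Vol}(\ball_{\ell^2}^N) = \pi^{N/2}/\Gamma(1+N/2)$ yield, with $N_m := |\indexSet_m| \asymp 2^{dm}$,
\[
  \mu_m \bigl( \ball(\xvec_m, \eps; \ell^2(\indexSet_m)) \bigr)
  \leq \bigl( C \cdot \eps \cdot N_m^{1/p - 1/2} / \rho_m \bigr)^{N_m} .
\]
Writing $\tilde\rho_m := \rho_m \cdot N_m^{1/2 - 1/p} \asymp 2^{-d s^\ast m}$, the dominant contribution comes from the boundary block $m_\ast(\eps)$ at which $\tilde\rho_{m_\ast} \asymp \eps$, so $m_\ast(\eps) \asymp (d s^\ast)^{-1} \log_2(1/\eps)$ and $N_{m_\ast} \asymp \eps^{-1/s^\ast}$; a geometric sum of $N_m \log_2(\tilde\rho_m/\eps)$ over $m \leq m_\ast(\eps)$ produces $\log_2 \prob(\ball(\xvec, \eps; \ell^2(\indexSet))) \leq -c \cdot \eps^{-1/s^\ast}$. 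Hence $\prob$ has logarithmic growth order $s^\ast$, Corollary~\ref{cor:GrowthOrderAtLeastOptimalRate} forces $\optRate<\sequenceSpaceSignalClass>{\ell^2(\indexSet)} \leq s^\ast$, and Theorem~\ref{thm:DichotomyBanach} delivers the phase transition.

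\textbf{Main obstacle.} The delicate point is the volume comparison underlying the blockwise bound: the polynomial-in-$N_m$ correction $N_m^{N_m(1/p - 1/2)}$ coming from the ratio of $\ell^2$ and $\ell^p$ unit-ball volumes must be tracked precisely, because a loose constant per block compounds over infinitely many blocks and destroys the exponent $s^\ast$. Equally subtle is the choice of $\rho_m$, which must simultaneously provide $q$-summability of $(2^{\alpha m} \rho_m)_m$ (to keep $\mathrm{supp}(\prob)$ inside $\sequenceSpaceSignalClass$, forcing $\beta > 1/q$) and enough $\ell^p$-mass on each block that $\mu_m$ is genuinely spread out at the scale of the $\ell^2$-ball of radius $\tilde\rho_m$; it is precisely this balance, combined with the identification $\tilde\rho_m \asymp 2^{-d s^\ast m}$, that pins down the exponent $s^\ast = \alpha/d - (1/2 - 1/p)$ and ties the growth rate of $\prob$ to the optimal compression rate of $\sequenceSpaceSignalClass$.
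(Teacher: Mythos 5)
Your proposal is correct and follows essentially the same route as the paper: compactness and the lower bound on the rate via Leopold-type entropy estimates, a product of normalized uniform measures on shrinking $\ell^p$-balls (your direct choice $\rho_m = c_0 2^{-\alpha m} m^{-\beta}$ with $\beta > 1/q$ plays the role of the paper's detour through the $q=\infty$ ball and the embedding $\generalSpace{2/q}{\infty}\hookrightarrow\generalSpace{0}{q}$), the volume-ratio bound of Lemma~\ref{lem:ellPBallVolume}, and the closing step via Corollary~\ref{cor:GrowthOrderAtLeastOptimalRate}. One small imprecision: because of the per-block factors $C_p^{N_m}$ and $\eta_m$ the final bound is $-c\,\eps^{-1/s}$ for each fixed $s>s^\ast$ rather than $-c\,\eps^{-1/s^\ast}$, but that is exactly what Definition~\ref{def:GrowthRate} requires, and you correctly fix $s>s^\ast$ at the outset (the paper also simplifies by bounding the full product by the single block $m_0\approx\widetilde m(\eps)$, which suffices).
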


\subsection{Embedding results and a lower bound for the compression rate}
\label{sec:ssembed}

Having introduced the signal classes $\sequenceSpaceSignalClass$, we now collect two technical
ingredients needed to construct the measures $\sequenceProductMeasure$ on these sets:
A lower bound for the optimal compression rate of $\sequenceSpaceSignalClass$
(Proposition~\ref{prop:ApproximationRateLowerBound}) and certain elementary embeddings
between the spaces $\generalSpace{\theta}{q}$ for different choices of the parameters
(Lemma~\ref{lem:ElementaryEmbeddings}).

\begin{lem}\label{lem:ElementaryEmbeddings}
  Let $p,q,r \in (0,\infty]$ and $\alpha,\beta,\theta,\vartheta \in \R$.
  If $q > r$ and $\vartheta > \frac{1}{r} - \frac{1}{q}$,
  then $\generalSpace{\theta+\vartheta}{q} \hookrightarrow \generalSpace{\theta}{r}$.
  More precisely, there is a constant $\kappa = \kappa(r,q,\vartheta) > 0$ such that
  \(
    \| \xvec \|_{\generalSpace{\theta}{r}}
    \leq \kappa \cdot \| \xvec \|_{\generalSpace{\theta + \vartheta}{q}}
  \)
  for all $\xvec \in \R^{\indexSet}$.
\end{lem}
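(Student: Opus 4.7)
The plan is to reduce the embedding to a weighted $\ell^r$-versus-$\ell^q$ inequality on $\N$, and then to apply Hölder's inequality. Both norms share the factor $2^{\alpha m} \| \xvec_m \|_{\ell^p(\indexSet_m)}$, so setting
\[
  a_m := 2^{\alpha m} \cdot m^{\theta} \cdot \| \xvec_m \|_{\ell^p(\indexSet_m)}
  \qquad (m \in \N),
\]
the desired estimate $\| \xvec \|_{\generalSpace{\theta}{r}} \leq \kappa \, \| \xvec \|_{\generalSpace{\theta+\vartheta}{q}}$ becomes the purely scalar inequality
\[
  \big\| (a_m)_{m \in \N} \big\|_{\ell^r(\N)}
  \leq \kappa \cdot \big\| (m^{\vartheta} a_m)_{m \in \N} \big\|_{\ell^q(\N)} .
\]
So the whole statement is independent of $p$, $\indexSet$, $\partition$, $\alpha$ (and of the unused $\beta$), and reduces to a one-dimensional weighted embedding.

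Assume first $q < \infty$. I would write $|a_m| = m^{-\vartheta} \cdot (m^{\vartheta} |a_m|)$ and apply Hölder's inequality to $\sum_m |a_m|^r$ with conjugate exponents $\frac{q}{r}$ and $\frac{q}{q - r}$ (both $>1$ since $q > r$):
\[
  \sum_{m \in \N} |a_m|^r
  = \sum_{m \in \N} m^{-\vartheta r} \cdot (m^{\vartheta} |a_m|)^r
  \leq \Big( \sum_{m \in \N} m^{-\vartheta r \cdot \frac{q}{q-r}} \Big)^{\!\frac{q-r}{q}}
       \Big( \sum_{m \in \N} m^{\vartheta q} |a_m|^q \Big)^{\!\frac{r}{q}} .
\]
The first factor is finite precisely when $\vartheta r \cdot \frac{q}{q-r} > 1$, i.e.\ when $\vartheta > \frac{1}{r} - \frac{1}{q}$, which is exactly the assumption. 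Taking the $r$-th root yields the desired inequality with
\[
  \kappa
  = \Big( \sum_{m \in \N} m^{-\vartheta r q/(q-r)} \Big)^{\frac{q-r}{rq}}
  < \infty ,
\]
a constant depending only on $r$, $q$, $\vartheta$. For the remaining case $q = \infty$ (so $r < \infty$), I would simply bound
\[
  \sum_{m \in \N} |a_m|^r
  \leq \Big( \sum_{m \in \N} m^{-\vartheta r} \Big) \cdot \sup_{m \in \N} (m^{\vartheta} |a_m|)^r ,
\]
where the series converges iff $\vartheta r > 1 = r \cdot (\tfrac{1}{r} - \tfrac{1}{\infty})$, again exactly the hypothesis.

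There is no real obstacle here; the only thing to watch is to identify the correct conjugate exponent and verify that the assumption $\vartheta > \frac{1}{r} - \frac{1}{q}$ is precisely the summability condition it produces, uniformly in the case distinction $q < \infty$ vs.\ $q = \infty$. Since all implicit constants depend only on $r, q, \vartheta$ and not on $\xvec$, the claimed continuous embedding with the stated form of the constant $\kappa = \kappa(r,q,\vartheta)$ follows.
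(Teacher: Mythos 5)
Your proof is correct and follows essentially the same route as the paper's: the same reduction to a scalar weighted $\ell^r$-vs-$\ell^q$ inequality in the index $m$, the same Hölder application with conjugate exponents $q/r$ and $q/(q-r)$, and the same direct bound in the case $q=\infty$. The only differences are notational (your $a_m$ and $m^\vartheta a_m$ are the paper's $v_m u_m$ and $u_m$).
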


\begin{proof}
  The claim follows by an elementary application of Hölder's inequality;
  the details can be found in Appendix~\ref{sec:TechnicalSequenceSpaceProofs}.
\end{proof}

We continue by lower bounding the optimal compression rate of the classes $\sequenceSpaceSignalClass$.
As we will see in Theorem~\ref{thm:MainSequenceSpaceResult}, we actually have an equality.

\begin{prop}\label{prop:ApproximationRateLowerBound}
  Let $p,q \in (0,\infty]$ and $\alpha \in (0,\infty)$, and assume that
  ${\alpha > d \cdot (\tfrac{1}{2} - \tfrac{1}{p})_+}$.
  Then $\mixSpace{q} \hookrightarrow \ell^2(\indexSet)$ and
  $\sequenceSpaceSignalClass \subset \ell^2 (\indexSet)$ is compact with
  \({
    \optRate<\sequenceSpaceSignalClass>{\ell^2(\indexSet)}
    \geq \tfrac{\alpha}{d} - (\tfrac{1}{2} - \tfrac{1}{p})
  }\).
  Furthermore, there exists a codec
  ${\code = \big( (E_R, D_R) \big)_{R \in \N} \in \codecs<\sequenceSpaceSignalClass>{\ell^2(\indexSet)}}$
  satisfying
  \[
    \distortion<\sequenceSpaceSignalClass>{\ell^2(\indexSet)} (E_R, D_R)
    \lesssim R^{- \bigs( \frac{\alpha}{d} - (\frac{1}{2} - \frac{1}{p}) \bigs)}
    \qquad \forall \, R \in \N.
  \]
\end{prop}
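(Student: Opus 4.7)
The plan is to prove the three claims---continuous embedding, compactness, and existence of a rate-$s$ codec---in turn. Writing $\beta := \alpha - d(\tfrac{1}{2} - \tfrac{1}{p})_+$, the hypothesis on $\alpha$ gives $\beta > 0$, and one checks that the target rate $s := \tfrac{\alpha}{d} - (\tfrac{1}{2} - \tfrac{1}{p})$ satisfies $s = \beta/d$ when $p \geq 2$ and $s > \beta/d$ when $p < 2$; this dichotomy will dictate whether one uses linear or nonlinear approximation in the codec construction.

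I would begin with the elementary finite-dimensional inequality
$\|\xvec_m\|_{\ell^2(\indexSet_m)} \lesssim 2^{d m (\tfrac{1}{2} - \tfrac{1}{p})_+} \, \|\xvec_m\|_{\ell^p(\indexSet_m)}$,
which is Hölder's inequality when $p > 2$ and the trivial inclusion $\ell^p \subset \ell^2$ when $p \leq 2$, combined with the definition of $\mixSpace{q}$. Summing the squares in $m$ gives $\|\xvec\|_{\ell^2}^2 \lesssim \sum_{m} 2^{-2\beta m} (2^{\alpha m} \|\xvec_m\|_{\ell^p})^2$, and the exponential summability of $2^{-2\beta m}$ together with a Hölder step in the scale index (the argument behind Lemma~\ref{lem:ElementaryEmbeddings}) yields $\|\xvec\|_{\ell^2} \lesssim \|\xvec\|_{\mixSpace{q}}$. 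Compactness of $\sequenceSpaceSignalClass$ in $\ell^2(\indexSet)$ then follows by total boundedness: the same estimate gives the uniform tail bound $\sum_{m > M} \|\xvec_m\|_{\ell^2}^2 \lesssim 2^{-2\beta M}$ for all $\xvec \in \sequenceSpaceSignalClass$, while the truncation $\xvec \mapsto \xvec \cdot \indicator_{\bigcup_{m \leq M} \indexSet_m}$ takes values in a bounded subset of a finite-dimensional space; a standard diagonal argument then delivers total boundedness in $\ell^2(\indexSet)$.

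For the codec I would split by cases. When $p \geq 2$ a purely linear scheme suffices: truncate $\xvec$ at some scale $M$ and uniformly quantize the retained coordinates (which lie in $[-2^{-\alpha m}, 2^{-\alpha m}]$ at scale $m$, since any coordinate is bounded by $\|\xvec_m\|_{\ell^p} \leq 2^{-\alpha m}$). Balancing the tail error $\lesssim 2^{-\beta M}$ against the aggregated quantization error and choosing $M \sim \tfrac{1}{d} \log_2 R$ yields distortion $\lesssim R^{-\beta/d} = R^{-s}$. When $p < 2$ the linear rate is too slow, and one must use a nonlinear scheme driven by the Stechkin-type inequality $\|\xvec_m - \sigma_{n_m}(\xvec_m)\|_{\ell^2} \leq n_m^{-(1/p - 1/2)} \|\xvec_m\|_{\ell^p}$, where $\sigma_{n_m}(\xvec_m)$ retains the $n_m$ largest entries of $\xvec_m$. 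Encoding positions (using $\sim n_m \log_2(|\indexSet_m|/n_m)$ bits per scale) together with quantized values of the retained entries, and then optimizing the allocation $(n_m)_m$ against the total bit budget, recovers the sharper rate $\tfrac{\alpha}{d} + \tfrac{1}{p} - \tfrac{1}{2} = s$. This optimization is essentially the computation in Leopold~\cite{LeopoldEntropyNumbersOfWeightedSequenceSpaces} of entropy numbers for weighted sequence-space embeddings, and the resulting $\varepsilon$-nets may be packaged into encoder/decoder pairs via the discretization correspondence noted in Lemma~\ref{lem:EntropyAndDistortion}.

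The principal technical obstacle is the nonlinear case $p < 2$: one must choose the scale-wise allocation $(n_m)_m$ together with a variable-precision quantization---fine for large coefficients, coarser for small ones---so as to produce a \emph{single} codec valid uniformly on $\sequenceSpaceSignalClass$, while keeping the quantization overhead from inflating the bit count beyond $R$. The plan is to adapt Leopold's entropy estimates to the precise parameter range $\alpha > d(\tfrac{1}{2} - \tfrac{1}{p})_+$, verify that his hypotheses cover this range, and assemble the resulting covering into an explicit codec realizing the claimed compression rate.
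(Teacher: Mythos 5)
Your plan coincides, in its decisive step, with the paper's proof: both reduce the codec construction to Leopold's entropy-number estimates for the embeddings $\ell^q(\beta_j\,\ell^p_{N_j})\hookrightarrow \ell^2(\ell^2_{N_j})$ and then convert $\varepsilon$-coverings into encoder/decoder pairs via Lemma~\ref{lem:EntropyAndDistortion}; the paper does exactly this (Theorem~\ref{thm:LeopoldEntropyResults}, applied with part~(i) for $p\le 2$ and part~(ii) for $p>2$, where the extra hypothesis on $\bigl(\beta_j N_j^{1/p-1/2}\bigr)_j$ must be verified), and even derives the embedding $\mixSpace{q}\hookrightarrow\ell^2(\indexSet)$ and the compactness of $\sequenceSpaceSignalClass$ from the resulting decay $e_R\to 0$ rather than proving them separately as you do. Two caveats on the parts where you deviate. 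First, for compactness you only argue total boundedness; you still need closedness of $\sequenceSpaceSignalClass$ in $\ell^2(\indexSet)$ (the paper gets this from Fatou's lemma), since total boundedness alone only yields precompactness. Second, your ``purely linear scheme'' for $p\ge 2$ --- truncation plus \emph{coordinate-wise uniform quantization} --- does not achieve the sharp rate: making the aggregated quantization error over the $n_M\sim 2^{dM}$ finest-scale coordinates smaller than $2^{-dsM}$ forces roughly $dM/p$ bits per coordinate, i.e.\ a total budget of order $R\log R$, so the distortion is only $\lesssim (R/\log R)^{-s}$. Removing the logarithm requires covering each block $\ball(0,2^{-\alpha m};\ell^p(\indexSet_m))$ by an $\ell^2$-net via the volumetric entropy bound rather than quantizing coordinates individually --- which is precisely what Leopold's theorem packages, and why the paper does not attempt an explicit quantization scheme at all. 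Since you ultimately fall back on Leopold for both regimes, these are presentational rather than structural defects, but the explicit schemes as written should not be claimed to attain the rate $R^{-s}$ on their own.
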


\begin{proof}
In essence, this an entropy estimate for sequence spaces; see \cite{EdmundsTriebelFunctionSpacesEntropyNumbers}.
Since the precise proof is mainly technical, it is deferred to Appendix~\ref{sec:SequenceSpaceCompressionRateLowerBound}.
\end{proof}

\subsection{Construction of the measure}
\label{sec:constrmeas}

We now come to the technical heart of this section---the construction of the measures $\sequenceProductMeasure$.
We will provide different constructions for $q = \infty$ and for $q < \infty$:
Since for $q = \infty$ the class $\generalSignalClass{\theta}{\infty}$ has a natural
product structure (Lemma~\ref{lem:ballprod}), we define the
measure as a product measure (Definition~\ref{def:Measureinfty}).
We then use the embedding result of Lemma~\ref{lem:ElementaryEmbeddings} to transfer the measure
on $\generalSignalClass{\theta}{\infty}$ to
the general signal classes $\sequenceSpaceSignalClass$; see Definition~\ref{def:Measurenotinfty}.

We start with the elementary observation that the balls $\generalSignalClass{\theta}{\infty}$
can be written as infinite products of finite dimensional balls.

\begin{lem}\label{lem:ballprod}
  The balls of the mixed-norm sequence spaces satisfy (up to canonical identifications)
  the factorization
  \[
    \generalSignalClass{\theta}{\infty}
    = \ball \big( 0, 1 ; \generalSpace \big)
    = \prod_{m \in \N}
        \ball \big( 0, 2^{-\alpha m} \; m^{-\theta} ; \ell^p(\indexSet_m) \big).
  \]
\end{lem}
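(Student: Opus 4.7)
The claim amounts to unfolding the definition of $\|\cdot\|_{\generalSpace{\theta}{\infty}}$ and observing that, because the outer norm is $\ell^\infty(\N)$, the constraint $\|\xvec\|_{\generalSpace{\theta}{\infty}} \leq 1$ decouples across the partition blocks. My plan has two short steps.

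First I would make the canonical identification explicit: since $\partition = (\indexSet_m)_{m \in \N}$ is a partition of $\indexSet$, the restriction map
\[
  \R^{\indexSet} \to \prod_{m \in \N} \R^{\indexSet_m},
  \qquad \xvec \mapsto (\xvec_m)_{m \in \N},
  \qquad \xvec_m := \xvec|_{\indexSet_m},
\]
is a bijection, and each factor $\R^{\indexSet_m}$ is canonically normed by $\|\cdot\|_{\ell^p(\indexSet_m)}$. Under this identification, the product $\prod_{m \in \N} \ball\bigl(0, 2^{-\alpha m} m^{-\theta}; \ell^p(\indexSet_m)\bigr)$ corresponds to exactly those $\xvec \in \R^{\indexSet}$ for which $\|\xvec_m\|_{\ell^p(\indexSet_m)} \leq 2^{-\alpha m} m^{-\theta}$ holds for every $m \in \N$. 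Note that $2^{-\alpha m} m^{-\theta}$ is well-defined and positive for every $m \geq 1$, regardless of the signs of $\alpha$ and $\theta$.

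Second I would apply the definition \eqref{eq:mixnorm} with $q = \infty$, which yields
\[
  \|\xvec\|_{\generalSpace{\theta}{\infty}}
  = \sup_{m \in \N} \; 2^{\alpha m}\, m^{\theta}\, \|\xvec_m\|_{\ell^p(\indexSet_m)}.
\]
Consequently, $\xvec \in \ball(0, 1; \generalSpace{\theta}{\infty})$ if and only if $2^{\alpha m} m^{\theta} \|\xvec_m\|_{\ell^p(\indexSet_m)} \leq 1$ for every $m \in \N$, i.e.\ $\xvec_m \in \ball\bigl(0, 2^{-\alpha m} m^{-\theta}; \ell^p(\indexSet_m)\bigr)$ for every $m$. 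Combined with the identification from the first step, this is precisely the asserted factorization.

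There is no real obstacle: the lemma is a direct consequence of the fact that the outer aggregation in \eqref{eq:mixnorm} is an $\ell^\infty$-norm, which converts a single-ball condition into a pointwise condition across the product index $m$. The only subtlety is to make the canonical identification of $\R^{\indexSet}$ with $\prod_m \R^{\indexSet_m}$ explicit, which is immediate from the partition property.
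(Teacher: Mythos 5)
Your proposal is correct and follows essentially the same route as the paper's proof: identify $\R^{\indexSet}$ with $\prod_{m}\R^{\indexSet_m}$ via restriction to the partition blocks, write $\|\xvec\|_{\generalSpace{\theta}{\infty}} = \sup_{m}\bigl(2^{\alpha m} m^{\theta}\|\xvec_m\|_{\ell^p(\indexSet_m)}\bigr)$, and observe that the supremum condition decouples into one ball condition per block. You merely spell out the identification step a bit more explicitly than the paper does.
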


\begin{proof}
  We identify $\xvec \in \R^\indexSet$ with
  $(\xvec_m)_{m \in \N} \! \in \! \prod_{m \in \N} \R^{\indexSet_m}$,
  as defined in Equation~\eqref{eq:mixnorm}.
  Set $w_m := m^\theta \cdot 2^{\alpha m}$ for $m \in \N$.
  The statement of the lemma then follows by recalling that
  \[
    \| \xvec \|_{\generalSpace}
    = \sup_{m \in \N}
        \Big( w_m \cdot \|\xvec_m\|_{\ell^p(\indexSet_m)} \Big).
    \qedhere
  \]
\end{proof}

With Lemma~\ref{lem:ballprod} in hand we can readily define $\generalProductMeasure$
as a product measure.

\begin{defn}[Measures for $q=\infty$]\label{def:Measureinfty}
  Let $\partition = (\indexSet_m)_{m \in \N}$ be a $d$-regular partition of $\indexSet$.
  Let $\calB_{m}$ be the Borel $\sigma$-algebra on $\R^{\indexSet_m}$,
  and denote the Lebesgue measure on $(\R^{\indexSet_m}, \calB_{m})$ by $\lambda_{m}$.

  For $p \in (0,\infty]$ and $w_m > 0$ define the probability measure
  $\prob_{m}^{p,w_m}$ on $(\R^{\indexSet_m}, \calB_{m})$ by
  \begin{equation}
    \mathbb{P}_{m}^{p,w_m} :
        \mathcal{B}_{m} \to [0,1], \quad
        A               \mapsto \frac{\lambda_{m} \bigl( \strut
                                                    \ball \big( 0, w_m^{-1}; \ell^p (\indexSet_m) \big)
                                                    \cap A
                                                  \bigr)}
                                     {\lambda_{m} \bigl( \strut
                                                    \ball \big( 0, w_m^{-1}; \ell^p (\indexSet_m)
                                                  \bigr) }\,.
    \label{eq:MeasureOnFactors}
  \end{equation}

  Given $p \in (0,\infty]$ and $\alpha,\theta \in \R$ define $w_m := m^\theta \cdot 2^{\alpha m}$,
  let $\calB_{\indexSet}$ denote the product $\sigma$-algebra on $\R^{\indexSet}$,
  and define $\generalProductMeasure$ as the product measure of the family
  $\bigl(\prob_{m}^{p, w_m}\bigr)_{m \in \N}$ (see e.g.~\mbox{\cite[Section~8.2]{DudleyRealAnalysis}}):
  \begin{equation}\label{eq:MeasureForInfiniteQ}
    \generalProductMeasure
    := \bigotimes_{m \in \bn}
         \prob_{m}^{p, w_m}
    : \calB_{\indexSet} \to [0,1] .
  \end{equation}
\end{defn}

With the help of the preceding results, we can now describe the construction of the measure
$\sequenceProductMeasure$ on $\sequenceSpaceSignalClass$, also for $q < \infty$.
A crucial tool will be the embedding result from Lemma~\ref{lem:ElementaryEmbeddings}.

\begin{defn}[Measures for $q<\infty$]\label{def:Measurenotinfty}
  Let the notation be as in Definition~\ref{def:Measureinfty}.

  For given $q \in (0,\infty]$, choose (according to Lemma~\ref{lem:ElementaryEmbeddings}) a constant
  $\kappa = \kappa(q) > 0$ (with $\kappa = 1$ if $q = \infty$) such that
  $\| \xvec \|_{\generalSpace{0}{q}} \leq \kappa \cdot \| \xvec \|_{\generalSpace{2/q}{\infty}}$
  for all $\xvec \in \R^{\indexSet}$, and define
  \[
    \sequenceProductMeasure : \calB_{\indexSet} \to [0,1],
    A \mapsto \generalProductMeasure{2/q} (\kappa \cdot A) .
  \]
\end{defn}

In the following, we verify that the measures defined according to Definitions~\ref{def:Measureinfty}
and \ref{def:Measurenotinfty} are indeed (Borel) probability measures on the signal classes
$\generalSignalClass{\theta}{\infty}$ and $\sequenceSpaceSignalClass$, respectively.
To do so, we first show that the signal classes are measurable with respect to the product
$\sigma$-algebra $\calB_{\indexSet}$, and we compare this $\sigma$-algebra to the Borel $\sigma$-algebra
on $\ell^2(\indexSet)$.

\begin{lem}\label{lem:ProductSigmaAlgebraLarge}
  Let $\calB_{\indexSet}$ denote the product $\sigma$-algebra on $\R^{\indexSet}$
  and let $p,q \in (0,\infty]$ and ${\alpha, \theta \in \R}$.
  Then the (quasi)-norm
  ${\| \cdot \|_{\generalSpace{\theta}{q}}} : \R^{\indexSet} \to [0,\infty]$
  is measurable with respect to $\calB_{\indexSet}$.
  In particular, $\generalSignalClass \in \calB_{\indexSet}$.

  Further, the Borel $\sigma$-algebra $\calB_{\ell^2}$ on $\ell^2(\indexSet)$ coincides with the
  trace $\sigma$-algebra ${\ell^2(\indexSet) \Cap \calB_{\indexSet}}$.
\end{lem}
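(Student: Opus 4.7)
The plan is to tackle the two claims separately, both via routine measure‑theoretic arguments that exploit the countability of $\indexSet$ and the finiteness of each block $\indexSet_m$.

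For the first assertion, I would argue block by block. Since $\partition$ is a $d$‑regular partition, each $\indexSet_m$ has cardinality at most $A \cdot 2^{dm} < \infty$, so the map
\[
  \R^{\indexSet} \to [0,\infty),
  \quad
  \xvec \mapsto \| \xvec_m \|_{\ell^p(\indexSet_m)}
\]
is a continuous function of the finitely many coordinates $(x_i)_{i \in \indexSet_m}$. Each such coordinate is $\calB_{\indexSet}$‑measurable by the very definition of the product $\sigma$‑algebra, so $\xvec \mapsto \| \xvec_m \|_{\ell^p(\indexSet_m)}$ is $\calB_{\indexSet}$‑measurable. Consequently $\xvec \mapsto a_m(\xvec) := 2^{\alpha m} m^\theta \| \xvec_m \|_{\ell^p(\indexSet_m)}$ is measurable for every $m \in \N$. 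The full (quasi)‑norm is then either the countable supremum $\sup_{m \in \N} a_m(\xvec)$ (if $q = \infty$) or the monotone limit of the partial sums of $(a_m(\xvec))^q$ followed by a $q$‑th root (if $q < \infty$); in both cases $\| \cdot \|_{\generalSpace{\theta}{q}}$ is $\calB_{\indexSet}$‑measurable as a function into $[0,\infty]$. Taking the preimage of $[0,1]$ yields $\generalSignalClass \in \calB_{\indexSet}$.

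For the second assertion, I would prove two inclusions. For $\calB_{\ell^2} \supset \ell^2(\indexSet) \Cap \calB_{\indexSet}$, observe that each coordinate projection $\pi_i : \ell^2(\indexSet) \to \R$ is continuous (since $|x_i| \leq \| \xvec \|_{\ell^2}$), hence $\calB_{\ell^2}$‑measurable. Since the product $\sigma$‑algebra $\calB_{\indexSet}$ is generated by the collection of coordinate projections, its trace on $\ell^2(\indexSet)$ is generated by the restrictions $\pi_i|_{\ell^2(\indexSet)}$, and therefore the trace is contained in $\calB_{\ell^2}$.

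For the reverse inclusion $\calB_{\ell^2} \subset \ell^2(\indexSet) \Cap \calB_{\indexSet}$, I exploit separability. Because $\indexSet$ is countable, $\ell^2(\indexSet)$ is a separable metric space, so its Borel $\sigma$‑algebra is generated by the open balls $\ball(\xvec_0, r; \ell^2)$ with $\xvec_0$ ranging over a countable dense subset and $r$ rational. Each such ball equals $\ell^2(\indexSet) \cap \{ \xvec \in \R^{\indexSet} : \| \xvec - \xvec_0 \|_{\ell^2} < r \}$. By the very same argument used in the first part (applied with $p = q = 2$, $\alpha = \theta = 0$ and to the shifted argument $\xvec - \xvec_0$, whose coordinates are still $\calB_{\indexSet}$‑measurable), the function $\xvec \mapsto \| \xvec - \xvec_0 \|_{\ell^2}$ is $\calB_{\indexSet}$‑measurable, so each such open ball is the trace on $\ell^2(\indexSet)$ of a set in $\calB_{\indexSet}$. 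A countable generating family for $\calB_{\ell^2}$ therefore lies in $\ell^2(\indexSet) \Cap \calB_{\indexSet}$, and both inclusions are established.

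No step seems genuinely hard; the only subtlety I will have to keep in mind is that for $p < 1$ or $q < 1$ one is dealing with quasi‑norms rather than norms, but neither the measurability argument in Part 1 nor the separability argument in Part 2 is affected by this.
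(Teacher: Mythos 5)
Your proposal is correct and follows essentially the same route as the paper's proof: measurability of each finite-dimensional block norm followed by a countable sum/supremum for the first claim, and for the second claim the combination of (a) continuity of coordinate projections on $\ell^2(\indexSet)$ to get the trace $\sigma$-algebra inside $\calB_{\ell^2}$ and (b) separability of $\ell^2(\indexSet)$ plus $\calB_{\indexSet}$-measurability of $\xvec \mapsto \| \xvec - \xvec_0 \|_{\ell^2}$ to get the reverse inclusion. No gaps.
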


\begin{proof}
  The (mainly technical) proof is deferred to Appendix~\ref{sec:TechnicalSequenceSpaceProofs}.
\end{proof}

\begin{lem}\label{lem:MeasuresAreProbabilityMeasures}
  (a) The measure $\generalProductMeasure$ is a probability measure on
      \({
        \big(
          \generalSignalClass{\theta}{\infty},
           \generalSignalClass{\theta}{\infty} \Cap \calB_{\indexSet}
        \big)
      }\).

  \smallskip{}

  (b) If $\alpha > d \cdot (\frac{1}{2} - \frac{1}{p})_+$,
      then $\sequenceSpaceSignalClass \subset \ell^2(\indexSet)$,
      and the measure $\sequenceProductMeasure$ is a probability measure on
      $\big( \sequenceSpaceSignalClass, \sequenceSpaceSignalClass \Cap \calB_{\ell^2} \big)$,
      where $\calB_{\ell^2}$ denotes the Borel $\sigma$-algebra on $\ell^2(\indexSet)$.
\end{lem}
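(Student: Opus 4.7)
The plan is to verify that each claimed measure satisfies the three defining properties of a Borel probability measure — $\sigma$-algebra compatibility, $\sigma$-additivity, and normalization — on the claimed signal class, leveraging Lemmas~\ref{lem:ballprod} and~\ref{lem:ProductSigmaAlgebraLarge} throughout. For part~(a), I would first observe that each $\prob_m^{p,w_m}$ is a genuine Borel probability measure on $(\R^{\indexSet_m},\calB_m)$: because $\partition$ is $d$-regular we have $|\indexSet_m| < \infty$, and then $\ball(0, w_m^{-1}; \ell^p(\indexSet_m))$ is a bounded set with positive, finite Lebesgue measure in the finite-dimensional space $\R^{\indexSet_m}$. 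The standard construction of infinite products of probability spaces (as referenced in Definition~\ref{def:Measureinfty}) then yields $\generalProductMeasure$ as a Borel probability measure on $(\R^\indexSet, \calB_\indexSet)$.

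Next I would apply Lemma~\ref{lem:ballprod} to write $\generalSignalClass{\theta}{\infty} = \prod_{m \in \N} B_m$, where $B_m := \ball(0, w_m^{-1}; \ell^p(\indexSet_m))$, and invoke Lemma~\ref{lem:ProductSigmaAlgebraLarge} to conclude $\generalSignalClass{\theta}{\infty} \in \calB_\indexSet$. Expressing $\generalSignalClass{\theta}{\infty}$ as the intersection of the decreasing cylinder sets $C_N := B_1 \times \cdots \times B_N \times \prod_{m>N} \R^{\indexSet_m}$, each of which has $\generalProductMeasure$-measure $\prod_{m=1}^N \prob_m^{p,w_m}(B_m) = 1$ by the definition of $\prob_m^{p,w_m}$, continuity from above yields $\generalProductMeasure(\generalSignalClass{\theta}{\infty}) = 1$. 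Restriction to the trace $\sigma$-algebra $\generalSignalClass{\theta}{\infty} \Cap \calB_\indexSet$ is then automatic.

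For part~(b), the inclusion $\sequenceSpaceSignalClass \subset \ell^2(\indexSet)$ is the embedding $\mixSpace{q} \hookrightarrow \ell^2(\indexSet)$ established in Proposition~\ref{prop:ApproximationRateLowerBound}, and Borel-measurability of $\sequenceSpaceSignalClass$ in $\ell^2(\indexSet)$ follows from Lemma~\ref{lem:ProductSigmaAlgebraLarge} since $\sequenceSpaceSignalClass \in \calB_\indexSet$ and $\calB_{\ell^2} = \ell^2(\indexSet) \Cap \calB_\indexSet$. The key observation for $\sequenceProductMeasure$ is that the choice of $\kappa$ in Definition~\ref{def:Measurenotinfty} delivers $\generalSignalClass{2/q}{\infty} \subset \kappa \cdot \sequenceSpaceSignalClass$: for $\xvec \in \generalSignalClass{2/q}{\infty}$ one has $\|\xvec/\kappa\|_{\mixSpace{q}} \leq \|\xvec\|_{\generalSpace{2/q}{\infty}} \leq 1$. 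Consequently $\sequenceProductMeasure$ is simply the pushforward of $\generalProductMeasure{2/q}$ under the scaling $\xvec \mapsto \xvec/\kappa$.

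With this in hand, the three verifications are routine: given $A \in \sequenceSpaceSignalClass \Cap \calB_{\ell^2}$ we have $A \in \calB_\indexSet$ (by the identification from Lemma~\ref{lem:ProductSigmaAlgebraLarge}) and hence $\kappa A \in \calB_\indexSet$, so $\sequenceProductMeasure(A) = \generalProductMeasure{2/q}(\kappa A) \in [0,1]$ is well-defined; $\sigma$-additivity is immediate from $\kappa \bigcup_n A_n = \bigcup_n \kappa A_n$ together with the preservation of disjointness under scaling; and normalization follows from $\sequenceProductMeasure(\sequenceSpaceSignalClass) = \generalProductMeasure{2/q}(\kappa \sequenceSpaceSignalClass) \geq \generalProductMeasure{2/q}(\generalSignalClass{2/q}{\infty}) = 1$, where the lower bound uses part~(a) and the upper bound $\leq 1$ is automatic. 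As I see it, there is no single hard step; the real subtlety is the careful bookkeeping between the product $\sigma$-algebra $\calB_\indexSet$ on $\R^\indexSet$ and the Borel $\sigma$-algebra $\calB_{\ell^2}$ on $\ell^2(\indexSet)$, for which Lemma~\ref{lem:ProductSigmaAlgebraLarge} does the essential work.
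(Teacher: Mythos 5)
Your proposal is correct and follows essentially the same route as the paper: measurability of the signal classes via Lemma~\ref{lem:ProductSigmaAlgebraLarge}, normalization in part~(a) via the product factorization of Lemma~\ref{lem:ballprod}, and in part~(b) the inclusion $\generalSignalClass{2/q}{\infty} \subset \kappa \cdot \sequenceSpaceSignalClass$ forced by the choice of $\kappa$, giving $\sequenceProductMeasure(\sequenceSpaceSignalClass) \geq \generalProductMeasure{2/q}(\generalSignalClass{2/q}{\infty}) = 1$. You merely spell out a few steps the paper leaves implicit (the cylinder-set/continuity-from-above argument and the $\sigma$-additivity of the rescaled measure), which is fine.
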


\begin{proof}
  For the first part, Lemma~\ref{lem:ProductSigmaAlgebraLarge} implies that
  $\generalSignalClass{\theta}{\infty} \in \calB_{\indexSet}$, so that
  $\generalProductMeasure$ is a measure on $\generalSignalClass{\theta}{\infty} \Cap \calB_{\indexSet}$.
  Furthermore, Lemma~\ref{lem:ballprod} and Definition~\ref{def:Measureinfty} show
  $\generalProductMeasure(\generalSignalClass{\theta}{\infty}) = 1$.

  For the second part, recall from Proposition~\ref{prop:ApproximationRateLowerBound}
  that $\sequenceSpaceSignalClass \subset \ell^2(\indexSet)$,
  so that Lemma~\ref{lem:ProductSigmaAlgebraLarge} implies
  $\sequenceSpaceSignalClass \Cap \calB_{\ell^2} = \sequenceSpaceSignalClass \Cap \calB_{\indexSet}$,
  which easily implies that $\sequenceProductMeasure$ is a measure on
  $\sequenceSpaceSignalClass \Cap \calB_{\ell^2}$.
  Finally, observe that, by choice of $\kappa$, we have
  $\generalSignalClass{2/q}{\infty} \subset \kappa \cdot \sequenceSpaceSignalClass$,
  and hence
  \[
    1 \geq
    \sequenceProductMeasure (\sequenceSpaceSignalClass)
    \geq \sequenceProductMeasure (\kappa^{-1} \, \generalSignalClass{2/q}{\infty})
    =  \generalProductMeasure{2/q} (\generalSignalClass{2/q}{\infty})
    =  1.
    \qedhere
  \]
\end{proof}

\subsection{Proof of Theorem~\ref{thm:MainSequenceSpaceResult}}
\label{sub:QuantitativeInverseTheorem}

In this subsection, we prove that the measures $\sequenceProductMeasure$
constructed in Definition~\ref{def:Measurenotinfty} are critical, provided that
$\alpha > d \cdot (\frac{1}{2} - \frac{1}{p})_+$.
An essential ingredient for the proof is the following estimate for the volumes of balls
in $\ell^p ([m])$.

\begin{lem}\label{lem:ellPBallVolume}
  Let $m \in \N$ and $p \in (0,\infty]$.
  The $m$-dimensional Lebesgue measure of $\ball(0,1;\ell^p ([m]))$ is
  \begin{equation}\label{eq:lpballvol}
    \lambda_m \big( \ball(0,1;\ell^p([m])) \big)
    = \frac{2^m \cdot \big( \Gamma(1+ \frac 1 p) \big)^m}{\Gamma(1 + \frac{m}{p})}.
  \end{equation}

  For every $p \in (0,\infty]$ there exist constants $c_p, C_p \in (0,\infty)$,
  such that for all $m \in \bn$
  \begin{equation}
    c_p^m \cdot m^{-m (\pdiffs 2 p)}
    \leq \frac{\lambda_m \big( \ball(0,1;\ell^2([m])) \big)}
              {\lambda_m \big( \ball(0,1;\ell^p([m])) \big)}
    \leq C_p^m \cdot m^{-m (\pdiffs{2}{p})}\,.
    \label{eq:VolumeEstimate}
 \end{equation}
\end{lem}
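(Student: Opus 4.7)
The plan is a two-step calculation: derive the exact volume formula by a Gaussian-integral trick, then extract the asymptotic ratio using Stirling's formula.

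For the exact formula \eqref{eq:lpballvol}, I would compute the integral $I_p := \int_{\R^m} e^{-\|x\|_{\ell^p}^p} \, dx$ in two different ways. Separation of variables and the substitution $u = |t|^p$ in $\int_\R e^{-|t|^p}\, dt = 2\Gamma(1+1/p)$ give $I_p = \bigl(2\Gamma(1+1/p)\bigr)^m$. On the other hand, writing $V_m(p) := \lambda_m\bigl(\ball(0,1;\ell^p([m]))\bigr)$ and using the scaling $\{\|x\|_{\ell^p}\le r\} = r\cdot \{\|x\|_{\ell^p}\le 1\}$ together with the coarea/layer-cake identity yields
\[
  I_p = V_m(p)\int_0^\infty e^{-r^p}\, m r^{m-1}\,dr = V_m(p)\cdot \Gamma\bigl(1+m/p\bigr),
\]
where the last equality follows from the substitution $u=r^p$. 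Equating the two expressions gives \eqref{eq:lpballvol}. The case $p=\infty$ is handled either directly (the cube $[-1,1]^m$ has volume $2^m$) or by continuous extension, since $\Gamma(1+1/p)\to 1$ and $\Gamma(1+m/p)\to 1$ as $p\to\infty$.

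For the ratio bound \eqref{eq:VolumeEstimate} I simply divide the two closed forms:
\[
  \frac{V_m(2)}{V_m(p)}
  = \left(\frac{\Gamma(1+1/2)}{\Gamma(1+1/p)}\right)^m \cdot \frac{\Gamma(1+m/p)}{\Gamma(1+m/2)}.
\]
The prefactor is of the form $\gamma_p^m$ with $\gamma_p>0$ depending only on $p$. For the $\Gamma$-ratio I would apply Stirling's formula $\Gamma(1+x) = \sqrt{2\pi x}\,(x/e)^{x}\bigl(1+O(1/x)\bigr)$ to $x = m/p$ and $x = m/2$. After collecting the exponential contributions $(m/p)^{m/p}/(m/2)^{m/2} = m^{-m(1/2-1/p)}\cdot p^{-m/p} \cdot 2^{m/2}$ and the pure exponential $e^{m(1/2-1/p)}$, one obtains
\[
  \frac{\Gamma(1+m/p)}{\Gamma(1+m/2)}
  = m^{-m(1/2-1/p)}\cdot \alpha_p^m\cdot \beta_p(m),
\]
with $\alpha_p>0$ explicit and $\beta_p(m)$ a subexponential polynomial correction arising from the $\sqrt{2\pi x}$ factors and the $1+O(1/x)$ error. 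Multiplying with $\gamma_p^m$ and absorbing $\beta_p(m)$ into an exponential (using that $m^{C/m}$ and $\sqrt{m}^{1/m}$ tend to $1$, so $\beta_p(m) \in [\tilde c_p^m, \tilde C_p^m]$ for suitable constants) produces the claimed two-sided bound with constants $c_p,C_p$ depending only on $p$. The case $p=\infty$ is again covered by sending $1/p\to 0$.

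I do not anticipate any conceptual obstacle; once the exact formula is in hand the rest is bookkeeping with Stirling. The only mild care needed is in the sub-unit regime $p<2$, where $m^{-m(1/2-1/p)} = m^{m(1/p-1/2)}$ grows super-exponentially and the subexponential Stirling corrections must be uniformly absorbed into the exponential constants $c_p,C_p$ on both sides, as well as the boundary handling at $p=\infty$ via continuity and the convention $1/\infty = 0$.
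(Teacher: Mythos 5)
Your proposal is correct and follows essentially the same route as the paper: divide the two closed-form volumes and control the resulting ratio $\Gamma(1+m/p)/\Gamma(1+m/2)$ by a Stirling-type two-sided estimate, absorbing all subexponential corrections into the constants $c_p^m, C_p^m$. The only difference is that the paper outsources both ingredients to the literature (citing Vybíral for the exact volume formula and for the bound $\lambda_p x^{1/p} \le [\Gamma(1+x/p)]^{1/x} \le \Lambda_p x^{1/p}$, which is just a packaged form of Stirling), whereas you derive them from scratch via the $\int e^{-\|x\|_p^p}\,dx$ computation and Stirling's formula.
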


\begin{proof}
  A proof of \eqref{eq:lpballvol} can be found e.g.~in \cite[Theorem~5]{Vybiral18}.

  For proving \eqref{eq:VolumeEstimate}, it is shown in \cite[Lemma~4]{Vybiral18}
  that for each $p \in (0,\infty)$ there are constants $\lambda_p, \Lambda_p > 0$ satisfying
  \begin{equation}
    \lambda_p \cdot x^{1/p}
    \leq \Big[ \Gamma\big(1 + \tfrac{x}{p}\big) \Big]^{1/x}
    \leq \Lambda_p \cdot x^{1/p}
    \quad \forall \, x \in [1,\infty).
    \label{eq:StirlingAlternative}
  \end{equation}
  It is clear that this remains true also for $p = \infty$; in fact, since $\Gamma(1) = 1$,
  one can simply choose $\lambda_\infty = \Lambda_\infty = 1$ in this case.

  By \eqref{eq:lpballvol}, we see that
  \[
    \frac{\lambda_m \big( \ball(0,1;\ell^2([m])) \big)}
         {\lambda_m \big( \ball(0,1;\ell^p([m])) \big)}
    = \bigg(
        \frac{\Gamma(1 + \frac{1}{2})}{\Gamma(1 + \frac{1}{p})}
      \bigg)^m
      \cdot \frac{\Gamma(1 + \frac{m}{p})}{\Gamma(1 + \frac{m}{2})} ,
  \]
  and the estimate~\eqref{eq:StirlingAlternative} implies
  \[
    \frac{\lambda_p^m \cdot m^{m/p}}{\Lambda_2^m \cdot m^{m/2}}
    \leq \frac{\Gamma(1 + \frac{m}{p})}
              {\Gamma(1 + \frac{m}{2})}
    \leq \frac{\Lambda_p^m \cdot m^{m/p}}
              {\lambda_2^m \cdot m^{m/2}}.
  \]
  Hence, we can choose
  \(
    C_p
    := \frac{\Gamma(1 + \frac{1}{2}) \strut}
            {\strut \Gamma(1 + \frac{1}{p})}
       \cdot \frac{\strut \Lambda_p}{\strut \lambda_2}
  \)
  and
  \(
    c_p
    := \frac{\Gamma(1 + \frac{1}{2}) \strut}
            {\strut \Gamma(1 + \frac{1}{p})}
       \cdot \frac{\strut \lambda_p}{\strut \Lambda_2}
    .
  \)
\end{proof}

We are finally equipped to prove Theorem~\ref{thm:MainSequenceSpaceResult}.

\begin{proof}[Proof of Theorem~\ref{thm:MainSequenceSpaceResult}]
  \textbf{Step 1:} We show for $s^\ast := \frac{\alpha}{d} - (\frac{1}{2} - \frac{1}{p})$
  and arbitrary $\theta \in \R$ that the measure $\generalProductMeasure$ has
  growth order $s^\ast$ with respect to $\ell^2(\indexSet)$.

  To this end, let $s > s^\ast$ be arbitrary, and let
  $\eps \in (0, \eps_0)$ (for a suitable $\eps_0 > 0$ to be chosen below),
  and $\xvec \in \ell^2(\indexSet)$.
  We estimate the measure $\generalProductMeasure ( \ball(\xvec, \eps; \ell^2(\indexSet)))$ by
  estimating the measure of certain finite-dimensional projections of the ball, exploiting the
  product structure of the measure:
  Recall the identification $\xvec = (\xvec_m)_{m \in \N}$, where $\xvec_m = \xvec|_{\indexSet_m}$.
  Set $w_m := m^\theta \cdot 2^{\alpha m}$ for $m \in \N$, as in Definition~\ref{def:Measureinfty}.
  For arbitrary $m \in \N$, we have
  \begin{align*}
     \ball(\xvec, \eps; \ell^2(\indexSet))
    \subset \prod_{t=1}^{m-1} \R^{\indexSet_t}
            \times \ball(\xvec_m, \eps ; \ell^2(\indexSet_m))
            \times \prod_{t=m+1}^\infty \R^{\indexSet_t}.
  \end{align*}
  Using the product structure of $\generalProductMeasure$
  (cf.~Equation~\eqref{eq:MeasureForInfiniteQ}), we thus see for each $m \in \N$ that
  \begin{align*}
    \generalProductMeasure \bigl( \ball(\xvec, \eps; \ell^2(\indexSet))\bigr)
    & \leq \prob_{m}^{p,w_m}
           \left(
             \ball\left(\xvec_m, \eps; \ell^2(\indexSet_m)\right)
           \right) &\qquad\\
    & \leq \frac{
                 \lambda_{m}
                 \left(
                   \ball\left(\xvec_m, \eps; \ell^2(\indexSet_m) \right)
                 \right)
                }
                {
                 \lambda_{m}
                 \left(
                   \ball\left( 0, w_m^{-1}; \ell^p(\indexSet_m) \right)
                 \right)
                }
    & \qquad \text{by Equation  \eqref{eq:MeasureOnFactors}}\,,\\
    & = \eps^{n_m} \, w_m^{n_m} \cdot
        \frac{
              \lambda_{m}
              \left(
                \ball ( 0, 1; \ell^2(\indexSet_m) )
              \right)
             }
             {
              \lambda_{m}
              \left(
                \ball( 0, 1; \ell^p(\indexSet_m) )
              \right)
             }
     & \qquad \text{for } n_m := |\indexSet_m|,\\
     & \leq \Big(
              C_{p} \cdot \eps \, w_m \cdot n_m^{- (\pdiffs 2 p)}
            \Big)^{n_m}
     &\qquad \text{by Lemma } \ref{lem:ellPBallVolume} .
  \end{align*}
  From~\eqref{eq:weightequival} we see that $n_m = 2^{d m} \, \eta_m$ with $\eta_m \in [a,A]$.
  Therefore, we conclude
  \begin{align*}
    w_m \, n_m^{-(\pdiffs{2}{p})}
    & = m^\theta \,
        2^{\alpha m} \,
        2^{-md(\pdiffs{2}{p})} \,
        \eta_m^{-(\pdiffs{2}{p})} \\
    & = m^\theta \,
        2^{md s^*} \,
        \eta_m^{-(\pdiffs{2}{p})}
      \leq K_1 \cdot 2^{md s}
  \end{align*}
  for a suitable constant $K_1 = K_1(s,\theta,\alpha,d,p,a,A) > 0$, since $s > s^\ast$.
  Therefore,
  \begin{equation}\label{eq:fundmeasin}
    \generalProductMeasure \bigl( \ball(\xvec, \eps; \ell^2(\indexSet)) \bigr)
    \leq \big(
           C_{p} K_1
           \cdot \eps \, 2^{m d s}
         \big)^{2^{m d} \eta_m}
    \leq \big(
           K_2 \cdot \eps \cdot 2^{m d s}
         \big)^{2^{m d} \eta_m}
  \end{equation}
  for a suitable constant $K_2 = K_2 (s,\theta,\alpha,d,p,a,A) > 0$ and for arbitrary $m \in \N$.
  A candidate for an upper bound for $\generalProductMeasure ( \ball(\xvec, \eps; \ell^2(\indexSet)))$
  is a positive integer close to
  \[
    \widetilde{m} (\eps)
    := \mathop{\mathrm{argmin}}_{m\in \br}
         \big(
           K_2 \, \eps \, 2^{m s d}
         \big)^{2^{m d}}
    = -\frac{\log_2(K_2 \cdot \eps)}{d s}
       - \frac{\log_2e}{d} \,.
  \]
  Choose a positive $\eps_0 = \eps_0 (s,\theta,\alpha,d,p,a,A)$ so small that
  $\widetilde m(\eps) > 1$ for all $\eps \in (0, \eps_0)$.
  Set $m_0 := \lfloor \widetilde{m}(\eps) \rfloor \in \N$.
  By construction, $2^{d s \cdot \widetilde{m}(\eps)} = \frac{e^{- s}}{K_2 \cdot \eps}$,
  and hence $K_2 \, \eps \, 2^{d s \cdot m_0} \leq e^{- s } < 1$.

  For the exponent in \eqref{eq:fundmeasin}, observe that
  \[
    2^{d \, m_0} \, \eta_{m_0}
    \geq a \cdot 2^{d \cdot (\widetilde m(\eps) - 1)}
    =    \frac{a}{2^d} \cdot \big( 2^{d s \widetilde m(\eps)} \big)^{1/ s}
    =    \frac{a}{2^d \cdot e \cdot K_2^{1/s}} \cdot \eps^{-1/ s}
    =    K_3 \cdot \eps^{-1/s}
  \]
  for a constant $K_3 = K_3(s,\theta,\alpha,d,p,a,A)$.
  Now~\eqref{eq:fundmeasin} can be estimated further, yielding
  \[
    \generalProductMeasure \bigl( \ball(\xvec, \eps; \ell^2(\indexSet)) \bigr)
    \leq \big( K_2 \cdot \eps \cdot 2^{m_0 \, d s} \big)^{K_3 \cdot \eps^{-1/ s}}
    \leq e^{-{K_3 s} \cdot \eps^{-1/s}}
    =    2^{- K_4 \cdot \eps^{-1/s}},
  \]
  for a suitable constant $K_4 = K_4 (s,\theta,\alpha,d,p,a,A) > 0$.
  Since $s > s^\ast$ was arbitrary, this shows that $\generalProductMeasure$
  is of logarithmic growth order $s^\ast$; see Definition~\ref{def:GrowthRate}.

  \medskip{}

  \noindent
  \textbf{Step 2:}
  We show that $\sequenceProductMeasure$ is of growth order $s^\ast$ with respect to $\ell^2(\indexSet)$
  on $\sequenceSpaceSignalClass$.

  To see this, let $s > s^\ast$ be arbitrary, and choose (by virtue of Step~1) $\eps_0, c > 0$ such that
  \(
    \generalProductMeasure{2/q} \bigl( \ball(\xvec,\eps;\ell^2(\indexSet)) \bigr)
    \leq 2^{-c \,\cdot \eps^{-1/s}}
  \)
  for all $\xvec \in \ell^2(\indexSet)$ and $\eps \in (0, \eps_0)$.
  Recall from Definition~\ref{def:Measurenotinfty} that
  $\sequenceProductMeasure(M) = \generalProductMeasure{2/q}(\kappa M)$
  for a suitable $\kappa = \kappa(q) > 0$.
  Define $\eps_0' := \eps_0 / \kappa$ and $c' := c \, \kappa^{-1/s}$.

  Now, if $\eps \in (0,\eps_0')$, then $\kappa \eps \in (0,\eps_0)$ and hence
  \begin{align*}
    \sequenceProductMeasure\bigl( \ball(\xvec, \eps; \ell^2(\indexSet)) \bigr)
    & = \generalProductMeasure{2/q} \bigl( \kappa \ball(\xvec, \eps; \ell^2(\indexSet)) \bigr) \\
    & = \generalProductMeasure{2/q} \bigl( \ball(\kappa\xvec, \kappa\eps; \ell^2(\indexSet)) \bigr) \\
    & \leq 2^{-c \cdot (\kappa \eps)^{-1/s}}
      = 2^{-c' \cdot \eps^{-1/s}} ,
  \end{align*}
  proving that $\sequenceProductMeasure$ is of growth order $s^\ast$ with respect to $\ell^2(\indexSet)$.

  \medskip{}

  \noindent
  \textbf{Step 3:} \emph{(Completing the proof):}
  By Proposition~\ref{prop:ApproximationRateLowerBound}, $\sequenceSpaceSignalClass \subset \ell^2(\indexSet)$
  is compact with
  \(
    \optRate<\sequenceSpaceSignalClass>{\ell^2(\indexSet)}
    \geq s^\ast
    .
  \)
  By Step~2 and Lemma~\ref{lem:MeasuresAreProbabilityMeasures},
  $\sequenceProductMeasure$ is a Borel probability measure on $\sequenceSpaceSignalClass$
  of growth order $s^\ast$ with respect to $\ell^2(\indexSet)$.
  Thus, Lemma~\ref{lem:OneSidedBoundsSufficeForExactRate} shows that
  \(
    \optRate<\sequenceSpaceSignalClass>{\ell^2(\indexSet)}
    = s^\ast
  \)
  and that $\sequenceProductMeasure$ is critical for $\sequenceSpaceSignalClass$
  with respect to $\ell^2(\indexSet)$.
\end{proof}

\begin{rem*}
  The proof borrows its main idea (using the product measure structure of $\generalProductMeasure$
  to work on finite dimensional projections) from \cite{MMR99}.
\end{rem*}

\section{Examples}
\label{sec:Examples}

\subsection{Besov spaces on bounded open sets \texorpdfstring{$\Omega \subset \R^d$}{Ω ⊂ ℝᵈ}}

For Besov spaces on bounded domains,
we obtain the following consequence of Theorem~\ref{thm:MainSequenceSpaceResult},
by using suitable wavelet bases to ``transport'' the measure
$\sequenceProductMeasure$ to the Besov spaces.

For a review of the definition of Besov spaces (on $\R^d$ and on domains),
and the characterization of these spaces by wavelets, we refer to
Appendices~\ref{sub:BesovFourierDefinition} and \ref{sub:BesovWaveletCharacterization}.

\begin{thm}\label{thm:mainbesovresult}
  Let $\emptyset \neq \Omega \subset \R^d$ be open and bounded,
  let $p,q \in (0,\infty]$, and $\BesovSmoothness \in \R$ with
  $\BesovSmoothness > d \cdot (p^{-1} - 2^{-1})_{+}$.

  \setlength{\leftmargini}{0.9cm}
  Then
  \begin{enumerate}[label={(\roman*)}]
    \item $\signalClass := \ball \big( 0, 1; B_{p,q}^{\BesovSmoothness} (\Omega; \R) \big)$
          is a compact subset of $L^2(\Omega)$, and $\optRateSmall{L^2(\Omega)} = \frac{\tau}{d}$;

    \item there is a Borel probability measure $\P$ on $\signalClass$ that is critical for $\signalClass$
          with respect to $L^2(\Omega)$;

    \item \label{enu:BesovCriticalCodecExists}
          there is a codec $\code = \big( (E_R, D_R) \big)_{R \in \N} \in \codecs{L^2(\Omega)}$
          with $\distortion{L^2(\Omega)} (E_R, D_R) \lesssim R^{-\frac{\tau}{d}}$.
  \end{enumerate}
\end{thm}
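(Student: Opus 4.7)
The plan is to apply Theorem~\ref{thm:LipschitzTransferResult} with $\banachThree = L^2(\Omega)$, pulling back the critical measure from a sequence ball $\sequenceSpaceSignalClass$ of Theorem~\ref{thm:MainSequenceSpaceResult} through wavelet synthesis. Choosing $\alpha := \tau + d(\tfrac{1}{2} - \tfrac{1}{p})$ aligns exponents: the smoothness hypothesis $\tau > d(\tfrac{1}{p} - \tfrac{1}{2})_+$ rewrites as $\alpha > d(\tfrac{1}{2} - \tfrac{1}{p})_+$, and the sequence-space optimal rate $\tfrac{\alpha}{d} - (\tfrac{1}{2} - \tfrac{1}{p}) = \tfrac{\tau}{d}$ matches the rate we wish to certify.

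First I would fix a compactly supported tensor-product Daubechies wavelet basis $\{\psi_i\}_{i \in \indexSet}$ of $L^2(\R^d)$ with regularity and vanishing moments large enough to characterize $B_{p,q}^\tau(\R^d)$ via an isomorphism $\|F\|_{B_{p,q}^\tau(\R^d)} \asymp \|(c_i(F))_{i \in \indexSet}\|_{\mixSpace{q}}$ (Appendix~\ref{sub:BesovWaveletCharacterization}), with $\indexSet = \biguplus_{m \in \N} \indexSet_m$ graded by dyadic scale. Restricting to those wavelets whose supports meet $\Omega$ gives a subindex satisfying $|\indexSet_m| \asymp 2^{dm}$ (because $\Omega$ is bounded), so the resulting partition $\partition$ is $d$-regular in the sense of Definition~\ref{def:RegPart}. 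The wavelets are taken $L^2$-normalized, so that the weight $2^{\alpha m}$ in Definition~\ref{def:SequenceSpaces} corresponds exactly to the Besov normalization.

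The Lipschitz surjection $\Phi$ combines a bounded extension operator $\extension : B_{p,q}^\tau(\Omega) \to B_{p,q}^\tau(\R^d)$ with wavelet synthesis: every $f \in \signalClass$ has a wavelet representation coming from $\extension f$ whose coefficient sequence lies in $C \cdot \sequenceSpaceSignalClass$, so the map $\Phi : C \cdot \sequenceSpaceSignalClass \to L^2(\Omega)$, $c \mapsto (\sum_i c_i \psi_i)|_\Omega$, is Lipschitz and its image covers $\signalClass$. For the expansive injection $\Psi$ I would pass to a sufficiently fine scale $m_0$ and keep only wavelets supported \emph{inside} $\Omega$; the resulting subfamily is still $d$-regular, and on its indices the synthesis is an $L^2$-isometry onto an orthogonal subspace of $L^2(\Omega)$. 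A suitable rescaling $\Psi = C' \cdot (\text{synthesis})$ maps the corresponding sequence ball into $\signalClass$ while preserving quasi-norm equivalence, yielding expansiveness. Theorem~\ref{thm:MainSequenceSpaceResult}, applied to this interior subpartition, supplies both the matching optimal rate $\tau/d$ and the critical source measure. Hypotheses (1)--(4) of Theorem~\ref{thm:LipschitzTransferResult} are then in place and its conclusion delivers (i) and (ii). For (iii), one composes the decoder of Proposition~\ref{prop:ApproximationRateLowerBound} with $\Phi$, using its Lipschitz constant to transport the $O(R^{-\tau/d})$ distortion to $\signalClass$. Compactness of $\signalClass$ in $L^2(\Omega)$ follows from the Rellich-type embedding $B_{p,q}^\tau(\Omega) \hookrightarrow L^2(\Omega)$ under the given parameter constraints.

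The main obstacle is obtaining the wavelet characterization, extension operator, and embedding on a general bounded open set across the full quasi-Banach range $p, q \in (0, \infty]$. For Lipschitz domains these tools are standard (Daubechies--Cohen boundary-adapted wavelets, Rychkov or Triebel-type extension), but for merely bounded open $\Omega$ one likely needs to work with the intrinsic restriction definition $B_{p,q}^\tau(\Omega) = \{F|_\Omega : F \in B_{p,q}^\tau(\R^d)\}$; this is precisely the situation in which having two distinct maps---$\Phi$ (a frame-type synthesis coming from an extension) and $\Psi$ (a Riesz-sequence synthesis supported in the interior)---is essential, since a single orthonormal wavelet basis adapted to $B_{p,q}^\tau(\Omega)$ need not exist. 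The $p < 1$ regime demands the additional technical care typical of quasi-Banach spaces but introduces no new structural difficulty.
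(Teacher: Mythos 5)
Your proposal follows essentially the same route as the paper: choose $\alpha = \tau + d(\tfrac12-\tfrac1p)$, discretize via a compactly supported wavelet system, use the ``exterior'' index set (wavelets meeting $\Omega$, via the restriction definition of $B^\tau_{p,q}(\Omega)$) for the Lipschitz surjection $\Phi$ and the ``interior'' index set (wavelets supported in $\Omega$) for the $L^2$-isometric expansive map $\Psi$, then invoke Theorems~\ref{thm:MainSequenceSpaceResult} and \ref{thm:LipschitzTransferResult}, with part~(iii) obtained by pushing the entropy-number codec of Proposition~\ref{prop:ApproximationRateLowerBound} through $\Phi$. The one point you gloss over is compactness: the (Rellich-type) embedding only yields relative compactness of $\signalClass$ in $L^2(\Omega)$, and one still needs closedness of the Besov ball in $L^2(\Omega)$, which the paper establishes separately via a Fatou-type argument (Lemma~\ref{lem:BesovBallsMeasurable}), while total boundedness already follows from the positive compression rate.
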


\begin{rem*}
  In the discussion following Theorem~\ref{thm:DichotomyBanach}, we observed that the existence
  of a critical measure in general leaves open what happens for $s = s^\ast$.
  In the case of Besov spaces, the above theorem shows that the compression rate $s = s^\ast$
  is actually achieved by a suitable codec.
\end{rem*}

\begin{proof}
  Define $\alpha := \BesovSmoothness + d \cdot (2^{-1} - p^{-1})$, noting that
  \[
    \alpha
    > d \cdot \bigl[ (p^{-1} - 2^{-1})_{+} + (2^{-1} - p^{-1}) \bigr]
    = d \cdot (2^{-1} - p^{-1})_{+}
    \,\, ,
  \]
  so that $\alpha$ satisfies the assumptions of Theorem~\ref{thm:MainSequenceSpaceResult}.

  Using the wavelet characterization of Besov spaces, it is shown in
  Appendix~\ref{sub:WaveletsForBesovSpacesOnDomains}%
  \footnote{Precisely, this follows by combining Lemmas~\ref{lem:BesovSequenceSpaceConnection}
  and \ref{lem:WaveletSequenceSpacesAreNice} and by taking
  $Q_{\interior} = T_{\interior} \circ \iota_{\interior}$ and ${Q_{\ext} = T_{\ext} \circ \iota_{\ext}}$.}
  that there are countably infinite index sets $J^{\ext}, J^{\interior}$
  with associated $d$-regular partitions $\partition^{\ext} = \big( \indexSet_m^{\ext} \big)_{m \in \N}$
  and $\partition^{\interior} = \big( \indexSet_m^{\interior} \big)_{m \in \N}$
  and such that there are linear maps
  \[
    Q_{\interior} :
    \mixSpace[\partition^{\interior}]{q} \to B_{p,q}^{\BesovSmoothness} (\Omega ; \R)
    \quad \text{and} \quad
    Q_{\ext} :
    \mixSpace[\partition^{\ext}]{q} \to B_{p,q}^{\BesovSmoothness} (\Omega ; \R)
  \]
  with the following properties:
  \begin{enumerate}
    \item $\mixSpace[\partition^{\interior}]{q} \hookrightarrow \ell^2(J^{\interior})$
          and $\mixSpace[\partition^{\ext}]{q} \hookrightarrow \ell^2(J^{\ext})$;
          this follows from Proposition~\ref{prop:ApproximationRateLowerBound}.

    \item There is some $\gamma > 0$ such that
          $\| Q_\interior \, \cvec \|_{L^2(\Omega)} = \gamma \cdot \| \cvec \|_{\ell^2} < \infty$
          and furthermore
          \(
            \| Q_\interior \, \cvec \|_{B_{p,q}^\tau (\Omega)}
            \leq \| \cvec \|_{\mixSpace[\partition^{\interior}]{q}}
          \)
          for all $\cvec \in \mixSpace[\partition^{\interior}]{q}$.

    \item There is $\varrho > 0$ such that
          $\| Q_{\ext} \, \cvec \|_{L^2(\Omega)} \leq \varrho \cdot \| \cvec \|_{\ell^2} < \infty$
          for all $\cvec \in \mixSpace[\partition^{\ext}]{q}$, and
          \begin{equation}
            \ball \big( 0, 1; B_{p,q}^{\BesovSmoothness} (\Omega; \R) \big)
            \subset Q_{\ext} \big(
                               \ball \big( 0, 1; \mixSpace[\partition^{\ext}]{q} \big)
                             \big)
            \subset L^2 (\Omega) .
            \label{eq:BesovProofTExtProperty}
          \end{equation}
  \end{enumerate}
  Furthermore, Theorem~\ref{thm:MainSequenceSpaceResult} shows that\vspace*{-0.1cm}
  \[
    \optRateSmall<\sequenceSpaceSignalClass<\partition^{\interior}>>{\ell^2(J^{\interior})}
    = \optRateSmall<\sequenceSpaceSignalClass<\partition^{\ext}>>{\ell^2(J^{\ext})}
    = \frac{\alpha}{d} - \Bigl( \frac{1}{2} - \frac{1}{p} \Bigr)
    = \frac{\tau}{d}
  \]
  and that there exists a Borel probability measure $\P_0$ on
  $\ball (0,1;\mixSpace[\partition^{\interior}]{q})$ that is critical
  for $\ball (0,1;\mixSpace[\partition^{\interior}]{q})$ with respect to $\ell^2(J^{\interior})$.
  Therefore, we can apply Theorem~\ref{thm:LipschitzTransferResult} with the choices
  ${\banachOne = \ell^2(J^{\ext})}$, $\banachTwo = \ell^2(J^{\interior})$ and $\banachThree = L^2(\Omega)$
  as well as
  \[
    \signalClass_{\banachOne}
    = \ball\bigl(0,1;\mixSpace[\partition^{\ext}]{q}\bigr),
    \quad
    \signalClass_{\banachTwo} = \ball\bigl(0,1;\mixSpace[\partition^{\interior}]{q}\bigr),
    \quad \text{and} \quad
    \signalClass = \ball\bigl(0,1; B_{p,q}^\tau (\Omega;\R)\bigr) ,
  \]
  and finally $\Phi = Q_{\ext}$, $\Psi = Q_{\interior}$, and $\kappa = \gamma$.
  This theorem then shows ${\optRateSmall{L^2(\Omega)} = \frac{\tau}{d} > 0}$
  (in particular, $\signalClass \subset L^2(\Omega)$ is totally bounded and hence compact,
  since $\signalClass \subset L^2(\Omega)$ is closed by Lemma~\ref{lem:BesovBallsMeasurable})
  and that $\P := \P_0 \circ Q_{\interior}^{-1}$ is a Borel probability measure on $\signalClass$
  that is critical for $\signalClass$ with respect to $L^2(\Omega)$.

  Finally, Proposition~\ref{prop:ApproximationRateLowerBound} yields a codec
  \({
    \code^\ast
    = \big( (E_R^\ast, D_R^\ast) \big)_{R \in \N}
    \in \codecs<\signalClass_{\banachOne}>{\ell^2(J^{\ext})}
  }\)
  satisfying $\distortion<\signalClass^\ast>{\ell^2(J^\ext)} (E_R^\ast, D_R^\ast) \lesssim R^{-\frac{\tau}{d}}$.
  Furthermore, $Q_{\ext}$ is Lipschitz (with respect to ${\| \cdot \|_{\ell^2}}$ and $\| \cdot \|_{L^2}$)
  and satisfies \eqref{eq:BesovProofTExtProperty};
  thus, the remark after Lemma~\ref{lem:ApproximationRateTransference} shows that
  $\distortion<\signalClass>{L^2(\Omega)} (E_R, D_R) \lesssim R^{- \frac{\tau}{d}}$ for a suitable codec
  ${\code = \big( (E_R, D_R) \big)_{R \in \N} \in \codecs<\signalClass>{L^2(\Omega)}}$.
\end{proof}

\subsection{Sobolev spaces on Lipschitz domains \texorpdfstring{$\Omega \subset \R^d$}{Ω ⊂ ℝᵈ}}

Let $\emptyset \neq \Omega \subset \R^d$ be an open bounded Lipschitz domain
(precisely, we require $\Omega$ to satisfy the conditions
in \cite[Chapter~VI, Section~3.3]{SteinSingularIntegrals}).
We consider the usual \emph{Sobolev spaces} $W^{k,p}(\Omega)$
($k \in \N$ and $p \in [1,\infty]$), and prove that also for these spaces,
the phase transition phenomenon holds.
To be completely explicit, we endow the space $W^{k,p}(\Omega)$ with the following norm:
\begin{equation}
  \| f \|_{W^{k,p} (\Omega)}
  := \max_{|\alpha| \leq k}
       \| \partial^\alpha f \|_{L^p (\Omega)} .
  \label{eq:SobolevNormDefinition}
\end{equation}

Our phase-transition result reads as follows:

\begin{thm}\label{thm:SobolevPhaseTransition}
  Let $\emptyset \neq \Omega \subset \R^d$ be an open bounded Lipschitz domain.
  Let $k \in \N$ and $p \in [1,\infty]$, and define
  $\signalClass := \ball \big( 0, 1; W^{k,p}(\Omega) \big)$.
  \setlength{\leftmargini}{0.9cm}
  If $k > d \cdot (p^{-1} - 2^{-1})_{+}$, then
  \begin{enumerate}[label={(\roman*)}]
    \item $\signalClass \subset L^2(\Omega)$ is bounded
          and Borel measurable and satisfies $\optRateSmall{L^2(\Omega)} = \frac{k}{d}$;

    \item there is a Borel probability measure $\P$ on $\signalClass$ that is critical for
          $\signalClass$ with respect to $L^2(\Omega)$;

    \item \label{enu:SobolevCriticalCodecExists}
          there is a codec $\code = \big( (E_R, D_R) \big)_{R \in \N} \in \codecs{L^2(\Omega)}$
          with $\distortion{L^2(\Omega)} (E_R, D_R) \lesssim R^{-\frac{k}{d}}$.
  \end{enumerate}
\end{thm}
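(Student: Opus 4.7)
The plan is to reduce this theorem to its Besov analogue, Theorem~\ref{thm:mainbesovresult}, by sandwiching the Sobolev ball between two Besov balls and invoking the transfer machinery of Theorem~\ref{thm:LipschitzTransferResult}. The classical embeddings
\[
  B^k_{p,1}(\Omega) \hookrightarrow W^{k,p}(\Omega) \hookrightarrow B^k_{p,\infty}(\Omega)
\]
on the Lipschitz domain $\Omega$ (obtained from the corresponding whole-space embeddings via Stein's total extension operator \cite{SteinSingularIntegrals}) yield constants $C_1, C_2 > 0$ with
\[
  C_1^{-1} \cdot \ball\bigl(0,1;B^k_{p,1}(\Omega)\bigr)
  \;\subset\; \signalClass
  \;\subset\; C_2 \cdot \ball\bigl(0,1;B^k_{p,\infty}(\Omega)\bigr).
\]
Since $k > d(1/p - 1/2)_+$, Theorem~\ref{thm:mainbesovresult} applies to both outer Besov balls: each has optimal compression rate $k/d$ with respect to $L^2(\Omega)$ and is $L^2$-compact, and there exists a critical Borel probability measure $\P_0$ on $\ball(0,1;B^k_{p,1}(\Omega))$ with respect to $L^2(\Omega)$. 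In particular, the right-hand inclusion already shows that $\signalClass \subset L^2(\Omega)$ is bounded.

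With these ingredients in hand I would apply Theorem~\ref{thm:LipschitzTransferResult} taking $\banachOne = \banachTwo = \banachThree = L^2(\Omega)$, $\signalClass_\banachOne := \ball(0,1;B^k_{p,\infty}(\Omega))$, $\signalClass_\banachTwo := \ball(0,1;B^k_{p,1}(\Omega))$, $\Phi := C_2 \, \identity$, and $\Psi := C_1^{-1} \, \identity$. The sandwich above ensures $\Phi(\signalClass_\banachOne) \supset \signalClass$ and $\Psi(\signalClass_\banachTwo) \subset \signalClass$; $\Phi$ is $C_2$-Lipschitz on $L^2$ while $\Psi$ is $C_1^{-1}$-expansive; and $\signalClass_\banachOne,\signalClass_\banachTwo$ share the same optimal compression rate $k/d$ by Theorem~\ref{thm:mainbesovresult}. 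The transfer theorem then simultaneously yields $\optRateSmall{L^2(\Omega)} = k/d$ and produces the critical measure $\P := \P_0 \circ \Psi^{-1}$ on $\signalClass$, settling (i) and (ii). For (iii), I would scale by $C_2$ the codec provided by Theorem~\ref{thm:mainbesovresult}\ref{enu:BesovCriticalCodecExists} for the $B^k_{p,\infty}$-ball and restrict it from $C_2 \cdot \ball(0,1;B^k_{p,\infty}(\Omega))$ to $\signalClass$; this yields the required distortion $\lesssim R^{-k/d}$.

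The main technical hurdle I anticipate is the Borel measurability of $\signalClass$ in $L^2(\Omega)$, especially in the endpoint cases $p \in \{1,\infty\}$ where both the Besov-Sobolev embeddings and the standard duality arguments are delicate. For $1 < p \leq \infty$, $L^2$-closedness of $\signalClass$ follows from a standard lower semi-continuity argument: given $f_n \to f$ in $L^2$ with $\|f_n\|_{W^{k,p}} \leq 1$, one extracts a weakly (or weakly-$*$) convergent subsequence of each derivative $\partial^\alpha f_n$ in $L^p$, identifies the weak limit distributionally with $\partial^\alpha f$, and invokes norm lower semi-continuity. For $p = 1$ weak-$*$ limits of derivatives live only in the space of measures, so one argues indirectly --- for example by realizing $\signalClass$ as a Borel subset of the compact set $C_2 \cdot \ball(0,1;B^k_{1,\infty}(\Omega)) \subset L^2(\Omega)$ via Borel measurability of the map $f \mapsto \|f\|_{W^{k,1}(\Omega)} \in [0,\infty]$ on $L^2(\Omega)$, essentially mirroring Lemma~\ref{lem:BesovBallsMeasurable}. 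Every other step reduces to a direct invocation of a result established earlier in the paper or to a routine rescaling.
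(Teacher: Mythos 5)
Your architecture is the same as the paper's---sandwich the Sobolev ball between two Besov balls, then invoke Theorem~\ref{thm:LipschitzTransferResult} with $\Phi,\Psi$ the two scalings and transfer both the rate and the critical measure, finishing part (iii) with the remark after Lemma~\ref{lem:ApproximationRateTransference}---but your choice of sandwiching spaces is genuinely different. The paper uses the sharp Littlewood--Paley sandwich $B^k_{p,\widetilde p}\hookrightarrow F^k_{p,2}=W^{k,p}\hookrightarrow B^k_{p,\widehat p}$ with $\widetilde p=\min\{p,2\}$, $\widehat p=\max\{p,2\}$, which forces it to restrict to $p\in(1,\infty)$ and to handle $p\in\{1,\infty\}$ separately in Appendix~\ref{sec:SobolevExceptionalCaseProof} by routing one side of each sandwich through $W^{k,2}(\Omega)$ (using boundedness of $\Omega$) and the already-proved $p=2$ case. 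Your coarser sandwich $B^k_{p,1}\hookrightarrow W^{k,p}\hookrightarrow B^k_{p,\infty}$ is uniform in $p\in[1,\infty]$ and loses nothing, since the fine index $q$ does not affect the compression rate $k/d$; this is arguably cleaner. Two things you still owe: (a) the endpoint embeddings $B^k_{1,1}(\R^d)\hookrightarrow W^{k,1}(\R^d)$ and $W^{k,\infty}(\R^d)\hookrightarrow B^k_{\infty,\infty}(\R^d)$ are true but are not covered by anything the paper proves (Lemma~\ref{lem:IntegrableSobolevEmbedsIntoBesov} gives only $W^{k,p}\hookrightarrow B^k_{p,\infty}$ for $p<\infty$, and the paper deliberately avoids the other two), so you must supply references or short proofs, e.g.\ $\partial^\alpha: B^k_{p,1}\to B^0_{p,1}\hookrightarrow L^p$ for the first; (b) your treatment of Borel measurability for $p=1$ is only gestured at---the real obstruction is that the dual/test-function characterization of $\|f\|_{W^{k,1}}\le 1$ only captures a BV-type ball, since weak-$*$ limits of derivatives are measures; the paper's fix (Lemma~\ref{lem:SobolevSpaceMeasurableOnDomain}) is to intersect with the measurable set $\{f: \extension f\in W^{k,1}(\R^d)\}$, whose measurability rests on a mollification--Cauchy criterion (Lemma~\ref{lem:SobolevSpaceMeasurableOnWholeSpace}). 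Your plan correctly identifies where the difficulty lies but would need this extra device to close.
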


\begin{rem*}
  1) As for the case of Besov spaces, the theorem shows that the critical rate $s = s^\ast = \frac{k}{d}$
  is actually attained by a suitable codec.

  \medskip{}

  2) The condition $k > d \cdot (p^{-1} - 2^{-1})_{+}$ is \emph{equivalent}
  to $\signalClass \subset L^2(\Omega)$ being precompact.
  The sufficiency is a consequence of the \emph{Rellich-Kondrachov theorem};
  see \cite[Theorem~6.3]{AdamsSobolevSpaces}.
  For the converse implication, note that $k > d \cdot (p^{-1} - 2^{-1})_+$
  trivially holds for $p \geq 2$.
  In the remaining case $p < 2$, one can consider the sequence
  $\psi_n (x) := c \cdot n^{\frac{d}{p} - k} \cdot \psi( n \cdot (x - x_0))$,
  where $c > 0$, $x_0 \in \Omega$, and $\psi \in C_c^\infty (\R^d)$.
  It is easy to see that $\psi_n \in \signalClass$ for all $n \in \N$, for a suitable choice of $c > 0$,
  while $\psi_n \to 0$ almost everywhere, so that if $\signalClass \subset L^2(\Omega)$
  is precompact, then $\| \psi_n \|_{L^2} \to 0$, which easily implies
  $k > d \cdot (p^{-1} - 2^{-1})_+$.
\end{rem*}

\begin{proof}[Proof of Theorem~\ref{thm:SobolevPhaseTransition}]
  We present here the proof for the case $p \in (1,\infty)$, where we will see that
  the claim follows from that for the Besov spaces.
  For the case $p \in \{1,\infty\}$, the proof is more involved, and thus postponed
  to Appendix~\ref{sec:SobolevExceptionalCaseProof}.

  First, the Rellich-Kondrachov compactness theorem (see \cite[Theorem~6.3]{AdamsSobolevSpaces})
  shows that $W^{k,p}(\Omega)$ embeds compactly into $L^2(\Omega)$.
  In particular, $\signalClass = \ball (0,1; W^{k,p}(\Omega)) \subset L^2(\Omega)$
  is bounded; in fact, $\signalClass$ is also compact (hence Borel measurable)
  by reflexivity of $W^{k,p}(\Omega)$\footnote{
    Indeed, if $(f_n)_{n \in \N} \subset \signalClass$ is arbitrary, then since $W^{k,p}(\Omega)$
    is reflexive (see \cite[Example~8.11]{AltFA}), the closed unit ball in $W^{k,p}(\Omega)$ is
    weakly sequentially compact (see \cite[Theorem~8.10]{AltFA}), so that
    there is a subsequence $(f_{n_\ell})_{\ell \in \N}$ satisfying
    $f_{n_\ell} \,\raisebox{-0.05cm}{$\xrightharpoonup{W^{k,p}(\Omega)}$}\, f \in \signalClass$.
    Again by compactness of the embedding $W^{k,p}(\Omega) \hookrightarrow L^2(\Omega)$,
    this implies $f_{n_\ell} \,\raisebox{-0.05cm}{$\xrightarrow{L^2}$}\, f \in \signalClass$,
    showing that $\signalClass \subset L^2(\Omega)$ is compact.
  }.

  Define $\widetilde{p} := \min \{p, 2\}$ and $\widehat{p} := \max \{p, 2\}$,
  as well as $\signalClass_s := \ball \big( 0, 1; B_{p,\widetilde{p}}^k (\Omega) \big)$
  and $\signalClass_b := \ball \big( 0, 1; B_{p,\widehat{p}}^k (\Omega) \big)$.
  We will prove below that there are constants $C_1, C_2 > 0$ such that
  \begin{equation}
    C_1^{-1} \cdot \signalClass_s
    = \ball \big( 0, C_1^{-1}; B_{p,\widetilde{p}}^k (\Omega) \big)
    \subset \signalClass
    \subset \ball \big( 0, C_2; B_{p,\widehat{p}}^k (\Omega) \big)
    = C_2 \cdot \signalClass_b .
    \label{eq:BesovSobolevConnection}
  \end{equation}
  Assuming this for the moment, recall from Theorem~\ref{thm:mainbesovresult} that
  \(
    \optRateSmall<\signalClass_b>{L^2(\Omega)}
    = \optRateSmall<\signalClass_s>{L^2(\Omega)}
    = \frac{k}{d}
  \)
  and that there exists a Borel probability measure $\P_0$ on $\signalClass_s$ that is critical
  for $\signalClass_s$ with respect to $L^2(\Omega)$.
  Define $\banachOne := \banachTwo := \banachThree := L^2(\Omega)$ and
  $\signalClass_{\banachOne} := \signalClass_b$, $\signalClass_{\banachTwo} := \signalClass_s$,
  as well as
  \[
    \Phi : \signalClass_b \to L^2(\Omega), f \mapsto C_2 \cdot f
    \qquad \text{and} \qquad
    \Psi : \signalClass_s \to \signalClass, f \mapsto C_1^{-1} \cdot f.
  \]
  Using \eqref{eq:BesovSobolevConnection}, one easily checks that all assumptions of
  Theorem~\ref{thm:LipschitzTransferResult} are satisfied.
  An application of that theorem shows that $\optRateSmall{L^2(\Omega)} = \frac{k}{d}$
  and that $\P := \P_0 \circ \Psi^{-1}$ is a Borel probability measure on $\signalClass$
  that is critical for $\signalClass$ with respect to $L^2(\Omega)$.

  Finally, Part~\ref{enu:BesovCriticalCodecExists} of Theorem~\ref{thm:mainbesovresult}
  yields a codec
  $\code^\ast = \big( (E_R^\ast, D_R^\ast) \big)_{R \in \N} \in \codecs<\signalClass_b>{L^2(\Omega)}$
  satisfying $\distortion<\signalClass_b>{L^2(\Omega)} (E_R^\ast, D_R^\ast) \lesssim R^{-\frac{k}{d}}$.
  Since $\Phi$ is Lipschitz continuous (with respect to the $L^2$-norm) with
  $\signalClass \subset \Phi(\signalClass_b)$, the remark
  after Lemma~\ref{lem:ApproximationRateTransference} provides a codec
  $\code = \big( (E_R, D_R) \big)_{R \in \N} \in \codecs{L^2(\Omega)}$ satisfying
  ${\distortion{L^2(\Omega)}(E_R, D_R) \lesssim R^{-\frac{k}{d}}}$ as well.
  This establishes Property~\ref{enu:SobolevCriticalCodecExists} of the current theorem.

  It remains to prove \eqref{eq:BesovSobolevConnection}.
  First, a combination of \cite[Theorem in Section 2.5.6]{TriebelTheoryOfFunctionSpaces1}
  and \cite[Proposition 2 in Section 2.3.2]{TriebelTheoryOfFunctionSpaces1} shows for
  the so-called \emph{Triebel-Lizorkin spaces}%
  \footnote{The precise definition of these spaces is immaterial for us.
  We merely remark that the identity $F_{p,2}^k (\R^d) = W^{k,p}(\R^d)$ is only valid
  for $p \in (1,\infty)$.}
  $F^k_{p,2}(\R^d)$ that
  \[
    B_{p,\widetilde{p}}^k (\R^d)
    \hookrightarrow F_{p,2}^k (\R^d) = W^{k,p}(\R^d)
    \hookrightarrow B_{p, \widehat{p}}^k (\R^d) .
  \]
  Hence, there are $C_3, C_4 > 0$ satisfying
  $\| f \|_{W^{k,p}(\R^d)} \leq C_3 \cdot \| f \|_{B_{p,\widetilde{p}}^k (\R^d)}$
  for all $f \in B_{p,\widetilde{p}}^k (\R^d)$, and
  $\| f \|_{B_{p,\widehat{p}}^k (\R^d)} \leq C_4 \cdot \| f \|_{W^{k,p}(\R^d)}$
  for all $f \in W^{k,p}(\R^d)$.
  Furthermore, since $\Omega$ is a Lipschitz domain,
  \cite[Chapter VI, Theorem~5]{SteinSingularIntegrals}
  shows that there is a bounded linear ``extension operator''
  $\extension : W^{k,p} (\Omega) \to W^{k,p}(\R^d)$
  satisfying $(\extension f)|_{\Omega} = f$ for all $f \in W^{k,p} (\Omega)$.

  It is now easy to prove the inclusion~\eqref{eq:BesovSobolevConnection},
  with $C_1 := C_3$ and $C_2 := C_4 \cdot \| \extension \|$.
  First, if $f \in \signalClass_s$ and $\eps > 0$, then there is
  $g \in B^k_{p,\widetilde{p}}(\R^d)$ satisfying $f = g|_\Omega$ and
  $\| g \|_{B^k_{p,\widetilde{p}}(\R^d)} \leq 1 + \eps$,
  and hence
  \(
    \| f \|_{W^{k,p}(\Omega)}
    = \| g|_\Omega \|_{W^{k,p}(\Omega)}
    \leq \| g \|_{W^{k,p}(\R^d)}
    \leq C_3 \cdot (1 + \eps) .
  \)
  Since this holds for all $\eps > 0$, we see that
  $\| C_1^{-1} \, f \|_{W^{k,p} (\Omega)} \leq 1$; that is, $C_1^{-1} f \in \signalClass$.

  Conversely, if $f \in \signalClass$,
  then $g := \extension f \in W^{k,p}(\R^d) \subset B^k_{p,\widehat{p}} (\R^d)$
  and $f = g|_{\Omega}$, which implies
  \(
    \strut
    \| f \|_{B^k_{p,\widehat{p}} (\Omega)}
    \leq \| g \|_{B^k_{p,\widehat{p}} (\R^d)}
    \leq C_4 \, \| g \|_{W^{k,p} (\R^d)}
    \leq C_4 \, \| \extension \| \cdot \| f \|_{W^{k,p}(\Omega)}
    \leq C_2,
  \)
  and hence $f \in C_2 \cdot \signalClass_b$.
\end{proof}

\appendix

\section{Transferring approximation rates and measures}%
\label{sec:TransferResultsProofs}

In this appendix, we provide the proof of Theorem~\ref{thm:LipschitzTransferResult}.
Along the way we will show that expansive maps can be used to transfer measures with
a certain growth order from one set to another, while Lipschitz maps can be used
to transfer estimates for the optimal compression rate from one set to another.

\begin{lem}\label{lem:CriticalMeasuresPushForward}
  Let $\banach, \otherBanach$ be Banach spaces and let $\signalClass \subset \banach$
  and $\signalClass' \subset \otherBanach$.
  Let $\Phi : \signalClass \to \signalClass'$ be measurable
  (with respect to the trace $\sigma$-algebra of the Borel $\sigma$-algebras)
  and \emph{expansive}, in the sense that there is $\kappa > 0$ such that
  \[
    \| \Phi(\xvec) - \Phi(\xvec') \|_{\otherBanach}
    \geq \kappa \cdot \| \xvec - \xvec' \|_{\banach}
    \qquad \forall \, \xvec, \xvec' \in \signalClass .
  \]
  If $s_0 \geq 0$ and if $\P$ is a Borel probability measure on $\signalClass$ of growth order $s_0$,
  then the push-forward measure $\P \circ \Phi^{-1}$ is a Borel probability measure on $\signalClass'$
  of growth order $s_0$ as well.
\end{lem}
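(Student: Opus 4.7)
The plan is to reduce the growth-order estimate for $\mu := \P \circ \Phi^{-1}$ on balls in $\otherBanach$ to the growth-order estimate for $\P$ on balls in $\banach$, using expansiveness to pull back balls to sets of controlled diameter.

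First I would verify the trivial preliminaries: since $\Phi : \signalClass \to \signalClass'$ is measurable with respect to the trace Borel $\sigma$-algebras, the pushforward $\mu$ is well defined on the Borel subsets of $\signalClass'$, and $\mu(\signalClass') = \P(\Phi^{-1}(\signalClass')) = \P(\signalClass) = 1$, so $\mu$ is a Borel probability measure on $\signalClass'$. Moreover $\signalClass' \cap \ball(\yvec,\eps;\otherBanach)$ is Borel in $\signalClass'$ for every $\yvec \in \otherBanach$ and $\eps > 0$.

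The key step is the geometric reduction. Fix $\yvec \in \otherBanach$ and $\eps > 0$, and set
\[
  A := \Phi^{-1}\bigl(\signalClass' \cap \ball(\yvec,\eps;\otherBanach)\bigr).
\]
If $A = \emptyset$ there is nothing to prove. Otherwise choose any $\xvec_0 \in A$. For any $\xvec \in A$, both $\Phi(\xvec)$ and $\Phi(\xvec_0)$ lie in $\ball(\yvec,\eps;\otherBanach)$, so by the triangle inequality $\|\Phi(\xvec) - \Phi(\xvec_0)\|_{\otherBanach} \leq 2\eps$; expansiveness then yields $\|\xvec - \xvec_0\|_{\banach} \leq 2\eps/\kappa$. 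Hence
\[
  A \subset \signalClass \cap \ball\bigl(\xvec_0,\, 2\eps/\kappa;\, \banach\bigr),
\]
and therefore $\mu\bigl(\signalClass' \cap \ball(\yvec,\eps;\otherBanach)\bigr) \leq \P\bigl(\signalClass \cap \ball(\xvec_0,2\eps/\kappa;\banach)\bigr)$.

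Finally I would feed this into the growth-order hypothesis. Given $s > s_0$, let $\eps_0, c > 0$ be the constants from Equation~\eqref{eq:CriticalMeasureDefinition} for $\P$. Set $\eps_0' := \kappa\eps_0/2$ and $c' := c \cdot (\kappa/2)^{1/s}$. For $\eps \in (0,\eps_0')$ we have $2\eps/\kappa \in (0,\eps_0)$, so
\[
  \mu\bigl(\signalClass' \cap \ball(\yvec,\eps;\otherBanach)\bigr)
  \leq 2^{-c \cdot (2\eps/\kappa)^{-1/s}}
  = 2^{-c' \cdot \eps^{-1/s}},
\]
which is exactly the required bound for $\mu$ at exponent $s$. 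Since $s > s_0$ was arbitrary, $\mu$ has growth order $s_0$. No step here should pose a real obstacle; the only thing to be careful about is the empty-preimage case and keeping the constant bookkeeping (the factor $(\kappa/2)^{1/s}$ depending on $s$) clean.
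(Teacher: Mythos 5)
Your proposal is correct and follows essentially the same route as the paper's proof: the same triangle-inequality/expansiveness argument showing that the preimage of $\signalClass' \cap \ball(\yvec,\eps;\otherBanach)$ is contained in a ball of radius $2\eps/\kappa$ around any point of the preimage, the same treatment of the empty-preimage case, and the same constants $\eps_0' = \kappa\eps_0/2$ and $c' = 2^{-1/s}c\,\kappa^{1/s}$. No gaps.
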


\begin{proof}
  Since $\Phi : \signalClass \to \signalClass '$ is measurable, $\nu := \P \circ \Phi^{-1}$
  is a Borel probability measure on $\signalClass'$.

  To prove that $\nu$ has growth order $s_0$, let $s > s_0$ be arbitrary.
  Since $\P$ is of growth order $s_0$, there are $\eps_0, \gc > 0$
  such that Equation~\eqref{eq:CriticalMeasureDefinition} is satisfied.
  Define $\eps_0 ' := \frac{\kappa}{2} \cdot \eps_0$ and
  $\gc' := \gc \cdot (2 \, \kappa^{-1})^{-1/s} = 2^{-1/s} \gc \cdot \kappa^{1/s}$.
  We claim that
  \(
    \nu \big( \signalClass' \cap \ball(\yvec, \eps; \otherBanach) \big)
    \leq 2^{- \gc' \cdot \eps^{-1/s}}
  \)
  for all $\yvec \in \otherBanach$ and all $\eps \in (0, \eps_0')$;
  this will show that $\nu$ has growth order $s_0$.

  \smallskip{}

  The estimate is trivial if $\Phi(\signalClass) \cap \ball(\yvec, \eps; \otherBanach) = \emptyset$,
  since then
  \(
    \Phi^{-1} \big( \ball (\yvec, \eps ; \otherBanach) \big) = \emptyset ,
  \)
  and hence
  \(
    \nu \big( \signalClass' \cap \ball(\yvec, \eps; \otherBanach) \big)
    = \P \big(
            \Phi^{-1}
            ( \signalClass' \cap \ball(\yvec, \eps; \otherBanach) )
          \big)
    = \P (
            \emptyset
          )
    = 0 .
  \)
  Therefore, let us assume that
  $\emptyset \neq \Phi(\signalClass) \cap \ball(\yvec, \eps; \otherBanach) \ni \yvec'$;
  say $\yvec' = \Phi(\xvec')$ for some $\xvec ' \in \signalClass$.
  Now, for arbitrary
  \(
    \xvec
    \in \Phi^{-1} \big(
                    \signalClass' \cap \ball(\yvec, \eps; \otherBanach)
                  \big)
    \subset \signalClass ,
  \)
  we have
  \[
    \| \xvec - \xvec' \|_{\banach}
    \leq \kappa^{-1} \,
         \| \Phi(\xvec) - \Phi(\xvec') \|_{\otherBanach}
    \leq \kappa^{-1} \cdot
         \big(
           \| \Phi(\xvec) - \yvec \|_{\otherBanach}
           + \| \yvec - \Phi(\xvec') \|_{\otherBanach}
         \big)
    \leq 2 \cdot \kappa^{-1} \cdot \eps .
  \]
  We have thus shown
  \(
    \Phi^{-1} (\signalClass' \cap \ball(\yvec, \eps; \otherBanach))
    \subset \signalClass \cap \ball(\xvec', \frac{2}{\kappa} \, \eps; \banach) .
  \)
  Since $\frac{2}{\kappa} \eps < \frac{2}{\kappa} \eps_0 ' = \eps_0$,
  we see by Property~\eqref{eq:CriticalMeasureDefinition} as claimed that
  \begin{align*}
    \nu \big( \signalClass ' \cap \ball(\yvec, \eps, \otherBanach) \big)
    & = \P \big(
              \Phi^{-1}
              ( \signalClass ' \cap \ball (\yvec, \eps; \otherBanach) )
            \big) \\
    & \leq \P \Big(
                 \signalClass
                 \cap \ball \big( \xvec', \tfrac{2}{\kappa} \eps; \banach \big)
               \Big) \\
    & \leq 2^{-\gc \cdot (2 \kappa^{-1} \eps)^{-1/s}}
      =    2^{- \gc' \cdot \eps^{-1/s}} .
    \qedhere
  \end{align*}
\end{proof}

As a kind of converse of the previous result, we now show that Lipschitz maps can be used
to obtain bounds for the optimal compression rate $\optRateSmall{\banach}$ of a signal class
$\signalClass \subset \banach$.

\begin{lem}\label{lem:ApproximationRateTransference}
  Let $\banach, \otherBanach$ be Banach spaces, and let $\signalClass \subset \banach$
  and $\signalClass' \subset \otherBanach$.
  Assume that $\Phi : \signalClass \to \otherBanach$ is Lipschitz continuous,
  and that $\Phi(\signalClass) \supset \signalClass'$.
  Then $\optRateSmall<\signalClass'>{\otherBanach} \geq \optRateSmall<\signalClass>{\banach}$.
\end{lem}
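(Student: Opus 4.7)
The plan is to show the implication that $\signalClass \in \comp{s}{\banach}$ forces $\signalClass' \in \comp{s}{\otherBanach}$ for every $s \geq 0$; taking the supremum over admissible $s$ then yields the desired inequality $\optRateSmall<\signalClass'>{\otherBanach} \geq \optRateSmall<\signalClass>{\banach}$. So fix such an $s$ together with a codec $\bigl((E_R, D_R)\bigr)_{R \in \N} \in \codecs<\signalClass>{\banach}$ and $C > 0$ such that $\|\xvec - D_R(E_R(\xvec))\|_{\banach} \leq C \cdot R^{-s}$ for all $\xvec \in \signalClass$ and $R \in \N$. The core idea is to transport this codec along $\Phi$: using the hypothesis $\Phi(\signalClass) \supset \signalClass'$, pick (by the axiom of choice) a section $\xi : \signalClass' \to \signalClass$ with $\Phi \circ \xi = \mathrm{id}_{\signalClass'}$, and set
\[
  E_R' : \signalClass' \to \{0,1\}^R, \ \yvec \mapsto E_R(\xi(\yvec)),
  \qquad
  D_R' : \{0,1\}^R \to \otherBanach, \ b \mapsto \Phi(D_R(b)) .
\]
Writing $L$ for the Lipschitz constant of $\Phi$, the Lipschitz bound then gives, for every $\yvec \in \signalClass'$,
\[
  \|\yvec - D_R'(E_R'(\yvec))\|_{\otherBanach}
  \leq L \cdot \|\xi(\yvec) - D_R(E_R(\xi(\yvec)))\|_{\banach}
  \leq L \cdot C \cdot R^{-s},
\]
which is exactly the distortion bound required for $\signalClass' \in \comp{s}{\otherBanach}$.

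The main (and in fact only) obstacle is a purely formal one: $D_R$ a priori takes values in $\banach$, while $\Phi$ is only defined on $\signalClass$, so the composition $\Phi \circ D_R$ appearing in $D_R'$ need not make sense. I would remove this obstruction by a preliminary reduction that replaces $(E_R, D_R)$ with $(E_R, \tilde D_R)$, where $\tilde D_R : \{0,1\}^R \to \signalClass$, at the cost of at most a factor of two in the distortion. The empty case $\signalClass = \emptyset$ is trivial (then $\signalClass' = \emptyset$ as well), so assume $\signalClass \neq \emptyset$ and fix some $\xvec_0 \in \signalClass$. For each $b \in \{0,1\}^R$: if $b = E_R(\xvec)$ for some $\xvec \in \signalClass$, then $\|\xvec - D_R(b)\|_{\banach} \leq C \cdot R^{-s}$ by assumption, so one may pick $\tilde D_R(b) \in \signalClass$ with $\|\tilde D_R(b) - D_R(b)\|_{\banach} \leq C \cdot R^{-s}$; otherwise set $\tilde D_R(b) := \xvec_0$. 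A triangle inequality yields $\|\xvec - \tilde D_R(E_R(\xvec))\|_{\banach} \leq 2 C R^{-s}$ for every $\xvec \in \signalClass$, so applying the construction of the previous paragraph to $(E_R, \tilde D_R)$ in place of $(E_R, D_R)$ produces a bona fide codec in $\codecs<\signalClass'>{\otherBanach}$ of distortion at most $2 L C \cdot R^{-s}$. Since this argument works for every $s < \optRateSmall<\signalClass>{\banach}$, the claimed inequality follows by taking the supremum.
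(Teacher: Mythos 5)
Your proof is correct and follows essentially the same route as the paper: both arguments push the given codec forward through $\Phi$ after first relocating the decoder's outputs into $\signalClass$ so that $\Phi$ can be applied (you do this only for codewords in the range of $E_R$, using the codec's own error bound; the paper uses a near-projection $\Psi_\eps$ of arbitrary points onto $\signalClass$), and both transport the encoder via preimages under $\Phi$ (you via a choice-function section $\xi$, the paper via a nearest-codeword selection). The resulting distortion bounds, $2LC\,R^{-s}$ versus $L(1+2C)\,R^{-s}$, differ only in the constant, which is immaterial.
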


\begin{rem*}
  The proof shows that if there exists a codec
  $\code = \big( (E_R, D_R) \big)_{R \in \N} \in \codecs{\banach}$
  satisfying $\distortion{\banach} (E_R, D_R) \lesssim R^{-s}$ for some $s \geq 0$,
  then one can construct a modified codec
  ${\code^\ast = \big( (E_R^\ast, D_R^\ast) \big)_{R \in \N} \in \codecs<\signalClass'>{\otherBanach}}$
  satisfying $\distortion<\signalClass'>{\otherBanach} (E_R^\ast, D_R^\ast) \lesssim R^{-s}$
  as well.
\end{rem*}

\begin{proof}
  The claim is clear if $\optRateSmall<\signalClass>{\banach} = 0$.
  Thus, let us assume $\optRateSmall<\signalClass>{\banach} > 0$,
  and let ${s \in [0, \optRateSmall<\signalClass>{\banach})}$ be arbitrary.
  Then there is a codec $\code = \big( (E_R, D_R) \big)_{R \in \N} \in \codecs<\signalClass>{\banach}$
  and a constant $C > 0$ such that $\distortion<\signalClass>{\banach} ( E_R, D_R ) \leq C \cdot R^{-s}$
  for all $R \in \N$.
  Let $L > 0$ denote a Lipschitz constant for $\Phi$.

  Now, for $\eps > 0$ and $\xvec \in \banach$, choose $\Psi_\eps (\xvec) \in \signalClass$
  such that $\| \xvec - \Psi_\eps (\xvec) \|_{\banach} \leq \eps + \dist (\xvec, \signalClass)$,
  and let
  \[
    D_R^\ast : \{0,1\}^R \to \otherBanach, c \mapsto \Phi \big( \Psi_{R^{-s}} (D_R(c)) \big)
    \qquad \text{for } R \in \N .
  \]
  Now, if $\yvec \in \signalClass' \subset \Phi(\signalClass)$ is arbitrary,
  then $\yvec = \Phi(\xvec)$ for some $\xvec \in \signalClass$, and hence
  \begin{align*}
    \| \yvec - D_R^\ast (E_R(\xvec)) \|_{\otherBanach}
    & = \Big\|
          \Phi(\xvec) - \Phi \Big( \Psi_{R^{-s}} \big( D_R(E_R(\xvec)) \big) \Big)
        \Big\|_{\otherBanach} \\
    & \leq L \cdot \big\| \xvec - \Psi_{R^{-s}} \big( D_R( E_R(\xvec)) \big) \big\|_{\banach} \\
    & \leq L \cdot \big[
                     \| \xvec - D_R(E_R(\xvec)) \|_{\banach}
                     + \big\| D_R(E_R(\xvec)) - \Psi_{R^{-s}} \big( D_R( E_R(\xvec)) \big) \big\|_{\banach}
                   \big] \\
    & \leq L \cdot \big[
                     C \cdot R^{-s}
                     + R^{-s}
                     + \dist \! \big( D_R(E_R(\xvec)), \signalClass \big)
                   \big]
      \leq L \cdot (1 + 2C) \cdot R^{-s},
  \end{align*}
  since
  \(
    \dist \! \big( D_R(E_R(\xvec) \big), \signalClass)
    \leq \| D_R(E_R(\xvec)) - \xvec \|_{\banach} \leq C \cdot R^{-s}
  \).
  Therefore, if for each $\yvec \in \signalClass'$ and $R \in \N$
  we choose $c_{\yvec, R} \in \{0,1\}^R$ with
  \(
    \| \yvec - D_R^\ast (c_{\yvec, R}) \|_{\otherBanach}
    = \min_{c \in \{0,1\}^R} \| \yvec - D_R^\ast (c) \|_{\otherBanach}
  \)
  and define $E_R^\ast : \signalClass' \to \{0,1\}^R, \yvec \mapsto c_{\yvec, R}$, then
  \(
    \| \yvec - D_R^\ast (E_R^\ast (\yvec)) \|_{\otherBanach}
    \leq L \cdot (1 + 2C) \cdot R^{-s}
  \)
  for all $\yvec \in \signalClass'$ and $R \in \N$,
  and hence $\optRateSmall<\signalClass'>{\otherBanach} \geq s$.
  Since $s \in [0, \optRateSmall<\signalClass>{\banach})$ was arbitrary, this completes the proof.
\end{proof}

The following lemma shows that if a signal class $\signalClass \subset \banach$
carries a Borel probability measure of growth order $s_0$ and satisfies $\optRateSmall{\banach} \geq s_0$,
then in fact $\optRateSmall{\banach} = s_0$.
This is elementary, but will be used quite frequently, so that we prefer to state it as a lemma.

\begin{lem}\label{lem:OneSidedBoundsSufficeForExactRate}
  Let $s_0 \in [0,\infty)$, let $\banach$ be a Banach space,
  and let $\signalClass \subset \banach$.
  Assume that there exists a Borel probability measure $\P$ of growth order $s_0$ on $\signalClass$
  and that $\optRateSmall{\banach} \geq s_0$.
  Then $\optRateSmall{\banach} = s_0$ and $\P$ is critical for $\signalClass$ with respect to $\banachOne$.
\end{lem}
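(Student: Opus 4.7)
The plan is to observe that this lemma is essentially a direct combination of the hypothesis with Corollary~\ref{cor:GrowthOrderAtLeastOptimalRate}, which has already been established.

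First, I would invoke Corollary~\ref{cor:GrowthOrderAtLeastOptimalRate}: since $\P$ is a Borel probability measure on $\signalClass$ of growth order $s_0$, that corollary gives the inequality $s_0 \geq \optRateSmall{\banach}$. Combining with the assumption $\optRateSmall{\banach} \geq s_0$ yields $\optRateSmall{\banach} = s_0$.

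Next, I would conclude criticality by unravelling definitions: by Definition~\ref{def:GrowthRate}, a Borel probability measure is \emph{critical for $\signalClass$ with respect to $\banach$} precisely when its logarithmic growth order equals $\optRateSmall{\banach}$. Since $\P$ has growth order $s_0 = \optRateSmall{\banach}$, this condition is met, so $\P$ is critical.

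There is no genuine obstacle here; the entire content of the lemma is the combination of Corollary~\ref{cor:GrowthOrderAtLeastOptimalRate} (providing the nontrivial direction of the equality) with the trivial direction supplied as a hypothesis. For completeness one could also give a direct argument bypassing the corollary: if one had $\optRateSmall{\banach} > s_0$, pick $s \in (s_0, \optRateSmall{\banach})$; by definition of the optimal compression rate there would exist a codec $\code \in \codecs{\banach}$ with $\approxClass{s}<\signalClass>{\banach}(\code) = \signalClass$, but Proposition~\ref{lem:CriticalMeasuresAreFine} forces $\P^\ast(\approxClass{s}<\signalClass>{\banach}(\code)) = 0$, contradicting $\P(\signalClass) = 1$. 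Either route takes only a couple of lines.
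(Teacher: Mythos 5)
Your proof is correct and matches the paper's own argument exactly: both invoke Corollary~\ref{cor:GrowthOrderAtLeastOptimalRate} for the inequality $s_0 \geq \optRateSmall{\banach}$ and combine it with the hypothesis to conclude. The optional direct argument you sketch is just the proof of that corollary unfolded, so nothing is missing.
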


\begin{proof}
  Corollary~\ref{cor:GrowthOrderAtLeastOptimalRate} shows that $s_0 \geq \optRateSmall{\banach}$.
  Since $s_0 \leq \optRateSmall{\banach}$ by assumption, the claim of the lemma follows.
\end{proof}

We finally provide the proof of Theorem~\ref{thm:LipschitzTransferResult}.

\begin{proof}[Proof of Theorem~\ref{thm:LipschitzTransferResult}]
  Since $\Phi : \signalClass_\banachOne \to \banachThree$ is Lipschitz continuous
  with $\Phi(\signalClass_\banachOne) \supset \signalClass$,
  Lemma~\ref{lem:ApproximationRateTransference} shows that
  $\optRateSmall{\banachThree} \geq \optRateSmall<\signalClass_\banachOne>{\banachOne} =: s^\ast$.
  Furthermore, since $\Psi : \signalClass_{\banachTwo} \to \signalClass$ is measurable and
  expansive and $\P$ has growth order $\optRateSmall<\signalClass_\banachTwo>{\banachTwo} = s^\ast$,
  Lemma~\ref{lem:CriticalMeasuresPushForward} shows that $\nu := \P \circ \Psi^{-1}$ is a Borel
  probability measure on $\signalClass$ of growth order $s^\ast$ as well.
  Now, Lemma~\ref{lem:OneSidedBoundsSufficeForExactRate} shows that $\optRate{\banachThree} = s^\ast$
  and that $\nu$ is critical for $\signalClass$ with respect to $\banachThree$.
\end{proof}

\section{A lower bound for the optimal compression rate \texorpdfstring{$\optRate<\sequenceSpaceSignalClass>{\ell^2(\indexSet)}$}{in sequence spaces}}
\label{sec:SequenceSpaceCompressionRateLowerBound}

Our goal in this subsection is to show that the optimal compression rate
for the class $\sequenceSpaceSignalClass$ satisfies
\(
  \optRate<\sequenceSpaceSignalClass>{\ell^2(\indexSet)}
  \geq \tfrac{\alpha}{d} - (\tfrac{1}{2} - \tfrac{1}{p})
\),
assuming that $\alpha > d \cdot (\tfrac{1}{2} - \tfrac{1}{p})_+$.
Our proof of this fact relies on an equivalence between the optimal distortion
for a set and the so-called \emph{entropy numbers} of that set.
By combining this equivalence with known estimates for the entropy numbers of certain
embeddings between sequence spaces (taken from \cite{LeopoldEntropyNumbersOfWeightedSequenceSpaces}),
we will obtain the claim.

First, let us describe the equivalence between the optimal achievable distortion and the entropy
numbers of a set.
Following \cite{CarlStephaniEntropy,EdmundsTriebelFunctionSpacesEntropyNumbers},
given a (quasi)-Banach space $\X$, a set $M \subset \X$, and $k \in \N$,
the \emph{$k$-th entropy number} $e_k(M) := e_k (M ; \X)$ of $M$ is defined as
\[
  e_k (M ; \X)
  := \inf \bigg\{
            \eps > 0
            \,\, \bigg| \,\,
            \exists \, \xvec_1, \dots, \xvec_{2^{k-1}} \in \X :
              M \subset \bigcup_{i=1}^{2^{k-1}} B_\eps (\xvec_i)
          \bigg\}
  \in [0,\infty],
\]
with the convention that $\inf \emptyset = \infty$.
Note that $e_k (M)$ is finite if and only if $M$ is bounded.
Furthermore, $e_k(M) \xrightarrow[k\to\infty]{} 0$ if and only if $M \subset \X$ is totally bounded.
Finally, if $\Y$ is a further (quasi)-Banach space, and $T : \Y \to \X$ is linear,
then the entropy numbers $e_k(T)$ are defined as $e_k(T) := e_k \big( T(\ball (0, 1; \Y)) ; \X \big)$.

For proving that
\(
  \optRate<\sequenceSpaceSignalClass>{\ell^2(\indexSet)}
  \geq \tfrac{\alpha}{d} - (\tfrac{1}{2} - \tfrac{1}{p})
\),
we will use the following folklore equivalence between entropy numbers and the optimal
achievable distortion for a given set:

\begin{lem}\label{lem:EntropyAndDistortion}
  Let $\X$ be a Banach space and $\signalClass \subset \X$.
  Then
  \[
    e_{R+1} (\signalClass ; \X)
    = \inf \big\{
             \distortion{\banach} (E_R , D_R)
             \colon
             (E_R, D_R) \in \endec{\banach}
           \big\}
    \text{ for all } R \in \N .
  \]
\end{lem}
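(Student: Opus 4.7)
The plan is to prove the identity by establishing the two inequalities separately, passing back and forth between a covering of $\signalClass$ by $2^R$ balls and an encoding/decoding pair of code-length $R$.

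For the inequality $e_{R+1}(\signalClass; \X) \leq \inf \distortion{\banach}(E_R, D_R)$, I start from an arbitrary pair $(E_R, D_R) \in \endec{\banach}$ and set $\delta := \distortion{\banach}(E_R, D_R)$. Since $|\mathrm{range}(D_R)| \leq 2^R$, I enumerate its elements as $\xvec_1, \dots, \xvec_{2^R}$ (padding with an arbitrary point of $\X$ if there are fewer). By the very definition of $\delta$, every $\xvec \in \signalClass$ satisfies $\| \xvec - D_R(E_R(\xvec)) \|_{\X} \leq \delta$, and so $\signalClass \subset \bigcup_{i=1}^{2^R} \ball(\xvec_i, \delta + \eta; \X)$ for every $\eta > 0$. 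This exhibits a covering of $\signalClass$ by $2^{(R+1)-1} = 2^R$ open balls of radius $\delta + \eta$, so $e_{R+1}(\signalClass; \X) \leq \delta + \eta$. Letting $\eta \to 0$ and then taking the infimum over $(E_R, D_R)$ gives the first inequality.

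For the reverse inequality $\inf \distortion{\banach}(E_R, D_R) \leq e_{R+1}(\signalClass; \X)$, fix $\eps > e_{R+1}(\signalClass; \X)$, so that there exist $\xvec_1, \dots, \xvec_{2^R} \in \X$ with $\signalClass \subset \bigcup_{i=1}^{2^R} \ball(\xvec_i, \eps; \X)$. I fix any bijection $\pi : \{1, \dots, 2^R\} \to \{0,1\}^R$, define the decoder $D_R : \{0,1\}^R \to \X$ by $D_R(\pi(i)) := \xvec_i$, and define the encoder $E_R : \signalClass \to \{0,1\}^R$ by choosing, for each $\xvec \in \signalClass$, some index $i(\xvec)$ with $\xvec \in \ball(\xvec_{i(\xvec)}, \eps; \X)$ (using the Axiom of Choice if necessary) and setting $E_R(\xvec) := \pi(i(\xvec))$. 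Then $\| \xvec - D_R(E_R(\xvec)) \|_{\X} \leq \eps$ for every $\xvec \in \signalClass$, so $\distortion{\banach}(E_R, D_R) \leq \eps$. Taking the infimum over $\eps > e_{R+1}(\signalClass; \X)$ concludes the argument.

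Combining the two inequalities yields the claimed equality. The proof is essentially bookkeeping; the only genuinely delicate points are the off-by-one convention $2^{k-1}$ in the definition of $e_k$ (which forces us to use $k = R+1$ to match $|\{0,1\}^R| = 2^R$) and the fact that the definition of $e_k$ uses open balls while the distortion is a supremum over closed inequalities, which is handled cleanly by the $\eta \to 0$ limit in the first direction and by the strict inequality $\eps > e_{R+1}$ in the second. I do not anticipate a serious obstacle beyond these conventions.
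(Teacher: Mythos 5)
Your proof is correct and follows essentially the same route as the paper: both directions pass between a covering of $\signalClass$ by the (at most $2^R$) elements of $\mathrm{range}(D_R)$ and an encoder/decoder pair built from a covering via a bijection $\{1,\dots,2^R\}\to\{0,1\}^R$. The only cosmetic differences are that the paper uses closed balls (making your $\eta\to 0$ step unnecessary) and explicitly notes the trivial cases $\distortion{\X}(E_R,D_R)=\infty$ and $e_{R+1}(\signalClass)=\infty$, which your argument handles implicitly.
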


\begin{proof}
  ``$\leq$'': Let $(E_R, D_R) \in \endec{\X}$ be arbitrary.
  Note that $\mathrm{range}(D_R) = D_R (\{ 0,1\}^R)$ is nonempty and has at most $2^R$
  elements, so that $\mathrm{range}(D_R) = \{\xvec_1, \dots, \xvec_{2^R} \}$,
  where we possibly repeat some elements.
  Define $\delta := \distortion{\X} (E_R, D_R)$.
  If $\delta = \infty$, then trivially $e_{R+1}(\signalClass; \banach) \leq \delta$;
  hence, assume that $\delta < \infty$.
  By definition of the distortion, this means $\| \uvec - D_R(E_R(\uvec)) \|_{\X} \leq \delta$
  for all $\uvec \in \signalClass$, and hence
  \[
    \uvec \in \ball \big( D_R(E_R(\uvec)), \delta; \X \big)
      \subset \bigcup_{i=1}^{2^R} \ball (\xvec_i, \delta; \X)
    \qquad \forall \, \uvec \in \signalClass ,
  \]
  since $D_R(E_R(\uvec)) \in \mathrm{range}(D_R) = \{\xvec_1, \dots, \xvec_{2^R} \}$.
  Therefore, $\signalClass \subset \bigcup_{i=1}^{2^{(R+1) - 1}} \ball (\xvec_i, \delta; \X)$,
  which shows that $e_{R+1} (\signalClass) \leq \delta = \distortion{\X} (E_R, D_R)$.

  \medskip{}

  ``$\geq$'': This is trivial if $e_{R+1}(\signalClass) = \infty$;
  hence, assume that $e_{R+1}(\signalClass) < \infty$.
  Choose a bijection $\iota : \{0,1\}^R \to \{1,\dots,2^R\}$,
  and let $\delta > e_{R+1} (\signalClass)$ be arbitrary.
  By definition of the entropy number, there are $\xvec_1, \dots, \xvec_{2^R} \in \X$
  such that $\signalClass \subset \bigcup_{i=1}^{2^R} \ball(\xvec_i, \delta; \X)$.
  Hence, for each $\uvec \in \signalClass$, there is $n_\uvec \in \{1, \dots, 2^R\}$ such that
  $\| \uvec - \xvec_{n_\uvec} \|_\X \leq \delta$.
  Now, define $(E_R, D_R) \in \endec{\X}$ by
  \[
    E_R : \signalClass \to \{0,1\}^R, \uvec \mapsto \iota^{-1} (n_\uvec)
    \qquad \text{and} \qquad
    D_R : \{0,1\}^R \to \X, c \mapsto \xvec_{\iota(c)} ,
  \]
  so that $D_R(E_R(\uvec)) = \xvec_{\iota(\iota^{-1}(n_\uvec))} = \xvec_{n_\uvec}$, and thus
  ${\| \uvec - D_R(E_R(\uvec)) \|_\X = \| \uvec - \xvec_{n_\uvec} \|_X \leq \delta}$
  for all $\uvec \in \signalClass$.
  Therefore, $\distortion{\X} (E_R, D_R) \leq \delta$.
  Since $\delta > e_{R+1} (\signalClass)$ was arbitrary, this completes the proof.
\end{proof}

In addition to this equivalence between entropy numbers and best achievable distortion,
we will use two results from \cite{LeopoldEntropyNumbersOfWeightedSequenceSpaces} about the
asymptotic behavior of the entropy numbers of certain sequence spaces.
The following definition introduces the terminology
used in \cite{LeopoldEntropyNumbersOfWeightedSequenceSpaces}.

\begin{defn}\label{def:LeopoldStuff}
  (see \cite[Equations (10), (11), and Definition 1]{LeopoldEntropyNumbersOfWeightedSequenceSpaces})

  A sequence $(\beta_j)_{j \in \N_0} \subset (0,\infty)$ is called
  \begin{itemize}
    \item an \emph{admissible sequence} if there are $d_0, d_1 \in (0,\infty)$
          such that $d_0 \, \beta_j \leq \beta_{j+1} \leq d_1 \, \beta_{j}$ for all $j \in \N_0$;

    \item \emph{almost strongly increasing} if there is $\kappa \in \N$ such that
          \(
            2 \beta_j \leq \beta_k
          \)
          for all $j,k \in \N_0$ with $k \geq j + \kappa$.
  \end{itemize}

  Given $p,q \in (0,\infty]$ and sequences
  $\boldsymbol{\beta} = (\beta_j)_{j \in \N_0} \subset (0,\infty)$
  and $\mathbf{\LeopoldM} = (\LeopoldM_j)_{j \in \N_0} \subset \N$, define
  \(
    J_{\mathbf{\LeopoldM}} := \{ (j,\ell) \in \N_0 \times \N \colon 1 \leq \ell \leq \LeopoldM_j \}
  \)
  and
  \[
    \| \xvec \|_{\ell^q(\beta_j \ell_{\LeopoldM_j}^p)}
    := \Big\|
         \Big(
         \beta_j \, \big\| (x_{j,\ell})_{\ell \in \{1,\dots,\LeopoldM_j\}} \big\|_{\ell^p}
         \Big)_{j \in \N_0}
       \Big\|_{\ell^q}
    \in [0,\infty]
    \quad \text{for} \! \quad
    \xvec = (x_{j,\ell})_{(j,\ell) \in J_{\mathbf{\LeopoldM}}} \in \CC^{J_{\mathbf{\LeopoldM}}},
  \]
  as well as
  \(
    \ell^q (\beta_j \, \ell_{\LeopoldM_j}^p)
    := \big\{
         \xvec \in \CC^{J_{\mathbf{\LeopoldM}}}
         \colon
         \| \xvec \|_{\ell^q (\beta_j \, \ell_{\LeopoldM_j}^p)} < \infty
       \big\}.
  \)
  For the case $\boldsymbol{\beta} = (1)_{j \in \N_0}$, we simply write
  $\LeopoldUnweighted$ instead of $\LeopoldSpace$.
\end{defn}

Using these notions, Leopold proved the following results:

\begin{thm}\label{thm:LeopoldEntropyResults}
  (see \cite[Theorems 3 and 4]{LeopoldEntropyNumbersOfWeightedSequenceSpaces})

  Let $p_1,p_2, q_1,q_2 \in (0,\infty]$, and let
  ${\mathbf{\LeopoldM} = (\LeopoldM_j)_{j \in \N_0} \subset \N}$
  and ${\boldsymbol{\beta} = (\beta_j)_{j \in \N_0} \subset (0,\infty)}$
  both be admissible, almost strongly increasing sequences.

  Assume that either
  \begin{enumerate}[label=(\roman*)]
    \item $p_1 \leq p_2$; or

    \item $p_2 < p_1$ and the sequence
          $\Big( \beta_j \cdot \LeopoldM_j^{p_1^{-1} - p_2^{-1}} \Big)_{j \in \N_0}$
          is almost strongly increasing.
  \end{enumerate}

  Then the embedding
  \(
    \LeopoldSpace{p_1}{q_1} \hookrightarrow \LeopoldUnweighted{p_2}{q_2}
  \)
  holds, and there are $C_1, C_2 > 0$ such that for all $L \in \N$, we have
  \[
    C_1 \cdot \beta_L^{-1} \, \LeopoldM_L^{-(p_1^{-1} - p_2^{-1})}
    \leq e_{2 \LeopoldM_L} \big(
                             \identity : \LeopoldSpace{p_1}{q_1} \to \LeopoldUnweighted{p_2}{q_2}
                           \big)
    \leq C_2 \cdot \beta_L^{-1} \, \LeopoldM_L^{-(p_1^{-1} - p_2^{-1})} .
  \]
\end{thm}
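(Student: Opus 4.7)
The plan is to establish the matching upper and lower bounds for $e_{2\LeopoldM_L}(\identity)$ by reducing the problem to the well-known entropy numbers of \emph{finite-dimensional} embeddings $\identity : \ell^{p_1}_n \to \ell^{p_2}_n$, whose asymptotics (due to Sch\"utt and Edmunds--Triebel, among others) in the regime $k \asymp n$ are of order $n^{-(1/p_1 - 1/p_2)}$ when $p_1 \le p_2$, with a corresponding form in the opposite regime. The two hypotheses of admissibility and almost strong increase are exactly what make the geometry of the infinite direct sum behave, up to absolute constants, as if only the single block at level $L$ mattered.

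For the lower bound I would restrict attention to sequences supported on block $L$ alone. Such sequences form an isometric copy of $\beta_L^{-1}\, \ell^{p_1}_{\LeopoldM_L}$ inside $\LeopoldSpace{p_1}{q_1}$, and sit inside the target $\LeopoldUnweighted{p_2}{q_2}$ as $\ell^{p_2}_{\LeopoldM_L}$. Hence the image of the unit ball of the domain contains $\beta_L^{-1}\, \ball(0,1;\ell^{p_1}_{\LeopoldM_L})$, regarded as a subset of $\ell^{p_2}_{\LeopoldM_L}$, and the classical lower bound on $e_{2\LeopoldM_L}(\identity : \ell^{p_1}_{\LeopoldM_L} \to \ell^{p_2}_{\LeopoldM_L})$ immediately yields $e_{2\LeopoldM_L}(\identity) \gtrsim \beta_L^{-1}\, \LeopoldM_L^{-(1/p_1-1/p_2)}$.

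For the upper bound I would decompose an arbitrary $x$ in the unit ball as $x = \sum_{j\ge 0} x^{(j)}$ blockwise, split the indices into ``low'' ones ($j \le L$) and ``high'' ones ($j > L$), and approximate each part separately. For the low blocks I would use finite-dimensional $\eps_j$-nets obtained from Sch\"utt's upper bound, distributing a total entropy budget of roughly $2\LeopoldM_L - 1$ among them with $k_j \asymp \LeopoldM_j$; the admissibility of $(\LeopoldM_j)_j$ ensures that $\sum_{j \le L} \LeopoldM_j \asymp \LeopoldM_L$, so that the budget closes. For the high blocks $j > L$ I would approximate by zero and exploit the almost-strongly-increasing property of $\beta_j$ (and, when $p_2 < p_1$, the corresponding property of $\beta_j\, \LeopoldM_j^{1/p_1 - 1/p_2}$) to show that the tail errors form a geometric series dominated by the $L$-th term. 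The outer passage from $\ell^{q_1}$ to $\ell^{q_2}$ on the vector of block-errors is then handled by a H\"older argument, trivial when $q_1 \le q_2$ and otherwise absorbed into the geometric decay.

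The main obstacle will be balancing the per-block bit allocation $k_j$ so that the final error matches the lower bound up to an absolute constant. This is where the admissibility hypotheses (controlling $\LeopoldM_{j+1}/\LeopoldM_j$ and $\beta_{j+1}/\beta_j$) and the almost-strongly-increasing hypotheses (providing geometric rather than merely monotone separation of the weights) are simultaneously needed. In particular, the case $p_2 < p_1$ is delicate: if $\beta_j\, \LeopoldM_j^{1/p_1 - 1/p_2}$ failed to be almost strongly increasing, the growth of $\LeopoldM_j$ would outrun that of $\beta_j$ and the tail could cease to be summable in the $\ell^{q_2}$-sense, so that the upper bound would degrade below the rate dictated by the lower bound.
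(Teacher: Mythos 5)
First, a point of comparison that matters here: the paper does not prove Theorem~\ref{thm:LeopoldEntropyResults} at all --- it is quoted verbatim from \cite[Theorems 3 and 4]{LeopoldEntropyNumbersOfWeightedSequenceSpaces}, and the only original content in the surrounding text is the remark that the upper bound transfers from complex to real sequence spaces. So there is no in-paper proof to match your argument against; what you have written is a reconstruction of the argument behind the cited result, and in outline it is the standard (and correct) one: the lower bound by restriction to the single block $L$, where the unit ball of the domain contains an isometric copy of $\beta_L^{-1}\,\ball(0,1;\ell^{p_1}_{\LeopoldM_L})$ independently of $q_1,q_2$, combined with the finite-dimensional estimate $e_{2n}(\identity:\ell^{p_1}_n\to\ell^{p_2}_n)\asymp n^{-(1/p_1-1/p_2)}$; and the upper bound by a blockwise net construction plus truncation of the tail, glued together with the additivity property $e_{k+l-1}(S+T)\le e_k(S)+e_l(T)$. (For the full range $p_i,q_i\in(0,\infty]$ you need the quasi-Banach extension of Sch\"utt's theorem, e.g.\ from Edmunds--Triebel or K\"uhn, but that is available.)

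One step of your sketch would fail as literally written. Allocating $k_j\asymp\LeopoldM_j$ bits to block $j\le L$ yields a per-block error of order $\beta_j^{-1}\LeopoldM_j^{-(1/p_1-1/p_2)}\,2^{-c k_j/\LeopoldM_j}$ with $k_j/\LeopoldM_j$ bounded, i.e.\ of order $\beta_j^{-1}\LeopoldM_j^{-(1/p_1-1/p_2)}$ --- and for $j$ well below $L$ this is much \emph{larger} than the target $\beta_L^{-1}\LeopoldM_L^{-(1/p_1-1/p_2)}$, precisely because the weight sequence (respectively $\beta_j\LeopoldM_j^{1/p_1-1/p_2}$ in case (ii)) is almost strongly increasing. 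You must let $k_j/\LeopoldM_j$ grow as $j$ decreases, e.g.\ $k_j\asymp\LeopoldM_j\,(1+L-j)$, so that the exponential factor $2^{-c k_j/\LeopoldM_j}$ beats the ratio of the weights; admissibility bounds that ratio by $d^{\,L-j}$, and the almost-strong increase of $\mathbf{\LeopoldM}$ gives $\sum_{j\le L}\LeopoldM_j(1+L-j)\lesssim\LeopoldM_L\sum_{i\ge0}(1+i)2^{-i/\kappa}\lesssim\LeopoldM_L$, so the total budget still closes. You flag the allocation as ``the main obstacle,'' which is fair, but the fix is this specific reweighting, not merely constants. With that correction, and the routine H\"older step from $\ell^{q_1}$ to $\ell^{q_2}$ on the block errors that you already mention, the plan is sound.
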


\begin{rem*}
  We note that the above results pertain to spaces of \emph{complex} sequences.
  At least concerning the \emph{upper} bound, however, this is no problem:
  To see this, note that if we denote by $\mathrm{Re} \, \xvec$ the (componentwise)
  real part of the sequence $\xvec$, then clearly
  \(
    \| \mathrm{Re} \, \xvec \|_{\LeopoldSpace} \leq \| \xvec \|_{\LeopoldSpace}
  \).
  Hence, defining the real-valued version of the space $\LeopoldSpace$ as
  \[
    \LeopoldReal := \big\{
                      \xvec \in \R^{J_{\mathbf{\LeopoldM}}}
                      \colon
                      \| \xvec \|_{\LeopoldSpace} < \infty
                    \big\},
  \]
  we see that if
  \(
    \ball \big( 0,1; \LeopoldSpace{p_1}{q_1} \big)
    \! \subset \! \bigcup_{i=1}^N
                    \ball \big( \xvec_i, \eps; \LeopoldUnweighted{p_2}{q_2} \big)
  \)
  for ${\xvec_1, \dots, \xvec_N \in \LeopoldUnweighted{p_2}{q_2}}$, then
  \(
    \ball \big( 0,1; \LeopoldReal{p_1}{q_1} \big)
    \subset \bigcup_{i=1}^N
              \ball \big( \mathrm{Re} \, \xvec_i, \eps; \LeopoldUnweightedReal{p_2}{q_2} \big)
  \),
  and hence
  \begin{equation}
    e_k \big( \identity : \LeopoldReal{p_1}{q_1} \to \LeopoldUnweightedReal{p_2}{q_2} \big)
    \leq e_k \big( \identity : \LeopoldSpace{p_1}{q_1} \to \LeopoldUnweighted{p_2}{q_2} \big)
    \quad \forall \, k \in \N .
    \label{eq:LeopoldResultForRealSpaces}
  \end{equation}
\end{rem*}

\begin{proof}[Proof of Proposition~\ref{prop:ApproximationRateLowerBound}]
  Let $n_m := |\indexSet_m|$ and $\LeopoldM_j := n_{j+1}$ for $m \in \N$ and $j \in \N_0$.
  Further, set ${\kappa := \big\lceil d^{-1} \big( 1 + \log_2(A / a) \big) \big\rceil}$,
  where we recall from Equation~\eqref{eq:weightequival} that ${a, A > 0}$
  satisfy $a \, 2^{d m} \leq n_m \leq A \, 2^{d m}$.
  Thus,
  \(
    \LeopoldM_{j+1}
    = n_{j+2} \sim 2^{d (j+2)}
    = 2^d \, 2^{d (j+1)}
    \sim n_{j+1}
    = \LeopoldM_j
    ,
  \)
  which shows that ${\mathbf{\LeopoldM} = (\LeopoldM_j)_{j \in \N_0}}$ is admissible.
  Furthermore, if $k \geq j + \kappa$, then
  \[
    \LeopoldM_{k}
    =    n_{k+1}
    \geq a \cdot 2^{d (k+1)}
    \geq a \cdot 2^{d (j+1) + 1 + \log_2 (A/a)}
    =    2 \cdot A \cdot 2^{d (j+1)}
    \geq 2 \, n_{j+1}
    =    2 \LeopoldM_j,
  \]
  which shows that $\mathbf{\LeopoldM}$ is almost strongly increasing.

  Next, define $\beta_j := 2^{\alpha (j+1)}$ for $j \in \N_0$, noting that
  $\beta_{j+1} = 2^{\alpha} \, \beta_j$, which implies that $(\beta_j)_{j \in \N_0}$ is admissible.
  Furthermore, if $k \geq j + \lceil \alpha^{-1} \rceil$, then
  \(
    \beta_k
    \geq 2 \cdot 2^{\alpha (j+1)}
    =    2 \, \beta_j,
  \)
  so that $(\beta_j)_{j \in \N_0}$ is also almost strongly increasing.
  Here, we used that $\alpha > 0$.

  Finally, for each $m \in \N$ pick a bijection $\iota_m : \FirstN{\LeopoldM_{m-1}} \to \indexSet_m$
  (which is possible since $\LeopoldM_{m-1} = n_m = |\indexSet_m|$), and define
  \[
    \Psi : \R^{\indexSet} \to \R^{J_{\mathbf{\LeopoldM}}},
           \xvec = (x_i)_{i \in \indexSet} \mapsto \big(
                                                     x_{\iota_{j+1}(\ell)}
                                                   \big)_{(j,\ell) \in J_{\mathbf{\LeopoldM}}} .
  \]
  It is easy to see that $\Psi$ is a bijection, and that
  \[
    \| \Psi(\xvec) \|_{\LeopoldSpace} \!
    = \! \Big\|\!
           \Big(
             \beta_j \,
             \big\|
               (x_{\iota_{j+1}(\ell)})_{\ell \in \FirstN{\LeopoldM_j}}
             \big\|_{\ell^p}
           \Big)_{\!j \in \N_0}
         \Big\|_{\ell^q} \!
    = \! \Big\|\!
           \Big(
             2^{\alpha m} \,
             \big\|
               \xvec_{m}
             \big\|_{\ell^p(\indexSet_m)}
           \Big)_{\! m \in \N}
         \Big\|_{\ell^q} \!
    =\! \| \xvec \|_{\mixSpace{q}}
  \]
  for arbitrary $p,q \in (0,\infty]$.
  Here, $\xvec_m = (x_i)_{i \in \indexSet_m}$ is as defined in Equation~\eqref{eq:weightequival}.
  In the same way, we see $\| \Psi(\xvec) \|_{\LeopoldUnweighted} = \| \xvec \|_{\mixSpace{q}{0}}$
  and also
  \(
    \| \Psi(\xvec) \|_{\LeopoldUnweighted{2}{2}}
    = \| \xvec \|_{\mixSpace{2}{0}<2>}
    = \| \xvec \|_{\ell^2(\indexSet)}
    .
  \)
  Using these identities, it is straightforward to see that
  $\mixSpace{q} \hookrightarrow \ell^2(\indexSet)$ holds if and only if
  $\LeopoldReal \hookrightarrow \LeopoldUnweightedReal{2}{2}$, and furthermore that
  \[
    e_k (\sequenceSpaceSignalClass; \ell^2(\indexSet))
    = e_k \big( \mixSpace{q} \hookrightarrow \ell^2(\indexSet) \big)
    = e_k \big( \LeopoldReal \hookrightarrow \LeopoldUnweightedReal{2}{2} \big)
    \qquad \forall \, k \in \N .
  \]

  \medskip{}

  There are now two cases.
  First, if $p \leq 2$, then Equation~\eqref{eq:LeopoldResultForRealSpaces} and the first part
  of Theorem~\ref{thm:LeopoldEntropyResults} with $p_1 = p, q_1 = q$ and $p_2 = q_2 = 2$
  show that ${\LeopoldReal \hookrightarrow \LeopoldUnweightedReal{2}{2}}$,
  and yield a constant $C_1 > 0$ such that
  \[
    e_{2 N_L} \big( \sequenceSpaceSignalClass; \ell^2 (\indexSet) \big)
    = e_{2 N_L} \big( \LeopoldReal \hookrightarrow \LeopoldUnweightedReal{2}{2} \big)
    \leq C_1 \cdot \beta_L^{-1} \cdot N_L^{-(p^{-1} - 2^{-1})}
  \]
  for all $L \in \N$.

  If otherwise $p > 2$, then $2^{-1} - p^{-1} > 0$, so that our assumptions
  concerning $\alpha$ imply that $\alpha > d \cdot (2^{-1} - p^{-1})_+ = d \cdot (2^{-1} - p^{-1})$,
  and hence $\gamma := \alpha + d \cdot (p^{-1} - 2^{-1}) > 0$.
  Therefore, the sequence
  $(K_j)_{j \in \N_0} := \big( \beta_j \cdot N_j^{p^{-1} - 2^{-1}}\big)_{j \in \N_0}$
  is almost strongly increasing; indeed, if
  \(
    k \geq j + \big\lceil
                 \gamma^{-1} \cdot \big( 1 + \log_2 [ (a / A)^{p^{-1} - 2^{-1}}] \big)
               \big\rceil ,
  \)
  then we see because of $a \, 2^{d (j+1)} \leq N_j \leq A \, 2^{d(j+1)}$ that
  \begin{align*}
    K_k
    & \geq A^{p^{-1} - 2^{-1}} \cdot 2^{\alpha (k+1)} \, 2^{d (k+1) (p^{-1} - 2^{-1})}
      =    2^\gamma A^{p^{-1} - 2^{-1}} \cdot 2^{\gamma k} \\
    & \geq 2^\gamma A^{p^{-1} - 2^{-1}} \cdot 2^{\gamma j} \cdot 2 \cdot (a/A)^{p^{-1} - 2^{-1}}
      =    2 \, a^{p^{-1} - 2^{-1}} \cdot 2^{\alpha (j+1)} \, 2^{d (j+1) (p^{-1} - 2^{-1})}
      \geq 2 \, K_j .
  \end{align*}
  Thus, Part~(ii) of Theorem~\ref{thm:LeopoldEntropyResults}
  and Equation~\eqref{eq:LeopoldResultForRealSpaces} show that
  ${\LeopoldReal \hookrightarrow \LeopoldUnweightedReal{2}{2}}$,
  and that there is a constant $C_2 > 0$ such that
  \[
    e_{2 N_L} \big( \sequenceSpaceSignalClass; \ell^2 (\indexSet) \big)
    = e_{2 N_L} \big( \LeopoldReal \hookrightarrow \LeopoldUnweightedReal{2}{2} \big)
    \leq C_2 \cdot \beta_L^{-1} \cdot N_L^{-(p^{-1} - 2^{-1})}
  \]
  for all $L \in \N$.

  \medskip{}

  Define $C_3 := \max \{ C_1, C_2 \}$ and note that the preceding estimates only yield
  bounds for the entropy numbers $e_{k} (\sequenceSpaceSignalClass; \ell^2(\indexSet))$
  in case of $k = 2 N_L$ for some $L \in \N$, not for general $k \in \N$.
  This, however, suffices to handle the general case.
  Indeed, let $R \in \N$ with $R \geq 2 \LeopoldM_1$ be arbitrary, and let
  $L \in \N$ be maximal with $2 \LeopoldM_L \leq R + 1$;
  this is possible since $\LeopoldM_L \to \infty$ as $L \to \infty$.
  Note
  \(
    R
    \leq R+1
    <    2 \, \LeopoldM_{L+1}
    =    2 \, n_{L+2}
    \leq 2A \, 2^{d (L+2)}
    =    2^{2d+1}A \, 2^{d L}
  \)
  by maximality.
  Since the sequence of entropy numbers
  $\big( e_k(\sequenceSpaceSignalClass;\ell^2(\indexSet)) \big)_{k \in \N}$
  is non-increasing, we thus see
  \begin{align*}
    e_{R+1} \big( \sequenceSpaceSignalClass; \ell^2(\indexSet) \big)
    & \leq e_{2 \LeopoldM_L} \big( \sequenceSpaceSignalClass; \ell^2(\indexSet) \big)
      \leq C_3 \cdot \beta_L^{-1} \cdot N_L^{-(p^{-1} - 2^{-1})} \\
    ({\scriptstyle{\text{since } N_L = n_{L+1} \sim 2^{d L}}})
    & \leq C_4 \cdot 2^{-\alpha L} \cdot 2^{(2^{-1} - p^{-1}) d L}
      =    C_4 \cdot \big( 2^{d L} \big)^{-(\frac{\alpha}{d} + p^{-1} - 2^{-1})} \\
    ({\scriptstyle{\text{since } \frac{\alpha}{d} + p^{-1} - 2^{-1} > 0}})
    & \leq C_5 \cdot R^{-(\frac{\alpha}{d} + p^{-1} - 2^{-1})},
  \end{align*}
  for all $R \geq 2 \LeopoldM_1$ and suitable constants $C_4,C_5 > 0$
  which are independent of $R$.

  Now, since $\sequenceSpaceSignalClass \subset \ell^2(\indexSet)$ is bounded
  (otherwise, all entropy numbers would be infinite), it is easy to see
  \(
    e_{R+1} (\sequenceSpaceSignalClass; \ell^2(\indexSet))
    \leq e_1(\sequenceSpaceSignalClass; \ell^2(\indexSet))
    \lesssim R^{-(\frac{\alpha}{d} + p^{-1} - 2^{-1})}
  \)
  for $R \in \N$ with $R < 2 \LeopoldM_1$.
  With this, the claim
  \(
    \optRateSmall<\sequenceSpaceSignalClass>{\ell^2(\indexSet)}
    \geq \frac{\alpha}{d} - \bigl( \frac{1}{2} - \frac{1}{p} \bigr)
  \)
  follows from the relation between entropy numbers and optimal
  distortion described in Lemma~\ref{lem:EntropyAndDistortion}.

  Finally, since $e_R \bigl(\sequenceSpaceSignalClass; \ell^2(\indexSet)\bigr) \to 0$ as $R \to \infty$,
  it follows that $\sequenceSpaceSignalClass \subset \ell^2(\indexSet)$ is totally bounded.
  Since $\sequenceSpaceSignalClass \subset \ell^2(\indexSet)$ is also easily seen to be closed
  (this essentially follows from Fatou's lemma),
  we see that $\sequenceSpaceSignalClass \subset \ell^2(\indexSet)$ is compact.
\end{proof}

\section{A review of Besov spaces}
\label{sec:BesovReview}

In this subsection, we review the relevant properties of Besov spaces on $\R^d$ and on domains,
including the characterization of these spaces in terms of wavelets;
see Section~\ref{sub:BesovWaveletCharacterization}.

Before we dive into the details, a word of caution is in order.
In the literature, there are two common definitions of Besov spaces:
A Fourier analytic definition and a definition using moduli of continuity.
Here, we only consider the former definition;
the reader interested in the latter is referred to \cite{DeVoreNonlinearApproximation}.
It should be mentioned, however, that the two definitions do \emph{not} agree in general;
see for instance \cite{HaroskeBesovSpacesWithPositiveSmoothness}.
Nevertheless, in the regime that we are interested in, the two definitions coincide,
as can be deduced from \mbox{\cite[Theorem in Section 2.5.12]{TriebelTheoryOfFunctionSpaces1}}.
Since we focus on the Fourier analytic definition only, we omit the details.

\subsection{The (Fourier-analytic) definition of Besov spaces}
\label{sub:BesovFourierDefinition}

Our presentation here follows \cite[Section~2.3]{TriebelTheoryOfFunctionSpaces1} and
\cite[Section~1.3]{TriebelTheoryOfFunctionSpaces3}.
In this section, all functions are taken to be complex-valued, unless indicated otherwise.
Let $\Schwartz (\R^d)$ denote the space of \emph{Schwartz functions}
(see, for instance, \cite[Section 8.1]{FollandRA}), and $\Schwartz' (\R^d)$ its topological
dual space, the space of \emph{tempered distributions} (see \cite[Section 9.2]{FollandRA}).
We use the \emph{Fourier transform} on $L^1(\R^d)$ with the same normalization as in
\cite{WojtaszczykIntroductionToWavelets,TriebelTheoryOfFunctionSpaces1}; that is,
\[
  \widehat{f}(\xi)
  := \Fourier f (\xi)
  := (2\pi)^{-d/2} \int_{\R^d}
                     f(x) e^{- i \langle x,\xi \rangle}
                   \, d x
  \quad \text{for} \quad
  f \in L^1(\R^d) \text{ and } \xi \in \R^d,
\]
where $\langle x,\xi \rangle = \sum_{j=1}^d x_j \xi_j$ denotes the standard inner product on $\R^d$.
With this normalization, the Fourier transform $\Fourier : L^1(\R^d) \to C_0(\R^d)$
extends to a unitary operator $\Fourier : L^2(\R^d) \to L^2(\R^d)$
and also to linear homeomorphisms $\Fourier : \Schwartz(\R^d) \to \Schwartz(\R^d)$ and
$\Fourier : \Schwartz'(\R^d) \to \Schwartz'(\R^d)$, with the latter defined by
\(
  \langle \Fourier f, \varphi \rangle_{\Schwartz',\Schwartz}
  := \langle f, \Fourier \varphi \rangle_{\Schwartz', \Schwartz}.
\)
Here, as in the remainder of the paper, the dual pairing for distributions
are taken to be \emph{bilinear}.
In any case, the inverse Fourier transform is given by (the extension of) the operator
$\Fourier^{-1} f (x) = \Fourier f (-x)$.
All of the facts listed here can be found in \cite[Chapter~7]{RudinFA}.

Fix $\varphi_0 \in \Schwartz (\R^d)$ satisfying $\varphi_0 (\xi) = 1$
for all $\xi \in \R^d$ such that $|\xi| \leq 1$, and $\varphi_0 (\xi) = 0$
for all $\xi \in \R^d$ satisfying $|\xi| \geq 3/2$.
Define $\varphi_k : \R^d \to \CC, \xi \mapsto \varphi_0(2^{-k} \xi) - \varphi_0 (2^{-k + 1} \xi)$
for $k \in \N$, noting that $\sum_{j=0}^\infty \varphi_j (\xi) \equiv 1$ on $\R^d$.

With this, the (inhomogeneous) \emph{Besov space} $B^{\BesovSmoothness}_{p,q}(\R^d)$
with smoothness $\BesovSmoothness \in \R$ and integrability exponents $p,q \in (0,\infty]$
is defined (see \cite[Section~1.3, Definition~1.2]{TriebelTheoryOfFunctionSpaces3}) as
\[
  B^{\BesovSmoothness}_{p,q}(\R^d)
  := \big\{
       f \in \Schwartz'(\R^d)
       \colon
       \| f \|_{B^{\BesovSmoothness}_{p,q}(\R^d)} < \infty
     \big\}
\]
where
\[
  \| f \|_{B^{\BesovSmoothness}_{p,q} (\R^d)}
  := \Big\|
       \big(
         2^{j \BesovSmoothness} \cdot
         \| \Fourier^{-1} (\varphi_j \cdot \widehat{f\,} \, ) \|_{L^p}
       \big)_{j \in \N_0}
     \Big\|_{\ell^q}
  \in [0,\infty]
  \quad \text{for} \quad
  f \in \Schwartz'(\R^d).
\]
This is well-defined, since $\varphi_j \cdot \widehat{f\,}$ is a tempered distribution with
compact support, so that the Paley-Wiener theorem (see \cite[Theorem~7.23]{RudinFA})
shows that $\Fourier^{-1} (\varphi_j \cdot \widehat{f\,})$ is a smooth function
of which one can take the $L^p$ norm (which might be infinite).
One can show that the definition of $B^{\BesovSmoothness}_{p,q} (\R^d)$ is independent
of the precise choice of the function $\varphi_0$,
with equivalent quasi-norms for different choices;
see \mbox{\cite[Proposition 1 in Section 2.3.2]{TriebelTheoryOfFunctionSpaces1}}.
Furthermore, the spaces $B^{\BesovSmoothness}_{p,q} (\R^d)$ are quasi-Banach spaces that satisfy
$B^{\BesovSmoothness}_{p,q}(\R^d) \hookrightarrow \Schwartz'(\R^d)$;
see \mbox{\cite[Theorem in Section 2.3.3]{TriebelTheoryOfFunctionSpaces1}}.

Now, let $\emptyset \neq \Omega \subset \R^d$ be a bounded open set,
and let $\BesovSmoothness \in \R$ and $p,q \in (0,\infty]$.
We will use the space $\calD' (\Omega)$ of distributions on $\Omega$;
for more details on these spaces, we refer to \cite[Chapter 6]{RudinFA}.
Following \cite[Definition 1.95]{TriebelTheoryOfFunctionSpaces3}, we then define
\[
  B^{\BesovSmoothness}_{p,q} (\Omega)
  := \big\{ f|_{\Omega} \colon f \in B^{\BesovSmoothness}_{p,q} (\R^d) \big\}
\]
and
\begin{equation}
  \| f \|_{B^{\BesovSmoothness}_{p,q} (\Omega)}
  := \inf \big\{
            \| g \|_{B^{\BesovSmoothness}_{p,q} (\R^d)}
            \colon
            g \in B^{\BesovSmoothness}_{p,q}(\R^d) \text{ and } g|_{\Omega} = f
          \big\}
  \quad \text{for} \quad f \in B^{\BesovSmoothness}_{p,q}(\Omega).
  \label{eq:BesovDomainNormDefinition}
\end{equation}
Here, given a tempered distribution $f \in \Schwartz'(\R^d)$, we write $f|_{\Omega}$
for the restriction of $f$ to $\Omega$, given by
$f|_{\Omega} : C_c^\infty (\Omega) \to \CC, \psi \mapsto f(\psi)$.
It is easy to see that $f|_{\Omega} \in \calD'(\Omega)$.
The spaces $B^{\BesovSmoothness}_{p,q}(\Omega)$ are quasi-Banach spaces
that satisfy $B^{\BesovSmoothness}_{p,q}(\Omega) \hookrightarrow \calD'(\Omega)$;
see \cite[Remark~1.96]{TriebelTheoryOfFunctionSpaces3}.

\subsection{The wavelet characterization of Besov spaces}
\label{sub:BesovWaveletCharacterization}

Wavelets are usually constructed using a so-called \emph{multiresolution analysis of $L^2(\R)$}.
A multiresolution analysis (see \cite[Definition~2.2]{WojtaszczykIntroductionToWavelets} or
\cite[Section~5.1]{DaubechiesTenLectures})
of $L^2(\R)$ is a sequence $(V_j)_{j \in \Z}$
of closed subspaces $V_j \subset L^2(\R)$ with the following properties:
\begin{enumerate}
  \item $V_j \subset V_{j+1}$ for all $j \in \Z$;

  \item $\bigcup_{j \in \Z} V_j$ is dense in $L^2(\R)$;

  \item $\bigcap_{j \in \Z} V_j = \{0\}$;

  \item for $f \in L^2(\R)$, we have $f \in V_j$ if and only if $f(2^{-j} \bullet) \in V_0$;

  \item there exists a function $\psi_F \in V_0$
        (called the \emph{scaling function} or the \emph{father wavelet})
        such that $\big( \psi_F (\bullet - m) \big)_{m \in \Z}$ is an orthonormal basis of $V_0$.
\end{enumerate}

To each multiresolution analysis, one can associate a \emph{(mother) wavelet} $\psi_M \in L^2(\R)$;
see \cite[Theorem~2.20]{WojtaszczykIntroductionToWavelets}.
More precisely, denote by $W_0 \subset L^2(\R)$ the orthogonal complement of $V_0$
as a subset of $V_1$, and define $W_j := \{ f (2^j \bullet) \colon f \in W_0 \}$ for $j \in \N$,
so that $W_j$ is the orthogonal complement of $V_j$ in $V_{j+1}$.
We then have $L^2(\R) = V_0 \oplus \bigoplus_{j=0}^\infty W_j$, where the sum is orthogonal.

One can show (see \cite[Lemma~2.19]{WojtaszczykIntroductionToWavelets}) that there exists
$\psi_M \in W_0$ such that the family $\big( \psi_M (\bullet - k) \big)_{k \in \Z}$
is an orthonormal basis of $W_0$.
In this case, we say that $\psi_M$ is a \emph{mother wavelet}
associated to the given multiresolution analysis.
For each such $\psi_M$, one can show
(see \mbox{\cite[Proposition 1.51]{TriebelTheoryOfFunctionSpaces3}}) that if we define
\[
  \psi_{j,m} :
  \R \to \CC,
  x \mapsto \begin{cases}
              \psi_F (x - m) ,                                & \text{if } j=0 \\[0.2cm]
              2^{\frac{j-1}{2}} \cdot \psi_M (2^{j-1} x - m), & \text{if } j \in \N
            \end{cases}
\]
for $j \in \N_0$ and $m \in \Z$, then the \emph{inhomogeneous wavelet system}
$(\psi_{j,m})_{j \in \N_0, m \in \Z}$ forms an orthonormal basis of $L^2(\R)$.
Furthermore, the family $\big( 2^{j/2} \, \psi_M (2^j \bullet - k) \big)_{j,k \in \Z}$
is an orthonormal basis of $L^2(\R)$.

For our purposes, we will need sufficiently regular wavelet systems,
as provided by the following theorem:

\begin{thm}\label{thm:WaveletExistence}
  For each $k \in \N$, there is a multiresolution analysis $(V_j)_{j \in \Z}$ of $L^2(\R)$
  with father/mother wavelets $\psi_F, \psi_M \in L^2(\R)$ such that the following hold:
  \begin{enumerate}
    \item $\psi_F, \psi_M$ are real-valued and have compact support;
          \vspace{-0.15cm}
    \item $\psi_F, \psi_M \in C^k (\R)$;
          \vspace{-0.15cm}
    \item $\myhat{\psi_F} (0) = (2\pi)^{-1/2}$;
          \vspace{-0.15cm}
    \item $\int_{\R} x^\ell \cdot \psi_M (x) \, d x = 0$ for all $\ell \in \{ 0,\dots,k \}$
          (\emph{vanishing moment condition}).
  \end{enumerate}
\end{thm}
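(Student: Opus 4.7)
The plan is to invoke Daubechies' classical construction of compactly supported orthonormal wavelet bases of $L^2(\R)$ \cite{DaubechiesTenLectures}, parametrized by a positive integer $N$, and to choose $N$ large enough (as a function of $k$) so as to simultaneously obtain the required smoothness and the required number of vanishing moments.

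For each $N \in \N$, Daubechies constructs a real-valued, compactly supported scaling function $\psi_F^{(N)} \in L^2(\R)$ generating a multiresolution analysis $(V_j^{(N)})_{j \in \Z}$, together with an associated real-valued, compactly supported mother wavelet $\psi_M^{(N)}$ having exactly $N$ vanishing moments (i.e., $\int_\R x^\ell \, \psi_M^{(N)}(x) \, dx = 0$ for $\ell = 0, \dots, N-1$) and satisfying $\int_\R \psi_F^{(N)}(x) \, dx = 1$. Properties (1), (3), and half of (4) in the theorem statement are then immediate: the real-valuedness and compact support come from the construction, and the normalization $\widehat{\psi_F^{(N)}}(0) = (2\pi)^{-1/2}$ is equivalent (in the Fourier convention used in the paper, with $d=1$) to $\int \psi_F^{(N)} = 1$.

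It remains to ensure property (2) and the full vanishing moment count (up to $\ell = k$) in property (4). For the latter, it suffices to take $N \geq k+1$. For the smoothness $\psi_F^{(N)}, \psi_M^{(N)} \in C^k(\R)$, I would invoke the classical regularity estimate of Daubechies (see \cite[Chapter~7]{DaubechiesTenLectures}): there exist constants $\beta > 0$ and $N_0 \in \N$ such that $\psi_F^{(N)}, \psi_M^{(N)} \in C^{\lfloor \beta N \rfloor}(\R)$ for all $N \geq N_0$ (asymptotically $\beta$ may be taken arbitrarily close to $0.2075$). Choosing $N := \max\{k+1,\, N_0,\, \lceil k/\beta \rceil\}$ then yields all four properties simultaneously.

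The main obstacle is really only the smoothness estimate $\psi_F^{(N)}, \psi_M^{(N)} \in C^{\lfloor \beta N \rfloor}$; but this is a well-known classical result and requires no new analysis. Everything else follows by unwinding definitions of a multiresolution analysis and reading off the normalization from the standard statement of Daubechies' theorem.
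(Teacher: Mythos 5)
Your proposal is correct and rests on the same underlying construction as the paper's proof: the Daubechies family of compactly supported orthonormal wavelets, with the parameter $N$ chosen large relative to $k$. The one genuine difference is how the vanishing moments are obtained. You read them off directly from the construction (the wavelet $\psi_M^{(N)}$ has exactly $N$ vanishing moments by design, so $N \geq k+1$ suffices) and then separately invoke the quantitative regularity estimate $\psi \in C^{\lfloor \beta N \rfloor}$ to get smoothness; the paper instead cites the qualitative statement ``for each $k$ there is a compactly supported $C^k$ MRA wavelet'' \cite[Theorem~4.7]{WojtaszczykIntroductionToWavelets} and then derives the vanishing moments up to order $k$ \emph{as an automatic consequence} of smoothness and compact support, via the standard fact that a $C^k$ orthonormal wavelet with sufficient decay has $k+1$ vanishing moments (\cite[Proposition~3.1]{WojtaszczykIntroductionToWavelets}). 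Your route requires tracking two quantitative parameters but avoids that proposition; the paper's route is softer but must additionally unwind the construction to verify real-valuedness and the normalization $\myhat{\psi_F}(0) = (2\pi)^{-1/2}$, which you correctly note also follow from the standard form of Daubechies' theorem. Either way the argument is complete.
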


\begin{proof}
  The existence of a multi-resolution analysis $(V_j)_{j \in \Z}$ with compactly supported
  father/mother wavelets $\psi_F, \psi_M \in C^k (\R)$ is shown in
  \cite[Theorem~4.7]{WojtaszczykIntroductionToWavelets}
  (while the original proof was given in \cite{DaubechiesCompactlySupportedWavelets}).
  It is not stated explicitly, however, that $\psi_F, \psi_M$ are real-valued;
  but this can be extracted from the proof:
  The function $\Phi := \psi_F$ is constructed as $\Phi = (2\pi)^{-1/2} \, \Fourier^{-1} \Theta$,
  with $\Theta(\xi) = \prod_{j=1}^\infty m(2^{-j} \xi)$
  (see \mbox{\cite[Theorem~4.1]{WojtaszczykIntroductionToWavelets}}),
  where ${m(\xi) = \sum_{k=0}^T a_k e^{i k \xi}}$ is obtained through
  \cite[Lemma~4.6]{WojtaszczykIntroductionToWavelets}, so that $a_0,\dots,a_T \in \R$
  and $m(0) = 1$.
  Therefore, \cite[Lemma~4.3]{WojtaszczykIntroductionToWavelets} shows that $\Phi$ is real-valued.
  Finally, $\Psi := \psi_M$ is obtained from $\Phi$ as
  \({
    \Psi (x) = 2\sum_{k=0}^T \overline{a_k} (-1)^k \Phi(2x + k + 1);
  }\)
  see \cite[Equation~(4.5)]{WojtaszczykIntroductionToWavelets}.
  Since $a_0, \dots, a_T \in \R$, this shows that $\psi_M = \Psi$ is real-valued as well.

  The above construction also implies $\myhat{\psi_F} (0) = (2\pi)^{-1/2} \, \Theta(0) = (2\pi)^{-1/2}$,
  because of $\Theta(0) = \prod_{j=1}^\infty m(0) = 1$.
  Finally, the vanishing moment condition is a consequence of
  \cite[Proposition 3.1]{WojtaszczykIntroductionToWavelets}.
\end{proof}

Wavelet systems in $\R^d$ can be constructed by taking suitable tensor products
of a one-dimensional wavelet system.
To describe this, let $\psi_F, \psi_M$ be father/mother wavelets, and
let $\waveletT_0 := \{F\}^d$ and $\waveletT_j := \waveletT := \{F,M\}^d \setminus \waveletT_0$ for $j \in \N$.
Now, for $t = (t_1,\dots,t_d) \in \waveletT$ and $m = (m_1,\dots,m_d) \in \Z^d$, define
$\Psi_m : \R^d \to \CC$ and $\Psi_{t,m} : \R^d \to \CC$ by
\begin{equation}
  \Psi_m (x) := \prod_{j=1}^d \psi_F (x_j - m_j)
  \quad \text{and} \quad
  \Psi_{t,m} (x) := \prod_{j=1}^d \psi_{t_j} (x_j - m_j).
  \label{eq:WaveletsMultiDimensionalPreparation}
\end{equation}
Finally, set $J := \{ (j,t,m) \colon j \in \N_0, t \in \waveletT_j , m \in \Z^d \}$, and
\begin{equation}
  \Psi_{j,t,m} :
  \R^d \to \CC,
  x \mapsto
  \begin{cases}
    \Psi_m (x)                               & \text{if } j = 0 , \,
                                                          t \in \waveletT_0, \,
                                                          \text{ and } m \in \Z^d \\[0.15cm]
    2^{(j-1) d / 2} \, \Psi_{t,m} (2^{j-1} x)& \text{if } j \in \N , \,
                                                          t \in \waveletT_j, \,
                                                          \text{ and } m \in \Z^d
  \end{cases}
  \label{eq:WaveletsMultiDimensionalDefinition}
\end{equation}
Then (see \cite[Proposition 1.53]{TriebelTheoryOfFunctionSpaces3}),
the system $(\Psi_{j,t,m})_{(j,t,m) \in J}$ is an orthonormal basis of $L^2(\R^d)$.

Finally, we have the following wavelet characterization of the Besov spaces
$B^{\BesovSmoothness}_{p,q}(\R^d)$.

\begin{thm}\label{thm:BesovWaveletCharacterization}
  (consequence of \cite[Theorem~1.64]{TriebelTheoryOfFunctionSpaces3})

  Let $d \in \N$, $p,q \in (0,\infty]$ and $\BesovSmoothness \in \R$.
  For a sequence $\cvec = (c_{j,t,m})_{(j,t,m) \in J} \in \CC^J$ define
  \[
    \| \cvec \|_{b^{\BesovSmoothness}_{p,q}}
    := \| \cvec \|_{b^{\BesovSmoothness}_{p,q}(\R^d)}
    := \Big\|
         \Big(
           2^{j (\BesovSmoothness + d (2^{-1} - p^{-1}))} \cdot
           \| (c_{j,t,m})_{m \in \Z^d} \|_{\ell^p}
         \Big)_{j \in \N_0, t \in \waveletT_j}
       \Big\|_{\ell^q}
    \in [0,\infty],
  \]
  and
  \(
    b^{\BesovSmoothness}_{p,q} (\R^d)
    := \big\{
         \cvec \in \CC^{J}
         \colon
         \| \cvec \|_{b^{\BesovSmoothness}_{p,q}(\R^d)} < \infty
       \big\}
  \).

  \medskip{}

  Let $k \in \N$, and let $\psi_F, \psi_M$ as provided by Theorem~\ref{thm:WaveletExistence}.
  Let the $d$-dimensional wavelet system $(\Psi_{j,t,m})_{(j,t,m) \in J}$
  be as defined in Equation~\eqref{eq:WaveletsMultiDimensionalDefinition}.
  If
  \(
    k > \max \big\{ \BesovSmoothness, \frac{2 d}{p} + \frac{d}{2} - \BesovSmoothness \big\},
  \)
  then the map
  \[
    \Gamma = \Gamma_k :
    b^{\BesovSmoothness}_{p,q} (\R^d) \to B^{\BesovSmoothness}_{p,q} (\R^d),
    (c_{j,t,m})_{(j,t,m) \in J}
    \mapsto \sum_{(j,t,m) \in J}
              c_{j,t,m} \, \Psi_{j,t,m}
  \]
  is well-defined (with unconditional convergence of the series in $\Schwartz'(\R^d)$),
  and an isomorphism of (quasi)-Banach spaces.
  The inverse map of $\Gamma$ will be denoted by
  \[
    \Theta = \Theta_k := \Gamma_k^{-1}
    : B^{\BesovSmoothness}_{p,q} (\R^d) \to b^{\BesovSmoothness}_{p,q}(\R^d) ,
    f \mapsto \big( \theta_{j,t,m} (f) \big)_{(j,t,m) \in J} .
  \]
\end{thm}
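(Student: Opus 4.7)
My plan is to deduce the statement from Triebel's wavelet characterization, \cite[Theorem~1.64]{TriebelTheoryOfFunctionSpaces3}, whose formulation matches the present one up to bookkeeping. The first task is therefore bookkeeping: I would check that the tensor wavelet system $(\Psi_{j,t,m})_{(j,t,m)\in J}$ built from Theorem~\ref{thm:WaveletExistence} fits into Triebel's framework. Specifically, the scaling function and the mother wavelet are compactly supported, lie in $C^k(\R)$, the mother wavelet has $k+1$ vanishing moments, and the normalizing factor $2^{(j-1)d/2}$ in \eqref{eq:WaveletsMultiDimensionalDefinition} is exactly the $L^2$-normalization assumed by Triebel. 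The numerical threshold $k > \max\{\tau,\,2d/p + d/2 - \tau\}$ is precisely the ``admissibility in the sense of $B^\tau_{p,q}$'' condition (it encodes enough smoothness to represent $f\in B^\tau_{p,q}$ and enough vanishing moments so that the wavelet coefficients decode it); note that under our hypotheses $k$ can be taken arbitrarily large, so this is no restriction.

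Once the setup is matched, the classical proof breaks into two halves. To bound $\Gamma$, I would decompose $\Gamma\cvec = \sum_{j\in\N_0} \Gamma_j \cvec$ with $\Gamma_j\cvec = \sum_{t,m} c_{j,t,m}\Psi_{j,t,m}$. Each $\Gamma_j\cvec$ has its frequency content essentially concentrated in an annulus $|\xi|\sim 2^{j}$, so applying a Littlewood--Paley filter $\Fourier^{-1}(\varphi_k \widehat{\cdot})$ only couples a bounded number of scales $j\approx k$. For a fixed scale, the $L^p$-stability of the integer translates of the (compactly supported) wavelet gives $\|\Gamma_j\cvec\|_{L^p} \simeq 2^{jd(1/2-1/p)}\|(c_{j,t,m})_{t,m}\|_{\ell^p}$, and the weight $2^{j(\tau+d(1/2-1/p))}$ in $b^\tau_{p,q}$ is exactly what is needed to match the Besov weight $2^{j\tau}$. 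To bound $\Theta$, I would establish the dual almost-diagonal estimate for the matrix $\langle \Fourier^{-1}(\varphi_k\widehat{f}),\Psi_{j,t,m}\rangle$: the vanishing moments of $\psi_M$ give rapid decay for $k < j$, and the $C^k$-smoothness (equivalently, Schwartz decay of $\widehat{\psi_M}$) gives rapid decay for $k > j$, and the condition on $k$ makes these estimates summable in $\ell^q$ after multiplication with the Besov weight.

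The identity $\Gamma_k \circ \Theta_k = \id$ and $\Theta_k \circ \Gamma_k = \id$ is then verified on the dense subset $B^\tau_{p,q}\cap L^2(\R^d)$ (resp.\ $b^\tau_{p,q}\cap \ell^2(J)$), where it reduces to the orthonormal basis property of $(\Psi_{j,t,m})$ in $L^2(\R^d)$, and extended by continuity using the bounds just proved. The step I expect to be most delicate is the quasi-Banach regime $\min\{p,q\}<1$: here the wavelet series converges only in $\Schwartz'(\R^d)$, not in norm, so one must justify termwise manipulations via duality against Schwartz test functions (whose wavelet coefficients decay faster than any polynomial in $2^j\,|m|$), and the analysis operator $\Theta$ has to be defined by the duality pairing rather than the $L^2$ inner product. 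Beyond this, the main obstacle is purely notational alignment with Triebel, since \cite{TriebelTheoryOfFunctionSpaces3} uses a slightly different indexing and an $L^p$-normalization of wavelets; translating his estimates to the $L^2$-normalized family $\Psi_{j,t,m}$ produces the exponent $\tau + d(2^{-1}-p^{-1})$ in the definition of $\|\cdot\|_{b^\tau_{p,q}}$.
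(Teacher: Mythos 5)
Your proposal matches the paper exactly: the paper offers no independent proof of this theorem but states it as a direct consequence of \cite[Theorem~1.64]{TriebelTheoryOfFunctionSpaces3}, which is precisely your starting point, and your additional sketch of the internal Littlewood--Paley/almost-diagonal argument and of the normalization bookkeeping (producing the exponent $\BesovSmoothness + d(2^{-1}-p^{-1})$) is the standard route underlying Triebel's result. No gap.
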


We will also use the \emph{real-valued Besov space}
\[
  B^{\BesovSmoothness}_{p,q} (\R^d ; \R)
  := B^{\BesovSmoothness}_{p,q} (\R^d) \cap \Schwartz' (\R^d; \R)
  \quad \text{equipped with the (quasi)-norm} \quad
  \| \cdot \|_{B^{\BesovSmoothness}_{p,q} (\R^d)},
\]
where we write
\(
  \Schwartz' (\R^d; \R)
  := \big\{
       \varphi \in \Schwartz' (\R^d)
       \colon
       \forall \, f \in \Schwartz(\R^d ; \R) :
         \langle \varphi, f \rangle_{\Schwartz',\Schwartz} \in \R
     \big\}
\)
and $\Schwartz(\R^d ; \R) := \{ f : \R^d \to \R \colon f \in \Schwartz(\R^d) \}$.
The spaces $B_{p,q}^{\BesovSmoothness}(\Omega;\R)$ are defined similarly.
We will also use the space
\(
  b^{\BesovSmoothness}_{p,q} (\R^d; \R)
  := b^{\BesovSmoothness}_{p,q}(\R^d)
     \cap \R^J
\).

\subsection{Wavelets and Besov spaces on bounded domains}
\label{sub:WaveletsForBesovSpacesOnDomains}

Note that Theorem~\ref{thm:BesovWaveletCharacterization} only pertains to the Besov spaces
$B^{\BesovSmoothness}_{p,q}(\R^d)$.
To describe Besov spaces on domains, we will use the sequence spaces
$b^{\BesovSmoothness}_{p,q}(\Omega_{\interior};\R)$
and $b^{\BesovSmoothness}_{p,q}(\Omega_{\ext};\R)$ that we now define.

\begin{defn}\label{def:DomainSequenceSpaces}
  Let $p,q \in (0,\infty]$ and $\BesovSmoothness \in \R$,
  and let $k \in \N$ with $k > \max \{ \tau, \frac{2d}{p} + \frac{d}{2} - \tau \}$.
  Let $\emptyset \neq \Omega \subset \R^d$ be a bounded open set.
  With the father/mother wavelets $\psi_F, \psi_M$ as in Theorem~\ref{thm:WaveletExistence}
  and $\Psi_{j,t,m}$ as in Equation~\eqref{eq:WaveletsMultiDimensionalDefinition}, define
  \begin{align*}
    & J^{\ext} := \bigcup_{j \in \N_0} \big( \{j\} \times J_j^{\ext} \big)
      \,\, \text{where} \,\,
      J_{j}^{\ext}
      := \{
           (t,m) \in \waveletT_j \times \Z^d
           \colon
           \Omega \cap \supp \Psi_{j,t,m} \neq \emptyset
         \}, \\
    \text{and} \quad
    & J^{\interior} := \bigcup_{j \in \N_0} \big( \{j\} \times J_j^{\interior} \big)
      \,\, \text{where} \,\,
      J_{j}^{\interior} := \{
                             (t,m) \in \waveletT_j \times \Z^d
                             \colon
                             \supp \Psi_{j,t,m} \subset \Omega
                           \}.
  \end{align*}
  Finally, set
  \[
    b^{\BesovSmoothness}_{p,q} (\Omega_{\ext} ; \R)
    := \big\{
         (c_{j,t,m})_{(j,t,m) \in J} \in b^{\BesovSmoothness}_{p,q}(\R^d ; \R)
         \colon
         c_{j,t,m} = 0
         \quad \forall \, (j,t,m) \in J \setminus J^{\ext}
       \big\},
  \]
  and define $b^{\BesovSmoothness}_{p,q}(\Omega_{\interior}; \R)$ similarly.
  Both of these spaces are considered as subspaces of $b^{\BesovSmoothness}_{p,q}(\R^d; \R)$;
  they are thus equipped with the (quasi)-norm $\| \cdot \|_{b^{\BesovSmoothness}_{p,q}}$.
\end{defn}

\begin{rem*}
  Strictly speaking, the spaces $b_{p,q}^{\BesovSmoothness}(\Omega_{\interior}; \R)$
  and $b_{p,q}^{\BesovSmoothness}(\Omega_{\ext}; \R)$ depend on the choice of $k \in \N$
  and on the precise choice of $\psi_F, \psi_M$.
  We will, however, suppress this dependence.
\end{rem*}


The next lemma describes the relation between these sequence spaces
and the Besov spaces $B_{p,q}^{\BesovSmoothness} (\Omega;\R)$.

\begin{lem}\label{lem:BesovSequenceSpaceConnection}
  Let $d \in \N$, $\emptyset \neq \Omega \subset \R^d$ open and bounded,
  $p,q \in (0,\infty]$ and $\BesovSmoothness \in \R$.
  Let $k \in \N$ with $k > \max \{ \BesovSmoothness, \frac{2d}{p} + \frac{d}{2} - \tau \}$.
  Let $b_{p,q}^{\BesovSmoothness} (\Omega_{\interior};\R)$
  and $b_{p,q}^{\BesovSmoothness} (\Omega_{\ext};\R)$
  be as in Definition~\ref{def:DomainSequenceSpaces},
  and $J$ as defined before Equation~\eqref{eq:WaveletsMultiDimensionalDefinition}.

  Then there are continuous linear maps
  \[
    T_{\interior} :
    b_{p,q}^{\BesovSmoothness} (\Omega_\interior;\R) \to B_{p,q}^{\BesovSmoothness} (\Omega;\R)
    \quad \text{and} \quad
    T_{\ext} :
    b_{p,q}^{\BesovSmoothness} (\Omega_\ext;\R) \to B_{p,q}^{\BesovSmoothness} (\Omega;\R)
  \]
  with the following properties:
  \begin{itemize}
    \item There is $\gamma > 0$ such that
          \(
            \| T_{\interior} \cvec \|_{L^2(\Omega)}
            = \gamma \cdot \| \cvec \|_{\ell^2}
          \)
          for all $\cvec \in \ell^2 (J) \cap b_{p,q}^{\BesovSmoothness}(\Omega_{\interior}; \R)$,
          and
          \(
            \| T_{\interior} \cvec \|_{B_{p,q}^{\BesovSmoothness}(\Omega)}
            \leq \| \cvec \|_{b_{p,q}^{\BesovSmoothness}}
          \)
          for all $\cvec \in b_{p,q}^{\BesovSmoothness}(\Omega_{\interior}; \R)$.

    \item There is $\varrho > 0$ such that
          $\| T_{\ext} \cvec \|_{L^2(\Omega)} \leq \varrho \cdot \| \cvec \|_{\ell^2}$
          for all $\cvec \in b_{p,q}^{\BesovSmoothness} (\Omega_\ext; \R)$, and we have
          \begin{equation}
            \ball\big( 0,1;B_{p,q}^{\BesovSmoothness} (\Omega;\R) \big)
            \subset T_{\ext} \Big(
                               \ball \big(
                                       0,1;b_{p,q}^{\BesovSmoothness}(\Omega_{\ext};\R)
                                     \big)
                             \Big) .
            \label{eq:ExteriorSynthesisSurjective}
          \end{equation}
  \end{itemize}
\end{lem}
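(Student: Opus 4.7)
The plan is to build both $T_{\interior}$ and $T_{\ext}$ as (rescaled) wavelet synthesis operators followed by distributional restriction to $\Omega$, exploiting the isomorphism $\Gamma : b_{p,q}^{\tau}(\R^d) \to B_{p,q}^{\tau}(\R^d)$ from Theorem~\ref{thm:BesovWaveletCharacterization} (restricted to the real-valued subspaces, which is possible because Theorem~\ref{thm:WaveletExistence} provides real-valued $\psi_F,\psi_M$). Concretely, I would first define the ``unscaled'' operators
\[
  \Gamma_{\interior}(\cvec) := \Gamma(\iota_{\interior}\cvec)\big|_{\Omega},
  \qquad
  \Gamma_{\ext}(\cvec) := \Gamma(\iota_{\ext}\cvec)\big|_{\Omega},
\]
where $\iota_{\interior}, \iota_{\ext}$ are the natural inclusions of $b_{p,q}^{\tau}(\Omega_{\interior};\R)$, $b_{p,q}^{\tau}(\Omega_{\ext};\R)$ into $b_{p,q}^{\tau}(\R^d;\R)$, and then set $T_{\interior} := \Gamma_{\interior}/\|\Gamma\|$ and $T_{\ext} := 2\|\Gamma^{-1}\| \cdot \Gamma_{\ext}$. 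The scaling factors are chosen so that the estimates claimed in the lemma hold with the stated constants in $L^2(\Omega)$ and with constant $1$ in the Besov norm.

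Next I would verify the four estimates. For $T_{\interior}$: since every wavelet $\Psi_{j,t,m}$ with $(j,t,m)\in J^{\interior}$ is (by definition of $J^{\interior}$) supported in $\Omega$, the distribution $\Gamma(\iota_{\interior}\cvec)$ vanishes on $\R^d\setminus\overline\Omega$, and when $\cvec\in\ell^2(J)$ the wavelet series converges in $L^2(\R^d)$ (orthonormality), giving $\|\Gamma_{\interior}\cvec\|_{L^2(\Omega)} = \|\iota_{\interior}\cvec\|_{\ell^2} = \|\cvec\|_{\ell^2}$. The Besov estimate follows because restriction is a contraction by the very definition of $\|\cdot\|_{B_{p,q}^{\tau}(\Omega)}$ in~\eqref{eq:BesovDomainNormDefinition}, so $\|\Gamma_{\interior}\cvec\|_{B_{p,q}^{\tau}(\Omega)}\le\|\Gamma\|\cdot\|\cvec\|_{b_{p,q}^{\tau}}$, and rescaling by $1/\|\Gamma\|$ yields the constant $1$. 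For $T_{\ext}$, the $L^2$ bound is again immediate from Parseval on $\R^d$ followed by restriction, giving $\varrho = 2\|\Gamma^{-1}\|$.

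The main obstacle and the only step requiring genuine work is the surjectivity-type inclusion~\eqref{eq:ExteriorSynthesisSurjective}. Given $f\in\ball(0,1;B_{p,q}^{\tau}(\Omega;\R))$, I would use~\eqref{eq:BesovDomainNormDefinition} to choose a real-valued extension $g\in B_{p,q}^{\tau}(\R^d;\R)$ with $g|_{\Omega}=f$ and $\|g\|_{B_{p,q}^{\tau}(\R^d)}\le 2$, then write $g=\Gamma(\cvec_g)$ with $\|\cvec_g\|_{b_{p,q}^{\tau}}\le 2\|\Gamma^{-1}\|$, and finally truncate to $\tilde\cvec := \cvec_g\cdot\indicator_{J^{\ext}}\in b_{p,q}^{\tau}(\Omega_{\ext};\R)$, which satisfies $\|\tilde\cvec\|_{b_{p,q}^{\tau}}\le\|\cvec_g\|_{b_{p,q}^{\tau}}$ by the monotonicity of the weighted mixed $\ell^p$-$\ell^q$ norm under coordinatewise truncation. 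The key claim is that the removed tail $\sum_{(j,t,m)\notin J^{\ext}}c_{j,t,m}\Psi_{j,t,m}$ restricts to zero on $\Omega$; this follows because each such $\Psi_{j,t,m}$ is supported in $\R^d\setminus\Omega$ by definition of $J^{\ext}$, so pairing the unconditionally $\Schwartz'$-convergent series with any $\varphi\in C_c^\infty(\Omega)$ yields zero termwise. Hence $\Gamma_{\ext}(\tilde\cvec) = g|_{\Omega} = f$, and setting $\cvec^* := \tilde\cvec/(2\|\Gamma^{-1}\|)\in\ball(0,1;b_{p,q}^{\tau}(\Omega_{\ext};\R))$ gives $T_{\ext}(\cvec^*)=f$, establishing~\eqref{eq:ExteriorSynthesisSurjective}. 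The subtle point to watch is that the truncation is legitimate, i.e., that the characteristic function restriction preserves the $b_{p,q}^{\tau}$ (quasi-)norm in the needed monotone way; this is immediate from the definition of $\|\cdot\|_{b_{p,q}^{\tau}}$, which is a lattice (quasi-)norm in the coefficients.
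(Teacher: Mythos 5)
Your construction is essentially identical to the paper's: both $T_{\interior}$ and $T_{\ext}$ are rescaled wavelet synthesis operators followed by restriction to $\Omega$, the $L^2$ statements come from orthonormality plus the support conditions defining $J^{\interior}$ and $J^{\ext}$, the Besov bound comes from the contractivity of restriction in Equation~\eqref{eq:BesovDomainNormDefinition}, and the surjectivity~\eqref{eq:ExteriorSynthesisSurjective} is obtained by extending $f$, passing to wavelet coefficients, truncating to $J^{\ext}$ (using that $\|\cdot\|_{b_{p,q}^{\BesovSmoothness}}$ is a lattice quasi-norm), and checking termwise against test functions $\varphi \in C_c^\infty(\Omega)$ that the discarded wavelets contribute nothing.

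The one step you gloss over is the claim that $f$ admits a \emph{real-valued} extension $g \in B_{p,q}^{\BesovSmoothness}(\R^d;\R)$ with $\|g\|_{B_{p,q}^{\BesovSmoothness}(\R^d)} \leq 2$. The infimum in~\eqref{eq:BesovDomainNormDefinition} runs over all complex extensions, and passing from a complex extension $g$ to $\operatorname{Re} g = \frac{1}{2}(g + \bar{g})$ requires a triangle inequality (problematic for $p<1$ or $q<1$) together with conjugation-invariance of the norm; neither is justified in your write-up. The paper avoids this entirely by taking an arbitrary complex extension, setting $\evec = \Theta g$, and defining the coefficients as $\varrho^{-1} \, \indicator_{J^{\ext}} \cdot \operatorname{Re}(e_{j,t,m})$ — the lattice property you already invoke for the truncation also handles the real-part operation, and the pairing argument with real test functions shows that taking real parts of the coefficients does not change the restriction to $\Omega$. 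Your argument is repairable along exactly these lines (or by absorbing an extra constant into $\varrho$), but as written this step is a gap.
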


\begin{proof}
  With the operator $\Gamma$ as in Theorem~\ref{thm:BesovWaveletCharacterization}, let
  $\gamma := \bigl(1 + \| \Gamma \|_{b_{p,q}^{\BesovSmoothness}(\R^d) \to B_{p,q}^{\BesovSmoothness}(\R^d)}\bigr)^{-1}$,
  and define
  \[
    T_{\interior} :
    b_{p,q}^{\BesovSmoothness} (\Omega_{\interior}; \R) \to B_{p,q}^\tau (\Omega;\R),
    \cvec \mapsto \gamma \cdot (\Gamma \, \cvec)|_{\Omega} .
  \]
  By definition of the Besov space $B_{p,q}^{\BesovSmoothness}(\Omega;\R)$ and its norm
  (see Equation~\eqref{eq:BesovDomainNormDefinition}), we then see that
  $T_{\interior}$ is a well-defined continuous linear map, with
  \[
    \| T_{\interior} \, \cvec \|_{B_{p,q}^{\BesovSmoothness} (\Omega)}
    \leq \gamma \cdot \| \Gamma \, \cvec \|_{B_{p,q}^{\BesovSmoothness}(\R^d)}
    \leq \| \cvec \|_{b_{p,q}^{\BesovSmoothness}}
    \qquad \forall \, \cvec \in b_{p,q}^{\BesovSmoothness} (\Omega_{\interior}; \R).
  \]
  Next, let
  \(
    \cvec
    = (c_{j,t,m})_{(j,t,m) \in J}
    \in \ell^2(J)
        \cap b_{p,q}^{\BesovSmoothness} (\Omega_{\interior} ; \R)
  \)
  be arbitrary.
  Since the family ${(\Psi_{j,t,m})_{(j,t,m) \in J} \subset L^2(\R^d)}$ is orthonormal,
  and since $c_{j,t,m} = 0$ for $(j,t,m) \in J \setminus J^{\interior}$,
  while $\supp \Psi_{j,t,m} \subset \Omega$ for $(j,t,m) \in J^{\interior}$, we see
  \begin{align*}
    \| T_{\interior} \, \cvec \|_{L^2(\Omega)}
    & = \gamma \cdot \Big\|
                       \sum_{(j,t,m) \in J^{\interior}}
                         c_{j,t,m} \, \Psi_{j,t,m}
                     \Big\|_{L^2(\Omega)}
      = \gamma \cdot \Big\|
                       \sum_{(j,t,m) \in J^{\interior}}
                         c_{j,t,m} \, \Psi_{j,t,m}
                     \Big\|_{L^2(\R^d)} \\
    & = \gamma \cdot \| (c_{j,t,m})_{(j,t,m) \in J^{\interior}} \|_{\ell^2}
      = \gamma \cdot \| \cvec \|_{\ell^2} .
  \end{align*}

  To construct $T_{\ext}$, let $\Theta$ be as in Theorem~\ref{thm:BesovWaveletCharacterization}, set
  $\varrho := 2 \cdot \bigl(1 + \| \Theta \|_{B_{p,q}^{\BesovSmoothness}(\R^d) \to b_{p,q}^\tau (\R^d)}\bigr)$,
  and define
  \[
    T_{\ext} :
    b_{p,q}^{\BesovSmoothness} (\Omega_{\ext}; \R) \to B_{p,q}^{\BesovSmoothness}(\Omega; \R),
    \cvec \mapsto \varrho \cdot (\Gamma \, \cvec)|_{\Omega} .
  \]
  Exactly as for $T_{\interior}$, we see that $T_{\ext}$ is a well-defined continuous linear map.
  Furthermore, using again that the family $(\Psi_{j,t,m})_{(j,t,m) \in J} \subset L^2(\R^d)$
  is an orthonormal system, we see
  \[
    \| T_{\ext} \, \cvec \|_{L^2(\Omega)}
    \leq \varrho \cdot \| \Gamma \, \cvec \|_{L^2(\R^d)}
    \leq \varrho \cdot \| \cvec \|_{\ell^2}
    \quad \forall \, \cvec \in b_{p,q}^{\BesovSmoothness} (\Omega_\ext; \R).
  \]
  It remains to prove the inclusion~\eqref{eq:ExteriorSynthesisSurjective}.
  To this end, let $f \in \ball \big( 0,1; B_{p,q}^{\BesovSmoothness} (\Omega;\R) \big)$
  be arbitrary.
  By definition, this implies $f = g|_{\Omega}$ for some $g \in B_{p,q}^{\BesovSmoothness} (\R^d)$
  with $\| g \|_{B_{p,q}^{\BesovSmoothness}} \leq 2$.
  Let ${\evec := \Theta g \in b_{p,q}^{\BesovSmoothness} (\R^d)}$, and
  \(
    \cvec
    = (c_{j,t,m})_{(j,t,m) \in J}
  \)
  where
  \(
    c_{j,t,m}
    := \varrho^{-1}
       \cdot \indicator_{J^{\ext}} ( (j,t,m) )
       \cdot \operatorname{Re} (e_{j,t,m}).
  \)
  Clearly,
  \(
    \| \cvec \|_{b_{p,q}^{\BesovSmoothness}}
    \leq \varrho^{-1} \| \evec \|_{b_{p,q}^{\BesovSmoothness}}
    \leq 2 \| \Theta \| / \varrho
    \leq 1,
  \)
  which means $\cvec \in \ball \big( 0,1; b_{p,q}^{\BesovSmoothness} (\Omega_{\ext}; \R) \big)$.

  Finally, for an arbitrary \emph{real-valued} test function $\varphi \in C_c^\infty (\Omega)$,
  we have $\langle \Psi_{j,t,m}, \varphi \rangle \in \R$ for all $(j,t,m) \in J$
  and $\langle \Psi_{j,t,m} , \varphi \rangle = 0$ if $(j,t,m) \notin J^{\ext}$.
  Therefore,
  \begin{align*}
    \langle T_{\ext} \, \cvec, \varphi \rangle
    & = \operatorname{Re}
        \sum_{(j,t,m) \in J^{\ext}}
        \big(
          e_{j,t,m} \, \langle \Psi_{j,t,m} , \varphi \rangle
        \big)
      = \operatorname{Re} \,
        \Big\langle
          \sum_{(j,t,m) \in J}
            e_{j,t,m} \, \Psi_{j,t,m} , \quad
          \varphi
        \Big\rangle \\
    & = \operatorname{Re} \langle \Gamma \, \evec, \varphi \rangle
      = \operatorname{Re} \langle \Gamma (\Theta g), \varphi \rangle
      = \operatorname{Re} \langle g, \varphi \rangle
      = \operatorname{Re} \langle f, \varphi \rangle
      = \langle f, \varphi \rangle,
  \end{align*}
  since $\Theta = \Gamma^{-1}$ and $f = g|_{\Omega}$ and $\varphi \in C_c^\infty (\Omega)$,
  and since $f$ is a real-valued distribution.
  Therefore, $f = T_{\ext} \, \cvec$, proving~\eqref{eq:ExteriorSynthesisSurjective}.
\end{proof}

Finally, we show that the sequence spaces $b_{p,q}^{\BesovSmoothness}(\Omega_{\interior} ; \R)$
and $b_{p,q}^{\BesovSmoothness}(\Omega_{\ext} ; \R)$ are quite similar to the sequence spaces
$\mixSpace{q}$ introduced in Definition~\ref{def:SequenceSpaces}.
In fact, the following (seemingly) weak property will be enough for our purposes.

\begin{lem}\label{lem:WaveletSequenceSpacesAreNice}
  Let $\emptyset \neq \Omega \subset \R^d$ be open and bounded.
  Let $p,q \in (0,\infty]$ and $\BesovSmoothness \in \R$, and define
  $\alpha := \BesovSmoothness + d \cdot (2^{-1} - p^{-1})$.
  Let $k \in \N$ with
  $k > \max \{\BesovSmoothness, \frac{2d}{p} + \frac{d}{2} - \BesovSmoothness \}$,
  and let $b_{p,q}^{\BesovSmoothness}(\Omega_{\interior} ; \R)$ and
  $b_{p,q}^{\BesovSmoothness}(\Omega_{\ext} ; \R)$ be
  as in Definition~\ref{def:DomainSequenceSpaces}.

  Assume that $\alpha > d \cdot (2^{-1} - p^{-1})_+$.
  Then the embeddings $b_{p,q}^\tau (\Omega_{\interior};\R) \hookrightarrow \ell^2(J^{\interior})$
  and $b_{p,q}^\tau (\Omega_{\ext};\R) \hookrightarrow \ell^2(J^{\ext})$ hold.
  Furthermore,
  \begin{enumerate}[label=(\roman*)]
    \item There is a $d$-regular partition $\partition^{\interior}$ of $J^{\interior}$
          and some $\gamma > 0$ such that if we define
          \[
            \iota_{\interior} :
            \mixSpace[\partition^{\interior}]{q} \to b_{p,q}^{\BesovSmoothness}(\Omega_{\interior}; \R),
            \cvec \mapsto \gamma \cdot \cvec^{\natural}
          \]
          where $\cvec^{\natural} \in \R^J$ is obtained by extending $\cvec \in \R^{J^{\interior}}$
          by zero, then $\| \iota_{\interior} \| \leq 1$ and
          $\| \iota_{\interior} \, \cvec \|_{\ell^2} = \gamma \, \| \cvec \|_{\ell^2}$
          for all $\cvec \in \mixSpace[\partition^{\interior}]{q}$.

    \item There is a $d$-regular partition $\partition^{\ext}$ of $J^{\ext}$ and some $\varrho > 0$
          such that if we define
          \[
            \iota_{\ext} :
            \mixSpace[\partition^{\ext}]{q} \to b_{p,q}^{\BesovSmoothness}(\Omega_{\ext}; \R),
            \cvec \mapsto \varrho \cdot \cvec^{\natural}
          \]
          where $\cvec^{\natural} \in \R^J$ is obtained by extending $\cvec \in \R^{J^{\ext}}$
          by zero, then $\| \iota_{\ext} \, \cvec \|_{\ell^2} = \varrho \, \| \cvec \|_{\ell^2}$
          for all $\cvec \in \mixSpace[\partition^{\ext}]{q}$, and
          \[
            \ball \big( 0,1; b_{p,q}^{\BesovSmoothness} (\Omega_{\ext}; \R) \big)
            \subset \iota_{\ext} \big( \ball(0, 1; \mixSpace[\partition^{\ext}]{q}) \big) .
          \]
  \end{enumerate}
\end{lem}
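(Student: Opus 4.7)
The plan is to build the partitions $\partition^{\interior}$ and $\partition^{\ext}$ by grouping wavelet indices according to scale (bundling the lowest scales into the first block), and then to observe that the Besov sequence quasi-norm $\|\cdot\|_{b_{p,q}^{\BesovSmoothness}}$ and the mixed-norm $\|\cdot\|_{\mixSpace{q}}$ of Definition~\ref{def:SequenceSpaces} are equivalent under this partition. The maps $\iota_{\interior}$ and $\iota_{\ext}$ will then be scaled zero-extensions, and all stated properties will follow from the norm equivalence together with Proposition~\ref{prop:ApproximationRateLowerBound}.

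First, fix $K>0$ with $\supp\psi_F\cup\supp\psi_M\subset[-K,K]$ (available from Theorem~\ref{thm:WaveletExistence}), so that $\supp\Psi_{j,t,m}\subset 2^{-(j-1)}(m+[-K,K]^d)$ for $j\geq 1$. Using that $\Omega$ is open (hence contains a ball) for the lower bound and bounded for the upper bound, a standard volume-counting argument produces an integer $j_0 \in \N$ and constants $0<c_0<C_0$ such that
\[
c_0\cdot 2^{jd} \leq |J_j^{\interior}| \leq |J_j^{\ext}| \leq C_0\cdot 2^{jd} \quad\text{for all }j\geq j_0,
\]
and $|J_j^{\ext}|\leq C_0\cdot 2^{jd}$ for all $j\geq 0$. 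For $\bullet\in\{\interior,\ext\}$ I then set
\[
\indexSet_1^{\bullet} := \bigcup_{j=0}^{j_0} \bigl(\{j\}\times J_j^{\bullet}\bigr),
\qquad
\indexSet_m^{\bullet} := \{j_0+m-1\}\times J_{j_0+m-1}^{\bullet}\ \text{for } m\geq 2.
\]
These yield partitions of $J^{\bullet}$, and the counting estimates give $|\indexSet_m^{\bullet}|\asymp 2^{md}$ with implicit constants depending on $j_0,c_0,C_0,d$; thus $\partition^{\bullet}$ is a $d$-regular partition as in Definition~\ref{def:RegPart}.

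The key analytic step is the quasi-norm equivalence
\[
C^{-1}\,\|\cvec\|_{\mixSpace[\partition^{\bullet}]{q}}
\leq \|\cvec^{\natural}\|_{b_{p,q}^{\BesovSmoothness}}
\leq C\cdot\|\cvec\|_{\mixSpace[\partition^{\bullet}]{q}}
\qquad\forall\, \cvec\in\R^{J^{\bullet}},
\]
for some $C=C(d,p,q,\alpha,j_0)>0$, where $\cvec^{\natural}\in\R^J$ is the zero-extension. At each high-scale block $m\geq 2$ both norms contribute only the single scale $j=j_0+m-1$; the scale weights $2^{m\alpha}$ and $2^{j\alpha}$ differ by the fixed factor $2^{(j_0-1)\alpha}$, and the inner quantities $\|(c_{j,t,m'})_{(t,m')}\|_{\ell^p}^q$ (mixed-norm side) and $\sum_{t\in\waveletT_j}\|(c_{j,t,m'})_{m'}\|_{\ell^p}^q$ (Besov side) are equivalent because $|\waveletT_j|\leq 2^d$ is bounded, so the two $\ell^p$-versus-$\ell^q$ aggregation schemes on $\waveletT_j$ are equivalent with constants depending only on $d,p,q$. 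The $m=1$ bundle combines finitely many scales $j\in\{0,\dots,j_0\}$ and yields the same kind of equivalence with constants also depending on $j_0$. Setting $\gamma:=1/C$ and $\varrho:=C$, define $\iota_{\interior}(\cvec):=\gamma\,\cvec^{\natural}$ and $\iota_{\ext}(\cvec):=\varrho\,\cvec^{\natural}$. The $\ell^2$-identities $\|\iota_{\bullet}\cvec\|_{\ell^2}=\gamma\|\cvec\|_{\ell^2}$ (resp.~$\varrho\|\cvec\|_{\ell^2}$) are immediate because zero-extension preserves the $\ell^2$ norm; the bound $\|\iota_{\interior}\|\leq 1$ is exactly the upper estimate in the equivalence; and for the inclusion $\ball(0,1;b_{p,q}^{\tau}(\Omega_{\ext};\R))\subset \iota_{\ext}\bigl(\ball(0,1;\mixSpace[\partition^{\ext}]{q})\bigr)$ one takes $\bvec$ in the unit ball on the left, sets $\cvec:=\bvec|_{J^{\ext}}/\varrho$, computes $\iota_{\ext}(\cvec)=\bvec$, and verifies $\|\cvec\|_{\mixSpace[\partition^{\ext}]{q}}\leq \varrho^{-1} C\|\bvec\|_{b_{p,q}^{\tau}}\leq 1$ via the lower bound. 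Finally, the embeddings $b_{p,q}^{\tau}(\Omega_{\bullet};\R)\hookrightarrow \ell^2(J^{\bullet})$ follow from the equivalence combined with Proposition~\ref{prop:ApproximationRateLowerBound}, which gives $\mixSpace[\partition^{\bullet}]{q}\hookrightarrow\ell^2(J^{\bullet})$ under the hypothesis $\alpha>d(2^{-1}-p^{-1})_+$.

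The main technical obstacle is the volume-counting estimate $|J_j^{\interior}|\asymp 2^{jd}$ for $j\geq j_0$. The upper bound uses only that $\Omega$ is bounded, but the lower bound for $J_j^{\interior}$ forces one to locate many lattice translates of the dilated wavelet support inside $\Omega$; this is where openness of $\Omega$ (so that $\Omega$ contains a ball) enters, and where $j_0$ must be chosen large enough that the shrunk support $2^{-(j_0-1)}[-K,K]^d$ comfortably fits into such a ball. Once this geometric ingredient is in place, everything else is quasi-norm bookkeeping that can be packaged as described above.
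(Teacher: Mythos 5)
Your proposal is correct and follows essentially the same route as the paper's proof: the same scale-bundled partitions (low scales $j\le j_0$ merged into the first block), the same two-sided counting estimate $|J_j^{\interior}|\asymp|J_j^{\ext}|\asymp 2^{jd}$ for large $j$ via boundedness and openness of $\Omega$, the same quasi-norm equivalence based on $|\waveletT_j|\le 2^d$, and the same choice of $\gamma,\varrho$ from the two constants of that equivalence. No gaps.
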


\begin{proof}
  The proof is divided into three steps.
  \medskip{}

  \textbf{Step 1} \emph{(Estimating $|J_{j}^{\interior}|$ and $|J_j^{\ext}|$):}
  We show that there are $j_0 \in \N$ and $a, A > 0$ satisfying
  \[
    |J_j^{\interior}| \leq |J_{j}^{\ext}| \leq A \cdot 2^{d j} \quad \forall \, j \in \N
    \qquad \text{and} \qquad
    |J_j^{\ext}| \geq |J_{j}^{\interior}| \geq a \cdot 2^{d j}
    \quad \forall \, j \in \N_{\geq j_0}.
  \]

  First of all, we clearly have $J_j^{\interior} \subset J_j^{\ext}$
  and thus $|J_j^{\interior}| \leq |J_j^{\ext}|$.
  Next, since $\Omega \subset \R^d$ is bounded and $\psi_F, \psi_M$ have compact support,
  there is $R \in \N$ such that $\Omega \subset [-R, R]^d$ and
  $\supp \psi_F \cup \supp \psi_M \subset [-R, R]$.
  Define $A := (8 R)^d$.
  In view of Equations~\eqref{eq:WaveletsMultiDimensionalPreparation}
  and \eqref{eq:WaveletsMultiDimensionalDefinition}, this implies
  $\supp \Psi_m \cup \supp \Psi_{t,m} \subset m + [-R, R]^d$, and hence
  $\supp \Psi_{0,t,m} \subset m + [-R, R]^d$ for $t \in \waveletT_0$ and $m \in \Z^d$,
  and finally $\supp \Psi_{j,t,m} \subset 2^{1-j} (m + [-R, R]^d)$ for $j \in \N$,
  $t \in \waveletT_j$, and $m \in \Z^d$.

  Now, it is not hard to see that if
  \(
    \emptyset
    \neq \Omega \cap \supp \Psi_{0,t,m}
    \subset [-R,R]^d \cap (m + [-R,R]^d) ,
  \)
  then $m \in [-2R,2R]^d \cap \Z^d = \{-2R, \dots, 2R\}^d$, and thus
  $|J_0^{\ext}| \leq (1 + 4R)^d \leq A \cdot 2^{d \cdot 0}$.

  Furthermore, if $j \in \N$ and
  \({
    \emptyset
    \neq \Omega \cap \supp \Psi_{j,t,m}
    \subset [-R,R]^d \cap 2^{1-j} (m + [-R,R]^d)
  }\),
  then $2^{1-j} (m + x) = y$ for certain $x,y \in [-R,R]^d$, and hence
  \[
    m = 2^{j-1}y - x
    \in \Z^d \cap [-(R + 2^{j-1}R), R + 2^{j-1} R]^d
    \subset \{ - 2^j R, \dots, 2^j R \}^d.
  \]
  Because of $|\waveletT_j| \leq 2^d$, this implies
  \(
    |J_j^{\ext}|
    \leq |\waveletT_j| \cdot (1 + 2^{j+1} R)^d
    \leq (8R)^d 2^{j d}
    \leq A \, 2^{d j} .
  \)

  Regarding the lower bound, recall that $\Omega \neq \emptyset$ is open,
  so that there are $x_0 \in \R^d$ and $n \in \N$ satisfying
  ${x_0 + [-r, r]^d \subset \Omega}$, where $r := 2^{-n}$.
  Choose $j_0 \in \N_{\geq n + 3}$ such that $2^{j_0 - 1} r \geq 2 R$,
  and note ${2^{j_0 - 3} r = 2^{j_0 - 3 - n} \in \N}$.
  Let $j \geq j_0$.
  Choose $m_0 := \lfloor 2^{j-1} x_0 \rfloor \in \Z^d$,
  with the ``floor'' operation applied componentwise.
  We have $\| 2^{j-1} x_0 - m_0 \|_\infty \leq 1$, and hence
  \[
    \| 2^{j-1} x_0 - (m + m_0) \|_\infty
    \leq 1 + 2^{j-3} r
    \leq 2^{j-2} r
    \quad \text{for} \quad
    m \in \{ - 2^{j-3} r, \dots, 2^{j-3} r \}^d.
  \]
  Here, one should observe $2^{j-3} r = 2^{j - j_0} 2^{j_0 - 3 - n} \in \N$.
  Because of $R \leq 2^{j_0 - 2} r \leq 2^{j-2} r$, the above estimate implies that
  \begin{align*}
    2^{1-j} \cdot \big( m+m_0 + [-R,R]^d \big)
    & \subset 2^{1-j} \cdot \big( 2^{j-1}x_0 + [-(R+2^{j-2}r), (R + 2^{j-2}r)]^d \big) \\
    & \subset 2^{1-j} \cdot \big( 2^{j-1}x_0 + [-2^{j-1} r, 2^{j-1} r]^d \big)
      =       x_0 + [-r,r]^d
      \subset \Omega
  \end{align*}
  for all $m \in \{ - 2^{j-3} r, \dots, 2^{j-3} r \}^d$.
  Because of $\supp \Psi_{j,t,m+m_0} \subset 2^{1-j}(m + m_0 + [-R, R]^d)$, this implies
  $|J_j^{\interior}| \geq (2^{j-2} r)^d = (r/4)^d \cdot 2^{dj}$,
  so that we can choose $a = (r/4)^d$.

  \medskip

  \textbf{Step 2} \emph{(Constructing the partitions $\partition^{\interior}, \partition^{\ext}$
  and showing $\| c^{\natural} \|_{b_{p,q}^{\BesovSmoothness}} \asymp \| c \|_{\mixSpace[\partition]{q}}$):}
  Define $\indexSet_1^{\interior} := \bigcup_{j=0}^{j_0} (\{ j \} \times J_j^{\interior} )$
  and $\indexSet_1^{\ext} := \bigcup_{j=0}^{j_0} (\{ j \} \times J_j^{\ext} )$,
  as well as
  \[
    \indexSet_m^{\interior} := \{ j_0 + m - 1 \} \times J_{j_0 + m - 1}^{\interior}
    \qquad \text{and} \qquad
    \indexSet_m^{\ext} := \{ j_0 + m - 1 \} \times J_{j_0 + m - 1}^{\ext}
  \]
  for $m \in \N_{\geq 2}$.
  As shown in Step 1, we have for $m \in \N_{\geq 2}$ that
  \[
    a \cdot 2^{d m}
    \leq a \cdot 2^{d (j_0 + m - 1)}
    \leq |\indexSet_m^{\interior}|
    \leq |\indexSet_m^{\ext}|
    \leq A \cdot 2^{d (j_0 + m - 1)}
    =:   A' \cdot 2^{d m}
  \]
  and also
  \(
    |\indexSet_1^{\ext}|
    \geq |\indexSet_1^{\interior}|
    \geq |J_{j_0}^{\interior}|
    \geq a \cdot 2^{d j_0}
    \geq a \cdot 2^d .
  \)
  Thus,
  \({
    a \cdot 2^{d m}
    \leq |\indexSet_m^{\interior}|
    \leq |\indexSet_m^{\ext}|
    \leq A'' \cdot 2^{d m}
  }\)
  for all $m \in \N$, where $A'' := \max \{ A', |\indexSet_1^{\ext}| \}$.
  Furthermore, we have ${J^{\interior} = \biguplus_{m \in \N} \indexSet_m^{\interior}}$
  and ${J^{\ext} = \biguplus_{m \in \N} \indexSet_m^{\ext}}$, so that
  $\partition^{\interior} := \big( \indexSet_m^{\interior} \big)_{m \in \N}$
  and $\partition^{\ext} := \big( \indexSet_m^{\ext} \big)_{m \in \N}$
  are $d$-regular partitions of $J^{\interior}$ and $J^{\ext}$, respectively.

  Now, for $J_0 \subset J$ and $\cvec \in \R^{J_0}$, let $\cvec^{\natural} \in \R^{J}$
  be the sequence $\cvec$, extended by zero.
  We claim that there are $C_1, C_2 > 0$ such that
  \begin{equation}
    C_1 \cdot \| \cvec \|_{\mixSpace[\partition^{\interior}]{q}}
    \leq \| \cvec^{\natural} \|_{b_{p,q}^{\BesovSmoothness}}
    \leq C_2 \cdot \| \cvec \|_{\mixSpace[\partition^{\interior}]{q}}
    \qquad \forall \, \cvec \in \R^{J^{\interior}} ,
    \label{eq:BetaBijectionPreservesBesovNorm}
  \end{equation}
  and similarly for $\partition^{\ext}$ and $J^{\ext}$ instead of $\partition^{\interior}$ and $J^{\interior}$.
  For brevity, we only prove the claim for $\partition^{\interior}$.

  To prove \eqref{eq:BetaBijectionPreservesBesovNorm}, let $\cvec \in \R^{J^{\interior}}$.
  For $m \in \N$ and $j \in \N_0$, define
  \(
    \zeta_m := 2^{\alpha m} \, \| (c_\kappa)_{\kappa \in \indexSet_m^{\interior}} \|_{\ell^p}
  \)
  and
  \(
    \omega_j
    := 2^{\alpha j}
       \big\|
         \big(
           \|
             (c_{j,t,k}^{\natural})_{k \in \Z^d}
           \|_{\ell^p}
         \big)_{t \in \waveletT_j}
       \big\|_{\ell^q},
  \)
  noting that
  $\| \cvec \|_{\mixSpace[\partition^{\interior}]{q}} = \| (\zeta_m)_{m \in \N} \|_{\ell^q}$ as well as
  ${\| \cvec^{\natural} \|_{b_{p,q}^{\BesovSmoothness}} = \| (\omega_j)_{j \in \N_0} \|_{\ell^q}}$.
  Since $|\waveletT_j| \leq 2^d$ for all $j \in \N_0$, we have
  $\| \cdot \|_{\ell^p (\waveletT_j)} \asymp \| \cdot \|_{\ell^q (\waveletT_j)}$ for all
  $j \in \N_0$, with implied constant only depending on $d,p,q$.

  Now, define $J_{j,t}^{\interior} := \{ k \in \Z^d \colon (t, k) \in J_j^{\interior} \}$
  for $j \in \N_0$ and $t \in \waveletT_j$, and note for $m \geq 2$ that
  \(
    \indexSet_m^{\interior}
    = \biguplus_{t \in \waveletT_{j_0 + m - 1}}
        \big( \{ j_0 + m - 1 \} \times \{ t \} \times J_{j_0 + m - 1, t}^{\interior} \big) ,
  \)
  which implies
  \begin{align*}
    \zeta_m
    & = 2^{\alpha m} \,
        \Big\|
          \Big(
            \big\|
              \big( c_{j_0 + m - 1, t, k} \big)_{k \in J_{j_0 + m - 1, t}^{\interior}}
            \big\|_{\ell^p}
          \Big)_{t \in \waveletT_{j_0 + m - 1}}
        \Big\|_{\ell^p} \\
    & \asymp 2^{\alpha (j_0 + m - 1)}
             \Big\|
               \Big(
                 \big\|
                   \big( c_{j_0 + m - 1, t, k}^{\natural} \big)_{k \in \Z^d}
                 \big\|_{\ell^p}
               \Big)_{t \in \waveletT_{j_0 + m - 1}}
             \Big\|_{\ell^q}
      =      \omega_{j_0 + m - 1} ,
  \end{align*}
  with implied constants only depending on $d,p,q,j_0,\alpha$.
  With similar arguments, we see that
  \begin{align*}
    \zeta_1
    & = 2^{\alpha}
        \Big\|
          \Big(
            \| (c_{j,t,k})_{k \in J_{j,t}^{\interior}} \|_{\ell^p}
          \Big)_{j \in \{ 0,\dots,j_0 \}, t \in \waveletT_j}
        \Big\|_{\ell^p} \\
    & \asymp \Big\|
               \Big(
                 \big\|
                   \big(
                     2^{\alpha j} \,
                     \| (c^{\natural}_{j,t,k})_{k \in \Z^d} \|_{\ell^p}
                   \big)_{t \in \waveletT_{j}}
                 \big\|_{\ell^q}
               \Big)_{j \in \{ 0, \dots, j_0 \}}
             \Big\|_{\ell^q}
      = \big\| (\omega_j)_{j \in \{ 0, \dots, j_0 \}} \big\|_{\ell^q}.
  \end{align*}
  Overall, we obtain that
  \begin{align*}
    \| \cvec \|_{\mixSpace[\partition^{\interior}]{q}}
    & = \big\| \big( \zeta_m \big)_{m \in \N} \big\|_{\ell^q}
      \asymp \zeta_1 + \big\| \big( \zeta_m \big)_{m \in \N_{\geq 2}} \big\|_{\ell^q} \\
    & \asymp \big\| \big( \omega_j \big)_{j \in \{ 0, \dots, j_0 \}} \big\|_{\ell^q}
             + \big\|  \big( \omega_{m + j_0 - 1} \big)_{m \in \N_{\geq 2}} \big\|_{\ell^q}
      \asymp \big\| \big( \omega_j \big)_{j \in \N_0} \big\|_{\ell^q}
      =      \| \cvec^{\natural} \|_{b_{p,q}^{\BesovSmoothness}} ,
  \end{align*}
  which proves Equation~\eqref{eq:BetaBijectionPreservesBesovNorm}.

  \medskip{}

  \textbf{Step 3} \emph{(Completing the proof):}
  Step~2 guarantees the existence of $\gamma > 0$ satisfying
  \({
    \| \iota_{\interior} \, \cvec \|_{b_{p,q}^{\BesovSmoothness}}
    = \gamma \cdot \| \cvec^{\natural} \|_{b_{p,q}^{\BesovSmoothness}}
    \leq \| \cvec \|_{\mixSpace[\partition^{\interior}]{q}}
  }\)
  for all $\cvec \in \mixSpace[\partition^{\interior}]{q}$.
  Furthermore, we clearly have
  $\| \iota_{\interior} \, \cvec \|_{\ell^2} = \gamma \, \| \cvec \|_{\ell^2}$.

  Similarly, Step~2 shows that there is $\varrho > 0$ satisfying
  \(
    \| \cvec \|_{\mixSpace[\partition^{\ext}]{q}}
    \leq \varrho \, \| \cvec^{\natural} \|_{b_{p,q}^{\BesovSmoothness}}
  \)
  for all $\cvec \in \R^{J^{\ext}}$.
  Now, given $\bvec \in \ball\bigl(0, 1; b_{p,q}^{\BesovSmoothness}(\Omega_{\ext}; \R)\bigr)$, note that
  $\bvec = (\bvec|_{J^{\ext}})^{\natural}$ and furthermore
  \(
    \| \bvec|_{J^{\ext}} \|_{\mixSpace[\partition^{\ext}]{q}}
    \leq \varrho \, \| (\bvec|_{J^{\ext}})^{\natural} \|_{b_{p,q}^{\BesovSmoothness}}
    \leq \varrho ,
  \)
  so that $\cvec := \varrho^{-1} \cdot \bvec|_{J^{\ext}} \in \ball\bigl(0, 1; \mixSpace[\partition^{\ext}]{q}\bigr)$
  satisfies $\bvec = \iota_{\ext} \cvec$.
  It is clear that $\| \iota_{\ext} \cvec \|_{\ell^2} = \varrho \| \cvec \|_{\ell^2}$
  for all $\cvec \in \mixSpace[\partition^{\ext}]{q}$.
\end{proof}

\section{The phase transition for Sobolev spaces with
\texorpdfstring{$p \in \{1,\infty\}$}{p ∈ \{1, ∞\}}}
\label{sec:SobolevExceptionalCaseProof}

In this subsection we provide the missing proof of Theorem~\ref{thm:SobolevPhaseTransition}
for the cases $p = 1$ and $p = \infty$.
We begin with the case $p = 1$.

\subsection{The case \texorpdfstring{$p = 1$}{p = 1}}%
\label{sub:SobolevPhasTransitionPEqual1}

The proof is crucially based on the following embedding.

\begin{lem}\label{lem:IntegrableSobolevEmbedsIntoBesov}
  For arbitrary $k,d \in \N$ and $1 \leq p < \infty$,
  we have $W^{k,p}(\R^d) \hookrightarrow B^{k}_{p,\infty}(\R^d)$.
\end{lem}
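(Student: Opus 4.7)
The plan is to use the Fourier-analytic (Littlewood--Paley) definition of $B^{k}_{p,\infty}(\R^d)$ directly. By definition
\[
  \|f\|_{B^{k}_{p,\infty}(\R^d)}
  = \sup_{j \in \N_0} \, 2^{jk} \,
    \bigl\|\Fourier^{-1}(\varphi_j \, \widehat{f\,})\bigr\|_{L^p(\R^d)} ,
\]
so I need to bound $\|\Fourier^{-1}(\varphi_j \, \widehat{f\,})\|_{L^p} \lesssim 2^{-jk} \|f\|_{W^{k,p}}$ for every $j \in \N_0$, uniformly in $f$. The $j=0$ term is easy: $\Fourier^{-1}\varphi_0 \in \Schwartz(\R^d) \subset L^1(\R^d)$, so Young's convolution inequality gives $\|\Fourier^{-1}(\varphi_0 \widehat{f\,})\|_{L^p} \leq \|\Fourier^{-1}\varphi_0\|_{L^1} \|f\|_{L^p} \lesssim \|f\|_{W^{k,p}}$.

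The substantive step is the case $j \geq 1$. Here I would exploit that $\varphi_j$ is supported in an annulus $\{|\xi| \asymp 2^j\}$ away from the origin, so division by $|\xi|^{2k}$ is harmless. The algebraic identity $|\xi|^{2k} = \sum_{|\alpha|=k} \binom{k}{\alpha} \xi^{2\alpha}$ (multinomial expansion of $(\xi_1^2 + \cdots + \xi_d^2)^k$) yields
\[
  \varphi_j(\xi) \, \widehat{f\,}(\xi)
  = \sum_{|\alpha|=k}
      \binom{k}{\alpha} \,
      \frac{\xi^\alpha \varphi_j(\xi)}{|\xi|^{2k}} \,
      \xi^\alpha \widehat{f\,}(\xi)
  = \sum_{|\alpha|=k}
      \binom{k}{\alpha} (-i)^k \,
      m_{j,\alpha}(\xi) \,
      \widehat{\partial^\alpha f}(\xi),
\]
with $m_{j,\alpha}(\xi) := \xi^\alpha \varphi_j(\xi) / |\xi|^{2k}$, since $\widehat{\partial^\alpha f} = (i\xi)^\alpha \widehat{f\,}$. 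Taking inverse Fourier transforms turns this into a sum of convolutions $K_{j,\alpha} * \partial^\alpha f$ with $K_{j,\alpha} := \Fourier^{-1} m_{j,\alpha}$.

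It remains to control $\|K_{j,\alpha}\|_{L^1}$. Using the scaling relation $\varphi_j(\xi) = \varphi_1(2^{1-j}\xi)$ one computes $m_{j,\alpha}(\xi) = 2^{(1-j)k}\, \chi_\alpha(2^{1-j}\xi)$ where $\chi_\alpha(\xi) := \xi^\alpha \varphi_1(\xi)/|\xi|^{2k}$ is a Schwartz function (since $\varphi_1$ is smooth and compactly supported away from the origin). Standard Fourier scaling then gives $K_{j,\alpha}(x) = 2^{(j-1)(d-k)} \, \check{\chi}_\alpha(2^{j-1}x)$ and hence, by a change of variables, $\|K_{j,\alpha}\|_{L^1} = 2^{-(j-1)k} \|\check{\chi}_\alpha\|_{L^1} \lesssim 2^{-jk}$. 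Young's inequality finishes the job:
\[
  \bigl\|\Fourier^{-1}(\varphi_j \widehat{f\,})\bigr\|_{L^p}
  \leq \sum_{|\alpha|=k}
         \binom{k}{\alpha} \, \|K_{j,\alpha}\|_{L^1} \, \|\partial^\alpha f\|_{L^p}
  \lesssim 2^{-jk} \, \|f\|_{W^{k,p}(\R^d)} .
\]
Combining with the $j=0$ bound yields $\|f\|_{B^k_{p,\infty}} \lesssim \|f\|_{W^{k,p}}$, proving the embedding. The only mild obstacle is bookkeeping with the specific Fourier-transform normalization used in the paper and checking that $\chi_\alpha$ is indeed Schwartz (which is clear since $\varphi_1$ has compact support contained in the region where $|\xi| \geq 1$); no deeper tools than Young's inequality are needed, and the argument works uniformly for all $p \in [1,\infty)$ (in fact for $p = \infty$ as well).
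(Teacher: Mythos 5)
Your argument is correct, but it takes a genuinely different route from the paper. The paper disposes of this lemma by citation: it invokes the embedding $W^{k,p}(\R^d) \hookrightarrow B^k_{p,\infty}(\R^d)$ from Adams' book (Section~7.33), where Besov spaces are defined via moduli of continuity, and then appeals to two further references (Triebel; Leoni) to identify that definition with the Fourier-analytic one used throughout the paper --- a nontrivial identification, since the paper itself warns in Appendix~\ref{sec:BesovReview} that the two definitions do not coincide in general. You instead prove the embedding directly against the paper's own Littlewood--Paley definition: the multinomial identity $|\xi|^{2k} = \sum_{|\alpha|=k}\binom{k}{\alpha}\xi^{2\alpha}$, the compact support of $\varphi_1$ in an annulus away from the origin (so that $\chi_\alpha = \xi^\alpha\varphi_1/|\xi|^{2k}$ is indeed $C_c^\infty$, hence Schwartz), the scaling $\varphi_j = \varphi_1(2^{1-j}\cdot)$ giving $\|K_{j,\alpha}\|_{L^1} \lesssim 2^{-jk}$, and Young's inequality. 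I checked the exponents in the dilation computation and they are right, and the paper's definition $\|f\|_{W^{k,p}} = \max_{|\alpha|\le k}\|\partial^\alpha f\|_{L^p}$ absorbs the finite sum over $|\alpha| = k$. What your approach buys is self-containedness and the complete avoidance of the definitional-equivalence issue; what the citation approach buys is brevity. The only points worth making explicit if this were written out in full are (i) the normalization constant $(2\pi)^{-d/2}$ in $\Fourier^{-1}(m\,\widehat{g}) = (2\pi)^{-d/2}(\Fourier^{-1}m) * g$, which you flag, and (ii) that the multiplier identity is applied to $\widehat{\partial^\alpha f} = (i\xi)^\alpha \widehat{f}$ in the sense of tempered distributions, which is legitimate since weak and distributional derivatives agree for $f \in W^{k,p}(\R^d)$. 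Neither affects correctness.
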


\begin{proof}
  This follows from \cite[Section~7.33]{AdamsSobolevSpaces}.
  Here, the definition of Besov spaces used in \cite{AdamsSobolevSpaces} coincides with our
  definition, as can be seen by combining \cite[Theorem in Section~2.5.12]{TriebelTheoryOfFunctionSpaces1}
  with \cite[Proposition~17.21 and Theorem~17.24]{LeoniFirstCourseInSobolevSpaces}.
\end{proof}

Using this embedding, we can now prove Theorem~\ref{thm:SobolevPhaseTransition}
for the case $p = 1$

\begin{proof}[Proof of Theorem~\ref{thm:SobolevPhaseTransition} for $p = 1$]
  Let $k \in \N$ with $k > d \cdot (1^{-1} - 2^{-1})_{+} = \frac{d}{2}$, and define
  $\signalClass := \ball( 0, 1; W^{k,1}(\Omega) )$.
  Our goal is to apply Theorem~\ref{thm:LipschitzTransferResult} for
  $\banachOne := \banachTwo := \banachThree := L^2(\Omega)$,
  $\signalClass_{\banachOne} := \ball (0,1;B^k_{1,\infty}(\Omega))$, and
  $\signalClass_{\banachTwo} := \ball (0,1;W^{k,2}(\Omega))$,
  with suitable choices of $\Phi,\Psi,\P$.

  To this end, first note that since $\Omega \subset \R^d$ is bounded, there is $\kappa > 0$
  satisfying $\kappa \, \| f \|_{W^{k,1}(\Omega)} \leq \| f \|_{W^{k,2}(\Omega)}$
  for all $f \in W^{k,2}(\Omega)$.
  Next, Theorems~\ref{thm:mainbesovresult} and \ref{thm:SobolevPhaseTransition}
  (the latter for $p = 2 \in (1,\infty)$) show that
  $\signalClass_{\banachOne}, \signalClass_{\banachTwo} \subset L^2(\Omega)$ are bounded with
  \[
    \optRateSmall<\signalClass_{\banachOne}>{L^2(\Omega)}
    = \optRateSmall<\signalClass_{\banachTwo}>{L^2(\Omega)}
    = \frac{k}{d}
  \]
  and that there exists a Borel measure $\P_0$ on $\signalClass_{\banachTwo}$ that is critical
  for $\signalClass_{\banachTwo}$ with respect to $L^2(\Omega)$.

  Next, we claim that there is $C > 0$ satisfying
  $\| f \|_{B^k_{1,\infty}(\Omega)} \leq C \, \| f \|_{W^{k,1}(\Omega)}$
  for all ${f \in W^{k,1}(\Omega)}$.
  Indeed, \cite[Theorem~5 in Chapter~VI]{SteinSingularIntegrals} shows that there is a bounded linear
  \emph{extension operator} $\extension : W^{k,1}(\Omega) \to W^{k,1}(\R^d)$ satisfying
  $(\extension f)|_{\Omega} = f$ for all $f \in W^{k,1}(\Omega)$.
  Then, Lemma~\ref{lem:IntegrableSobolevEmbedsIntoBesov} yields $C_1 > 0$ satisfying
  \[
    \| f \|_{B^k_{1,\infty}(\Omega)}
    = \| (\extension f)|_{\Omega} \|_{B^k_{1,\infty}(\Omega)}
    \leq \| \extension f \|_{B^k_{1,\infty}(\R^d)}
    \leq C_1 \cdot \| \extension f \|_{W^{k,1}(\R^d)}
    \leq C_1 \| \extension \| \, \| f \|_{W^{k,1}(\Omega)},
  \]
  so that we can choose $C = C_1 \cdot \| \extension \|$.
  In particular, this implies that $\signalClass \subset C \cdot \signalClass_{\banachOne} \subset L^2(\Omega)$
  is bounded, so that Lemma~\ref{lem:SobolevSpaceMeasurableOnDomain} shows
  that $\signalClass = \signalClass \cap L^2(\Omega) \subset L^2(\Omega)$ is measurable.

  Overall, we see that if we choose
  \[
    \Phi : \signalClass_{\banachOne} \to L^2(\Omega), f \mapsto C \cdot f
    \quad \text{and} \quad
    \Psi : \signalClass_{\banachTwo} \to \signalClass, f \mapsto \kappa \cdot f,
  \]
  then $\Phi,\Psi$ are well-defined and satisfy all assumptions of Theorem~\ref{thm:LipschitzTransferResult}.
  This theorem then shows that $\optRateSmall{L^2(\Omega)} = \frac{k}{d}$ and that
  $\P := \P_0 \circ \Psi^{-1}$ is a Borel probability measure on $\signalClass$ that is critical
  for $\signalClass$ with respect to $L^2(\Omega)$.

  Finally, Part~\ref{enu:BesovCriticalCodecExists} of Theorem~\ref{thm:mainbesovresult} yields a codec
  \(
    \code^\ast
    = \big( (E_R^\ast, D_R^\ast) \big)_{R \in \N}
    \in \codecs<\signalClass_{\banachOne}>{L^2(\Omega)}
  \)
  satisfying $\distortion<\signalClass_{\banachOne}>{L^2(\Omega)} (E_R^\ast, D_R^\ast) \lesssim R^{-k/d}$.
  Since $\Phi$ is Lipschitz with $\Phi(\signalClass_{\banachOne}) \supset \signalClass$, the remark after
  Lemma~\ref{lem:ApproximationRateTransference} shows that there is a codec
  $\code = \big( (E_R, D_R) \big)_{R \in \N} \in \codecs{L^2(\Omega)}$ satisfying
  $\distortion{L^2(\Omega)} (E_R, D_R) \lesssim R^{-k/d}$ as well,
  as claimed in Part~\ref{enu:SobolevCriticalCodecExists} of Theorem~\ref{thm:SobolevPhaseTransition}.
\end{proof}

\subsection{The case \texorpdfstring{$p = \infty$}{p = ∞}}%
\label{sub:SobolevPhasTransitionPEqualInfty}

Let $k \in \N$ with $k > d \cdot (\frac{1}{\infty} - \frac{1}{2})_+ = 0$
and define $\signalClass := \ball(0, 1; W^{k,\infty}(\Omega))$.
Note that trivially $\signalClass \subset L^\infty(\Omega) \subset L^2(\Omega)$ is bounded,
so that Lemma~\ref{lem:SobolevSpaceMeasurableOnDomain} implies that $\signalClass \subset L^2(\Omega)$
is Borel measurable.
Our goal is to apply Theorem~\ref{thm:LipschitzTransferResult} for
$\banachOne := \banachTwo := \banachThree := L^2(\Omega)$,
$\signalClass_{\banachOne} := \ball(0,1; W^{k,2}(\Omega))$, and
$\signalClass_{\banachTwo} := \ball(0,1;B^k_{\infty,1}(\Omega))$,
for suitable choices of $\Phi,\Psi$ and $\P$.

To this end, first note that since $\Omega \subset \R^d$ is bounded, there is $C > 0$ satisfying
$\| f \|_{W^{k,2}(\Omega)} \leq C \, \| f \|_{W^{k,\infty}(\Omega)}$ for all $f \in W^{k,\infty}(\Omega)$.

Next, it is well-known (see for instance \cite[Example~7.2]{DecompositionIntoSobolev}) that there is
$\kappa > 0$ such that $\kappa \, \| f \|_{W^{k,\infty}(\R^d)} \leq \| f \|_{B^k_{\infty,1}(\R^d)}$
for all $f \in B^k_{\infty,1}(\R^d)$.
Now, for $f \in B^k_{\infty,1}(\Omega)$ and $\eps > 0$,
by definition of the norm on $B^k_{\infty,1}(\Omega)$ there is some $g \in B^k_{\infty,1}(\R^d)$
with ${\| g \|_{B^k_{\infty,1}(\R^d)} \leq (1 + \eps) \| f \|_{B^k_{\infty,1}(\Omega)}}$
and $f = g|_{\Omega}$.
Since $g \in B^k_{\infty,1}(\R^d) \subset W^{k,\infty}(\R^d)$,
we see $f \in W^{k,\infty}(\Omega)$ and
\(
  \kappa \| f \|_{W^{k,\infty}(\Omega)}
  \leq \kappa \| g \|_{W^{k,\infty}(\R^d)}
  \leq \| g \|_{B^k_{\infty,1}(\R^d)}
  \leq (1 + \eps) \, \| f \|_{B^k_{\infty,1}(\Omega)} .
\)
We have thus shown
\[
  \kappa \| f \|_{W^{k,\infty}(\Omega)} \leq \| f \|_{B^k_{\infty,1}(\Omega)}
  \qquad \forall \, f \in B^k_{\infty,1}(\Omega).
\]

Finally, Theorems~\ref{thm:mainbesovresult} and \ref{thm:SobolevPhaseTransition}
(the latter applied with $p = 2 \in (1,\infty)$) show that
\(
  \optRateSmall<\signalClass_{\banachOne}>{L^2(\Omega)}
  = \optRateSmall<\signalClass_{\banachTwo}>{L^2(\Omega)}
  = \frac{k}{d}
\)
and that there exists a Borel probability measure $\P_0$ on $\signalClass_{\banachTwo}$
that is critical for $\signalClass_{\banachTwo}$ with respect to $L^2(\Omega)$.

Combining these observations, it is not hard to see that all assumptions
of Theorem~\ref{thm:LipschitzTransferResult} are satisfied for
\[
  \Phi : \signalClass_{\banachOne} \to L^2(\Omega), f \mapsto C \cdot f
  \quad \text{and} \quad
  \Psi : \signalClass_{\banachTwo} \to \signalClass, f \mapsto \kappa \cdot f.
\]
This theorem thus shows that $\optRate{L^2(\Omega)} = \frac{k}{d}$ and that
$\P := \P_0 \circ \Psi^{-1}$ is a Borel probability measure on $\signalClass$
that is critical for $\signalClass$ with respect to $L^2(\Omega)$.

Finally, Theorem~\ref{thm:SobolevPhaseTransition} shows that there exists
$\code^\ast = \big( (E_R^\ast, D_R^\ast) \big)_{R \in \N} \in \codecs<\signalClass_{\banachOne}>{L^2(\Omega)}$
satisfying $\distortion<\signalClass_{\banachOne}>{L^2(\Omega)} (E_R^\ast, D_R^\ast) \lesssim R^{-k/d}$.
Since $\Phi$ is Lipschitz with $\Phi(\signalClass_{\banachOne}) \supset \signalClass$, the remark after
Lemma~\ref{lem:ApproximationRateTransference} shows that there exists a codec
$\code = \big( (E_R, D_R) \big)_{R \in \N} \in \codecs{L^2(\Omega)}$
satisfying $\distortion{L^2(\Omega)} (E_R, D_R) \lesssim R^{-k/d}$ as well,
as claimed in Part~\ref{enu:SobolevCriticalCodecExists} of Theorem~\ref{thm:SobolevPhaseTransition}.
\hfill$\square$

\section{Measurability of Besov and Sobolev balls}
\label{sec:SignalClassesMeasurable}

In this subsection, we show for the range of parameters considered in
Theorems~\ref{thm:mainbesovresult} and \ref{thm:SobolevPhaseTransition}
that the balls $\ball \big( 0, R; B^\BesovSmoothness_{p,q}(\Omega) \big)$
and $\ball \big( 0, R; W^{k,p}(\Omega) \big)$ are measurable subsets of $L^2(\Omega)$.
We remark that for the case where $p,q \in (1,\infty)$, easier proofs than the ones given here
are possible.
Yet, since the proofs for the cases where $p \in \{ 1,\infty \}$ or $q \in \{ 1,\infty \}$
apply verbatim for a whole range of exponents, we prefer to state and prove the
more general results.

We begin with the case of Besov spaces, for which the balls are in fact closed.

\begin{lem}\label{lem:BesovBallsMeasurable}
  Let $\emptyset \neq \Omega \subset \R^d$ be open and bounded and let $p,q \in (0,\infty]$
  and $\BesovSmoothness \in \R$ with $\BesovSmoothness > d \cdot (p^{-1} - 2^{-1})_{+}$.
  Then $B^{\BesovSmoothness}_{p,q}(\Omega) \hookrightarrow L^2(\Omega)$, and
  the balls $\ball(0, R; B^{\BesovSmoothness}_{p,q}(\Omega)) \subset L^2(\Omega)$
  are closed for all $R > 0$.
\end{lem}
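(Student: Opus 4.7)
The plan is to deduce both claims from the wavelet-based sequence-space representation of Besov spaces on $\Omega$ developed in Appendix~\ref{sub:WaveletsForBesovSpacesOnDomains}, together with a Fatou-type lower semicontinuity argument for the closedness of the balls.

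For the embedding $B_{p,q}^{\BesovSmoothness}(\Omega) \hookrightarrow L^2(\Omega)$, I would first treat the real-valued subspace. Setting $\alpha := \BesovSmoothness + d(2^{-1} - p^{-1})$, the hypothesis $\BesovSmoothness > d(p^{-1} - 2^{-1})_{+}$ is precisely the condition $\alpha > d(2^{-1} - p^{-1})_{+}$ required in Proposition~\ref{prop:ApproximationRateLowerBound}; combined with Lemma~\ref{lem:WaveletSequenceSpacesAreNice} this produces a continuous embedding $b_{p,q}^{\BesovSmoothness}(\Omega_{\ext};\R) \hookrightarrow \ell^2(J^{\ext})$. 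Coupling this with the surjectivity~\eqref{eq:ExteriorSynthesisSurjective} and the estimate $\| T_{\ext} \cvec \|_{L^2(\Omega)} \leq \varrho \| \cvec \|_{\ell^2}$ from Lemma~\ref{lem:BesovSequenceSpaceConnection}, and scaling, delivers $\| f \|_{L^2(\Omega)} \lesssim \| f \|_{B_{p,q}^{\BesovSmoothness}(\Omega;\R)}$ for every $f \in B_{p,q}^{\BesovSmoothness}(\Omega;\R)$. The complex case then reduces to the real one by decomposing $f = \mathrm{Re}(f) + i\,\mathrm{Im}(f)$ and observing that each part lies in $B_{p,q}^{\BesovSmoothness}(\Omega;\R)$ with norm controlled by $\| f \|_{B_{p,q}^{\BesovSmoothness}(\Omega)}$.

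For the closedness of $\ball(0, R; B_{p,q}^{\BesovSmoothness}(\Omega))$ in $L^2(\Omega)$, pick $(f_n)$ in that ball with $f_n \to f$ in $L^2(\Omega)$ and aim to show $\| f \|_{B_{p,q}^{\BesovSmoothness}(\Omega)} \leq R$. By the definition of the Besov norm on the domain as an infimum over extensions, choose $g_n \in B_{p,q}^{\BesovSmoothness}(\R^d)$ with $g_n|_{\Omega} = f_n$ and $\| g_n \|_{B_{p,q}^{\BesovSmoothness}(\R^d)} \leq R + 1/n$. Since $B_{p,q}^{\BesovSmoothness}(\R^d) \hookrightarrow \Schwartz'(\R^d)$ continuously, $(g_n)$ is bounded in $\Schwartz'(\R^d)$; and since $\Schwartz(\R^d)$ is a separable Fr\'{e}chet space, bounded subsets of $\Schwartz'(\R^d)$ are weak-$*$ metrizable and sequentially compact by Banach--Alaoglu, so after passing to a subsequence $g_n \to g$ in $\Schwartz'(\R^d)$ for some $g \in \Schwartz'(\R^d)$. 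Testing against $\varphi \in C_c^{\infty}(\Omega) \subset \Schwartz(\R^d)$ and using the strong $L^2$-convergence of $f_n = g_n|_{\Omega}$ then identifies $g|_{\Omega} = f$ in $\calD'(\Omega)$.

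It remains to show $\| g \|_{B_{p,q}^{\BesovSmoothness}(\R^d)} \leq R$, and here it is essential to work with the \emph{exact} Besov norm rather than an equivalent wavelet-based one, as otherwise spurious multiplicative constants would yield closedness only of a slightly larger ball. Using that $h_j := \Fourier^{-1}\varphi_j \in \Schwartz(\R^d)$ and that $(g * h_j)(x) = \langle g,\, h_j(x - \cdot) \rangle_{\Schwartz',\Schwartz}$ for every $x \in \R^d$, the convolutions $g_n * h_j$ converge pointwise on $\R^d$ to $g * h_j$ for each $j \in \N_0$. Applying Fatou's lemma first in $L^p(\R^d)$ (against Lebesgue measure, with the $\sup$ modification for $p = \infty$) and then in $\ell^q(\N_0)$ (against counting measure, with the $\sup$ modification for $q = \infty$) yields $\| g \|_{B_{p,q}^{\BesovSmoothness}(\R^d)} \leq \liminf_n \| g_n \|_{B_{p,q}^{\BesovSmoothness}(\R^d)} \leq R$, and hence $\| f \|_{B_{p,q}^{\BesovSmoothness}(\Omega)} \leq \| g \|_{B_{p,q}^{\BesovSmoothness}(\R^d)} \leq R$. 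The main obstacle is precisely this last passage to the limit: insisting on the exact norm forces one to work at the level of tempered distributions rather than in $L^2(\R^d)$, because for $p > 2$ the extensions $g_n$ need not belong to $L^2(\R^d)$ at all, so the more elementary approach via $L^2$-weak convergence is unavailable in that regime.
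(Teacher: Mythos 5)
Your argument is correct, and it splits naturally into a part that differs from the paper and a part that essentially reproduces it. For the embedding $B^{\BesovSmoothness}_{p,q}(\Omega) \hookrightarrow L^2(\Omega)$ you route through the wavelet sequence-space machinery of Appendix~\ref{sub:WaveletsForBesovSpacesOnDomains} (Lemmas~\ref{lem:BesovSequenceSpaceConnection} and \ref{lem:WaveletSequenceSpacesAreNice} together with Proposition~\ref{prop:ApproximationRateLowerBound}); this is legitimate and non-circular, since none of those results rely on the present lemma, but it is heavier than the paper's one-line argument, which simply invokes the embedding $B^{\BesovSmoothness}_{p,q}(\R^d) \hookrightarrow L^{p_0}(\R^d)$ with $p_0 = \max\{p,2\}$ from \cite{DecompositionIntoSobolev} and then restricts to the bounded set $\Omega$. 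Your reduction of the complex case to the real one is the only place you gloss over something: you should justify that $\| \operatorname{Re} f \|_{B^{\BesovSmoothness}_{p,q}} \lesssim \| f \|_{B^{\BesovSmoothness}_{p,q}}$, e.g.\ by choosing $\varphi_0$ real and even so that complex conjugation is an isometry on $B^{\BesovSmoothness}_{p,q}(\R^d)$, or by noting that $\xi \mapsto \overline{\varphi_j(-\xi)}$ is another admissible dyadic system (the paper's route avoids this entirely). For the closedness, your proof is essentially the paper's: near-optimal extensions $g_n$, a weak-$*$ compactness argument to extract a limit $g$ in $\Schwartz'(\R^d)$, identification $g|_\Omega = f$ by testing against $C_c^\infty(\Omega)$, and the Fatou-type lower semicontinuity of $\| \cdot \|_{B^{\BesovSmoothness}_{p,q}(\R^d)}$ under convergence in $\Schwartz'(\R^d)$ via pointwise convergence of $\Fourier^{-1}(\varphi_j \widehat{g_n})$. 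The one genuine divergence is the compactness step: you use sequential weak-$*$ compactness of bounded sets in $\Schwartz'(\R^d)$ (equicontinuity via Banach--Steinhaus plus metrizability from separability of $\Schwartz(\R^d)$), whereas the paper extracts the subsequence in $L^{p_0}(\R^d) = (L^{p_0'}(\R^d))'$ with $p_0' \leq 2$ separable. Both are valid; yours is more abstract but avoids re-using the $L^{p_0}$ embedding, and your closing remark correctly identifies why naive $L^2$-weak compactness fails for $p > 2$.
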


\begin{proof}
  Let $p_0 := \max \{ p, 2 \}$.
  Then \cite[Example~7.2]{DecompositionIntoSobolev} shows that
  $B^{\BesovSmoothness}_{p,q} (\R^d) \hookrightarrow L^{p_0}(\R^d)$,
  since $p \leq p_0$ and since $\BesovSmoothness > d \cdot (p^{-1} - p_0^{-1})$
  by our assumptions on $\BesovSmoothness$.
  This implies $B^{\BesovSmoothness}_{p,q}(\Omega) \hookrightarrow L^2(\Omega)$,
  since if $f \in B^{\BesovSmoothness}_{p,q}(\Omega)$, then by definition of this space
  there exists some ${g \in B^{\BesovSmoothness}_{p,q}(\R^d)}$ satisfying $f = g|_{\Omega}$ and
  $\| g \|_{B_{p,q}^{\BesovSmoothness}(\R^d)} \leq 2 \| f \|_{B_{p,q}^{\BesovSmoothness}(\Omega)}$,
  and hence
  \[
    \| f \|_{L^2(\Omega)}
    \lesssim \| f \|_{L^{p_0}(\Omega)}
    = \big\| g|_{\Omega} \big\|_{L^{p_0}(\Omega)}
    \leq \| g \|_{L^{p_0}(\R^d)}
    \lesssim \| g \|_{B^{\BesovSmoothness}_{p,q}(\R^d)}
    \leq 2 \, \| f \|_{B^{\BesovSmoothness}_{p,q}(\Omega)} .
  \]

  It remains to show that $\ball(0, R; B^{\BesovSmoothness}_{p,q}(\Omega)) \subset L^2(\Omega)$
  is closed.
  To see this, first note that if $(g_n)_{n \in \N} \subset B_{p,q}^{\BesovSmoothness}(\R^d)$
  satisfies $g_n \to g \in \Schwartz'(\R^d)$ with convergence in $\Schwartz'(\R^d)$, then
  \(
    \| g \|_{B^{\BesovSmoothness}_{p,q} (\R^d)}
    \leq \liminf_{n \to \infty}
           \| g_n \|_{B^{\BesovSmoothness}_{p,q}(\R^d)} .
  \)
  Indeed, with the family $(\varphi_j)_{j \in \N_0} \subset \Schwartz(\R^d)$
  used in the definition of Besov spaces (see Section~\ref{sub:BesovFourierDefinition}),
  we have for $f \in \Schwartz'(\R^d)$ and $x \in \R^d$ that
  \(
    \Fourier^{-1}(\varphi_j \cdot \widehat{f} \,) (x)
    = (2\pi)^{-d/2}
      \cdot \big\langle
              \widehat{f}, \quad
              e^{i \langle x, \bullet \rangle} \varphi_j
            \big\rangle_{\Schwartz', \Schwartz} ;
  \)
 see for instance \cite[Theorem~7.23]{RudinFA}.
 From this, we easily see that
 $\Fourier^{-1}(\varphi_j \cdot \widehat{g_n}) \to \Fourier^{-1}(\varphi_j \cdot \widehat{g})$,
 with pointwise convergence as $n \to \infty$.
 Therefore, Fatou's lemma shows that
 \(
   \| \Fourier^{-1}(\varphi_j \cdot \widehat{g}) \|_{L^p}
   \leq \liminf_{n \to \infty}
          \| \Fourier^{-1}(\varphi_j \cdot \widehat{g_n}) \|_{L^p} .
 \)
 By another application of Fatou's lemma, we therefore see
 \begin{align*}
   \| g \|_{B^{\BesovSmoothness}_{p,q} (\R^d)}
   & = \Big\|
        \Big(
          2^{\BesovSmoothness j}
          \| \Fourier^{-1} (\varphi_j \cdot \widehat{g}) \|_{L^p}
        \Big)_{j \in \N_0}
      \Big\|_{\ell^q}
    \leq \liminf_{n \to \infty}
           \Big\|
             \Big(
               2^{\BesovSmoothness j}
               \| \Fourier^{-1} (\varphi_j \cdot \widehat{g_n}) \|_{L^p}
             \Big)_{j \in \N_0}
           \Big\|_{\ell^q} \\
   & =   \liminf_{n \to \infty}
           \| g_n \|_{B^{\BesovSmoothness}_{p,q} (\R^d)} ,
 \end{align*}
 as claimed.

 Now we prove the claimed closedness.
 Let $(f_n)_{n \in \N} \subset \ball(0, R; B_{p,q}^{\BesovSmoothness}(\Omega)) \subset L^2(\Omega)$
 such that $f_n \to f \in L^2(\Omega)$ with convergence in $L^2(\Omega)$.
 By definition of $B_{p,q}^\tau (\Omega)$, for each $n \in \N$ there is
 $g_n \in B_{p,q}^{\BesovSmoothness}(\R^d)$ with
 \(
   \| g_n \|_{B_{p,q}^{\BesovSmoothness}(\R^d)}
   \leq (1 + \frac{1}{n}) \| f_n \|_{B_{p,q}^{\BesovSmoothness}(\Omega)}
   \leq (1 + \frac{1}{n}) R
   \leq 2 R
 \)
 and $f_n = g_n |_\Omega$.

 As seen above, $B_{p,q}^{\BesovSmoothness}(\R^d) \hookrightarrow L^{p_0}(\R^d)$,
 so that $(g_n)_{n \in \N} \subset L^{p_0}(\R^d) = (L^{p_0'}(\R^d))'$ is bounded,
 where $p_0' \leq 2 < \infty$, so that $L^{p_0'}(\R^d)$ is separable.
 Thus, \cite[Theorem~8.5]{AltFA} shows that there is a subsequence $(g_{n_k})_{k \in \N}$
 and some $g \in L^{p_0}(\R^d)$ such that $g_{n_k} \to g$ in the weak-$\ast$-sense in
 $L^{p_0}(\R^d) = (L^{p_0'}(\R^d) )'$.
 In particular, $g_{n_k} \to g$ in $\Schwartz'(\R^d)$.
 By what we showed above, this implies
 \(
   \| g \|_{B_{p,q}^{\BesovSmoothness}(\R^d)}
   \leq \liminf_{k \to \infty} \| g_{n_k} \|_{B_{p,q}^{\BesovSmoothness}(\R^d)}
   \leq R .
 \)
 Finally, we have for any $\varphi \in C_c^\infty (\Omega)$ that
 \(
   \langle g, \varphi \rangle
   = \lim_{k \to \infty} \langle g_{n_k}, \varphi \rangle
   = \lim_{k \to \infty} \langle f_{n_k}, \varphi \rangle
   = \langle f, \varphi \rangle ,
 \)
 since $f_{n_k} = g_{n_k}|_{\Omega}$ and $f_{n_k} \to f$ in $L^2(\Omega)$.
 Overall, we thus see that $f = g|_{\Omega} \in B_{p,q}^{\BesovSmoothness}(\Omega)$ and
 $\| f \|_{B_{p,q}^{\BesovSmoothness}(\Omega)} \leq \| g \|_{B_{p,q}^{\BesovSmoothness}(\R^d)} \leq R$.
\end{proof}

For the Sobolev spaces $W^{k,p}(\Omega)$ with $p = 1$,
the set $\ball \big( 0, R; W^{k,1}(\Omega) \big)$ is not closed in $L^2(\Omega)$.
In order to show that this ball is nonetheless Borel measurable,
we begin with the following result on $\R^d$.

\begin{lem}\label{lem:SobolevSpaceMeasurableOnWholeSpace}
  Let $d,k \in \N$ and $p \in [1,2]$.
  Then $L^2(\R^d) \cap W^{k,p}(\R^d)$ is a Borel-measurable subset of $L^2(\R^d)$.
\end{lem}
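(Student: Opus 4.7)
The plan is to characterize $L^2(\R^d) \cap W^{k,p}(\R^d)$ inside $L^2(\R^d)$ by an explicit family of Borel conditions obtained via mollification. I fix a mollifier $\psi \in C_c^\infty(\R^d)$ with $\int \psi \, dx = 1$, and set $\psi_n(x) := n^d \, \psi(n x)$. For each multi-index $\alpha$ with $|\alpha| \leq k$ and $n \in \N$, Young's inequality ensures that
\[
  T_n^\alpha : L^2(\R^d) \to L^2(\R^d), \qquad f \mapsto f * (\partial^\alpha \psi_n)
\]
is a bounded (hence continuous) linear operator, and distributionally one has $T_n^\alpha(f) = \partial^\alpha(f * \psi_n)$.

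The key equivalence I will establish is: \emph{for $f \in L^2(\R^d)$, we have $f \in W^{k,p}(\R^d)$ if and only if $(T_n^\alpha(f))_{n \in \N}$ is Cauchy in $L^p(\R^d)$ for every $|\alpha| \leq k$.} The forward direction uses $T_n^\alpha(f) = (\partial^\alpha f) * \psi_n \to \partial^\alpha f$ in $L^p$ by the standard approximation-to-the-identity property. Conversely, if $T_n^\alpha(f) \to h^\alpha$ in $L^p \hookrightarrow \Schwartz'(\R^d)$, then simultaneously $f * \psi_n \to f$ in $L^2 \hookrightarrow \Schwartz'$ and hence $T_n^\alpha(f) = \partial^\alpha(f * \psi_n) \to \partial^\alpha f$ in $\Schwartz'$ by continuity of distributional differentiation; therefore $\partial^\alpha f = h^\alpha \in L^p$. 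Applying this for all $|\alpha| \leq k$ (including $\alpha = 0$, which forces $f \in L^p$) yields $f \in W^{k,p}$.

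Finally, I verify that the Cauchy property is a Borel condition on $f \in L^2$. The functional $g \mapsto \|g\|_{L^p} \in [0,\infty]$ is lower semicontinuous on $L^2(\R^d)$, as one sees by extracting an almost everywhere convergent subsequence from any $L^2$-convergent sequence and applying Fatou's lemma. Composing with the continuous maps $T_n^\alpha$ and $T_m^\alpha$ shows that $f \mapsto \|T_n^\alpha(f) - T_m^\alpha(f)\|_{L^p}$ is Borel measurable from $L^2(\R^d)$ into $[0,\infty]$, and the Cauchy condition
\[
  \lim_{N \to \infty} \, \sup_{m,n \geq N} \| T_n^\alpha(f) - T_m^\alpha(f) \|_{L^p} = 0
\]
is a countable Boolean combination of Borel conditions; intersecting over the finitely many multi-indices $|\alpha| \leq k$ yields the desired Borel subset of $L^2(\R^d)$. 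The main obstacle is the backward direction of the key equivalence, where an $L^p$-limit must be identified with a distributional derivative; this is handled by working in the common ambient space $\Schwartz'(\R^d)$, into which both $L^2$ and $L^p$ embed continuously. The restriction $p \leq 2$ plays no role in the argument itself (the characterization works for every $p \in [1,\infty)$); it only reflects the natural ambient hypothesis that $f$ belong to $L^2(\R^d)$.
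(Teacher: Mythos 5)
Your proof is correct and follows essentially the same route as the paper's: both characterize $L^2(\R^d) \cap W^{k,p}(\R^d)$ via the Cauchy property in $L^p$ of the mollified derivatives $f \ast \partial^\alpha \psi_n$, with the same two-inclusion argument (approximate identity in one direction, identification of the $L^p$-limit with the distributional derivative in the other). The only divergence is the measurability step: the paper truncates to $[-M,M]^d$ and uses $p \leq 2$ to make the truncated functional continuous on $L^2$, then takes a supremum over $M$, whereas you establish lower semicontinuity of the full $\| \cdot \|_{L^p}$ on $L^2$ via Fatou's lemma --- a slightly cleaner variant which, as you correctly observe, makes the hypothesis $p \leq 2$ superfluous for this particular lemma.
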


\begin{proof}
  Let $\varphi \in C_c^\infty (\R^d)$ with $\varphi \geq 0$ and $\int_{\R^d} \varphi(x) \, d x = 1$,
  and define $\varphi_n (x) := n^{d} \cdot \varphi(n x)$.
  It follows from \cite[Section~4.13]{AltFA} that if $f \in L^2(\R^d)$, then
  $\varphi_n \ast f \in L^2(\R^d) \cap C^\infty(\R^d)$
  with $\partial^\alpha (\varphi_n \ast f) = (\partial^\alpha \varphi_n) \ast f$.

  \smallskip{}

  \textbf{Step 1:} Define $\signalClass := L^2(\R^d) \cap W^{k,p}(\R^d)$.
  In this step, we show that
  \[
    \signalClass = \big\{
                     f \in L^2(\R^d)
                     \quad\colon\quad
                     \forall \, |\alpha| \leq k :
                       \big( (\partial^\alpha \varphi_n) \ast f \big)_{n \in \N}
                       \text{ is Cauchy in } L^p(\R^d)
                   \big\} .
  \]
  For ``$\subset$'', note that if $f \in \signalClass$,
  then from the definition of the weak derivative we see
  \begin{align*}
    [ (\partial^\alpha \varphi_n) \ast f ](x)
    & = \int_{\R^d}
          f(y) \cdot (\partial^\alpha \varphi_n)(x - y)
        \, d y
      = (-1)^{|\alpha|}
        \int_{\R^d}
          f(y) \cdot \partial^\alpha_y [\varphi_n (x-y)]
        \, d y \\
    & = \int_{\R^d}
          \partial^\alpha f (y) \cdot \varphi_n (x-y)
        \, d y
      = [\varphi_n \ast (\partial^\alpha f)](x),
  \end{align*}
  so that \cite[Theorem~4.15]{AltFA} shows that
  $(\partial^\alpha \varphi_n) \ast f \xrightarrow[n\to\infty]{} \partial^\alpha f$,
  with convergence in $L^p (\R^d)$.
  This proves ``$\subset$''.

  For ``$\supset$'', let $f \in L^2(\R^d)$ such that
  $\big( (\partial^\alpha \varphi_n) \ast f \big)_{n \in \N}$
  is Cauchy in $L^p (\R^d)$ for each $\alpha \in \N_0^d$ with $|\alpha| \leq k$.
  Define $g_\alpha := \lim_{n\to\infty} [(\partial^\alpha \varphi_n) \ast f] \in L^p(\R^d)$
  for $|\alpha| \leq k$.
  Since \cite[Theorem~4.15]{AltFA} shows that $\varphi_n \ast f \to f$ with convergence in $L^2$,
  we get $f = g_0 \in L^p(\R^d)$.
  Furthermore, as seen above, we have $\varphi_n \ast f \in C^\infty (\R^d)$ with
  $\partial^\alpha (\varphi_n \ast f) = (\partial^\alpha \varphi_n) \ast f$.
  Therefore, we see for arbitrary $\psi \in C_c^\infty (\R^d)$ and $\alpha \in \N_0^d$
  with $|\alpha| \leq k$ that
  \begin{align*}
    \int_{\R^d}
      f \cdot \partial^\alpha \psi
    \, d x
    & = \lim_{n \to \infty}
        \int_{\R^d}
          (\varphi_n \ast f) \cdot \partial^\alpha \psi
        \, d x
      = \lim_{n \to \infty}
        (-1)^{|\alpha|}
        \int_{\R^d}
          [(\partial^\alpha \varphi_n) \ast f] \cdot \psi
        \, d x \\
    ({\scriptstyle{\text{since } \psi \in C_c^\infty \subset L^{p'}}})
    & = (-1)^{|\alpha|}
        \int_{\R^d}
          g_\alpha \cdot \psi
        \, d x ,
  \end{align*}
  which shows that $g_\alpha$ is the $\alpha$-th weak derivative of $f$; that is,
  $\partial^\alpha f = g_\alpha \in L^p(\R^d)$.
  Since this holds for all $|\alpha| \leq k$, we see that $f \in W^{k,p}(\R^d)$
  and thus $f \in \signalClass$.

  \medskip{}

  \textbf{Step 2:}
  For $n,m,M \in \N$, define
  \[
    \Gamma_{n,m,M} :
    L^2(\R^d) \to [0,\infty),
    f \mapsto \big\|
                [\partial^\alpha (\varphi_n - \varphi_m) \ast f] \cdot \indicator_{[-M,M]^d}
              \big\|_{L^p} .
  \]
  Since $p \leq 2$, it is easy to see that $\Gamma_{n,m,M}$ is well-defined and continuous.
  Furthermore,
  \(
    \| [(\partial^\alpha \varphi_n) \ast f] - [(\partial^\alpha \varphi_m) \ast f] \|_{L^p}
    = \sup_{M \in \N}
        \Gamma_{n,m,M}(f),
  \)
  which---together with the result from Step~1---implies that
  \[
    \signalClass
    = \bigcap_{\ell = 1}^\infty
        \bigcup_{N=1}^\infty
          \bigcap_{n,m=N}^\infty
            \bigcap_{M=1}^\infty
              \big\{ f \in L^2(\R^d) \colon \Gamma_{n,m,M}(f) \leq 1/\ell \big\}
  \]
  is a Borel-measurable subset of $L^2(\R^d)$.
\end{proof}

We can now prove a similar result on bounded domains.
For the convenience of the reader, we recall that
$\| f \|_{W^{k,p}} = \max_{|\alpha| \leq k} \| \partial^\alpha f \|_{L^p}$;
see Equation~\eqref{eq:SobolevNormDefinition}.

\begin{lem}\label{lem:SobolevSpaceMeasurableOnDomain}
  Let $p \in [1,\infty]$, $k \in \N$, $R \in (0,\infty)$,
  and let $\Omega \subset \R^d$ be open and bounded.
  In case of $p = 1$, assume additionally that $\Omega$ is a Lipschitz domain.

  Then ${L^2(\Omega) \cap \ball \big( 0, R; W^{k,p}(\Omega) \big)}$
  is a Borel-measurable subset of $L^2(\Omega)$.
\end{lem}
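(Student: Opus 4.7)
The plan is to split into three cases by the value of $p$, using different tools tailored to each. For $p \in (1,\infty)$, the ball $\ball(0, R; W^{k,p}(\Omega))$ is in fact closed in $L^2(\Omega)$, hence Borel measurable. Indeed, if $(f_n) \subset \ball(0, R; W^{k,p}(\Omega))$ converges to $f$ in $L^2(\Omega)$, reflexivity of $W^{k,p}(\Omega)$ provides a subsequence $(f_{n_j})$ converging weakly in $W^{k,p}(\Omega)$ to some $g$ with $\|g\|_{W^{k,p}(\Omega)} \leq R$; testing against $C_c^\infty(\Omega)$ shows that $g$ and the $L^2$-limit $f$ agree distributionally, hence a.e. For $p = \infty$, the analogous Banach--Alaoglu argument applies: each $(\partial^\alpha f_n)$ with $|\alpha| \leq k$ is bounded in $L^\infty(\Omega) = (L^1(\Omega))^\ast$, and a diagonal extraction yields a subsequence with $\partial^\alpha f_{n_j} \overset{\ast}{\rightharpoonup} g_\alpha$ for every $|\alpha| \leq k$ and $\|g_\alpha\|_{L^\infty} \leq R$; identifying $g_\alpha$ with $\partial^\alpha f$ via $\calD'(\Omega)$-consistency closes the argument.

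For $p = 1$, $W^{k,1}(\Omega)$ is not reflexive and the ball need not be closed in $L^2(\Omega)$, so I would adapt the Cauchy-mollification characterization from Lemma~\ref{lem:SobolevSpaceMeasurableOnWholeSpace} to the domain setting. With $\varphi_n$ as in that lemma, let $\Omega_m := \{x \in \Omega \colon \dist(x,\partial\Omega) > 1/m\}$, and for $f \in L^2(\Omega)$ define the interior mollification $(\varphi_n \ast_\Omega f)(x) := \int_\Omega f(y)\,\varphi_n(x-y)\,dy$ for $x \in \Omega_n$. The key claim is that $f \in L^2(\Omega) \cap \ball(0, R; W^{k,1}(\Omega))$ if and only if, for every multi-index $\alpha$ with $|\alpha| \leq k$, the family $\bigl((\partial^\alpha \varphi_n) \ast_\Omega f\bigr)_{n > m}$ is Cauchy in $L^1(\Omega_m)$ for every $m \in \N$ and one has the global bound $\sup_n \|(\partial^\alpha \varphi_n) \ast_\Omega f\|_{L^1(\Omega_n)} \leq R$. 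The forward direction combines the identity $\partial^\alpha(\varphi_n \ast_\Omega f) = (\partial^\alpha f) \ast_\Omega \varphi_n$ on $\Omega_n$ with Young's convolution inequality and standard $L^1_{\mathrm{loc}}$-mollifier convergence. For the converse, the local Cauchy limits glue to some $g_\alpha \in L^1_{\mathrm{loc}}(\Omega)$; integration by parts against $\psi \in C_c^\infty(\Omega)$ identifies $g_\alpha = \partial^\alpha f$ in $\calD'(\Omega)$, and Fatou's lemma combined with the global bound promotes $g_\alpha$ to $L^1(\Omega)$ with norm at most $R$. Continuity of the maps $f \mapsto (\partial^\alpha \varphi_n) \ast_\Omega f$ from $L^2(\Omega)$ to $L^1(\Omega_m)$, which follows from the pointwise Cauchy--Schwarz bound together with boundedness of $\Omega_m$, then cuts out the ball as a countable Boolean combination of closed subsets of $L^2(\Omega)$.

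The main obstacle is the $p = 1$ case, precisely because non-reflexivity rules out the clean closedness argument used for $p \in (1,\infty]$. The global $L^1$ bound alone only forces the distributional derivatives $\partial^\alpha f$ to be finite Borel measures, and the local Cauchy condition is exactly the extra ingredient needed to upgrade these to genuine $L^1$ functions; once this upgrade is in place, the Borel structure of both conditions follows from the same countable-intersection reasoning that drove Lemma~\ref{lem:SobolevSpaceMeasurableOnWholeSpace}.
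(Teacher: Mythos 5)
Your proposal is correct, but it takes a genuinely different route from the paper for every value of $p$. For $p\in(1,\infty]$ the paper does not argue closedness via weak (or weak-$*$) compactness; instead it fixes a countable $\|\cdot\|_{C^k(\overline\Omega)}$-dense family $(\varphi_n)\subset C_c^\infty(\Omega)\setminus\{0\}$ and forms the single Borel function $\gamma(f)=\sup_n\max_{|\alpha|\le k}\bigl|\int_\Omega f\,\partial^\alpha\varphi_n\bigr|/\|\varphi_n\|_{L^{p'}}$, then identifies $L^2(\Omega)\cap\ball(0,R;W^{k,p}(\Omega))$ with the sublevel set $\{\gamma\le R\}$ by $L^p$--$L^{p'}$ duality (which is where $p>1$ enters). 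Your compactness arguments are equally valid and in fact yield the slightly stronger conclusion that the ball is closed in $L^2(\Omega)$ for $p\in(1,\infty]$ — a fact the paper itself exploits elsewhere (the reflexivity footnote in the proof of Theorem~\ref{thm:SobolevPhaseTransition}). The real divergence is at $p=1$: the paper keeps the functional $\gamma$ (which for $p=1$ only forces the distributional derivatives to be measures of total variation at most $R$, exactly the obstruction you identify) and restores membership in $W^{k,1}$ by composing with the Stein extension operator $E:L^1(\Omega)\to L^1(\R^d)$ and invoking Lemma~\ref{lem:SobolevSpaceMeasurableOnWholeSpace} on $\R^d$ — this is precisely why the Lipschitz hypothesis appears in the statement. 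Your interior-mollification Cauchy criterion on the exhaustion $\Omega_m$ replaces both ingredients at once: it certifies membership and the norm bound simultaneously, the continuity of $f\mapsto(\partial^\alpha\varphi_n)\ast_\Omega f$ from $L^2(\Omega)$ to $L^1(\Omega_m)$ gives the Borel structure, and — notably — it never uses the extension operator, so it proves the $p=1$ case for an arbitrary bounded open set, dispensing with the Lipschitz assumption altogether.
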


\begin{proof}
  \textbf{Step 1:}
  The space $C^k (\overline{\Omega})$ (with the norm
  ${\| f \|_{C^k(\overline{\Omega})} = \max_{|\alpha| \leq k} \| \partial^\alpha f \|_{\sup}}$)
  is separable; see \cite[Section~4.18]{AltFA}.
  Since subsets of separable spaces are separable, there exists a sequence
  $(\varphi_n)_{n \in \N} \subset C_c^\infty(\Omega) \setminus \{ 0 \}$
  that is dense in $C_c^\infty(\Omega) \setminus \{ 0 \}$ with respect to
  $\| \bullet \|_{C^k(\overline{\Omega})}$.
  For $n \in \N$, define
  \[
    \gamma_n : L^2(\Omega) \to [0,\infty),
    f \mapsto \max_{|\alpha| \leq k}
                \left|
                  \int_{\Omega}
                    f \cdot \partial^\alpha \varphi_n
                  \, d x
                \right|
              \Big/
              \| \varphi_n \|_{L^{p'}},
  \]
  where $p' \in [1,\infty]$ is the conjugate exponent to $p$.
  Since $\partial^\alpha \varphi_n \in C_c^\infty(\Omega) \subset L^2(\Omega)$,
  we see that $\gamma_n$ is continuous, so that
  \(
    \gamma : L^2(\Omega) \to [0,\infty],
             f \mapsto \sup_{n \in \N} \gamma_n (f)
  \)
  is Borel measurable.

  \medskip{}

  \textbf{Step 2:}
  We claim that
  $|\int_\Omega f \cdot \partial^\alpha \varphi \, d x| \leq \gamma(f) \cdot \| \varphi \|_{L^{p'}}$
  for all $f \in L^2(\Omega)$, $\varphi \in C_c^\infty(\Omega)$, and $|\alpha| \leq k$.
  Clearly, we can assume without loss of generality that $\gamma(f) < \infty$ and $\varphi \neq 0$.
  Thus, there is a subsequence $(n_\ell)_{\ell \in \N}$ such that
  $\| \varphi - \varphi_{n_\ell} \|_{C^k(\overline{\Omega})} \to 0$,
  which easily implies $\| \varphi_{n_\ell} \|_{L^{p'}} \to \| \varphi \|_{L^{p'}}$ and
  $\partial^\alpha \varphi_{n_\ell} \to \partial^\alpha \varphi$
  with convergence in $L^2(\Omega)$ for all $|\alpha| \leq k$.
  Hence,
  \(
    |\int_\Omega f \cdot \partial^\alpha \varphi \, d x|
    = \lim_{\ell \to \infty}
        |\int_\Omega f \cdot \partial^\alpha \varphi_{n_\ell} \, d x|
    \leq \lim_{\ell \to \infty}
           \gamma(f) \cdot \| \varphi_{n_\ell} \|_{L^{p'}}
    =   \gamma(f) \cdot \| \varphi \|_{L^{p'}},
  \)
  as claimed.

  \medskip{}

  \textbf{Step 3:} In this step, we prove for $p > 1$ that
  $\signalClass := L^2(\Omega) \cap \ball (0, R ; W^{k,p}(\Omega))$
  satisfies $\signalClass = \{ f \in L^2(\Omega) \colon \gamma(f) \leq R \}$,
  which then implies that $\signalClass$ is a Borel measurable subset of $L^2(\Omega)$.

  First, if $f \in \signalClass$, then
  \(
    |\int_\Omega f \, \partial^\alpha \varphi_n \, d x|
    = |\int_\Omega \varphi_n \, \partial^\alpha f \, d x|
    \leq \| \partial^\alpha f \|_{L^p} \cdot \| \varphi_n \|_{L^{p'}}
    \leq R \cdot \| \varphi_n \|_{L^{p'}}
  \)
  for all $|\alpha| \leq k$ and $n \in \N$, so that $\gamma(f) \leq R$.

  Conversely, if $\gamma(f) \leq R$, then
  Step~2 shows for arbitrary $|\alpha| \leq k$ and $\varphi \in C_c^\infty(\Omega)$ that
  \(
    \strut
    |\int_\Omega f \cdot \partial^\alpha \varphi \, d x|
    \leq \gamma(f) \cdot \| \varphi \|_{L^{p'}}
    \leq R \cdot \| \varphi \|_{L^{p'}},
  \)
  so that \cite[Section~E6.7]{AltFA} implies that $f \in W^{k,p}(\Omega) \strut$;
  this uses our assumption $p > 1$.
  Finally, for $\varphi \in C_c^\infty (\Omega)$ and $|\alpha| \leq k$, we have
  \(
    |\int_\Omega \varphi \cdot \partial^\alpha f \, d x|
    = |\int_\Omega f \cdot \partial^\alpha \varphi \, d x|
    \leq R \cdot \| \varphi \|_{L^{p'}} .
  \)
  Therefore, \cite[Corollary~6.13]{AltFA} shows
  $\| \partial^\alpha f \|_{L^p} \leq R$ for all $|\alpha| \leq k$.
  By our definition of $\| \bullet \|_{W^{k,p}(\Omega)}$
  (see Equation~\eqref{eq:SobolevNormDefinition}), this implies $f \in \signalClass$.

  \medskip{}

  \textbf{Step 4:} We prove the claim for the case $p = 1$.
  Since $\Omega$ is a Lipschitz domain, \mbox{\cite[Theorem~5 in Chapter~VI]{SteinSingularIntegrals}}
  yields a linear extension operator ${E : L^1(\Omega) \to L^1(\R^d)}$
  satisfying $(E f)|_\Omega = f$ for all $f \in L^1(\Omega)$,
  and such that for arbitrary $\ell \in \N_0$ and $q \in [1,\infty]$
  the restriction $E : W^{\ell,q}(\Omega) \to W^{\ell,q}(\R^d)$ is well-defined and bounded.
  In particular, $E : L^2(\Omega) \to L^2(\R^d)$ is continuous and hence measurable.
  By Lemma~\ref{lem:SobolevSpaceMeasurableOnWholeSpace}, this means that
  $\Theta := \{ f \in L^2(\Omega) \colon E f \in W^{k,1}(\R^d) \} \subset L^2(\Omega)$
  is measurable.
  We claim that
  \({
    \signalClass
    := L^2(\Omega) \cap \ball (0,R; W^{k,1}(\Omega))
    = \Theta \cap \{ f \in L^2(\Omega) \colon \gamma(f) \leq R \}
    ,
  }\)
  which then implies that $\signalClass \subset L^2(\Omega)$ is measurable.

  For ``$\subset$'', we see as in Step~3 that $\gamma(f) \leq R$ if $f \in \signalClass$.
  Furthermore, by the properties of the extension operator $E$, we also have $f \in \Theta$
  if $f \in \signalClass$.
  For ``$\supset$'', let $f \in \Theta$ satisfy $\gamma(f) \leq R$.
  Since $f \in \Theta$, we have $f = (E f)|_\Omega \in W^{k,1}(\Omega)$.
  One can then argue as at the end of Step~3 (using \cite[Corollary~6.13]{AltFA})
  to see that $f \in \ball(0, R; W^{k,1}(\Omega))$ and thus $f \in \signalClass$.
\end{proof}





\section{Proof of the lower bounds for neural network approximation}%
\label{sec:NNLowerBoundProofs}

We begin by explaining the connection between rate distortion theory and approximation
by neural networks.
This is based on the observation from \cite{boelcskeiNeural,PetersenVoigtlaenderOptimalReLUApproximation}
that one can use the existence of approximating networks to construct a codec for a function class.
This in turn relies on the fact that neural networks ca be encoded as bit strings,
as described in the following result.

\begin{lem}\label{lem:NNEncoding}
  Let $\varrho : \R \to \R$ with $\varrho(0) = 0$.
  For $d,\sigma,W \in \N$, let $\mathcal{NN}_{d,W}^{\sigma,\varrho}$
  be as defined in Theorem~\ref{thm:IntroductionNNResult}.
  Then there exists an injective map
  \[
    \Gamma_{d,W}^{\sigma,\varrho} :
    \mathcal{NN}_{d,W}^{\sigma,\varrho}
    \to \{0,1\}^{C_0 \, \sigma \cdot W \cdot \lceil \log_2 (1+W) \rceil^2},
  \]
  where $C_0 = C_0(d) \in \N$ is a universal constant.
\end{lem}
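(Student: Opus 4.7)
\medskip

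\noindent\textbf{Proof sketch.} The strategy is the standard one used in \cite{boelcskeiNeural,PetersenVoigtlaenderOptimalReLUApproximation}: for every $f \in \mathcal{NN}_{d,W}^{\sigma,\varrho}$ pick (e.g.~via the axiom of choice) some representative network $\Phi_f$ with $R_\varrho \Phi_f = f$ that is $(\sigma,W)$-quantized and satisfies $W(\Phi_f) \leq W$, $d_{\mathrm{in}}(\Phi_f) = d$, $d_{\mathrm{out}}(\Phi_f) = 1$. It then suffices to inject the (much larger) set of admissible networks into the required bit-string space: if the map $\Phi \mapsto b(\Phi)$ is injective on such networks, the composition $f \mapsto b(\Phi_f)$ is an injection $\Gamma_{d,W}^{\sigma,\varrho}$ of the form claimed.

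The step I expect to require the most care is the preliminary reduction that bounds the \emph{architecture} of $\Phi_f$ in terms of $W$. Indeed, $W(\Phi) \le W$ does not, a priori, bound the number of layers $L$ or the widths $N_\ell$, only the number of nonzero entries. Exploiting $\varrho(0)=0$, however, one can iteratively prune any layer $\ell$ with $A_\ell = 0$ and $b_\ell = 0$ (whose output is identically zero and can be absorbed), delete neurons which are either never activated by the previous layer or never read by the subsequent layer, and merge adjacent affine layers which have become trivial after such pruning. A careful bookkeeping, identical to the one carried out in \cite[Lemma~VI.5]{boelcskeiNeural} or \cite[Lemma~A.7]{PetersenVoigtlaenderOptimalReLUApproximation}, shows that after these operations one may assume $L \le C_1 W$ and $N_\ell \le C_1 (W+d)$ for a universal $C_1$, while preserving the realization $R_\varrho \Phi$, the input/output dimensions, and the $(\sigma,W)$-quantization of the surviving weights. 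One then uses this pruned network as $\Phi_f$.

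With this normalisation in place, the bit-counting is routine. First, encoding the architecture $(L, N_0, \dots, N_L)$ with $L \leq C_1 W$ and $N_\ell \leq C_1 (W+d)$ costs at most $C_2(d) \cdot W \lceil \log_2(1+W) \rceil$ bits. Second, each of the at most $W$ nonzero entries of $\Phi_f$ is specified by its \emph{position} inside the tuple $((A_1,b_1),\dots,(A_L,b_L))$ (which is identified within $O(\log_2 (1+W))$ bits, using the already encoded architecture) and its \emph{value}. For the value, the quantization set
\[
   \bigl[-W^{\sigma\lceil \log_2 W\rceil}, W^{\sigma\lceil \log_2 W\rceil}\bigr]
   \cap 2^{-\sigma\lceil \log_2 W\rceil^2}\Z
\]
has cardinality at most $2 \cdot 2^{\sigma \lceil \log_2 W\rceil^2}\cdot 2^{\sigma \lceil \log_2 W\rceil\cdot \log_2 W} + 1 \le 2^{2+2\sigma\lceil \log_2(1+W)\rceil^2}$, so each value can be stored in $\le 3 + 2\sigma\lceil \log_2(1+W)\rceil^2$ bits. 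Summing over the at most $W$ nonzero entries dominates the architecture cost, yielding a total bit-length bounded by $C_0 \, \sigma \cdot W \cdot \lceil \log_2(1+W) \rceil^2$ for a suitable $C_0 = C_0(d)$, as required; padding shorter strings with zeros to a common length preserves injectivity and completes the construction.
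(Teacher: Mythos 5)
Your proposal is correct and follows essentially the same route as the paper: the paper simply invokes \cite[Lemma~B.4]{PetersenVoigtlaenderOptimalReLUApproximation} for the combinatorial encoding of the (pruned) architecture and weight positions, and only verifies, as you do, that the quantization alphabet has cardinality at most $2^{O(\sigma \lceil \log_2(1+W)\rceil^2)}$ so that each weight value fits into $O(\sigma\lceil\log_2(1+W)\rceil^2)$ bits. The extra details you supply (choice of representative network, pruning to bound depth and width, padding to a common length) are exactly what the cited lemma encapsulates.
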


\begin{proof}
  This follows from \cite[Lemma~B.4]{PetersenVoigtlaenderOptimalReLUApproximation},
  once we note that $|\Z \cap [a,b]| \leq 1 + (b-a)$ for $a \leq b$, and hence
  \begin{align*}
    & \Big|
        \big[ - W^{\sigma \lceil \log_2 W \rceil}, W^{\sigma \lceil \log_2 W \rceil} \big]
        \cap 2^{-\sigma \lceil \log_2 W \rceil^2} \Z
      \Big| \\
    & = \Big|
          \Z \cap
          2^{\sigma \lceil \log_2 W \rceil^2}
          \big[ - W^{\sigma \lceil \log_2 W \rceil}, W^{\sigma \lceil \log_2 W \rceil} \big]
        \Big| \\
    & \leq 1 + 2 \cdot W^{\sigma \lceil \log_2 W \rceil} \, 2^{\sigma \lceil \log_2 W \rceil^2} \\
    & \leq 1 + 2 \cdot 2^{2 \sigma \lceil \log_2 W \rceil^2}
      =:   \theta ,
  \end{align*}
  so that there is a surjection
  \(
    B : \{ 0, 1 \}^{M}
        \to \big[ \!
              - W^{\sigma \lceil \log_2 W \rceil}, W^{\sigma \lceil \log_2 W \rceil}
            \big]
            \cap 2^{-\sigma \lceil \log_2 W \rceil^2} \Z
  \)
  as soon as ${M \geq \lceil \log_2 \theta \rceil}$.
  Since $\sigma \in \N$, this holds for
  ${M := 5\sigma \lceil \log_2(1 + W) \rceil^2}$.
\end{proof}

The precise connection to rate distortion theory is established by the following lemma.

\begin{lem}\label{lem:NNRateDistortion}
  Let $\Omega \subset \R^d$ be measurable, let $\varrho : \R \to \R$ be measurable with $\varrho(0) = 0$,
  and let $\sigma \in \N$.
  For $f \in L^2(\Omega)$ and $\eps \in (0, 1)$, let $W^{\sigma,\varrho}_\eps (f) \in \N \cup \{ \infty \}$
  be as defined in Theorem~\ref{thm:IntroductionNNResult}.
  For $\tau > 0$ define
  \[
    \mathcal{A}_{\mathcal{NN},\sigma,\varrho}^\tau
    := \big\{
         f \in L^2(\Omega)
         \quad\colon\quad
         \exists \, C > 0 \quad
           \forall \, \eps \in (0, 1) : \quad
             W_{\eps}^{\sigma,\varrho} (f) \leq C \cdot \eps^{-\tau}
       \big\} .
  \]
  Then there is a codec $\code = \code(\sigma, \varrho, \Omega) \in \codecs<L^2(\Omega)>{L^2(\Omega)}$
  such that
  \[
    \mathcal{A}_{\mathcal{NN},\sigma,\varrho}^\tau
    \subset \approxClass{\tau^{-1} - \delta}<L^2(\Omega)>{L^2(\Omega)} (\code)
    \qquad \forall \,\,\, \delta \in (0, \tau^{-1}) .
  \]
\end{lem}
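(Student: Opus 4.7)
\medskip

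\noindent
\textbf{Proof plan.} The strategy is to use Lemma~\ref{lem:NNEncoding} to turn each $R$-bit string into a quantized neural network and hence, via $R_\varrho$, into an element of $L^2(\Omega)$. Let $C_0=C_0(d)$ be as in Lemma~\ref{lem:NNEncoding} and set $L(W):=C_0\,\sigma\, W\,\lceil\log_2(1+W)\rceil^2$, so that there is an injection $\Gamma_W:\mathcal{NN}_{d,W}^{\sigma,\varrho}\to\{0,1\}^{L(W)}$. For each $R\in\N$, define $W(R):=\max\{W\in\N : L(W)\le R\}$ (with $W(R):=1$ for $R$ too small). For the decoder, I will fix a left inverse $\Delta_W:\{0,1\}^{L(W)}\to\mathcal{NN}_{d,W}^{\sigma,\varrho}\cup\{0\}$ of $\Gamma_W$ (out-of-range strings sent to the zero function) and set
\[
  D_R:\{0,1\}^R\to L^2(\Omega),\quad
  c\mapsto
  \begin{cases}
    \Delta_{W(R)}(c|_{[L(W(R))]})\big|_\Omega, & \text{if this lies in }L^2(\Omega),\\
    0,& \text{otherwise.}
  \end{cases}
\]
For the encoder, I will simply set $E_R(f)\in\argmin_{c\in\{0,1\}^R}\|f-D_R(c)\|_{L^2(\Omega)}$, picking any minimizer; this yields a codec $\code\in\codecs<L^2(\Omega)>{L^2(\Omega)}$.

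\medskip

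\noindent
The second step is the approximation guarantee for $f\in\mathcal{A}^\tau_{\mathcal{NN},\sigma,\varrho}$. By definition, there is $C_f>0$ with $W_\eps^{\sigma,\varrho}(f)\le C_f\,\eps^{-\tau}$ for all $\eps\in(0,1)$. For $R$ large, choose $\eps_R:=(2C_f/W(R))^{1/\tau}$, which lies in $(0,1)$ once $W(R)$ is large enough. The definition of $W_{\eps_R}^{\sigma,\varrho}(f)$ then yields a $(\sigma,W(R))$-quantized network $\Phi_R$ with at most $W(R)$ nonzero weights and $\|f-R_\varrho(\Phi_R)\|_{L^2(\Omega)}\le \eps_R$. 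Appending arbitrary bits after $\Gamma_{W(R)}(\Phi_R)$ to obtain some $c_R\in\{0,1\}^R$, the construction of $D_R$ gives $D_R(c_R)=R_\varrho(\Phi_R)|_\Omega$, so that by optimality of $E_R(f)$,
\[
  \|f-D_R(E_R(f))\|_{L^2(\Omega)}
  \;\le\;\|f-D_R(c_R)\|_{L^2(\Omega)}
  \;\le\;\eps_R
  \;=\;(2C_f)^{1/\tau}\cdot W(R)^{-1/\tau}.
\]

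\medskip

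\noindent
The final step is to relate $W(R)$ to $R$ and absorb the logarithmic factor into $\delta$. From the maximality of $W(R)$ and the definition of $L$, a short computation (using $L(W(R)+1)>R$ and monotonicity of $W\mapsto W\log_2^2(1+W)$) gives $W(R)\ge c_1\cdot R/\log_2^2(1+R)$ for all sufficiently large $R$, with $c_1=c_1(d,\sigma)>0$. Substituting into the bound above,
\[
  \|f-D_R(E_R(f))\|_{L^2(\Omega)}
  \;\le\;C'\cdot R^{-1/\tau}\cdot\log_2^{2/\tau}(1+R)
  \;\le\;C''\cdot R^{-(\tau^{-1}-\delta)},
\]
where in the last step $\log_2^{2/\tau}(1+R)\le C_\delta\cdot R^\delta$ for $R$ large; the finitely many small values of $R$ are absorbed into the constant. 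Therefore $\sup_{R\in\N}R^{\tau^{-1}-\delta}\,\|f-D_R(E_R(f))\|_{L^2(\Omega)}<\infty$, which is exactly $f\in\approxClass{\tau^{-1}-\delta}<L^2(\Omega)>{L^2(\Omega)}(\code)$.

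\medskip

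\noindent
The only mildly delicate point is the conversion from $L(W(R))\le R<L(W(R)+1)$ to the lower bound $W(R)\gtrsim R/\log_2^2(R)$; this is the reason one loses an arbitrarily small $\delta$ in the rate rather than reaching $s=\tau^{-1}$ exactly, and is the only step where some care is needed. Everything else is bookkeeping around the injection $\Gamma_W$ and the definition of the codec, which work for all measurable $\Omega$ and $\varrho$ since the decoder is defined piecewise and nothing beyond the guarantees of Lemma~\ref{lem:NNEncoding} is required.
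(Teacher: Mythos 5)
Your proof is correct and follows essentially the same route as the paper's: the codec enumerates the (at most $2^{C_0\sigma W\lceil\log_2(1+W)\rceil^2}$) quantized networks of size $W_R$ via Lemma~\ref{lem:NNEncoding}, the encoder is a nearest-neighbour map, and the rate $\tau^{-1}-\delta$ is obtained by absorbing the logarithmic gap between $W_R$ and $R$ into the $\delta$-loss. The only (immaterial) differences are that you first bound the distortion by $W(R)^{-1/\tau}$ and then convert $W(R)\gtrsim R/\log_2^2(1+R)$, whereas the paper chooses $\eps\asymp R^{-(\tau^{-1}-\delta)}$ directly and checks $W_\eps^{\sigma,\varrho}(f)\le W_R$; you are also slightly more careful about forcing the decoder to land in $L^2(\Omega)$.
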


\begin{proof}
  \textbf{Step 1:} (\emph{Constructing the codec $\code$}):
  Let $C_0 \in \N$ as in Lemma~\ref{lem:NNEncoding}.
  For $R \geq C_0 \, \sigma$, let $W_R \in \N$ be maximal with
  $C_0 \sigma \cdot W_R \cdot \lceil \log_2 (1+W_R) \rceil^2 \leq R$.
  By Lemma~\ref{lem:NNEncoding}, there exists a surjection
  $D_R : \{ 0,1 \}^R \to \mathcal{NN}_{d,W_R}^{\sigma,\varrho}$.
  For each $f \in L^2(\Omega)$, choose $\cvec(R,f) \in \{ 0,1 \}^R$ such that
  \[
    \| f - D_R(\cvec (R, f)) \|_{L^2(\Omega)}
    = \min_{\cvec \in \{ 0,1 \}^R}
        \| f - D_R(\cvec) \|_{L^2(\Omega)}
    = \min_{g \in \mathcal{NN}_{d,W_R}^{\sigma,\varrho}}
        \| f - g \|_{L^2(\Omega)}
    ,
  \]
  and define ${E_R : L^2(\Omega) \to \{ 0,1 \}^R, f \mapsto \cvec(R,f)}$.

  Finally, for $R < C_0 \, \sigma$, define
  \[
    E_R : L^2(\Omega) \to \{0,1\}^R, f \mapsto (0,\dots,0)
    \qquad \text{and} \qquad
    D_R : \{ 0,1 \}^R \to L^2(\Omega), \cvec \mapsto 0 .
  \]

  \medskip{}

  \textbf{Step 2:} (\emph{Completing the proof}):
  Let $\delta \in (0,\tau^{-1})$ and $f \in \mathcal{A}_{\mathcal{NN},\sigma,\varrho}^\tau$,
  so that there is $C = C(f) > 0$ satisfying $W^{\sigma,\varrho}_\eps (f) \leq C \cdot \eps^{-\tau}$
  for all $\eps \in (0,1)$.

  Since the logarithm grows slower than any positive power and since the maximality of $W_R$
  implies that $R \leq C_0 \, \sigma \cdot (W_R + 1) \cdot \lceil \log_2 (W_R + 2) \rceil^2$,
  it is easy to see that there is $C_1 = C_1(\tau,\delta,d,\sigma) > 0$
  such that $R \leq C_1 \cdot W_R^{1/(1 - \tau \delta)}$ for all $R \in \N_{\geq C_0 \sigma}$.
  Note that if $R$ is large enough, then
  $\eps := C^{1/\tau} \cdot C_1^{(1 - \delta \tau)/\tau} \cdot R^{-(\tau^{-1} - \delta)}$
  satisfies $\eps \in (0,1)$.
  For these $R$, we thus get
  \[
    W^{\sigma,\varrho}_\eps(f)
    \leq C \cdot \eps^{-\tau}
    \leq C_1^{-(1 - \delta \tau)} \cdot R^{1 - \delta \tau}
    \leq W_R .
  \]
  By definition of $W^{\sigma,\varrho}_\eps(f)$ and by choice of $D_R, E_R$,
  we therefore see for all sufficiently large $R \in \N$ that
  \begin{align*}
    \bigl\| f - D_R(E_R(f)) \bigr\|_{L^2(\Omega)}
    & = \min_{g \in \mathcal{NN}_{d,W_R}^{\sigma,\varrho}}
          \| f - g \|_{L^2(\Omega)} \\
    & \leq \min_{g \in \mathcal{NN}_{d,W^{\sigma,\varrho}_\eps (f)}^{\sigma,\varrho}}
             \| f - g \|_{L^2(\Omega)}
      \leq \eps
      =    C^{1/\tau} \cdot C_1^{(1 - \delta \tau) / \tau} \cdot R^{-(\tau^{-1} - \delta)} ,
  \end{align*}
  which easily implies that $f \in \approxClass{\tau^{-1} - \delta}<L^2(\Omega)>{L^2(\Omega)} (\code)$.
  Since $f \in \mathcal{A}_{\mathcal{NN},\sigma,\varrho}^\tau$ and $\delta \in (0,\tau^{-1})$
  were arbitrary, we are done.
\end{proof}

\begin{proof}[Proof of Theorem~\ref{thm:IntroductionNNResult}]
  \textbf{Part 1:} Let $s > s^\ast$.
  The measure $\P$ from Theorem~\ref{thm:IntroductionSobolevBesovTransition} is critical for
  $\signalClass$ with respect to $L^2(\Omega)$, so that we have
  $\P \big( \signalClass \cap \ball(f,\eps; L^2(\Omega)) \big) \leq 2^{-c \cdot \eps^{-1/s}}$
  for all $\eps \in (0,\eps_0)$ and suitable $c, \eps_0 > 0$; see Equation~\eqref{eq:CriticalMeasureDefinition}.
  Lemma~\ref{lem:NNEncoding} easily implies that
  $|\mathcal{NN}_{d,W}^{\sigma,\varrho}| \leq 2^{C_0 \sigma W \lceil \log_2(1+W) \rceil^2}$
  for all $W \in \N$ and a suitable $C_0 = C_0(d)$.
  Thus, setting $C := C_0 \, \sigma$, we get
  \[
    \mathrm{Pr}
    \Big(
      \min_{g \in \mathcal{NN}_{d,W}^{\sigma,\varrho}}
        \| f - g \|_{L^2(\Omega)}
      \leq \eps
    \Big)
    = \P \Big(
           \bigcup_{g \in \mathcal{NN}_{d,W}^{\sigma,\varrho}}
             \ball(g,\eps; L^2(\Omega))
         \Big)
    \leq 2^{C W \lceil \log_2(1+W) \rceil^2}
         2^{-c \cdot \eps^{-1/s}}
  \]
  for all $\eps \in (0,\eps_0)$.
  This is precisely what is claimed in Part~1 of Theorem~\ref{thm:IntroductionNNResult}.

  \medskip{}

  \noindent
  \textbf{Part 2:}
  For $\ell,\sigma \in \N$, define
  \[
    \mathcal{A}_{\ell,\sigma}
    :=
    \bigl\{
      f \in \signalClass
      \quad \colon \quad
      \exists \, C > 0 \quad
        \forall \, \eps \in (0,1): \quad
          W_\eps^{\sigma,\varrho}(f) \leq C \cdot \eps^{-(1 - \frac{1}{2\ell}) / s^\ast}
    \bigr\} .
  \]
  It is not hard to see that
  \(
    \mathcal{A}_{\mathcal{NN},\varrho}^\ast \!
    \subset \bigcup_{\sigma \in \N}
              \bigcup_{\ell \in \N}
                \mathcal{A}_{\ell,\sigma},
  \)
  so that it suffices to show ${\P^\ast (\mathcal{A}_{\ell,\sigma}) = 0}$ for all $\sigma,\ell \in \N$.
  To see this, let $\code \in \codecs<L^2(\Omega)>{L^2(\Omega)}$ as in Lemma~\ref{lem:NNRateDistortion},
  and note with the notation of that lemma and with $\delta := \frac{s^\ast / 2}{2\ell - 1}$ that
  \[
    \mathcal{A}_{\ell,\sigma}
    = \signalClass \cap \mathcal{A}_{\mathcal{NN},\sigma,\varrho}^{(1 - \frac{1}{2\ell}) / s^\ast}
    \subset \signalClass \cap \mathcal{A}_{L^2(\Omega),L^2(\Omega)}^{\frac{s^\ast}{1- 1/(2\ell)} - \delta} (\code)
    =       \mathcal{A}_{\signalClass,L^2(\Omega)}^{s^\ast \cdot \frac{2 \ell - 1/2}{2\ell - 1}} (\code),
  \]
  where Theorem~\ref{thm:IntroductionSobolevBesovTransition} shows that
  \(
    \P^\ast
    \Bigl(
      \mathcal{A}_{\signalClass,L^2(\Omega)}^{s^\ast \cdot \frac{2 \ell - 1/2}{2\ell - 1}} (\code)
    \Bigr)
    = 0.
  \)
\end{proof}

We close this section by proving Remark~\ref{rem:NNResultSharpness}.

\begin{proof}[Proof of Remark~\ref{rem:NNResultSharpness}]
  \textbf{Case~1 (Besov spaces):}
  Here, we have $\signalClass = \ball(0,1; B_{p,q}^{\BesovSmoothness} (\Omega; \R))$
  and $s^\ast = \BesovSmoothness / d$, where $\BesovSmoothness > d \cdot (\frac{1}{p} - \frac{1}{2})_+$.
  By definition of the space $B_{p,q}^{\BesovSmoothness} (\Omega; \R)$, each $f \in \signalClass$
  extends to a function $\tilde{f} \in B_{p,q}^{\BesovSmoothness} (\R^d; \R)$ satisfying
  $\| \tilde{f} \|_{B_{p,q}^{\BesovSmoothness} (\R^d; \R)} \leq 2$.
  Thanks to \mbox{\cite[Theorem in Section~2.5.12]{TriebelTheoryOfFunctionSpaces1}},
  this implies for a suitable $C_1 = C_1(d,p,q,\BesovSmoothness) > 0$
  that $\| f \|_{B_{p,q}^{\BesovSmoothness,\ast} (\Omega)} \leq C_1$,
  where $\| \cdot \|_{B_{p,q}^{\BesovSmoothness,\ast}(\Omega)}$ is the norm on the Besov space
  used in \cite{SuzukiBesovNNApproximation}.

  \smallskip{}

  Now, \cite[Proposition~1]{SuzukiBesovNNApproximation} yields $C_2, C_3, N_0, \theta \in \N_{\geq 2}$
  (all depending only on $d,p,q,\BesovSmoothness$) such that for every $f \in \signalClass$
  and $N \geq N_0$, there exists a network ${\Phi = \Phi(f,N)}$ satisfying
  $\| f - R_\varrho (\Phi) \|_{L^2(\Omega)} \leq \frac{C_2}{2} \cdot N^{- s^\ast}$
  as well as $L(\Phi) \leq C_3 \cdot \log_2 N$ and ${W(\Phi) \leq C_3 \cdot N \cdot \log_2 N}$,
  and such that all weights of $\Phi$ have absolute value at most $C_3 \cdot N^{\theta}$.
  This almost implies the desired estimate; the main issue is that the weights are merely
  bounded, but not necessarily quantized.
  To fix this, we will use \cite[Lemma~VI.8]{GrohsNNApproximationTheory}.

  To make this formal, let us assume in what follows that $\eps \in (0,\frac{1}{2}) \cap (0, C_2 \cdot N_0^{-s^\ast})$;
  it is easy to see that this implies the claim of Remark~\ref{rem:NNResultSharpness}
  for general $\eps \in (0, \frac{1}{2})$.
  Let ${N \in \N}$ be minimal with $C_2 \cdot N^{-s^\ast} \leq \eps$, noting that this entails
  $N \geq N_0 \geq 2$ as well as
  ${\eps \leq C_2 \cdot (N-1)^{-s^\ast} \leq 2^{s^\ast} C_2 \cdot N^{-s^\ast}}$,
  and therefore $N \leq 2 \, C_2^{1/s^\ast} \eps^{-1/s^\ast}$.
  Now, given $f \in \signalClass$, choose $\Phi = \Phi (f, N)$ as above and note that
  $\| f - R_\varrho (\Phi) \|_{L^2(\Omega)} \leq \frac{\eps}{2}$.

  Define $W := \lceil C_3 \cdot N \cdot \log_2 N \rceil \geq N$ and choose
  $k = k(p,q,d,\BesovSmoothness) \in \N$ with ${k \geq \frac{\theta}{s^\ast}}$ so large that
  \(
    C_3 \cdot \bigl(2 C_2^{1/s^\ast}\bigr)^2
    \leq C_3 \cdot \bigl(2 C_2^{1/s^\ast}\bigr)^\theta
    \leq 2^k .
  \)
  Since $\log_2 N \leq N$ 
  we then see that
  \({
    W
    \leq C_3 N^2
    \leq C_3 \cdot\! \bigl(2 \, C_2^{1/s^\ast} \eps^{-1/s^\ast}\bigr)^2
    \!\leq\! (\frac{\eps}{2})^{-k}
  }\)
  and
  \({
    C_3 \!\cdot\! N^\theta
    \leq C_3 \cdot \bigl(2 \, C_2^{1/s^\ast} \eps^{-1/s^\ast}\bigr)^\theta
    \leq (\frac{\eps}{2})^{-k} .
  }\)
  Therefore, \cite[Lemma~VI.8]{GrohsNNApproximationTheory} produces a network $\Phi '$ satisfying
  ${W(\Phi') \leq W(\Phi) \leq W}$ and
  ${\strut \| R_\varrho (\Phi) - R_\varrho (\Phi') \|_{L^\infty ([0,1]^d)} \leq \frac{\eps}{2}}$
  and such that
  \[
    \text{all weights of $\Phi'$ belong to }
    [-(\tfrac{\eps}{2})^{-\sigma_0}, (\tfrac{\eps}{2})^{-\sigma_0}]
    \cap 2^{- \sigma_0 \lceil \log_2(2/\eps) \rceil} \Z,
    \text{ where }
    \sigma_0 := 3 k L(\Phi) .
  \]
  To see that this implies the claim, first note that
  $\strut \| f - R_\varrho (\Phi') \|_{L^2(\Omega)} \leq \eps$ and that
  \[
    W(\Phi')
    \leq W
    \leq 2 C_3 \, N \, \log_2 N
    \leq 4 C_2^{\frac{1}{s^\ast}} C_3
         \cdot \eps^{-\frac{1}{s^\ast}}
         \cdot \log_2 \bigl(2 C_2^{\frac{1}{s^\ast}} \eps^{-\frac{1}{s^\ast}}\bigr)
    \leq C_4 \cdot \eps^{-\frac{1}{s^\ast}} \log_2 (1/\eps)
  \]
  for a suitable constant $C_4 = C_4 (d,p,q,\BesovSmoothness)$.
  Regarding the quantization, first define $\sigma_1 := 3 k C_3$, so that
  $\sigma_0 = 3 k L(\Phi) \leq 3 k C_3 \cdot \log_2 N \leq \sigma_1 \lceil \log_2 W \rceil$.
  Next, observe that ${2/\eps \leq C_2 / \eps \leq N^{s^\ast} \leq W^{s^\ast}}$,
  meaning
  $(\eps/2)^{-\sigma_0} \leq (W^{s^\ast})^{\sigma_0} \leq W^{\sigma \lceil \log_2 W \rceil}$
  with ${\sigma := \sigma_1 \, \lceil s^\ast \rceil}$.
  Furthermore, we have
  \(
    \strut
    \log_2 (2/\eps)
    \leq \log_2 (W^{s^\ast})
    \leq \lceil s^\ast \rceil \, \lceil \log_2 W \rceil
  \),
  which easily implies that
  \({
    \strut
    2^{-\sigma_0 \lceil \log_2 (2/\eps) \rceil} \Z
    \subset 2^{-\sigma_0 \lceil s^\ast \rceil \lceil \log_2 W \rceil} \Z
    \subset 2^{-\sigma \lceil \log_2 W \rceil^2} \Z .
  }\)
  Overall, this shows that $\Phi'$ is $(\sigma,W)$-quantized, so that
  $\Phi' \in \mathcal{NN}_{d,W}^{\sigma,\varrho}$.
  Because of $\| f - R_\varrho (\Phi') \|_{L^2 (\Omega)} \leq \eps$, this implies that
  $W_\eps^{\sigma,\varrho}(f) \leq W \leq C_4 \cdot \eps^{-1/s^\ast} \log_2(1/\eps)$,
  which is what we wanted to show.

  \medskip{}

  \textbf{Case~2 (Sobolev spaces):}
  Set $p_1 := \min \{ p, 2 \}$ and note
  $\signalClass \subset \signalClass' := \ball(0,1; W^{k,p_1}(\Omega))$.
  Since $\Omega = [0,1]^d$ is a Lipschitz domain,
  \cite[Chapter VI, Theorem~5]{SteinSingularIntegrals} shows that each
  ${f \in \signalClass'}$ extends to a function
  $\tilde{f} \in W^{k,p_1}(\R^d)$ with $\| \tilde{f} \|_{W^{k,p_1}(\R^d)} \leq C_1$,
  where ${C_1 = C_1(d,p,k)}$.
  Now, Lemma~\ref{lem:IntegrableSobolevEmbedsIntoBesov} shows that
  $\tilde{f} \in B^k_{p_1,\infty}(\R^d)$ with $\| \tilde{f} \|_{B^{k}_{p_1, \infty}(\R^d)} \leq C_2$
  where ${C_2 = C_2 (d,p,k)}$.
  Overall, this easily implies $\signalClass \subset \ball(0,C_2; B^k_{p_1, \infty}(\Omega; \R))$,
  so that the claim follows from that for the Besov spaces.
  Here, we implicitly used that the condition $k > d \cdot (\frac{1}{p} - \frac{1}{2})_+$
  holds if and only if $k > d \cdot (\frac{1}{p_1} - \frac{1}{2})_+$.
\end{proof}

\section{Technical results concerning the optimal compression rate}
\label{sec:SingleElementProofs}

In the introduction, it was claimed that if the signal class $\signalClass \subset \banach$ is
closed and convex, then---for each codec $\code = \big( (E_R, D_R) \big)_{R \in \N} \in \codecs{\banach}$
and each $s > \optRateSmall{\banach}$---one can find a \emph{single} signal $x \in \signalClass$
on which the codec $\code$ does not attain the rate $s$.
The following proposition shows that even a slightly stronger statement holds:
Given $\code$, one can find a ``badly encodable'' $\xvec = \xvec(\code) \in \signalClass$
such that $\xvec$ is not encoded at any rate $s > \optRateSmall{\banach}$ by the codec $\code$.

\begin{prop}\label{prop:SingleHardToEncodeElement}
  Let $\banach$ be a Banach space and let $\emptyset \neq \signalClass \subset \banach$.
  Assume that either
  \begin{enumerate}
    \item $\signalClass$ is closed, bounded, and convex; or

    \item $\signalClass = \{ \xvec \in \banach \colon \| \xvec \|_{\ast} \leq r \}$
          for some $r \in (0,\infty)$ and a map $\| \cdot \|_\ast : \banach \to [0,\infty]$
          with the following properties:
          \begin{enumerate}
            \item $\| \cdot \|_\ast$ is a \emph{quasi-norm}; that is, there exists $\kappa \geq 1$
                  such that ${\| \alpha \, \xvec \|_{\ast} = |\alpha| \cdot \| \xvec \|_\ast}$ and
                  $\| \xvec + \yvec \|_{\ast} \leq \kappa \cdot (\| \xvec \|_{\ast} + \| \yvec \|_{\ast})$
                  for all $\alpha \in \mathbb{R}$ and $\xvec,\yvec \in \banach$;

            \item there is $C \geq 1$ satisfying $\| \xvec \|_{\banach} \leq C \cdot \| \xvec \|_{\ast}$
                  for all $\xvec \in \banach$;

            \item $\signalClass \subset \banach$ is closed;

            \item $\| \cdot \|_\ast$ is continuous ``with respect to itself'', meaning that
                  $\| \xvec \|_\ast \to \| \xvec_0 \|_\ast$ if $\| \xvec - \xvec_0 \|_\ast \to 0$.
          \end{enumerate}
  \end{enumerate}
  Set $s^\ast := \optRateSmall{\banach}$.
  Then, for each codec $\code = \big( (E_R, D_R) \big)_{R \in \N} \in \codecs{\banach}$
  there is some $\xvec = \xvec(\code) \in \signalClass$ such that for each $\ell \in \N$,
  we have
  \[
    \big\| \xvec - D_R (E_R (\xvec) ) \big\|_{\banach} \geq R^{-(s^\ast + \ell^{-1})}
    \qquad \text{for infinitely many} \quad R \in \N .
  \]
\end{prop}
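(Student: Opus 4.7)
I would argue by contradiction via the Baire category theorem. Suppose no such $\xvec$ exists; then for every $\xvec \in \signalClass$ there are $\ell, M \in \N$ with $\|\xvec - D_R(E_R(\xvec))\|_\banach < R^{-(s^\ast + 1/\ell)}$ for all $R \geq M$. Setting
\[
  \widetilde{B}_{\ell,M}
  := \bigl\{\zvec \in \banach \colon \dist\bigl(\zvec, D_R(\{0,1\}^R)\bigr) \leq R^{-(s^\ast+1/\ell)} \text{ for all } R \geq M\bigr\},
\]
which is $\banach$-closed by continuity of $\dist$, we obtain the countable cover $\signalClass \subset \bigcup_{\ell, M \in \N}(\widetilde{B}_{\ell, M} \cap \signalClass)$. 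Since $\signalClass$ is $\banach$-closed and hence a Baire space in the induced metric, the Baire category theorem yields $\xvec_0 \in \signalClass$, $\rho > 0$, and $(\ell, M)$ with
\[
  \ball(\xvec_0, \rho; \banach) \cap \signalClass \subset \widetilde{B}_{\ell,M}. \qquad (\star)
\]

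Next, I would use $(\star)$ to build a codec $\code' \in \codecs{\banach}$ of distortion $\lesssim R^{-(s^\ast + 1/\ell)}$ on $\signalClass$, which contradicts $s^\ast = \optRateSmall{\banach}$. The recipe is to construct an affine map $T \colon \signalClass \to \ball(\xvec_0, \rho; \banach) \cap \signalClass$ with Lipschitz-continuous inverse, encode $T(\yvec)$ using $(E_R, D_R)$ (whose range approximates $T(\yvec)$ to within $R^{-(s^\ast+1/\ell)}$ by $(\star)$), and decode by applying $T^{-1}$; the resulting distortion on $\signalClass$ is then amplified by at most the Lipschitz constant of $T^{-1}$.

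In case 1 ($\signalClass$ closed, bounded, convex) the natural choice $T(\yvec) := (1-\alpha)\xvec_0 + \alpha \yvec$ with $\alpha := \rho / \operatorname{diam}(\signalClass, \banach)$ works: convexity gives $T(\yvec) \in \signalClass$, the choice of $\alpha$ gives $\|T(\yvec) - \xvec_0\|_\banach \leq \rho$, and $T^{-1}(\zvec) = \xvec_0 + \alpha^{-1}(\zvec - \xvec_0)$ is Lipschitz with constant $1/\alpha$, yielding the required codec.

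The main obstacle will be case 2, where $\kappa > 1$ forces the interpolation $(1-\alpha)\xvec_0 + \alpha\yvec$ to leave $\signalClass$ in general. I would instead take the translated scaling $T(\yvec) := \xvec_0 + \beta \yvec$; the quasi-triangle inequality then gives $\|T(\yvec)\|_\ast \leq \kappa(\|\xvec_0\|_\ast + \beta r)$, which is $\leq r$ provided $\|\xvec_0\|_\ast < r/\kappa$ and $\beta > 0$ is sufficiently small, while (b) together with small $\beta$ yields $\|T(\yvec) - \xvec_0\|_\banach \leq \rho$. To enforce $\|\xvec_0\|_\ast < r/\kappa$, I would sharpen the Baire step by applying it in the $\|\cdot\|_\ast$-topology on $\signalClass$---which, under (a)--(d) together with the implicit completeness of $\signalClass$ as a closed quasi-ball in a quasi-Banach space, is itself a Baire space---restricted to the non-empty $\|\cdot\|_\ast$-open subset $\{\xvec \in \signalClass \colon \|\xvec\|_\ast < r/\kappa\}$. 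With this refinement the argument closes as in case 1.
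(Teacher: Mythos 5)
Your overall architecture (Baire category on $\signalClass$ as a closed subset of $\banach$, then turning the relatively interior point into a rescaled codec whose distortion beats $s^\ast$) is exactly the paper's, and your Case~1 is correct modulo routine bookkeeping: the finitely many codewords with $R<M$ must be handled by a trivial encoder (the paper avoids this by indexing its closed sets by a constant $N$ and requiring the distance bound for \emph{all} $R$), and the ``encode $T(\yvec)$ using $(E_R,D_R)$'' step should really pick a nearest element of $\mathrm{range}(D_R)$ rather than use $E_R$ itself, since $(\star)$ only controls the distance to the range. These are cosmetic.

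Case~2, however, has a genuine gap. To force $\|\xvec_0\|_\ast < r/\kappa$ you invoke the Baire category theorem for $\signalClass$ in the $\|\cdot\|_\ast$-topology, justified by ``the implicit completeness of $\signalClass$ as a closed quasi-ball in a quasi-Banach space.'' But completeness of $(\banach,\|\cdot\|_\ast)$ (or of $\signalClass$ in the induced quasi-metric) is nowhere among the hypotheses: assumptions (a)--(d) only give a quasi-norm dominating $\|\cdot\|_\banach$, closedness of $\signalClass$ \emph{in $\banach$}, and self-continuity of $\|\cdot\|_\ast$. A $\|\cdot\|_\ast$-Cauchy sequence in $\signalClass$ converges in $\banach$ to some $\xvec\in\signalClass$, but nothing guarantees $\|\xvec_n-\xvec\|_\ast\to 0$; the Fatou-type property that would close this loop is mentioned in the paper only as a typical \emph{sufficient} condition, not assumed. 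So the refined Baire step is unjustified as stated. The paper circumvents the $\kappa$-loss differently, using only the $\banach$-topology Baire theorem: given $\xvec_0$ in the relative interior of some $\mathcal{G}_{N,\ell}$, it sets $\xvec_0':=(1-\sigma)\xvec_0$, so that $\|\xvec_0'\|_\ast<r$ \emph{strictly}, and then uses assumption~(d) directly to produce $\delta>0$ with $\|\xvec_0'+\yvec\|_\ast<r$ for all $\|\yvec\|_\ast\le\delta$; taking $t=\delta/r$ gives $\xvec_0'+t\,\signalClass\subset\signalClass\cap\ball(\xvec_0,\eps;\banach)$ without ever needing the lossy quasi-triangle estimate $\kappa(\|\xvec_0\|_\ast+\beta r)\le r$. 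If you replace your ``sharpened Baire step'' by this use of~(d), your Case~2 closes; as written, it does not.
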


\begin{rem*}
  1) In particular, we see for each $s > s^\ast$ (by choosing $\ell \in \N$ such that $s^\ast + \ell^{-1} < s$)
  that $R^s \cdot \| \xvec - D_R(E_R(\xvec)) \|_{\banach} \geq R^{s - s^\ast - \ell^{-1}} \to \infty$
  for some sequence $R = R_N \to \infty$.
  Therefore,
  \[
    \xvec \in \signalClass \setminus \bigcup_{s > s^\ast} \approxClass{s}{\banach} (\code) .
  \]

  \medskip{}

  2) The assumptions on the quasi-norm $\| \cdot \|_{\ast}$ might appear quite technical,
  but they are usually satisfied.
  Indeed, the condition $\| \xvec \|_{\banach} \leq C \cdot \| \xvec \|_{\ast}$ is equivalent to
  $\signalClass \subset \banach$ being bounded, which is necessary for having
  $\optRateSmall{\banach} > 0$.
  Next, most naturally appearing quasi-norms are \emph{$q$-norms} for some $q \in (0,1]$,
  meaning that $\| \xvec + \yvec \|_{\ast}^q \leq \| \xvec \|_\ast^q + \| \yvec \|_\ast^q$.
  In this case, it is not hard to see
  $\big| \| \xvec \|_\ast^q - \| \yvec \|_\ast^q \big| \leq \| \xvec - \yvec \|_\ast^q$,
  which implies that $\| \cdot \|_\ast$ is ``continuous with respect to itself''.
  Finally, most natural quasi-norms will satisfy a \emph{Fatou property},
  in the sense that if $\xvec_n \to \xvec$ in $\banach$,
  then $\| \xvec \|_\ast \leq \liminf_{n\to\infty} \| \xvec_n \|_{\ast}$.
  If this is the case, then $\signalClass \subset \banach$ is closed.
\end{rem*}

\begin{proof}
  \textbf{Step 1:} (\emph{Setup for applying the Baire category theorem}).
  Let us assume towards a contradiction that the claim does \emph{not} hold.
  Define $M_R := \mathrm{range}(D_R) \subset \banach$.
  Then for each $\xvec \in \signalClass$ there exist $n_\xvec, \ell_\xvec \in \N$ satisfying
  ${\| \xvec - D_R(E_R (\xvec)) \|_{\banach} < R^{-(s^\ast + \ell_\xvec^{-1})}}$ for all $R \geq n_\xvec$.
  Thus, it is not hard to see that
  \[
    \dist_{\banach} (\xvec, M_R)
    \leq \| \xvec - D_R (E_R (\xvec)) \|_{\banach}
    \leq N_\xvec \cdot R^{-(s^\ast + \ell_{\xvec}^{-1})}
    \qquad \forall \, R \in \N ,
  \]
  where we defined
  \(
    N_\xvec
    := 1 + \max \{
                  k^{s^\ast + \ell_\xvec^{-1}} \cdot \| \xvec - D_k (E_k (\xvec)) \|_{\banach}
                  \colon
                  1 \leq k \leq n_\xvec
                \}
  \).

  Thus, if we define
  \[
    \mathcal{G}_{N,\ell} := \big\{
                              \xvec \in \signalClass
                              \,\, \colon \,\,
                              \forall \, R \in \N :
                                \dist_{\banach} (\xvec, M_R) \leq N \cdot R^{-(s^\ast + \ell^{-1})}
                            \big\}
    \qquad \text{for } N,\ell \in \N,
  \]
  then $\signalClass = \bigcup_{N,\ell \in \N} \mathcal{G}_{N,\ell}$.
  Furthermore, since $\dist_{\banach} (\cdot, M_R)$ is continuous, it is not hard to see
  that each set $\mathcal{G}_{N,\ell} \subset \signalClass$ is closed.
  Finally, $\signalClass \subset \banach$ is a closed set, and hence a complete metric space
  (equipped with the metric induced by $\|\cdot\|_{\banach}$).
  Thus, the Baire category theorem (\cite[Theorem~5.9]{FollandRA})
  shows that there are certain $N,\ell \in \N$ such that the relative interior $\mathcal{G}_{N,\ell}^\circ$
  of $\mathcal{G}_{N,\ell}$ in $\signalClass$ satisfies $\mathcal{G}_{N,\ell}^\circ \neq \emptyset$.

  \medskip{}

  \textbf{Step 2:} (\emph{Proving that there are $\xvec_0' \in \banach$ and $t > 0$
  satisfying $\xvec_0' + t \signalClass \subset \mathcal{G}_{N,\ell}$}).
  We distinguish the two cases regarding the assumptions on $\signalClass$.

  \emph{Case 1: $\signalClass$ is convex.}
  Choose $\xvec_0 \in \mathcal{G}_{N,\ell}^\circ \subset \signalClass$ and note that there is
  some $\eps \in (0, 1)$ such that $\signalClass \cap \ball(\xvec_0, \eps; \banach) \subset \mathcal{G}_{N,\ell}$.
  Next, since $\signalClass$ is bounded, there is some $C \geq 1$ satisfying
  $\signalClass \subset \ball(\mathbf{0}, C; \banach)$.
  Let us define $\xvec_0' := (1 - t) \xvec_0 \in \banach$ where $t := \frac{\eps}{2 C}$,
  noting that $t \in (0,1)$.
  With these choices, we see for arbitrary $\xvec \in \signalClass$
  that $\xvec_0' + t \xvec \in \signalClass$ by convexity, and furthermore
  \[
    \big\| \xvec_0 - \big( \xvec_0' + t \, \xvec \big) \big\|_{\banach}
    = t \cdot \| \xvec_0 - \xvec \|_{\banach}
    \leq 2 t C \leq \eps .
  \]
  Thus,
  \(
    \xvec_0' + t \signalClass
    \subset \signalClass \cap \ball(\xvec_0, \eps; \banach)
    \subset \mathcal{G}_{N,\ell}
  \).

  \smallskip{}

  \emph{Case 2: $\signalClass = \{ \xvec \in \banach \colon \| \xvec \|_\ast \leq r \}$.}
  Choose $\xvec_0 \in \mathcal{G}_{N,\ell}^\circ \subset \signalClass$ and note that there is
  some $\eps \in (0,1)$ such that $\signalClass \cap \ball(\xvec_0, \eps; \banach) \subset \mathcal{G}_{N,\ell}$.
  With $C \geq 1$ as in Part~2b) of the assumptions of the proposition, let
  $0 < \sigma < \frac{\eps}{2 \kappa C (1 + r)}$ and define $\xvec_0' := (1 - \sigma) \, \xvec_0$,
  noting that $\| \xvec_0' \|_\ast < r$.
  By continuity of $\| \cdot \|_\ast$ with respect to itself,
  there is some $0 < \delta < \frac{\eps \min\{ 1, r\} }{2 \kappa C}$ such that
  $\| \xvec_0' + \yvec \|_\ast < r$ for all $\yvec \in \banach$ satisfying $\| \yvec \|_\ast \leq \delta$.
  Define $t := \frac{\delta}{r}$.
  For arbitrary $\yvec \in \signalClass$, we then have $\| t \, \yvec \|_\ast \leq t r = \delta$,
  and hence $\xvec_0' + t \, \yvec \in \signalClass$.
  Furthermore,
  \(
    \| (\xvec_0 ' + t \, \yvec) - \xvec_0 \|_{\banach}
    \leq C \cdot \| - \sigma \xvec_0 + t \, \yvec \|_{\ast}
    \leq \kappa \sigma C r + \kappa C \delta
    \leq \eps .
  \)
  Overall, we have shown that
  \(
    \xvec_0' + t \signalClass
    \subset \signalClass \cap \ball(\xvec_0, \eps; \banach)
    \subset \mathcal{G}_{N,\ell}
    ,
  \)
  as desired.

  \medskip{}

  \textbf{Step 3:} (\emph{Completing the proof})
  For each $\xvec \in \signalClass$, we have $\xvec_0' + t \, \xvec \in \mathcal{G}_{N,\ell}$,
  and therefore $\dist_{\banach} \big( \xvec_0' + t \, \xvec , M_R \big) \leq N \cdot R^{-(s^\ast + \ell^{-1})}$
  for all $R \in \N$.
  Because of $M_R = \mathrm{range} (D_R)$, this implies that there is $c_{\xvec,R} \in \{0,1\}^R$
  satisfying $\| (\xvec_0' + t \xvec) - D_R(c_{\xvec,R}) \|_{\banach} \leq N \cdot R^{-(s^\ast + \ell^{-1})}$.
  Now, we define a new codec
  $\widetilde{\code} = \big( (\widetilde{E}_R, \widetilde{D}_R) \big)_{R \in \N}$ by
  \[
    \widetilde{E}_R : \signalClass \to \{0,1\}^R ,
                      \xvec \mapsto c_{\xvec,R}
    \qquad \text{and} \qquad
    \widetilde{D}_R : \{0,1\}^R \to \banach ,
                      c \mapsto t^{-1} \cdot \big( D_R (c) - \xvec_0' \big) .
  \]
  For arbitrary $\xvec \in \signalClass$, we then see
  \begin{align*}
    \big\| \xvec - \widetilde{D}_R \big( \widetilde{E}_R (\xvec) \big) \big\|_{\banach}
    & =    t^{-1} \cdot \big\| t \, \xvec - \big( D_R(c_{\xvec,R}) - \xvec_0' \big) \big\|_{\banach} \\
    & =    t^{-1} \cdot \big\| (\xvec_0' + t \, \xvec) - D_R (c_{\xvec,R}) \big\|_{\banach}
      \leq \frac{N}{t} \cdot R^{-(s^\ast + \ell^{-1})}
  \end{align*}
  for all $R \in \N$.
  By definition of the optimal exponent, this implies
  $s^\ast = \optRateSmall{\banach} \geq s^\ast + \ell^{-1}$, which is the desired contradiction.
\end{proof}

As the second result in this appendix, we show that the preceding property does \emph{not} hold for general
compact sets $\signalClass \subset \banach$, even if $\banach = \hilbert$ is a Hilbert space.
In other words, some additional regularity assumption---like convexity---is necessary to
ensure the property stated in Proposition~\ref{prop:SingleHardToEncodeElement}.

\begin{ex}\label{exa:SingleElementExistenceCounterExample}
  We consider the Hilbert space $\hilbert := \ell^2(\N)$, where we denote the standard orthonormal
  basis of this space by $(\evec_n)_{n \in \N}$.
  Fix $s > 0$, define $\xvec_0 := \mathbf{0} \in \ell^2 (\N)$
  and $\xvec_n := (\log_2(n+1))^{-s} \cdot \evec_n \in \ell^2(\N)$ for $n \in \N$, and finally set
  \[
    \signalClass := \{ \xvec_n \colon n \in \N_0 \} .
  \]
  We claim that $\optRateSmall = s$, but that there is a codec
  $\code = \big( (E_R, D_R) \big)_{R \in \N} \in \codecs$ such that
  $\approxClass{\sigma}(\code) = \signalClass$ for every $\sigma > 0$;
  that is, \emph{every} element $\xvec \in \signalClass$ is compressed by $\code$
  with \emph{arbitrary} rate $\sigma > 0$.

  To prove $\optRateSmall \leq s$, let $R \in \N$ and $(E,D) \in \endec$.
  By the pigeonhole-principle, there are $n,m \in \{1, \dots, 2^R + 1\}$ satisfying $n \neq m$
  but $E (\xvec_n) = E(\xvec_m)$.
  By symmetry, we can assume that $n < m$, so that $n+1 \leq 2^R + 1 \leq 2^{R+1}$.
  Therefore,
  \begin{align*}
    2^{-s} \cdot R^{-s}
    & \leq (R+1)^{-s}
      \leq (\log_2 (n+1))^{-s}
      \leq \| \xvec_n - \xvec_m \|_{\ell^2} \\
    & \leq \| \xvec_n - D(E(\xvec_n)) \|_{\ell^2} + \| D(E(\xvec_m)) - \xvec_m \|_{\ell^2}
      \leq 2 \, \distortion (E, D) .
  \end{align*}
  Since this holds for any encoder/decoder pair $(E,D) \in \endec$ and arbitrary $R \in \N$,
  we see $\optRateSmall \leq s$.

  Next, we construct the codec $\code$ mentioned above.
  To do so, for each $n \in \N$, fix a bijection $\kappa_n : \{0, \dots, 2^n - 1\} \to \{0,1\}^n$,
  and define
  \begin{alignat*}{5}
    E_n : \quad & \signalClass \to \{0,1\}^n, \quad
                && \xvec_m \mapsto \begin{cases}
                                     \kappa_n (m), & \text{if } m \leq 2^n - 1 , \\
                                     \kappa_n (0), & \text{otherwise},
                                   \end{cases} \\
    D_n : \quad & \{0,1\}^n \to \signalClass , \quad
                && \theta \mapsto \xvec_{\kappa_n^{-1} (\theta)} .
  \end{alignat*}
  For $m \in \N_0$ with $m \leq 2^n - 1$, we then have
  $D_n (E_n (\xvec_m)) = \xvec_{\kappa_n^{-1} (\kappa_n (m))} = \xvec_m$,
  while if $m \geq 2^n$, then
  $D_n (E_n (\xvec_m)) = \xvec_{\kappa_n^{-1} (\kappa_n(0))} = \xvec_0 = \mathbf{0}$, and hence
  \[
    \| \xvec_m - D_n (E_n (\xvec_m) ) \|_{\ell^2}
    = \| \xvec_m \|_{\ell^2}
    = \big( \log_2 (m+1) \big)^{-s}
    \leq n^{-s} .
  \]
  Therefore, $\| \xvec - D_n (E_n (\xvec)) \|_{\ell^2} \leq n^{-s}$ for all $\xvec \in \signalClass$,
  and thus $\distortion (E_n , D_n) \leq n^{-s}$, so that $\optRateSmall \geq s$.

  We have now proved that $\optRateSmall = s$.
  Finally, it is easy to see that given \emph{arbitrary} $\sigma > 0$,
  the codec $\code = \big( (E_R, D_R) \big)_{R \in \N}$ constructed above approximates
  each \emph{fixed} $\xvec \in \signalClass$ with rate $\sigma$.
  Indeed, for $m \in \N$ and $\xvec = \xvec_m$, we have
  \begin{align*}
    \| \xvec - D_n (E_n (\xvec)) \|_{\ell^2}
    & = \begin{cases}
          \big( \log_2 (m+1) \big)^{-s}, & \text{if } n < \log_2 (m+1), \\
          0           , & \text{if } n \geq \log_2(m+1)
        \end{cases} \\
    & \leq \big( \log_2 (m+1) \big)^\sigma \cdot n^{-\sigma}
      =:   C_{\xvec,\sigma} \cdot n^{-\sigma}
  \end{align*}
  for all $n \in \N$, while for $\xvec = \xvec_0$ we have
  $\| \xvec - D_n (E_n (\xvec)) \|_{\ell^2} = 0$ for all $n \in \N$.
  \hfill$\blacktriangleleft$
\end{ex}

\section{Technical results concerning sequence spaces}
\label{sec:TechnicalSequenceSpaceProofs}

\begin{proof}[Proof of Lemma~\ref{lem:ElementaryEmbeddings}]
  Let $\xvec \in \R^{\indexSet}$.
  Set $u_m := m^{\theta + \vartheta} \cdot 2^{\alpha m} \cdot \| \xvec_m \|_{\ell^p(\indexSet_m)}$
  and $v_m := m^{-\vartheta}$, and observe that
  $\| (u_m)_{m \in \N} \|_{\ell^q} = \| \xvec \|_{\generalSpace{\theta+\vartheta}{q}}$
  and $\| (v_m \cdot u_m)_{m \in \N} \|_{\ell^r} = \| \xvec \|_{\generalSpace{\theta}{r}}$.

  Let us first consider the case $q < \infty$.
  In this case, $\frac{q}{r} \in (1,\infty)$ and $\frac{q}{q-r} \in (1,\infty)$ are conjugate
  exponents, so that Hölder's inequality shows
  \begin{align*}
    \big\| (v_m \cdot u_m)_{m \in \N} \big\|_{\ell^r}
    & = \big\| (v_m^r \cdot u_m^r)_{m \in \N} \big\|_{\ell^1}^{1/r}
      \leq \Big(
             \big\| (v_m^r)_{m \in \N} \big\|_{\ell^{q/(q-r)}}
             \cdot \big\| (u_m^r)_{m \in \N} \big\|_{\ell^{q/r}}
           \Big)^{1/r} \\
    & = \big\| (v_m)_{m \in \N} \big\|_{\ell^{rq / (q-r)}}
        \cdot \big\| (u_m)_{m \in \N} \big\|_{\ell^q} .
  \end{align*}
  Here, we note that $\vartheta \cdot \frac{r q}{q - r} = \frac{\vartheta}{r^{-1} - q^{-1}} > 1$
  and $v_m = m^{-\vartheta}$, so that ${\kappa := \| (v_m)_{m \in \N} \|_{\ell^{rq / (q-r)}}}$
  is finite.

  Finally, in case of $q = \infty$, simply note that
  \[
    \big\| (v_m \, u_m)_{m \in \N} \big\|_{\ell^r}
    \leq \big\| (m^{-\vartheta})_{m \in \N} \big\|_{\ell^r}
         \cdot \big\| (u_m)_{m \in \N} \big\|_{\ell^q} ,
  \]
  where now $\kappa := \| (m^{-\vartheta})_{m \in \N} \|_{\ell^r}$ is finite,
  since $\vartheta > \frac{1}{r} - \frac{1}{q} = \frac{1}{r}$.
\end{proof}

\begin{proof}[Proof of Lemma~\ref{lem:ProductSigmaAlgebraLarge}]
  By definition of the product $\sigma$-algebra, each of the \emph{fi\-nite-di\-men\-sion\-al} projections
  $\pi_m : \R^{\indexSet} \to \R^{\indexSet_m}, \xvec \mapsto \xvec_m$ is measurable.
  Since $\| \cdot \|_{\ell^p (\indexSet_m)}$ is continuous on $ \R^{\indexSet_m}$
  and hence Borel measurable,
  \({
     q_m : \R^{\indexSet} \to [0,\infty),
           \xvec \mapsto 2^{\alpha m} \, m^{\theta} \, \| \xvec_m \|_{\ell^p(\indexSet_m)}
  }\)
  is $\calB_{\indexSet}$-measurable for each $m \in \N$.

  In case of $q < \infty$, this implies that the map
  \[
    \R^{\indexSet} \to [0,\infty],
    \xvec \mapsto \| \xvec \|_{\generalSpace{\theta}{q}}^q = \sum_{m = 1}^\infty [q_m (\xvec)]^q
  \]
  is $\calB_{\indexSet}$-measurable as a countable series of measurable, non-negative functions,
  and hence so is $\xvec \mapsto \| \xvec \|_{\generalSpace{\theta}{q}}$.
  If $q = \infty$, the (quasi) norm
  $ \| \cdot \|_{\generalSpace{\theta}{\infty}} = \sup_{m \in \N} q_m$
  is $\calB_{\indexSet}$-measurable as a countable supremum of $\calB_{\indexSet}$-measurable,
  non-negative functions.

  \medskip{}

  For proving the final claim, let us write $\calT := \ell^2(\indexSet) \Cap \calB_{\indexSet}$
  for brevity.
  By the first part of the lemma,
  ${\| \cdot \|_{\ell^2} = \| \cdot \|_{\generalSpace{0}{2}{0}<2>}} : \R^{\indexSet} \to [0,\infty]$
  is $\calB_{\indexSet}$-measurable.
  Furthermore, for arbitrary $\xvec \in \R^{\indexSet}$ the translation
  $\R^{\indexSet} \to \R^{\indexSet}, \yvec \mapsto \yvec + \xvec$ is $\calB_{\indexSet}$-measurable.
  These two observations imply that the norm $\| \cdot \|_{\ell^2} : \ell^2(\indexSet) \to [0,\infty)$
  and the translation operator $\ell^2(\indexSet) \to \ell^2(\indexSet), \yvec \mapsto \yvec + \xvec$
  are $\mathcal{T}$-measurable for any $\xvec \in \ell^2(\indexSet)$.
  This implies that
  $B_r (\xvec) = \{ \yvec \in \ell^2(\indexSet) \colon \| \yvec + (-\xvec) \|_{\ell^2} < r \}$
  is $\mathcal{T}$-measurable.
  But $\ell^2(\indexSet)$ is separable, so that every open set is a countable union of open balls;
  therefore, it follows that $\calB_{\ell^2} \subset \mathcal{T}$.
  Conversely, $\mathcal{T}$ is generated by sets of the form
  $\{ \xvec \in \ell^2(\indexSet) \colon p_i(\xvec) \in M \}$,
  where $M \subset \R$ is a Borel set and
  $p_i : \R^{\indexSet} \to \R, (x_j)_{j \in \indexSet} \mapsto x_i$.
  Since $p_i |_{\ell^2(\indexSet)} : \ell^2(\indexSet) \to \R$ is continuous with respect to
  $\| \cdot \|_{\ell^2(\indexSet)}$, we see that each generating set of $\mathcal{T}$
  also belongs to $\calB_{\ell^2}$, which completes the proof.
\end{proof}

\bibliographystyle{amsplain}

{\scriptsize
\bibliography{critbib}

\providecommand{\bysame}{\leavevmode\hbox to3em{\hrulefill}\thinspace}
\providecommand{\MR}{\relax\ifhmode\unskip\space\fi MR }
\providecommand{\MRhref}[2]{%
  \href{http://www.ams.org/mathscinet-getitem?mr=#1}{#2}
}
\providecommand{\href}[2]{#2}
\begin{thebibliography}{10}

\bibitem{AdamsSobolevSpaces}
R.A. Adams and J.J.F. Fournier, \emph{Sobolev spaces}, second ed., Pure and
  Applied Mathematics (Amsterdam), vol. 140, Elsevier/Academic Press,
  Amsterdam, 2003.

\bibitem{AltFA}
H.~W. Alt, \emph{Linear functional analysis}, Universitext, Springer-Verlag
  London, Ltd., London, 2016.

\bibitem{berger2003rate}
T.~Berger, \emph{Rate-distortion theory}, Wiley Encyclopedia of
  Telecommunications (2003).

\bibitem{boelcskeiNeural}
H.~B\"olcskei, P.~Grohs, G.~Kutyniok, and P.~C. Petersen, \emph{Optimal
  approximation with sparsely connected deep neural networks}, SIAM J. Math.
  Data Sci. \textbf{1} (2019), 8--45.

\bibitem{CarlStephaniEntropy}
B.~Carl and I.~Stephani, \emph{Entropy, compactness and the approximation of
  operators}, Cambridge Tracts in Mathematics, vol.~98, Cambridge University
  Press, Cambridge, 1990.

\bibitem{DaubechiesCompactlySupportedWavelets}
I.~Daubechies, \emph{Orthonormal bases of compactly supported wavelets}, Comm.
  Pure Appl. Math. \textbf{41} (1988), no.~7, 909--996.

\bibitem{DaubechiesTenLectures}
\bysame, \emph{Ten lectures on wavelets}, CBMS-NSF Regional Conference Series
  in Applied Mathematics, vol.~61, Society for Industrial and Applied
  Mathematics (SIAM), Philadelphia, PA, 1992.

\bibitem{DeVoreNonlinearApproximation}
R.~A. DeVore, \emph{Nonlinear approximation}, Acta numerica, 1998, Acta Numer.,
  vol.~7, Cambridge Univ. Press, Cambridge, 1998, pp.~51--150.

\bibitem{DeVoreConstructiveApproximation}
R.~A. DeVore and G.~G. Lorentz, \emph{Constructive approximation}, Grundlehren
  der Mathematischen Wissenschaften, vol. 303, Springer-Verlag, Berlin, 1993.

\bibitem{DudleyRealAnalysis}
R.~M. Dudley, \emph{Real analysis and probability}, Cambridge Studies in
  Advanced Mathematics, vol.~74, Cambridge University Press, Cambridge, 2002.

\bibitem{EdmundsTriebelFunctionSpacesEntropyNumbers}
D.~E. Edmunds and H.~Triebel, \emph{Function spaces, entropy numbers,
  differential operators}, Cambridge Tracts in Mathematics, vol. 120, Cambridge
  University Press, Cambridge, 1996.

\bibitem{GrohsNNApproximationTheory}
D.~Elbrächter, D.~Perekrestenko, P.~Grohs, and H.~Bölcskei, \emph{Deep neural
  network approximation theory}, arXiv preprint arXiv:1901.02220 (2019).

\bibitem{FollandRA}
G.B. Folland, \emph{Real analysis}, second ed., Pure and Applied Mathematics
  (New York), John Wiley \& Sons, Inc., New York, 1999.

\bibitem{grohs2015optimally}
P.~Grohs, \emph{Optimally sparse data representations}, Harmonic and Applied
  Analysis, Springer, 2015, pp.~199--248.

\bibitem{HaroskeBesovSpacesWithPositiveSmoothness}
D.~D. Haroske and C.~Schneider, \emph{Besov spaces with positive smoothness on
  {$\Bbb R^n$}, embeddings and growth envelopes}, J. Approx. Theory
  \textbf{161} (2009), no.~2, 723--747.

\bibitem{Vybiral18}
M.~{Kossaczk{\'a}} and J.~{Vyb{\'{\i}}ral}, \emph{{Entropy numbers of
  finite-dimensional embeddings}}, ArXiv preprint arXiv:1802.00572 (2018).

\bibitem{LeoniFirstCourseInSobolevSpaces}
G.~Leoni, \emph{A first course in {S}obolev spaces}, second ed., Graduate
  Studies in Mathematics, vol. 181, American Mathematical Society, Providence,
  RI, 2017. \MR{3726909}

\bibitem{LeopoldEntropyNumbersOfWeightedSequenceSpaces}
H.~Leopold, \emph{Embeddings and entropy numbers for general weighted sequence
  spaces: the non-limiting case}, Georgian Math. J. \textbf{7} (2000), no.~4,
  731--743.

\bibitem{LinLimitationsOfShallowNets}
S.~Lin, \emph{Limitations of shallow nets approximation}, Neural Netw.
  \textbf{94} (2017), 96 -- 102.

\bibitem{MMR99}
V.~Maiorov, R.~Meir, and J.~Ratsaby, \emph{On the approximation of functional
  classes equipped with a uniform measure using ridge functions}, J. Approx.
  Theory \textbf{99} (1999), no.~1, 95--111.

\bibitem{MaiorovPinkusLowerBoundsMLP}
V.~Maiorov and A.~Pinkus, \emph{Lower bounds for approximation by {MLP} neural
  networks}, Neurocomputing \textbf{25} (1999), no.~1, 81 -- 91.

\bibitem{mallat1999wavelet}
S.~Mallat, \emph{A wavelet tour of signal processing}, Elsevier, 1999.

\bibitem{PetersenVoigtlaenderOptimalReLUApproximation}
P.~Petersen and F.~Voigtlaender, \emph{Optimal approximation of piecewise
  smooth functions using deep {ReLU} neural networks}, Neural Netw.
  \textbf{108} (2018), 296--330.

\bibitem{RudinFA}
W.~Rudin, \emph{Functional analysis}, second ed., International Series in Pure
  and Applied Mathematics, McGraw-Hill, Inc., New York, 1991.

\bibitem{SteinSingularIntegrals}
E.M. Stein, \emph{Singular integrals and differentiability properties of
  functions}, Princeton Mathematical Series, No. 30, Princeton University
  Press, Princeton, N.J., 1970.

\bibitem{SuzukiBesovNNApproximation}
T.~Suzuki, \emph{Adaptivity of deep {R}e{LU} network for learning in {B}esov
  and mixed smooth {B}esov spaces: optimal rate and curse of dimensionality},
  International Conference on Learning Representations, 2019,
  \url{https://openreview.net/forum?id=H1ebTsActm}.

\bibitem{TriebelTheoryOfFunctionSpaces3}
H.~Triebel, \emph{Theory of function spaces. {III}}, Monographs in Mathematics,
  vol. 100, Birkh\"{a}user Verlag, Basel, 2006.

\bibitem{TriebelTheoryOfFunctionSpaces1}
\bysame, \emph{Theory of function spaces}, Modern Birkh\"{a}user Classics,
  Birkh\"{a}user/Springer Basel AG, Basel, 2010.

\bibitem{DecompositionIntoSobolev}
F.~Voigtlaender, \emph{{E}mbeddings of {D}ecomposition {S}paces into {S}obolev
  and {BV} {S}paces}, arXiv preprints arXiv:1601.02201 (2016).

\bibitem{WojtaszczykIntroductionToWavelets}
P.~Wojtaszczyk, \emph{A mathematical introduction to wavelets}, London
  Mathematical Society Student Texts, vol.~37, Cambridge University Press,
  Cambridge, 1997.

\bibitem{YarotskyPhaseDiagram}
D.~Yarotsky and A.~Zhevnerchuk, \emph{The phase diagram of approximation rates
  for deep neural networks}, arXiv preprint arXiv:1906.09477 (2019).

\end{thebibliography}
}

\end{document}